\newtheorem{thm}{Theorem}[section]
\newtheorem{cor}[thm]{Corollary}
\newtheorem{claim}[thm]{Claim}
\newtheorem{fact}[thm]{Fact}
\newtheorem{lemma}[thm]{Lemma}
\newtheorem{prop}[thm]{Proposition}
\theoremstyle{definition}
\newtheorem{definition}[thm]{Definition}
\newtheorem{ex}[thm]{Example}
\newtheorem{remark}[thm]{Remark}
\newtheorem{question}[thm]{Question}
\newtheorem{problem}[thm]{Problem}
\newtheorem{conj}[thm]{Conjecture}
\def\rquotient#1#2{%
	\makeatletter
	\raise.3ex\hbox{$#1$}/\lower.3ex\hbox{$#2$}%
	\makeatother
}	
\newcommand{\subjclass}[2][2010]{%
	\let\@oldtitle\@title%
	\gdef\@title{\@oldtitle\footnotetext{#1 \emph{Mathematics subject classification.} #2}}%
}
\newcommand{\keywords}[1]{%
	\let\@@oldtitle\@title%
	\gdef\@title{\@@oldtitle\footnotetext{\emph{Key words and phrases.} #1.}}%
}
\newcommand{\Address}{{
		\bigskip
		\small
		
\noindent\textsc{Universit\'e de Montpellier\\ 
Institut Math\'ematiques Alexander Grothendieck\\
Place Eug\`ene Bataillon\\
34090 Montpellier (France)}\par\nopagebreak
\noindent \textit{E-mail address}: \texttt{anthony.genevois@umontpellier.fr}
		
}}
\title{Beyond graph products and cactus groups: quandle products of groups}
\date{\today}
\author{Anthony Genevois}
\subjclass{Primary 20F65. Secondary 20F10, 20E22, 05C25.}
\keywords{graph product, wreath product, cactus group, quasi-median graph}
\begin{document}

\maketitle

\begin{abstract}
In this paper, we introduce and initiate the study of quandle products of groups, a family of groups that includes graph products of groups, cactus groups, wreath products, and the recently introduced trickle groups. Our approach is geometric: we show that quandle products admit quasi-median Cayley graphs; and, then, we exploit this geometry to deduce various valuable information about quandle products. 
\end{abstract}

\small
\tableofcontents
\normalsize

\section{Introduction}

\noindent
In group theory, an utopian goal is to classify all the groups into a well-organised collection of families. Of course, in full generality or even when restricted to finitely generated groups, nobody expects this goal to be reached. However, we can tend to this objective in the small part of the world of groups that we have explored so far. For this, a reasonable strategy is:
\begin{description}
	\item[(Step 1)] Identify a large collection of groups sharing some similarities (in their constructions or in the properties they satisfy).
	\item[(Step 2)] Propose a formal framework, as simple as possible, that encompasses all these groups.
	\item[(Step 3)] Justify that the formalism is meaningful by producing unified proofs.
\end{description}
The second step is the most delicate one. For instance, it has not been completed for Thompson-like and braided-like groups. 

\medskip \noindent
In this article, we apply this program to a family of groups whose constructions are inspired by \emph{cactus groups}.

\paragraph{Step 1.} Formally, given an integer $n \geq 2$, the \emph{cactus group} $J_n$ is defined by generators $s_{I}$, with $I$ a subinterval of $[n]:=\{1, \ldots, n\}$ of length $\geq 2$, submitted to the relations:
\begin{itemize}
	\item $s_{I}^2=1$ for every $I \subset [n]$;
	\item $s_{I}s_{J} = s_{I}s_{J}$  for all $I,J \subset [n]$ satisfying $I \cap J = \emptyset$;
	\item $s_{I}s_{J} = s_{J}s_{J \ast I}$ for all $I,J \subset [n]$ satisfying $I \subset J$, where $J \ast I$ denotes the image of $I$ under the central symmetry in $J$.
\end{itemize}
From a more visual point of view, it follows essentially from the presentation given above that the elements of $J_n$ can be represented by \emph{braided-like pictures}. 

\medskip \noindent
\begin{minipage}{0.3\linewidth}
\includegraphics[width=0.95\linewidth]{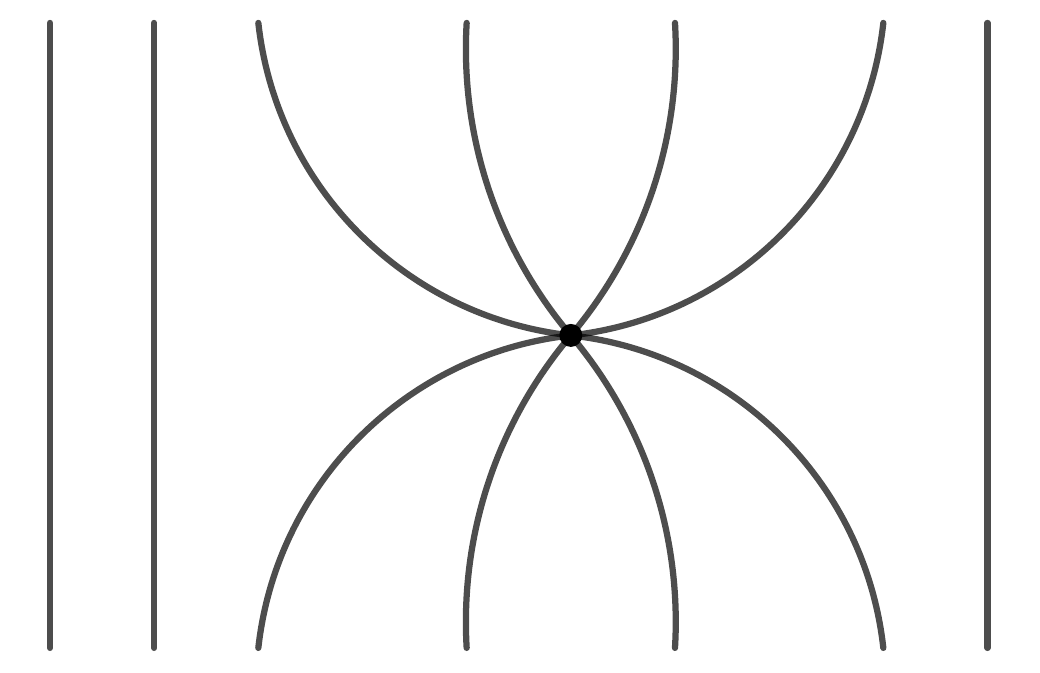}
\end{minipage}
\begin{minipage}{0.68\linewidth}
Given $n$ vertical strands and two indices $1 \leq p<q \leq n$, the generator $s_{[p,q]}$ centrally inverts the strands numbered from $p$ to $q$, all the strands meeting simultaneously at a unique node during the process. For instance, the picture on the left illustrates the generator $s_{[3,6]}$ in the cactus group $J_7$. 
\end{minipage}

\medskip \noindent
Then, the relations given above can be visualised as follows:

\medskip \noindent
\includegraphics[width=\linewidth]{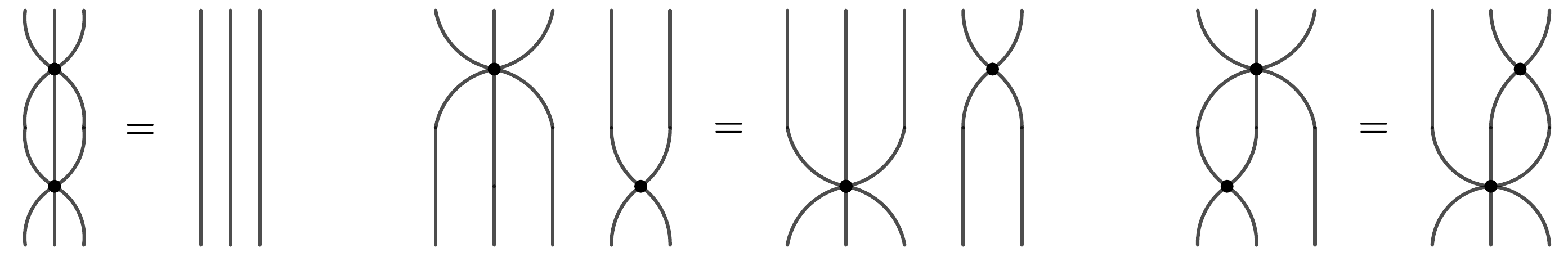}

\medskip \noindent
In practice, it will be more convenient to draw braided-like pictures with strands meeting at an interval instead of a single node.
\begin{center}
\includegraphics[width=0.5\linewidth]{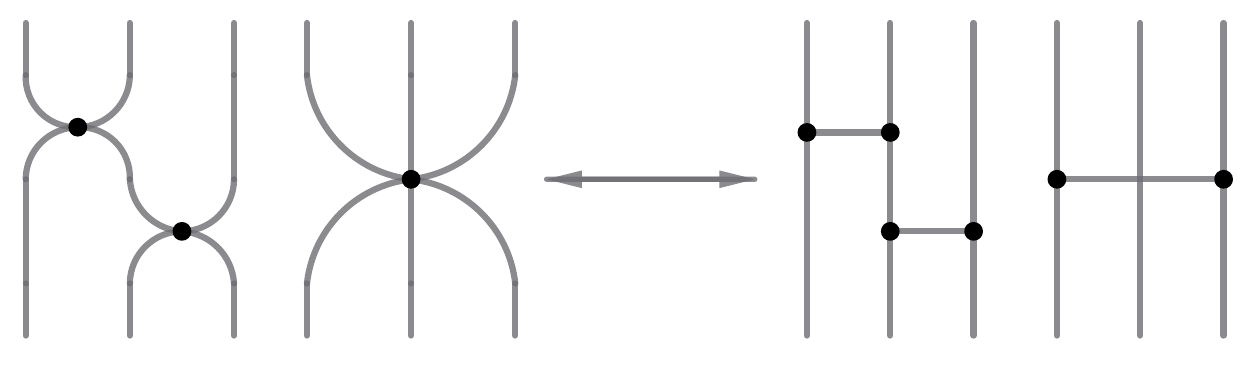}
\end{center}
Thus, an element of the cactus group is just a stack of intervals submitted to a few Reidemeister-like relations, namely:
\begin{center}
\includegraphics[width=0.5\linewidth]{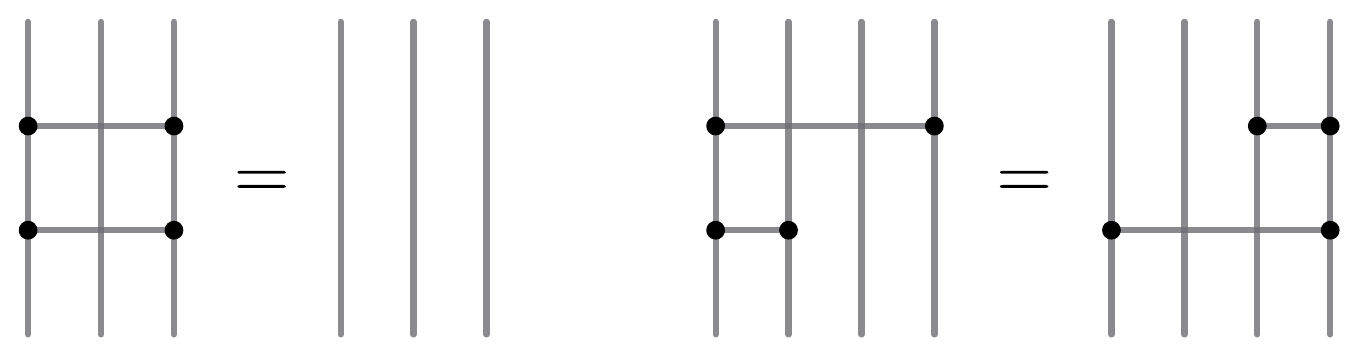}
\end{center}
where we omit, as we will keep doing it, the commutation relations between disjoint intervals, which are implicit.

\medskip \noindent
\begin{minipage}{0.2\linewidth}
\begin{center}
\includegraphics[width=0.9\linewidth]{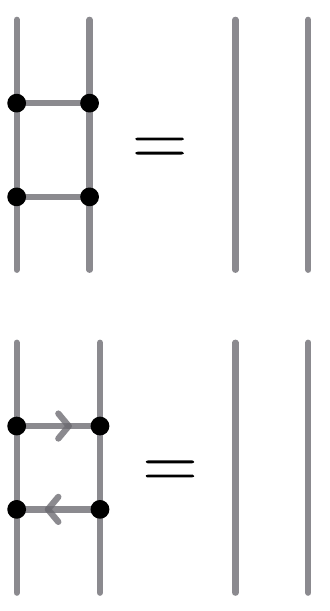}
\end{center}
\end{minipage}
\begin{minipage}{0.78\linewidth}
By restricting ourselves with intervals $[p,p+1] \subset [n]$, submitted to same Reidemeister moves, one recover the so-called \emph{flat braid group} $\mathrm{FB}_n$, also known as \emph{twin group} $T_n$. As shown by its presentation 
$$\langle s_p \ (1 \leq p \leq n-1) \mid s_p^2=1 \text{ for every } p, \ [s_p,s_q]=1 \text{ for all } |p-q| \geq 2 \rangle,$$
$\mathrm{FB}_n$ coincides with the right-angled Coxeter group $C(P_{n-2}^\mathrm{opp})$, where $P_{n-2}^\mathrm{opp}$ denotes the opposite graph of the path $P_{n-2}$ of length $n-2$. The corresponding right-angled Artin group can also be described similarly by decorating intervals with orientations.
\end{minipage}

\medskip \noindent
As another example, consider an infinite oriented interval $(- \infty, + \infty)$ and small intervals $(p-1/2,p+1/2)$, $p \in \mathbb{Z}$, submitted to the moves:
\begin{center}
\includegraphics[width=\linewidth]{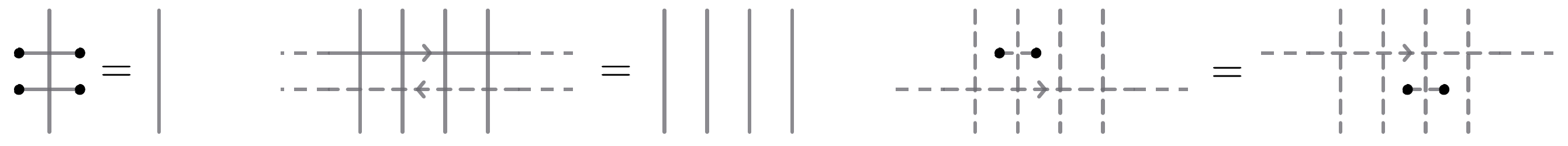}
\end{center}
Then, one gets a group with 
$$\langle t, a_i \ (i \in \mathbb{Z}) \mid a_i^2=1 \text{ and }  a_it = ta_{i+1} \text{ for every } i \rangle$$
as a presentation, which corresponds to the well-known lamplighter group $\mathbb{Z}_2 \wr \mathbb{Z}:= \left( \bigoplus_\mathbb{Z} \mathbb{Z}_2 \right) \rtimes \mathbb{Z}$. By decorating the small intervals with coefficients coming from a group $F$, we can get the more general wreath product $F \wr \mathbb{Z}$ by modifying slightly our Reidemeister moves:
\begin{center}
\includegraphics[width=\linewidth]{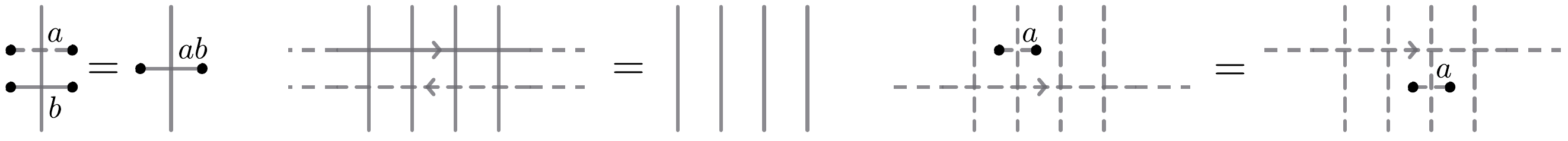}
\end{center}

\noindent
Playing with intervals is convenient for drawings, but the construction naturally extends to subsets of an arbitrary set. For instance, we can adapt the definition of the cactus group $J_n$ by organising the $n$ strands along a cylinder instead of a plane. The group thus obtained is known as the affine cactus group \cite{AffineCactus}. We can also imagine higher-dimensional cactus groups. Given an integer $n \geq 2$ and a dimension $d \geq 1$, consider the cube $[n]^d$. Every subcube $Q \subset [n]^d$ is naturally endowed with a central symmetry $\sigma_Q$ (which can be described as the product of $d$ reflections). Then, we naturally get a cactus group over $[n]^d$ defined by
$$\left\langle s_Q \ (Q \subset [n]^d \text{ subcube}) \mid \begin{array}{l} s_Q^2=1, \ [s_Q,s_R]=1 \text{ for all } Q \cap R = \emptyset, \\ s_Qs_R = s_Rs_{\sigma_R(Q)} \text{ for all } Q \subset R \end{array} \right\rangle$$
By generalising the construction above to arbitrary sets and subsets, one easily recovers generalised cactus groups (see Section~\ref{section:cactus}) as well as arbitrary graph products and arbitrary permutational wreath products. 

\medskip \noindent
We can keep playing with the construction and produce new groups. For instance, by decorating the intervals in the construction of cactus groups with orientations, we are naturally led to introduce the following Reidemeister moves:
\begin{center}
\includegraphics[width=0.5\linewidth]{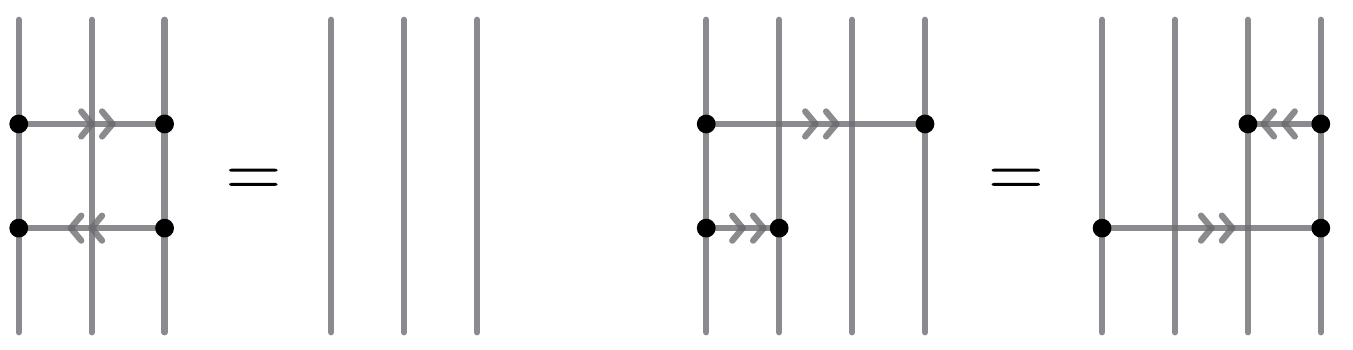}
\end{center}
This leads to the group, which we refer to as the \emph{oriented cactus group}, admitting
$$\left\langle s_I \ (I \subset [n] \text{ subinterval}) \mid \begin{array}{l} [s_I,s_J]=1 \text{ when } I \cap J= \emptyset \\ s_Is_J = s_Js_{J \ast I}^{-1} \text{ when } I \subset J \end{array} \right\rangle$$
as a presentation, where $J \ast I$ still denotes the image of $I$ under the central symmetry in $J$. We can also mix the intervals and moves used to define cactus groups and lamplighters:
\begin{center}
\includegraphics[width=0.6\linewidth]{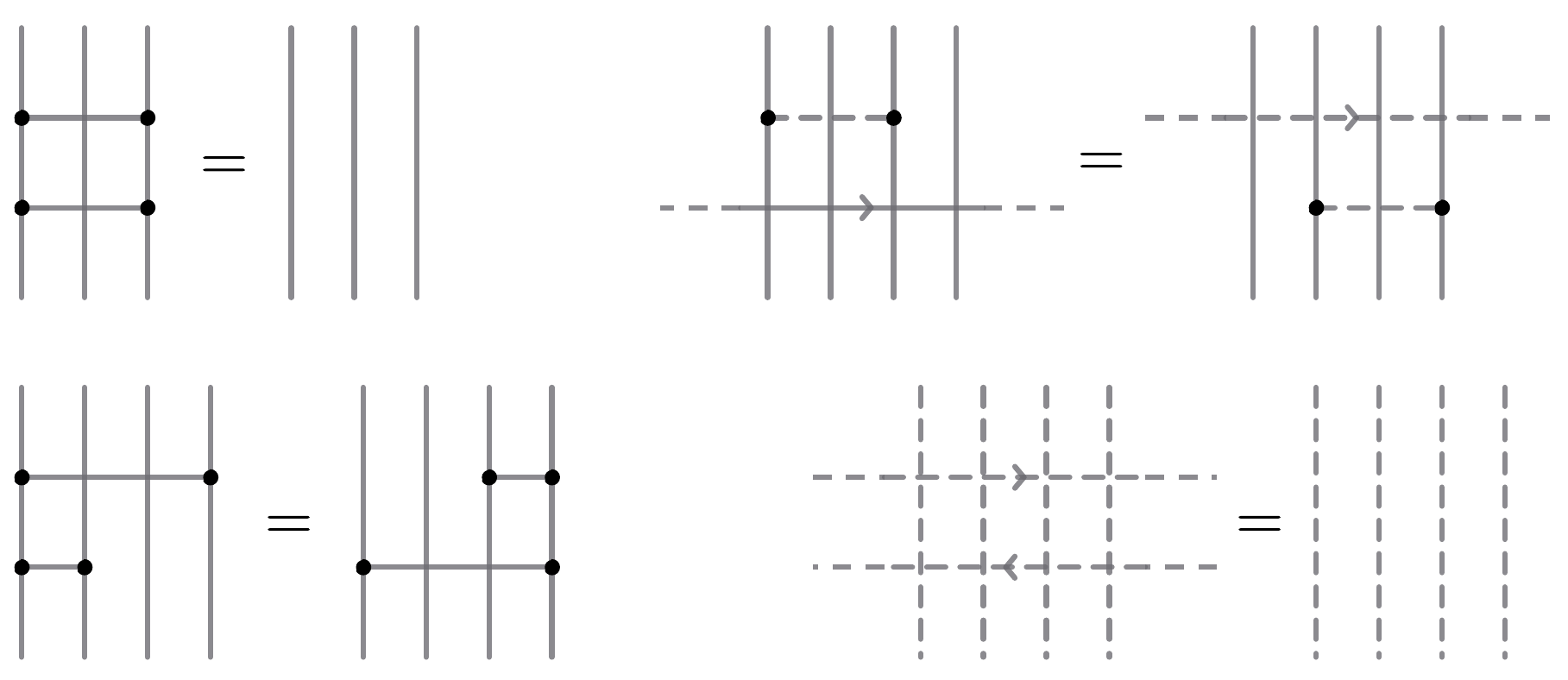}
\end{center}
This leads to a \emph{lamp-cactus group}, which admits
$$\left\langle t, s_I \ (I \subset \mathbb{Z} \text{ interval}) \mid \begin{array}{l} s_I^2 = 1, s_It = ts_{I+1} \\ \left[ s_I,s_J \right]=1 \text{ if } I \cap J=\emptyset \\ s_Is_J = s_J s_{J \ast I} \text{ if } I \subset J \end{array} \right\rangle$$
as a presentation and which is a finitely generated group that splits as a semidirect product $J_\infty \rtimes \mathbb{Z}$ (where $J_\infty$ denotes the cactus groups on infinitely many strands but whose elements are finitely supported).

\paragraph{Step 2.} Now, the problem is to find a simple construction that allows us to include all the examples previously mentioned. In this perspective, we introduce \emph{quandle products} of groups.

\medskip \noindent
Start with a set $\mathbb{S}$ and a collection of subsets $\mathrm{P}(\mathbb{S}) \subset \mathfrak{P}(\mathbb{S})$. Each subset $S \in \mathrm{P}(\mathbb{S})$ is decorated with a group $G_S$. Each factor $G_S$ acts on the disjoint union of groups $\bigsqcup_{R \subset S} G_R$, thought of as a groupoid. We require such a data to satisfy the following two conditions:
\begin{itemize}
	\item[(1)] the action $G_S \curvearrowright \{ R \in \mathrm{P}(\mathbb{S}) \mid R \subset S\}$ induced by $G_S \curvearrowright \bigsqcup_{R \subset S} G_R$ preserves inclusion and disjointness;
	\item[(2)] for all $S,T \in \mathrm{P}(\mathbb{S})$ satisfying $S \subset T$, the equality $$k \ast ( h \ast g) = (k\ast h ) \ast (k \ast g)$$ holds for all $k \in G_T$, $h \in G_S$, and $g \in \bigsqcup_{R \subset S} G_R$. 
\end{itemize}
Then, we define the quandle product $\mathfrak{Q}$ as
$$\left( \underset{S \in P (\mathbb{S})}{\ast} G_S \right) / \left\langle\left\langle \begin{array}{l} gh=hg \text{ for all } g \in G_R, h \in G_S \text{ with } R \cap S= \emptyset \\ gh=h(h\ast g) \text{ for all } g \in G_R, h \in G_S \text{ with } R \subset S \end{array} \right\rangle \right\rangle.$$
Condition (2), referred to as the \emph{quandle relation} and which echoes the definition of \emph{quandles}, gives its name to quandle product. We refer the reader to Section~\ref{section:Quandle} for a slightly more conceptual definition of quandle products, but which turns out to be equivalent to the definition above according to Lemma~\ref{lem:Oposet}. 

\medskip \noindent
Examples of quandle products include generalised cactus groups, graph products of groups, and permutational wreath products, as well as all the groups previously mentioned. We refer the reader to Sections~\ref{section:Quandle} and~\ref{section:Examples} for more examples.

\paragraph{Step 3.} Our definition of quandle products is sufficiently general to encompass all the examples of groups we are interested in, but we need to justify that it is not too general, that it is sufficiently restrictive in order to allow us to extract valuable information about the groups under consideration. 

\medskip \noindent
Our study of quandle products is mainly geometric. After solving the word problem in quandle products (relatively to their factors), we prove that:

\begin{thm}[{Theorem~\ref{thm:QuandleQM}}]
Cayley graphs of quandle products with respect to the union of their factors is a quasi-median graph. 
\end{thm}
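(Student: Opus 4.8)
The plan is to apply the standard criterion characterizing quasi-median graphs among graphs, which is most conveniently stated in terms of the Bandelt--Mulder--Soltan triangle/pentagon conditions together with weak modularity (or, equivalently, the absence of certain isometrically embedded forbidden subgraphs $K_4^-$ and $K_{3,2}$ plus weak modularity). In the group-theoretic setting, the cleanest route is the one used for graph products and for quasi-median groups in general: identify a normal form for elements of $\mathfrak{Q}$ relative to the factors $G_S$ (this is exactly the solution of the word problem announced just before the theorem), and use it to compute distances in the Cayley graph $X$ with respect to $\bigsqcup_S (G_S \setminus \{1\})$. First I would set up syllable-length and a notion of reduced word: a word $w = g_1 \cdots g_n$ with $g_i \in G_{S_i} \setminus \{1\}$ is reduced if no sequence of the defining moves (shuffling commuting syllables past each other, and replacing $gh$ by $h(h\ast g)$ when $S(g) \subset S(h)$, which preserves syllable length) produces a word with a cancellable or amalgamable pair; then argue, via the word-problem solution, that reduced words all have the same length, which is $d_X(1, \bar w)$, and that two reduced words represent the same element iff they differ by the length-preserving moves.

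With distances understood, the next step is to identify the geodesics, the intervals $I(x,y)$, and in particular the ``projections'' onto factor-cosets: for each coset $gG_S$ one shows there is a well-defined gate (nearest-point) projection $X \to gG_S$, which is the key feature making the graph quasi-median. Concretely, given a vertex $x$ and a coset $gG_S$, the reduced form of $g^{-1}x$ has a uniquely determined ``$G_S$-prefix'' after the length-preserving rewriting, and this yields the gate. From the existence of gates onto factor-cosets — and the observation that the factor-cosets are themselves cliques (or more precisely copies of the complete graph on $G_S$) that are gated — one deduces the triangle condition and the pentagon condition: any triangle lies inside a single factor-coset clique (so it extends to the required configuration trivially), and any ``almost-square'' / pentagon configuration is resolved by projecting to the appropriate coset, using condition (1) (preservation of inclusion and disjointness) to guarantee that the relevant factors interact only in the two prescribed ways (commuting or quandle-conjugation) and condition (2), the quandle relation, to guarantee that iterated conjugations are coherent so that the rewriting is confluent. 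Weak modularity (every pair of vertices at distance $2$ with a common neighbour has a ``median-like'' common neighbour, and the triangle condition) then follows from the same coset-projection bookkeeping.

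The main obstacle I anticipate is establishing the confluence/Church--Rosser property of the rewriting system underlying the normal form — i.e., showing that all reduced words for a given element have equal length and differ only by the ``innocuous'' moves. This is where the quandle relation (2) does essential work: when one syllable must be pushed past two others whose supports are nested, the two possible orders of rewriting must agree, and this is precisely $k\ast(h\ast g) = (k\ast h)\ast(k\ast g)$; similarly condition (1) is needed so that disjointness and nesting relations among supports are not destroyed by conjugation, keeping the set of available moves stable. Once confluence is in hand, the remaining verifications (gate existence, triangle and pentagon conditions, weak modularity) are essentially bookkeeping along the lines of the graph-product case, so I would present them compactly, emphasizing only the two points where conditions (1) and (2) are invoked. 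A secondary technical point is to handle the case where $\mathrm{P}(\mathbb{S})$ is infinite and the factors themselves are infinite, so that cliques in $X$ may be infinite; the quasi-median axioms accommodate this, but one should state the gate-projection lemma without finiteness hypotheses, relying only on the fact that each individual reduced word is finite.
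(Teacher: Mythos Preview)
Your proposal is broadly sound and shares its foundation with the paper: both arguments rest on the confluence of the rewriting system, and you are right that the quandle relation is precisely what makes the two orders of pushing a syllable past a nested pair agree. The paper's Section~\ref{section:WP} carries this out (termination plus local confluence of rankings, yielding unique ranked braids), so that part of your plan is exactly what happens.

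Where you diverge is in the choice of quasi-median criterion. You aim for the global characterisation ``weakly modular $+$ no induced $K_4^-$ or $K_{3,2}$'' (Theorem~\ref{thm:QMmodular}), verifying the triangle and quadrangle conditions via gate-projections onto factor-cosets. The paper instead uses the \emph{local-to-global} criterion: a connected graph is quasi-median iff its square-triangle completion is simply connected, there is no induced $K_4^-$, and the house and $3$-cube conditions hold. This buys a lot. Simple connectedness comes for free because the $3$- and $4$-cycles of the Cayley graph are exactly the defining relations, so $\mathfrak{M}^{\square\triangle}$ is a Cayley $2$-complex. Then everything reduces to purely local checks: Lemma~\ref{lem:QuandleClique} (every clique sits in a factor-coset) kills $K_4^-$, Lemma~\ref{lem:QuandleCycle} (induced $4$-cycles come only from the two relation types) kills $K_{3,2}$ as a by-product, and the house and $3$-cube conditions are dispatched by short case analyses on whether the indices involved are $\perp$- or $<$-comparable. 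No global distance computation, no gate-projection lemma, no direct verification of the quadrangle condition is needed.

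Your route can be made to work, but be aware of two points. First, a terminology slip: weak modularity is the triangle condition plus the \emph{quadrangle} condition, not a ``pentagon condition''; the latter belongs to a different class of graphs. Second, and more substantively, proving that factor-cosets are gated \emph{before} knowing the graph is quasi-median is not circular but does require genuine work with the normal form (essentially reproving a version of Theorem~\ref{thm:ShortestWords} by hand), and then the quadrangle condition still needs an argument---the two edges at $y$ may label factors that are $<$-comparable rather than $\perp$-comparable, so the ``missing'' vertex $w$ is not simply obtained by commuting, and you must invoke the twisted relation and check distances carefully. The paper's local approach sidesteps all of this; what your approach would buy is a more self-contained argument that does not invoke the local-to-global theorem from \cite{MR3062742}.
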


\noindent
Median graphs are among the most studied graphs in metric graph theory, and are every well-known in geometric group theory through CAT(0) cube complexes (as a graph is median if and only if it is the one-skeleton of a CAT(0) cube complex). Quasi-median graphs provide a generalisation that allows our graph to contain complete subgraphs but that keep all the good properties of median graphs, including a rich combinatorics of hyperplanes. See Section~\ref{section:QM} for more details. 

\medskip \noindent
As our main contribution to the study of the algebraic structure of quandle products, we deduce from the theory of group actions on quasi-median graphs that:

\begin{thm}[{Corollary~\ref{cor:DecompositionQuandle}}]
A quandle product $\mathfrak{Q}$ of finitely many groups decomposes as
$$G_1 \rtimes (G_2 \rtimes ( \cdots \rtimes G_n))$$
where each $G_i$ is a graph product whose vertex-groups are factors of $\mathfrak{Q}$.
\end{thm}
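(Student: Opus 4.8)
The plan is to argue by induction on the poset $\mathrm{P}(\mathbb{S})$, peeling off one level at a time and using the quasi-median Cayley graph as the tool that lets us recognise the pieces. For the base case, suppose $\mathrm{P}(\mathbb{S})$ is an antichain, i.e.\ no two of its subsets are nested. Then the quandle relations $gh = h(h \ast g)$ are never invoked, so the only defining relations of $\mathfrak{Q}$ beyond those internal to the factors are the commutations $[G_R, G_S] = 1$ for $R \cap S = \emptyset$; hence $\mathfrak{Q}$ is literally the graph product of the $G_S$ over the graph on $\mathrm{P}(\mathbb{S})$ whose edges join disjoint subsets, and we are done with $n = 1$.

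For the inductive step, let $\mathcal{B} \subseteq \mathrm{P}(\mathbb{S})$ be the set of minimal subsets and put $N := \langle\langle G_S \mid S \in \mathcal{B} \rangle\rangle \trianglelefteq \mathfrak{Q}$. First I would check the purely formal fact that $\mathfrak{Q}/N$ is naturally isomorphic to the quandle product $\mathfrak{Q}'$ built on the truncated poset $\mathrm{P}(\mathbb{S}) \setminus \mathcal{B}$ with the restricted data: since every action $G_T \curvearrowright \bigsqcup_{R \subset T} G_R$ preserves the poset structure and is bijective, it preserves the sub-collection of non-minimal subsets below $T$, so conditions~(1)--(2) descend to the restriction, and comparing presentations shows that killing the minimal factors imposes nothing beyond the triviality of those factors, leaving exactly the defining relations of $\mathfrak{Q}'$. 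Moreover the assignment $G_T \mapsto G_T$ (for $T$ non-minimal) defines a homomorphism $\mathfrak{Q}' \to \mathfrak{Q}$, because the defining relations of $\mathfrak{Q}'$ form a subset of those of $\mathfrak{Q}$; it is a section of $\mathfrak{Q} \twoheadrightarrow \mathfrak{Q}'$, so $\mathfrak{Q} = N \rtimes \mathfrak{Q}'$. As the poset $\mathrm{P}(\mathbb{S}) \setminus \mathcal{B}$ has strictly smaller height, the induction hypothesis yields $\mathfrak{Q}' = G_2 \rtimes (G_3 \rtimes \cdots \rtimes G_n)$ with each $G_i$ a graph product of factors of $\mathfrak{Q}'$, hence of $\mathfrak{Q}$; setting $G_1 := N$, it remains to show that $N$ is a graph product whose vertex-groups are factors of $\mathfrak{Q}$.

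This is where the geometry does the work, and where I expect the only real difficulty. The point is that $\mathfrak{Q} \twoheadrightarrow \mathfrak{Q}'$ should be exactly the homomorphism induced by the restriction quotient of the quasi-median Cayley graph $X$ of $\mathfrak{Q}$ (Theorem~\ref{thm:QuandleQM}) that collapses the hyperplanes whose colour lies in $\mathcal{B}$; consequently $N = \ker(\mathfrak{Q} \twoheadrightarrow \mathfrak{Q}')$ is generated by the rotative stabilisers of those $\mathcal{B}$-coloured hyperplanes. Each such stabiliser is a conjugate of some $G_S$ with $S \in \mathcal{B}$, and two of them commute when the corresponding subsets are disjoint and generate a free product otherwise, since minimal subsets are never nested and hence the associated hyperplanes are never nested. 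Feeding this configuration into the characterisation of graph products through group actions on quasi-median graphs — together with the already established solution of the word problem, which forbids any hidden relation — gives that $N$ is a graph product of copies of the minimal factors $G_S$, completing the induction. The technical core, and the main obstacle, is precisely this recognition step: one has to verify that the $\mathcal{B}$-coloured hyperplanes of $X$ are pairwise non-nested and that their rotative stabilisers are independent in the sense demanded by the graph-product criterion, which is exactly what the hyperplane combinatorics of quasi-median graphs developed earlier is meant to supply.
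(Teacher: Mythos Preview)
Your proposal is correct and follows essentially the same route as the paper: peel off the minimal level of the oposet, obtain a semidirect product $\mathfrak{Q}=N\rtimes\mathfrak{Q}'$ with $\mathfrak{Q}'=\langle I\setminus I_{\min}\rangle$ a quandle product on fewer factors, identify $N$ as a graph product via the hyperplane combinatorics of the quasi-median Cayley graph, and induct. The only noteworthy difference is packaging: you establish the splitting by exhibiting a section $\mathfrak{Q}'\to\mathfrak{Q}$ directly from the presentations and then separately argue that $N$ is a graph product, whereas the paper invokes a single external result (\cite[Theorem~3.24]{MR4033512}) that delivers both the semidirect decomposition and the graph-product structure of $\mathrm{Rot}(\mathcal{W})$ in one stroke, after checking (Lemma~\ref{lem:RotativeStab}) that for minimal $i$ one has $\mathrm{stab}_\circlearrowright(J_i)=G_i$ and that rotative stabilisers of transverse minimal-coloured hyperplanes commute---precisely the ``recognition step'' you flag as the technical core.
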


\noindent
The theorem is a consequence of a more precise statement (Theorem~\ref{thm:SemiDirect}) that allows us to prove various properties for quandle products by induction on the number of factors. Thanks to this principle, and thanks to the theory developed in \cite{QM} for groups acting on quasi-median graphs, we prove that:

\begin{thm}\label{thm:IntroList}
Let $\mathfrak{Q}$ be a quandle product of finitely many groups.
\begin{itemize}
	\item If every factor has solvable word problem, then $\mathfrak{Q}$ has solvable word problem. More generally, parabolic subgroups have solvable membership problem.
	\item If every factor is torsion-free, then $\mathfrak{Q}$ is also torsion-free.
	\item If every factor is orderable, then $\mathfrak{Q}$ is also orderable.
	\item If every factor satisfies the Tits alternative, then $\mathfrak{Q}$ satisfies the Tits alternative.
	\item If every factor has finite asymptotic dimension, then $\mathfrak{Q}$ has finite asymptotic dimension. 
\end{itemize}
Moreover, assuming in addition that $\mathfrak{Q}$ has trivial holonomy:
\begin{itemize}
	\item If every factor acts (metrically) properly on a median graph, then $\mathfrak{Q}$ acts (metrically) properly on a median graph.
	\item If every factor has a locally finite (quasi-)median Cayley graph, then $\mathfrak{Q}$ has a locally finite (quasi-)median Cayley graph. 
	\item If every factor is a-T-menable (resp.\ a-$L^p$-menable for some odd $p \geq 1$), then $\mathfrak{Q}$ is a-T-menable (resp.\ a-$L^p$-menable for some odd $p \geq 1$).
	\item If every factor is finitely generated, then the inequality between $L^p$-compressions $$\alpha_p(\mathfrak{Q}) \geq \min \left( \frac{1}{p}, \min\limits_{G \text{ factor}} \alpha_p(G) \right)$$ holds for every $p \geq 1$. 
\end{itemize}
\end{thm}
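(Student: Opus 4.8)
The plan is to prove each item by induction on the number of factors, combining two ingredients: the iterated semidirect-product decomposition $\mathfrak{Q} = G_1 \rtimes (G_2 \rtimes ( \cdots \rtimes G_n))$ furnished by Theorem~\ref{thm:SemiDirect} (and Corollary~\ref{cor:DecompositionQuandle}), and the machinery of \cite{QM} for groups acting on quasi-median graphs, applied to the quasi-median Cayley graph $X$ of $\mathfrak{Q}$ provided by Theorem~\ref{thm:QuandleQM}. The base of the induction consists of graph products of factors of $\mathfrak{Q}$, namely the groups $G_i$; for these, every property in the list is already available, since a graph product acts on a quasi-median graph (its Cayley graph with respect to the union of the vertex-groups) with clique-stabilizers conjugate to the vertex-groups, so the combination theorems of \cite{QM} apply directly.

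For the first block of statements, where no hypothesis on the holonomy is needed, I would proceed as follows. Torsion-freeness and orderability are stable under group extensions (for orderability one uses that left-orderability passes to extensions), so the inductive step is immediate once the base case is settled; alternatively, one reads directly from the geometry of $X$ that every finite-order element is conjugate into a clique-stabilizer, i.e.\ into a factor. For the word problem and, more generally, the membership problem for parabolic subgroups, the key input is that parabolic subgroups are gated (hence convex) subgraphs of $X$; coupled with the explicit quasi-median normal form obtained when solving the word problem relative to the factors, this turns a membership query into a finite search controlled by distances in $X$. The Tits alternative and finiteness of the asymptotic dimension follow from the corresponding combination theorems of \cite{QM}: their hypotheses bear on the clique-stabilizers of $X$ (conjugates of factors, covered by assumption) and on the hyperplane-stabilizers of $X$; the latter are, up to conjugacy, quandle products of strictly fewer factors, so the induction hypothesis applies to them and the induction closes. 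Finiteness of $\dim X$ (bounded by the number of factors) is what makes the asymptotic-dimension bound effective.

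For the second block, the triviality of the holonomy is precisely what allows one to ``cubulate'': replacing each clique of $X$ by a median graph on which the corresponding clique-stabilizer (a factor) acts properly, one builds a genuine median graph $Y$ carrying a proper action of $\mathfrak{Q}$, which settles the first two items. The a-T-menability (and a-$L^p$-menability for odd $p$) then follows from the combination theorem of \cite{QM} for spaces with (measured) walls, once more fed by the induction hypothesis on hyperplane-stabilizers, while local finiteness of the (quasi-)median Cayley graph is a counting statement at the level of cliques and hyperplanes. The $L^p$-compression inequality is the quasi-median compression estimate of \cite{QM}, in which the term $\frac{1}{p}$ comes from the ``transversal'' (wall/hyperplane) direction and the term $\min_{G}\alpha_p(G)$ from the clique-stabilizers.

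The main obstacle I anticipate is not any individual estimate but the bookkeeping that drives the induction: one must identify the hyperplane-stabilizers and rotative-stabilizers of $X$ and verify that they are themselves quandle products of finitely many factors of $\mathfrak{Q}$ (so that the induction hypothesis applies), and, in the trivial-holonomy case, check that the cubulated median graph $Y$ retains clique/hyperplane data of the shape demanded by the theorems of \cite{QM}. Supplying this structural information about the stabilizers is exactly what Theorem~\ref{thm:SemiDirect} is designed to do, so the proof ultimately consists in feeding its output into the already-available combination theorems of \cite{QM}, item by item.
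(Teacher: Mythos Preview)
Your overall framework---induction on the number of factors via Theorem~\ref{thm:SemiDirect}, with graph products as the base case---is exactly what the paper does for the first block (torsion, orderability, Tits alternative, asymptotic dimension; see Corollary~\ref{cor:CombApplications}). But there is a genuine gap in how you propose to run the induction and, more seriously, in how you propose to handle the second block.

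The gap is your repeated claim that ``the hyperplane-stabilisers of $X$ are, up to conjugacy, quandle products of strictly fewer factors.'' The paper never establishes this, and Corollary~\ref{cor:HypStab} computes $\mathrm{stab}(J_i)=\langle G_i,\mathrm{Diaph}(i)\rangle$, which is \emph{not} a parabolic subgroup and has no reason to be a quandle product over a proper sub-oposet. For the first block this is harmless, because the paper bypasses hyperplane-stabilisers entirely: it simply uses that each property is stable under semidirect products and under graph products (over graphs with finite clique number, Fact~\ref{fact:FiniteClique}, for asymptotic dimension), and inducts on $|I|$ by peeling off $I_\mathrm{min}$. No combination theorem from \cite{QM} with hyperplane-stabiliser input is invoked.

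For the second block your proposed induction on hyperplane-stabilisers would not close, and the paper takes a different route with no induction at all. The key step you are missing is Proposition~\ref{prop:TopicHolonomy}: trivial holonomy is \emph{equivalent} to the action $\mathfrak{Q}\curvearrowright\mathfrak{M}$ being topical-transitive. Once this is in hand, the theorems of \cite{QM} (Propositions~5.22, 5.25, 5.26, 7.4, 3.30) apply directly, and their only structural input is the clique-stabilisers---which are conjugates of factors by Corollary~\ref{cor:QuandleClique}---together with the finiteness of the number of cliques through each vertex (Claim~\ref{claim:FinetelyManyCliques}). The locally finite (quasi-)median Cayley graph is obtained not by a ``counting statement'' but by a concrete construction: Lemma~\ref{lem:GeneratingNoHolo} uses trivial holonomy to produce a compatible family of finite generating sets $S_i$, and Propositions~\ref{prop:CayleyQ} and~\ref{prop:QMdelta} show that the resulting global metric on $\mathfrak{M}$ is (quasi-)median. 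Finally, the word and membership problems (Corollaries~\ref{cor:WP} and~\ref{cor:FactorsEmb}) are handled via the ranked-braid normal form of Theorem~\ref{thm:NormalForm}, a direct rewriting-system argument independent of the quasi-median geometry.
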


\noindent
The statement summarises Corollaries~\ref{cor:WP}, \ref{cor:FactorsEmb}, \ref{cor:CombApplications}, Theorems~\ref{thm:ShortestWords} and~\ref{thm:NPC}, Theorem~\ref{thm:MedianCayl} and Lemma~\ref{lem:GeneratingNoHolo}, Theorem~\ref{thm:Compression}. We refer the reader to these statements for more details.

\medskip \noindent
Some words about the holonomy mentioned in Theorem~\ref{thm:IntroList} are in order. Given a quandle product $\mathfrak{Q}$, associated to a set $\mathbb{S}$ and a collection of subsets $\mathrm{P}(\mathbb{S})$, the actions $G_S \curvearrowright \bigsqcup_{R \subset S} G_R$ provide a system of isomorphisms between the factors of $\mathfrak{Q}$. For each factor $G$, we get a group of automorphisms $\mathrm{Hol}(G) \leq \mathrm{Aut}(G)$ by composing these isomorphisms, which we refer to as the \emph{holonomy} at $G$. Most of the examples mentioned above have trivial holonomy, but not all. For instance, the oriented cactus group is a quandle product of infinite cyclic groups with non-trivial holonomy.

\medskip \noindent
The idea to keep in mind is that quandle products of finitely many groups with trivial holonomy have a very nice behaviour. An idea that we often meet is:

\medskip \noindent
\textbf{Leitmotiv.} \emph{A group property stable under direct and free products is stable under quandle products of finitely many groups with trivial holonomy. }

\medskip \noindent
Allowing infinitely many factors may produce more exotic groups, such as permutational wreath products and Thompson's group $F$. It is not clear to us whether the family of quandle products of possibly infinitely many factors define a meaningful family of groups, algebraically speaking. Contrary to quandle products of finitely many groups, it may be possible that this includes examples of groups that are too different from each other. 

\medskip \noindent
Allowing holonomy may also create more exotic groups. For instance, the Baumslag-Solitar group $\mathrm{BS}(1,2)$ is a quandle product of $\mathbb{Z}[1/2]$ and $\mathbb{Z}$. But quandle products with holonomy are more complicated than quandle products without holonomy in the same way that semidirect products are more complicated than direct products. In full generality, we cannot say anything really interesting about semidirect products, we need to impose restrictions on the corresponding action. Similarly, it should be possible to say something interesting about quandle products with non-trivial but controlled holonomy.

\paragraph{A word about trickle groups.} Motivated by similarities in solutions to the word problem in cactus groups and graph products of cyclic groups, \cite{Trickle} investigates the following question: which group presentations of the form
$$\langle x_i \mid x_i^{p_i}=1, \ x_ix_j= x_jx_i' \rangle$$
 admit a solution to the word problem similar to cactus groups and graph products of cyclic groups? In \cite{Trickle}, the authors propose a list of axioms that group presentations should satisfy in order to have a positive answer to this question (see Section~\ref{section:Trickle}). Groups given by such presentations are called \emph{trickle groups}. 

\medskip \noindent
It turns out that trickle groups are quandle products. In fact, it is possible to characterise exactly which quandle products are trickle groups.

\begin{thm}[{Proposition~\ref{prop:QuandleVsTrickle}}]\label{thm:IntroTrickle}
A group is a trickle group if and only if it is a quandle product of cyclic groups with trivial holonomy.
\end{thm}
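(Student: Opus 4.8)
\medskip\noindent
\textbf{Proof strategy.}
The plan is to prove the two implications separately, in each case by matching the combinatorial data underlying the two constructions and then comparing group presentations. Throughout I would use the conceptual description of quandle products from Section~\ref{section:Quandle} --- equivalent to the hands-on definition by Lemma~\ref{lem:Oposet} --- which records a quandle product by an abstract poset equipped with a compatible orthogonality (``disjointness'') relation together with the decorating groups and the action; this avoids the purely set-theoretic task of realising a prescribed pattern of inclusions and disjointnesses by honest subsets of an honest set.

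For ``quandle product of cyclic groups with trivial holonomy $\Rightarrow$ trickle group'', start with such a quandle product $\mathfrak{Q}$, with factors $G_S$ cyclic of order $p_S \in \{2,3,\dots,\infty\}$. The first point is to use triviality of the holonomy to choose, simultaneously for all factors, a generator $x_S$ of $G_S$ so that each system isomorphism $G_R \to G_{k \ast R}$ (induced, for $R \subset S$, by some $k \in G_S$) carries chosen generator to chosen generator: pick a base factor in each component of the groupoid of system isomorphisms and transport a chosen generator along paths, path-independence being exactly the vanishing of every loop-automorphism, i.e.\ of the holonomy. (Without this hypothesis one is forced into ``twisted'' isomorphisms, as the oriented cactus group shows, its holonomy being generated by the inversion $x \mapsto x^{-1}$ of an infinite cyclic factor.) With generators so chosen, the defining relations of $\mathfrak{Q}$ read $x_S^{p_S} = 1$, $x_R x_S = x_S x_R$ when $R \perp S$, and $x_R x_S = x_S(x_S \ast x_R)$ with $x_S \ast x_R$ the chosen generator of $G_{x_S \ast R}$ when $R \subset S$ --- visibly a presentation of trickle type. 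It then remains to read off the candidate trickle structure (order $=$ the poset order, commutation graph $=$ orthogonality, trickle maps $=$ the induced actions on generators) and to verify each axiom of Section~\ref{section:Trickle}: the order-theoretic axioms from those of the poset-with-orthogonality, the fact that the trickle maps are order- and orthogonality-preserving bijections respecting the $p_S$'s from condition~(1) together with the system maps being isomorphisms, and the trickle self-distributivity axiom from the quandle relation~(2).

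For the converse, start with a trickle group $T = \langle x_i\ (i \in I) \mid x_i^{p_i} = 1,\ x_ix_j = x_jx_i'\rangle$ satisfying the axioms of Section~\ref{section:Trickle}. I would build the quandle data on $I$: the poset order is the domination order of the trickle structure, the orthogonality relation is ``$x_i$ and $x_j$ commute with trivial transform'', the group at $i$ is $\langle x_i \rangle \cong \mathbb{Z}/p_i$, and the action $G_j \curvearrowright \bigsqcup_{i \le j} G_i$ is prescribed on chosen generators by $x_i \mapsto x_i'$ --- these being genuine generators, the action is by isomorphisms. The trickle axioms then yield conditions~(1) and~(2), so the quandle product $\mathfrak{Q}$ of this data is defined; since every system isomorphism carries chosen generator to chosen generator by construction, its holonomy vanishes automatically; and a side-by-side comparison of presentations gives $\mathfrak{Q} \cong T$.

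The main obstacle I expect is the precise dictionary between the two axiom lists, and above all the correspondence between ``trivial holonomy'' on the quandle side and the feature of trickle presentations that each transform $x_i'$ is an honest generator rather than a power of one: in the first implication one must produce a globally consistent ``untwisting'' of generators, which is exactly where loop-triviality of the holonomy enters, and in the second one must confirm that the holonomy of the quandle product built from a trickle structure always vanishes. The remaining points --- order-theoretic fine print such as whether a preorder or a well-foundedness hypothesis needs separate handling --- I expect to be routine once the definitions are unwound.
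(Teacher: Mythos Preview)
Your proposal is correct and follows essentially the same route as the paper's proof of Proposition~\ref{prop:QuandleVsTrickle}: build the dictionary oposet $\leftrightarrow$ $(\Gamma,\leq)$, orthogonality $\leftrightarrow$ $E_\parallel$, cyclic factor orders $\leftrightarrow$ $\mu$, action on indices $\leftrightarrow$ $\varphi_x$, quandle relation~(2) $\leftrightarrow$ axiom~(g), and use trivial holonomy exactly as you describe (this is the content of Remark~\ref{remark:NoHolonomy} and Lemma~\ref{lem:GeneratingNoHolo}) to make a globally consistent choice of generators so that the two presentations coincide. The only small points to watch when you write it out are that the trickle graph $\Gamma$ has edges for \emph{both} $\leq$-comparable and $\perp$-orthogonal pairs (axiom~(a)), and that each $\varphi_x$ must be defined on all of $\mathrm{star}_\Gamma(x)$, extended by the identity outside $\{y : y<x\}$ so that axiom~(d) holds.
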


\noindent
Thus, quandle products with trivial holonomy generalise trickle groups in the same way that graph products generalise graph products of cyclic groups. As a consequence of Theorem~\ref{thm:IntroTrickle}, one gets an alternative perspective on trickle groups through the prism of quandle products.

\medskip \noindent
The next statement summarises what can be deduced about trickle groups from our results about quandle products.

\begin{thm}[{Corollary~\ref{cor:Trickle}}]
Let $\mathfrak{T}$ be a trickle group with finitely many generators.
\begin{itemize}
	\item[(i)] $\mathfrak{T}$ has a locally finite quasi-median Cayley graph. Consequently, $\mathfrak{T}$ is cocompactly cubulable. 
	\item[(ii)] For every prime $p \geq 2$, $\mathfrak{T}$ contains an element of order $p$ if and only if it has a generator whose order is divisible by $p$. Consequently, $\mathfrak{T}$ is torsion-free if and only if its generators all have infinite order. 
	\item [(iii)] $\mathfrak{T}$ decomposes as $$G_1 \rtimes ( G_2 \rtimes (\cdots \rtimes G_n))$$ where each $G_i$ is a graph product of finite groups.
	\item[(iv)] If all the generators of $\mathfrak{T}$ have infinite order, then $\mathfrak{T}$ is orderable.
	\item[(v)] For all conjugates of infinite-order generators $g_1, \ldots, g_m$, there exists $N \geq 1$ such that $\{g_1^N, \ldots, g_m^N\}$ is the basis of a right-angled Artin group.
\end{itemize}
\end{thm}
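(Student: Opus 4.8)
The plan is to deduce each of the five items by specialising, to the case of cyclic factors, the results already established for quandle products. The bridge is Proposition~\ref{prop:QuandleVsTrickle}: a trickle group $\mathfrak{T}$ with finitely many generators is exactly a quandle product of finitely many cyclic groups with trivial holonomy, the factors being the cyclic subgroups $\langle x_i \rangle$ generated by the trickle generators. So at the start I would record three elementary facts about a cyclic group $C$ that are then plugged into the general machinery: (a) $C$ admits a locally finite quasi-median Cayley graph --- the complete graph on $C \setminus \{1\}$ when $C$ is finite, the line (with respect to a generator) when $C \cong \mathbb{Z}$; (b) $C$ is orderable if and only if it is infinite; (c) for a prime $p$, the group $C$ has an element of order $p$ if and only if $p$ divides $|C|$, and such an element is a power of a generator. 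Each conclusion of the corollary is then a translation of an earlier statement through this dictionary.

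Items (i) and (iv) are immediate. Since $\mathfrak{T}$ has trivial holonomy and all its factors have locally finite quasi-median Cayley graphs by (a), the relevant part of Theorem~\ref{thm:IntroList} (namely Theorem~\ref{thm:MedianCayl} together with Lemma~\ref{lem:GeneratingNoHolo}) gives that $\mathfrak{T}$ itself has one; the ``cocompactly cubulable'' clause then follows from the general fact that a group with a locally finite quasi-median Cayley graph acts geometrically on a CAT(0) cube complex (\cite{QM}). For (iv), if every generator of $\mathfrak{T}$ has infinite order then every factor is isomorphic to $\mathbb{Z}$, hence orderable by (b), and the orderability part of Theorem~\ref{thm:IntroList} applies verbatim.

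For the torsion statement (ii), the non-obvious direction uses the geometry rather than the summary theorem: any finite-order element of a quandle product is elliptic for the action on its quasi-median Cayley graph, hence lies in a clique-stabiliser, and clique-stabilisers are conjugates of the factors (this is the structural input behind the torsion-freeness part of Theorem~\ref{thm:IntroList}). Thus an element of order $p$ is conjugate into some cyclic factor $\langle x_i \rangle$, which by (c) forces $p$ to divide $\mathrm{ord}(x_i)$; the converse is clear, and the torsion-freeness consequence follows because a generator of finite order $>1$ has order divisible by some prime. For (iii), I would feed the quandle product $\mathfrak{T}$ of finitely many cyclic groups into Corollary~\ref{cor:DecompositionQuandle} (more precisely Theorem~\ref{thm:SemiDirect}), obtaining a tower $G_1 \rtimes (G_2 \rtimes (\cdots \rtimes G_n))$ in which each $G_i$ is a graph product of finitely many cyclic factors of $\mathfrak{T}$. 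For (v), each conjugate of an infinite-order generator stabilises a ``line-clique'' of the quasi-median Cayley graph of $\mathfrak{T}$ and translates along it, so acts loxodromically; running the ping-pong / combinatorial-convexity argument for groups acting on quasi-median graphs from \cite{QM} on the tuple $g_1, \ldots, g_m$ then yields an integer $N \geq 1$ such that $\{g_1^N, \ldots, g_m^N\}$ freely generates --- subject only to the commutations that hold among the $g_i$ themselves --- a right-angled Artin group.

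The step I expect to be the main obstacle is (v): unlike (i)--(iv), it does not follow from the summary Theorem~\ref{thm:IntroList} but requires the finer hyperplane combinatorics --- one must verify that the line-cliques carrying the various $g_i$ can be separated well enough by hyperplanes to run a clean ping-pong argument producing an honest right-angled Artin basis, rather than merely a free subgroup or an uncontrolled one. The remaining items are bookkeeping applications of results already available for quandle products, the only subtlety being the identification of the correct earlier statement in each case --- in particular, for (ii) one needs the structural fact that torsion is conjugate into factors, which is stronger than the bare ``torsion-free'' assertion recorded in Theorem~\ref{thm:IntroList}.
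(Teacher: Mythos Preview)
Items (i), (iii), and (iv) match the paper's proof (Theorem~\ref{thm:MedianCayl}, Corollary~\ref{cor:DecompositionQuandle}, and Corollary~\ref{cor:CombApplications} respectively), with one caveat on (iii): the graph products in the tower may be over \emph{infinite} graphs --- the paper stresses this immediately after Corollary~\ref{cor:DecompositionQuandle} --- so your phrase ``finitely many cyclic factors'' is not correct; only the tower length $n$ is finite.

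For (ii) there is a genuine gap. The implication ``a finite-order element is elliptic, hence lies in a clique-stabiliser'' does not follow: a finite-order element of $\mathfrak{T}$ stabilises a \emph{prism} of the quasi-median Cayley graph, but may act diagonally on a product $C_1\times\cdots\times C_k$ without stabilising any single clique, and the paper explicitly lists the structure of prism-stabilisers as an open problem in its final section. The paper proves (ii) via Corollary~\ref{cor:CombApplications}(i), which is purely algebraic: decompose as $\mathfrak{Q}_\mathrm{min}\rtimes\langle I\setminus I_\mathrm{min}\rangle$ by Theorem~\ref{thm:SemiDirect}, push an element of order $p$ into one of the two pieces, and induct (using the known torsion description in graph products for the base case). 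For (v), you correctly flag it as the hard step but do not locate the mechanism: the paper invokes Corollary~\ref{cor:SubRAAG}, a special case of Theorem~\ref{thm:SubGP}, whose engine is showing that each factor $G_i$ virtually lies in the \emph{rotative}-stabiliser $\mathrm{stab}_\circlearrowright(J_i)$ of its hyperplane, after which the criterion Proposition~\ref{prop:PPforSubGP} from~\cite{QM} applies. Your ping-pong sketch on ``line-cliques'' is the right intuition but does not identify the rotative-stabiliser ingredient, and that is exactly what upgrades a free-subgroup conclusion to an honest right-angled Artin basis.
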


\noindent
Items (i) and (iv) answer (partially) \cite[Questions~(1), (2), and~(4) p.\ 6]{Trickle} and (ii) strengthens \cite[Theorem~2.16]{Trickle}.

\section{Basics on quandle products}

\subsection{Oposets}

\noindent
In order to define quandle products, we need to a set endowed with a partial order and with a relation of \emph{orthogonality} (or \emph{disjointness}). The next definition captures the notation that will be relevant for us.

\begin{definition}
An \emph{oposet} $(I,\leq, \perp)$ is a poset $(I,\leq)$ endowed with a symmetric irreflexive relation $\perp$ (\emph{orthogonality}) satisfying:
\begin{itemize}
	\item for all $x,y \in I$, if $x \perp y$ then $x$ and $y$ are not $\leq$-comparable;
	\item for all $x,y,z \in I$ if $x \leq y$ and $y \perp z$ then $x \perp z$. 
\end{itemize}
\end{definition}

\noindent
The typical example to keep in mind is, given a set $S$, the oposet $(\mathfrak{P}(S) \backslash \{ \emptyset \}, \subseteq, \perp \text{disjointness})$. Notice that, for every $I \subset \mathfrak{P}(S) \backslash \{ \emptyset\}$, one gets an oposet by restricting $\subseteq$ and $\perp$ to $I$. In fact, as shown by our next lemma, every oposet can be realised in this way. However, the more abstract point of view given by oposets may be convenient in practice. See Example~\ref{ex:Oposet} below, as well as Section~\ref{section:Examples} for other examples. 

\begin{lemma}\label{lem:Oposet}
Let $(I, \leq , \perp)$ be an oposet. There exists an injective map $\iota : I \hookrightarrow \mathfrak{P}(I) \backslash \{ \emptyset \}$ such that 
\begin{itemize}
	\item for all $x,y \in I$, $x \leq y$ if and only if $\iota(x) \subseteq \iota(y)$;
	\item for all $x,y \in X$, $x \perp y$ if and only if $\iota(x) \cap \iota(y) = \emptyset$.
\end{itemize}
\end{lemma}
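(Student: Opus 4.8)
The plan is to take the "obvious" embedding by downsets and then correct it so that orthogonality is faithfully recorded. First I would recall that a poset $(I,\leq)$ always embeds into $(\mathfrak{P}(I),\subseteq)$ via $x \mapsto I_{\leq x} := \{ y \in I \mid y \leq x\}$, and this map is injective with $x \leq y \iff I_{\leq x} \subseteq I_{\leq y}$. The trouble is that this map says nothing useful about $\perp$: two orthogonal elements $x \perp y$ certainly have $I_{\leq x} \cap I_{\leq y} = \emptyset$ (indeed if $z \leq x$ and $z \leq y$ then $z$ is $\leq$-comparable to nothing forces a contradiction via the axioms — wait, one must check this), but the converse can fail, since two $\leq$-incomparable non-orthogonal elements would also be sent to disjoint sets. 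So the downset map is "too disjoint": it makes incomparable elements look orthogonal.

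To fix this I would enlarge $I$ by adjoining, for each \emph{unordered pair} $\{x,y\}$ of elements with $x \not\perp y$ and $x,y$ incomparable, a fresh token $t_{\{x,y\}}$, and work in $\mathfrak{P}(I \sqcup T) \setminus \{\emptyset\}$ where $T$ is the set of tokens. Define $\iota(x) = I_{\leq x} \cup \{ t_{\{x,y\}} : \{x,y\} \in T,\ x \in \{x,y\}\} \cup (\text{tokens recording non-orthogonality "from below"})$. The point of the token $t_{\{x,y\}}$ is to sit in both $\iota(x)$ and $\iota(y)$, witnessing $\iota(x) \cap \iota(y) \neq \emptyset$; but I must be careful that this token is \emph{not} dragged into $\iota(z)$ for a $z$ that really is orthogonal to $x$ or to $y$. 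The natural rule: put $t_{\{x,y\}}$ into $\iota(z)$ precisely when $z$ is \emph{not} orthogonal to $x$ \emph{and not} orthogonal to $y$ (equivalently, when neither the first nor the second axiom of oposet lets us conclude $z \perp x$ or $z \perp y$). One then has to verify that this rule is consistent with $\leq$, i.e. that $\iota$ is still monotone: if $z \leq z'$ and $t_{\{x,y\}} \in \iota(z)$, then $z \not\perp x$ and $z \not\perp y$; the second oposet axiom (contrapositive: if $z \leq z'$ and $z' \perp x$ then $z \perp x$, so $z \not\perp x \Rightarrow$ nothing, but we need $z' \not\perp x$) runs the wrong way — so the correct move is instead to also include in $\iota(z')$ all tokens forced by elements $z \leq z'$, i.e. define $\iota$ by first specifying a "local" label set $\ell(z)$ and then setting $\iota(z) = \bigcup_{w \leq z} \ell(w)$, which is monotone by construction.

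With that design the verification splits into the two required equivalences. For the order: $\iota(x) \subseteq \iota(y) \Rightarrow x \in I_{\leq x} \subseteq \iota(x) \subseteq \iota(y)$, and since $I_{\leq y} \cap I = $ exactly the downset of $y$ (tokens live in $T$, disjoint from $I$), $x \leq y$; conversely $x \leq y$ gives $\ell$-sets nested by the $\bigcup_{w\le \cdot}$ definition. For orthogonality: if $x \perp y$ then the axioms give $I_{\leq x} \cap I_{\leq y} = \emptyset$ (any common lower bound $z$ would satisfy $z \leq x$ and $z \perp y$ hence $z \perp z$, impossible), no pair-token $t_{\{x,y\}}$ exists (we only created tokens for non-orthogonal pairs), and no token $t_{\{a,b\}}$ lies in both $\iota(x)$ and $\iota(y)$ — because membership in $\iota(x)$ forces some $w \leq x$ with $w \not\perp a, w \not\perp b$, membership in $\iota(y)$ forces some $w' \leq y$ similarly, and one checks the orthogonality $x \perp y$ propagated downward makes this impossible; conversely if $x \not\perp y$ then either they are $\leq$-comparable (handled by the order part, giving proper containment hence nonempty intersection) or incomparable, in which case the token $t_{\{x,y\}}$ was created and lies in $\ell(x) \cap \ell(y) \subseteq \iota(x) \cap \iota(y)$.

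The main obstacle I anticipate is exactly the interaction between the token-assignment rule and the two oposet axioms: one must choose the rule so that $\iota$ stays monotone \emph{and} still separates orthogonal pairs, and the two requirements pull in opposite directions (monotonicity wants tokens to propagate upward along $\leq$, separation wants them confined). Routing every token through a downward-closed "local label" function $\ell$ and setting $\iota(\cdot) = \bigcup_{w \leq \cdot} \ell(w)$ is what makes both work, but checking that an orthogonal pair $x \perp y$ genuinely shares no token under this definition — using that $z \leq x$ and $y \perp x$ imply $y \perp z$ — is the delicate point and deserves a careful short argument rather than being waved through.
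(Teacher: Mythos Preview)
Your overall strategy is right and in fact very close to the paper's, but the token rule you settle on has a genuine gap. In your verification paragraph you use the rule ``$t_{\{a,b\}}\in\ell(w)$ iff $w\not\perp a$ and $w\not\perp b$'', so that $t_{\{a,b\}}\in\iota(x)$ iff some $w\le x$ is non-orthogonal to both $a$ and $b$. This does \emph{not} keep orthogonal pairs disjoint. Take $I=\{a,b,w,w'\}$ with trivial order and $w\perp w'$ the only orthogonality. Then $\{a,b\}$ is a legitimate token (incomparable, non-orthogonal), and since $w\not\perp a$, $w\not\perp b$ we get $t_{\{a,b\}}\in\ell(w)\subseteq\iota(w)$; symmetrically $t_{\{a,b\}}\in\iota(w')$. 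So $\iota(w)\cap\iota(w')\neq\emptyset$ even though $w\perp w'$. The step ``one checks the orthogonality $x\perp y$ propagated downward makes this impossible'' simply fails here: $w\perp w'$ tells you nothing about $a,b$.

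The fix is to use the narrower rule you actually wrote down first: put $t_{\{a,b\}}$ in $\ell(w)$ only when $w\in\{a,b\}$. Then $t_{\{a,b\}}\in\iota(x)\cap\iota(y)$ forces (up to swapping) $a\le x$ and $b\le y$, and now $x\perp y$ together with the oposet axiom gives $a\perp y$, then $a\perp b$, contradicting the existence of the token. All other verifications go through exactly as you sketched. This corrected version is essentially the paper's proof in disguise: the paper defines an \emph{atom} to be any subset of $I$ with pairwise non-orthogonal elements, declares $A$ to be an atom \emph{of} $i$ when some $j\in A$ satisfies $j\le i$, and sets $\iota(i)=\{\text{atoms of }i\}$. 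Your downset elements are the singleton atoms, your tokens are the two-element atoms, and ``$t_{\{a,b\}}\in\iota(x)$ iff $a\le x$ or $b\le x$'' is exactly the paper's membership criterion. The atom formulation is cleaner because it unifies the two pieces and makes monotonicity and the orthogonality check one-liners, with no need to separate a local label $\ell$ from its upward closure.
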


\begin{proof}
Define an \emph{atom} as a subset of $I$ whose elements are pairwise non-orthogonal. For all atom $A$ and element $i \in I$, $A$ is an \emph{atom of $i$} if there exists $j \in A$ satisfying $j \leq i$. For instance, for every $i \in I$, $\{i\}$ is an atom of $i$. So every element of $I$ has at least one atom. Consider the map
$$\iota : \left\{ \begin{array}{ccc} I & \to & \mathfrak{P}(I) \backslash \{ \emptyset \} \\ i & \mapsto & \{ \text{atoms of } i\} \end{array} \right..$$
Let $x,y \in I$ be two elements. First, assume that $x \leq y$. If $A$ is an atom of $x$, then there exists $i \in A$ such that $i \leq x$. Necessarily, $i \leq y$, so $A$ is also an atom of $y$. Hence $\iota(x) \subset \iota(y)$. Next, if $x \perp y$ and  if $A$ is atom of both $x$ and $y$, then there exist $i,j \in A$ satisfying $i \leq x$ and $j \leq y$. But, since $x \perp y$, we must have $i \perp j$, contracting the fact that $A$ is an atom. Hence $\iota(x) \cap \iota(y)= \emptyset$.

\medskip \noindent
Conversely, if $x$ and $y$ are not $\leq$-comparable, then $\{x\}$ is an atom of $x$ but not an atom of $y$ and $\{y\}$ is an atom of $y$ but not an atom of $x$, so $\iota(x)$ and $\iota(y)$ are not $\subseteq$-comparable. Finally, if $x$ and $y$ are not $\perp$-comparable, then $\{x,y\}$ is an atom of both $x$ and $y$, hence $\iota(x) \cap \iota(y) \neq \emptyset$. 

\medskip \noindent
Thus, we have proved that $\iota$ satisfies the two items of our lemma. Notice that the injectivity of $\iota$ follows. Indeed, for all $x,y \in I$, if $\iota(x)= \iota(y)$, then we must have $x \leq y$ and $y \leq x$, hence $x=y$. 
\end{proof}

\begin{ex}\label{ex:Oposet}
Let $G$ be a group. For all subgroups $H$ and $K$, we note $H \leq_f K$ if $H$ is a finite-index subgroup of $K$; and $H \perp K$ if $\langle H, K \rangle$ decomposes as the free product $H \ast K$. Then $(\text{non-trivial subgroups of } G, \leq_f, \perp)$ is an oposet. 
\end{ex}

\subsection{Quandle systems and groups}\label{section:Quandle}

\noindent
Recall that a groupoid is a small category all of whose morphisms are isomorphisms. In practice, a groupoid can be thought of as a set endowed with a product that satisfies all the properties of a group law but that is not defined for all pairs of elements. For instance, a disjoint union of groups is a groupoid in which the product is well-defined only between two elements of the same group. Disjoint unions of groups are the only groupoids that we will consider. We also recall that the notion of (iso)morphisms naturally extends to groupoids. Then, an action $G \curvearrowright \mathrm{Gr}$ of a group $G$ on a groupoid $\mathrm{Gr}$ refers to a morphism $G \to \mathrm{Aut}(\mathrm{Gr})$.

\medskip \noindent
We are now ready to define \emph{quandle products of groups}, to which this article is dedicated. 

\begin{definition}
A \emph{quandle system} $(\mathcal{I}, \mathcal{G}, \mathcal{A})$ is the data of 
\begin{itemize}
	\item[(a)] an oposet $\mathcal{I}:= (I,\leq, \perp)$, 
	\item[(b)] a collection of groups $\mathcal{G}:= \{ G_i \mid i \in I \}$,
	\item[(c)] and a collection of actions $\mathcal{A}:= \left\{ G_i \curvearrowright \text{groupoid } \bigsqcup_{j< i} G_j \mid i \in I \right\}$
\end{itemize}
satisfying the two following conditions:
\begin{itemize}
	\item[(1)] for every $i \in I$, the action $G_i \curvearrowright \mathrm{Obj} \left( \bigsqcup_{j <i} G_j \right)$ preserves $\leq$ and $\perp$;
	\item[(2)] The equality $c \ast (b \ast a) = (c \ast b) \ast (c \ast a)$ holds for all $ j < k$, $b \in G_j$, $c \in G_k$, and $a \in \bigsqcup_{i< j } G_i$.
\end{itemize}
The \emph{quandle product} $\mathfrak{Q}(\mathcal{I},\mathcal{G},\mathcal{A})$ is
$$\left( \underset{i \in I}{\ast} G_i \right) / \left\langle\left\langle \begin{array}{l} ab=ba \text{ for all } i,j \in I \text{ satisfying } i\perp j \text{ and } a \in G_i, b\in G_j \\ ab=b( b\ast a) \text{ for all } i,j \in I \text{ satisfying } i<j \text{ and } a \in G_i, b \in G_j \end{array} \right\rangle\right\rangle.$$
\end{definition}

\noindent
We refer to the condition (2) as the \emph{quandle relation}, as motivated by the algebraic structure named ``quandle''. (See for instance \cite{MR2885229, MR2002606} for more information about quandles and their use in knot theory.) The central role played by this relation in our formalism justifies the choice to name our products ``quandle products''. 

\medskip \noindent
\textbf{Convention.} In the rest of the article, we will always assume that the factors of our quandle products are non-trivial.

\medskip \noindent
Notice that there is no loss of generality with this convention. Indeed, given an arbitrary quandle system $(\mathcal{I}, \mathcal{G}, \mathcal{A})$, we can consider the suboposet $\mathcal{I}_0$ of $\mathcal{I}$ given by $I_0:= \{ i \in I \mid G_i \neq \{1\}\}$, and we can restrict $\mathcal{G}$ and $\mathcal{A}$ to the subcollections $\mathcal{G}_0$ and $\mathcal{A}_0$ induced by $I_0$. Then, $(\mathcal{I}_0, \mathcal{G}_0, \mathcal{G}_0)$ is a quandle system that defines a quandle product defined by the same presentation as $\mathfrak{Q}(\mathcal{I}, \mathcal{G}, \mathcal{A})$. 

\medskip \noindent
Various examples of quandle products will be given in Section~\ref{section:Examples}. But let us mention at least few easy examples to get more familiar with the notion. 

\begin{ex}\label{ex:GP}
Consider an oposet $\mathcal{I}:=(I, = , \perp)$ and a collection of groups $\mathcal{G}:= \{ G_i \mid i \in I\}$. Then $(\mathcal{I}, \mathcal{G}, \emptyset)$ is a quandle system, and the corresponding quandle product $\mathfrak{Q}$ is
$$\left( \underset{i \in I}{\ast} G_i \right) / \left\langle\left\langle ab=ba \text{ for all } i,j \in I \text{ satisfying } i\perp j \text{ and } a \in G_i, b\in G_j \right\rangle\right\rangle.$$
Thus, $\mathfrak{Q}$ coincides with the graph product $\Gamma \mathcal{G}$ where $\Gamma$ denotes the graph whose vertex-set is $I$ and whose edges connect two $i,j \in I$ whenever $i \perp j$. 
\end{ex}

\begin{ex}\label{ex:SemiDirect}
Consider the oposet $\mathcal{I}:= ( \{1,2 \}, \leq , \emptyset )$, a collection of groups $\mathcal{G}:= \{ G_1 , G_2 \}$, and an action $\mathcal{A}:= \{ G_2 \curvearrowright G_1\}$. Then $(\mathcal{I}, \mathcal{G}, \emptyset)$ is a quandle system, and the corresponding quandle product $\mathfrak{Q}$ is
$$\left( G_1 \ast G_2 \right) / \left\langle\left\langle b^{-1}ab=b \ast a \text{ for all } a \in G_1, b \in G_2 \right\rangle\right\rangle.$$
In other words, $\mathfrak{Q}$ coincides with the semidirect product $G_1 \rtimes G_2$ given by our action $G_2 \curvearrowright G_1$. 
\end{ex}

\begin{ex}\label{ex:Permutations}
Let $S$ be a set and $\mathfrak{S} \subset \mathrm{Sym}(S)$ a collection of permutations. Consider the oposet $\mathcal{I}:= ( S \sqcup \mathfrak{S}, \leq, \emptyset)$ where $\leq$ denotes the smallest partial order for which every element of $S$ is smaller than every element of $\mathfrak{S}$. Let $\mathcal{G}$ be a collection of infinite cyclic groups indexed by $S \sqcup \mathfrak{S}$ and let $\mathcal{A}:= \{ G_\sigma \curvearrowright \bigsqcup_{s \in S} G_s \mid \sigma \in \mathfrak{S}\}$ be the collection of actions where each $G_\sigma$ permutes the $G_s$ according to the action of $\sigma$ on $S$. Then $(\mathcal{I}, \mathcal{G}, \mathcal{A})$ is a quandle system whose corresponding quandle product is given by the presentation
$$\langle S \cup \mathfrak{S} \mid s \cdot \sigma = \sigma \cdot \sigma(s) \text{ for all } s \in S, \sigma \in \mathfrak{S} \rangle.$$ 
\end{ex}

\begin{ex}\label{ex:Quandle}
Let $G$ be a group and $N_1 \leq \cdots \leq N_{n-1} \leq N_n= G$ a chain of normal subgroups. For all $g,h \in G$, write $g \leq h$ when there exist $1 \leq i \leq j \leq n$ such that $g \in N_i$ and $h \in N_j$. Consider the oposet $\mathcal{I}:= (G, \leq , \emptyset)$, a collection $\mathcal{G}$ of infinite cyclic groups $\langle z_g \rangle$ indexed by $g \in G$, and the collection of actions $\mathcal{A}:= \{ G_g \curvearrowright \bigsqcup_{h<g} G_h \mid g \in G \}$ such that each $z_g$ permutes the $z_h$ as follows: if $g \in N_i$ and $h \in N_j$ for some $j<i$, then $z_g \ast z_h : =s_{ghg^{-1}}$. Notice that, for all $1 \leq i <j<k \leq n$ and $a \in N_i$, $b \in N_j$, $c \in N_k$, we have
$$\begin{array}{lcl} (z_c \ast z_b) \ast (z_c \ast z_a) & = & z_{cbc^{-1}} \ast z_{cac^{-1}} = z_{cbc^{-1} \cdot cac^{-1} \cdot cb^{-1}c^{-1}} \\ \\ & = & z_{cbab^{-1}c^{-1}} = z_c \ast z_{bab^{-1}}= z_c \ast (z_b \ast z_a). \end{array}$$
Thus, the quandle relation is satisfied, proving that $(\mathcal{I}, \mathcal{G}, \mathcal{A})$ is a quandle system. The corresponding quandle product admits
$$\langle z_g, \ g \in G \mid z_gz_h= z_h z_{hgh^{-1}} \text{ for all } i<j \text{ and } g \in N_i, h \in N_j \rangle$$
as a presentation. 
\end{ex}

\begin{ex}\label{ex:Push}
Consider the poset $\mathcal{I}:= (\mathbb{R}, \leq, \emptyset)$ and fix a multiplicative subgroup $\Lambda \leq (\mathbb{R}, \times)$. Our collection of groups will be $\mathcal{G}:= \{ G_x = \Lambda \mid x \in \mathbb{R} \}$. For all $x \in \mathbb{R}$ and $\lambda \in \Lambda$, define the homeomorphism
$$D_x(\lambda) : t \mapsto \left\{ \begin{array}{cl} t & \text{if } t\geq x \\ x + \lambda (t-x) & \text{if } t<x \end{array} \right..$$
In other words, $D_x(\lambda)$ fixes $[x,+ \infty)$ pointwise and dilates $(- \infty,x]$ by a factor $\lambda$. Our collection of actions $\mathcal{A}:= \{G_x \curvearrowright \bigsqcup_{y<x} G_y, x \in \mathbb{R}\}$ is defined by making $G_x$ permute the $G_y$, $y < x$, according to the action $D_x : \Lambda \to \mathrm{Homeo}(\mathbb{R})$ on $(- \infty,x)$. More formally, 
$$\left\{ \begin{array}{l} \mu_y \ast x = D_y(\mu)(x) \\ \mu_y \ast \lambda_x = \lambda_ {D_y(\mu)}(x) \end{array} \right. \text{ for all } y<x \text{ and } \lambda, \mu \in \Lambda,$$
where, for all $z \in \mathbb{R}$ and $\xi \in \Lambda$, we denote by $\xi_z$ the element $\xi$ thought of as an element of $G_z$. For all $x<y<z$ and $\lambda, \mu \in \Lambda$, we find that 
$$\lambda_z \ast (\mu_y \ast x) = D_z(\lambda) \circ D_y(\mu)(x) = D_z(\lambda)( y+\mu(x-y)) = z + \lambda ( y+ \mu (x-y) -z)$$
coincides with
$$\begin{array}{lcl} (\lambda_z \ast \mu_y) \ast (\lambda_z \ast x) & = & D_{D_z(\lambda)(y)}(\mu) \circ D_z(\lambda)(x) = D_{z+\lambda(y-z)}(\mu) (z+ \lambda(x-z)) \\ \\ & = & z + \lambda(y-z) + \mu \left( z+ \lambda(x-z) -z-\lambda(y-z) \right). \end{array}$$
Thus, the quandle relation is satisfied, proving that $(\mathcal{I}, \mathcal{G}, \mathcal{A})$ defines a quandle system. When $\Lambda = \langle 2 \rangle$, the corresponding quandle product admits
$$\langle x \in \mathbb{R} \mid xy = y ( 2x -y) \text{ for all } x<y \rangle$$
as a presentation.
\end{ex}

\begin{ex}
Let $(\mathcal{I}, \mathcal{G}, \mathcal{A})$ and $(\mathcal{J}, \mathcal{H}, \mathcal{B})$ be two quandle systems. Defining the \emph{oproduct} $\mathcal{I} \otimes \mathcal{J}$ of our two oposets as $(I \sqcup J, \leq_I \sqcup \leq_J, \perp_I \sqcup \perp_J)$, one can define a quandle system as $( \mathcal{I} \otimes \mathcal{J}, \mathcal{G} \sqcup \mathcal{H}, \mathcal{A} \sqcup \mathcal{B})$. The corresponding quandle product naturally decomposes as a free product:
$$\mathfrak{Q}( \mathcal{I} \otimes \mathcal{J}, \mathcal{G} \sqcup \mathcal{H}, \mathcal{A} \sqcup \mathcal{B}) \simeq \mathfrak{Q}(\mathcal{I}, \mathcal{G}, \mathcal{A}) \ast \mathfrak{Q}(\mathcal{J}, \mathcal{H}, \mathcal{B}).$$
Similarly, define the \emph{oposum} $\mathcal{I} \oplus \mathcal{J}$ as $(I \sqcup J, \leq_I \sqcup \leq_J, \perp_I \ast \perp_J)$ where $\perp_I \ast \perp_J$ denotes the relation $\perp$ defined as follows: for all $a,b \in I \sqcup J$, $a \perp b$ whenever $a,b \in I$ and $a \perp_I$, or $a,b \in J$ and $a \perp_J b$, or $a \in I$ and $b \in J$, or $a \in J$ and $b \in I$. One can define a quandle system as $( \mathcal{I} \oplus \mathcal{J}, \mathcal{G} \sqcup \mathcal{H}, \mathcal{A} \sqcup \mathcal{B})$, and the corresponding quandle product naturally decomposes as a direct sum:
$$\mathfrak{Q}( \mathcal{I} \oplus \mathcal{J}, \mathcal{G} \sqcup \mathcal{H}, \mathcal{A} \sqcup \mathcal{B}) \simeq \mathfrak{Q}(\mathcal{I}, \mathcal{G}, \mathcal{A}) \times \mathfrak{Q}(\mathcal{J}, \mathcal{H}, \mathcal{B}).$$
\end{ex}

\subsection{Holonomy}\label{section:Holonomy}

\noindent
Typically, in a quandle product, two distinct factors can be conjugate by an element of a third factor. By composing such elementary conjugations, we may find an element of our quandle product that conjugates a factor to itself in a non-trivial way. Algebraically speaking, the normaliser of a factor $G$ may define a non-trivial subgroup of $\mathrm{Aut}(G)$. As motivated in Section~\ref{section:Combination}, quandle products for which such a phenomenon does not occur have a much better behaviour. 

\medskip \noindent
Below, we record this phenomenon a more suitable way for quandle products. Recall that a \emph{system of groups} is a directed graph whose vertices are labelled by groups and whose directed edges are labelled by morphisms between the groups labelling its endpoints. 

\begin{definition}
Let $(\mathcal{I},\mathcal{G},\mathcal{A})$ be a quandle system. Define the system of groups $\mathscr{S}(\mathcal{I}, \mathcal{G}, \mathcal{A})$ by considering all the groups in $\mathcal{G}$ and all the isomorphisms $G_i \to G_{j \ast i}$ given by $h \mapsto g \ast h$ for all $h \in G_i$, $g \in G_j$, $i,j \in I$ satisfying $i<j$. Given an $i \in I$, the \emph{holonomy of $(\mathcal{I},\mathcal{G},\mathcal{A})$ at $i$} is the group $\mathrm{Hol}(i) \leq \mathrm{Aut}(G_i)$ of all the automorphisms of $G_i$ that can be obtained by composing the isomorphisms of $\mathscr{S}(\mathcal{I}, \mathcal{G}, \mathcal{A})$. The system $(\mathcal{I},\mathcal{G},\mathcal{A})$ has \emph{trivial holonomy} if $\mathrm{Hol}(i)= \{ \mathrm{id}\}$ for every $i \in I$. 
\end{definition}

\noindent
In other words, an element of $\mathrm{Hol}(i)$ can be described as a map of the form
$$g \mapsto a_1 \ast (a_2 \ast ( \cdots \ast ( a_n \ast g)))$$
for some $a_1 \in G_{r(1)}, \ldots, a_n \in G_{r(n)}$. Notice that, for such a map, in order to be well-defined on $G_i$, we need the inequalities
$$\left\{ \begin{array}{l} r(n)>i \\ r(n-1) > a_n \ast i \\ r(n-2)> a_{n-1} \ast (a_n \ast i) \\ \vdots \\ r(1) > a_2 \ast ( \cdots \ast (a_{n-1} \ast (a_n \ast i))) \end{array} \right.$$
to be satisfied. And, in order to take values in $G_i$, we also need the equality 
$$a_1 \ast (a_2 \ast ( \cdots \ast (a_n \ast i)))=i$$ 
to hold. 

\medskip \noindent
In Example~\ref{ex:SemiDirect}, $\mathrm{Hol}(2)$ is trivial but $\mathrm{Hol}(1)$ coincides with the image of $G_2$ in $\mathrm{Aut}(G_1)$ given by the action $G_2 \curvearrowright G_1$. The quandle systems given in Examples~\ref{ex:GP}, ~\ref{ex:Permutations}, and~\ref{ex:Quandle} have trivial holonomy. See Section~\ref{section:Examples} for more examples.

\begin{remark}\label{remark:NoHolonomy}
When a quandle system $(\mathcal{I},\mathcal{G},\mathcal{A})$ has trivial holonomy, we can naturally identify two factors $G_i$ and $G_j$ whenever there exists an isomorphism $G_i \to G_j$ obtained by composition of isomorphisms given by the system $\mathscr{S}(\mathcal{I}, \mathcal{G}, \mathcal{A})$, since all such compositions will automatically coincide. Then, the relations
$$ab=b (b\ast a) \text{ for all } i,j \in I \text{ satisfying } i<j \text{ and } a \in G_i, b \in G_j$$
defining the quandle product $\mathfrak{Q}(\mathcal{I},\mathcal{G},\mathcal{A})$ can be rewritten as $ab=ba$ where the first $a$ is thought of as an element of $G_i$ and where the second $a$ is thought of as the same element but in $G_{j \ast i}$. 
\end{remark}

\subsection{Parabolic subgroups}

\noindent
Given a product of groups, it is natural to consider subgroups generated by only some of the factors. 

\begin{definition}
Let $\mathfrak{Q}:= \mathfrak{Q}(\mathcal{I},\mathcal{G},\mathcal{A})$ be a quandle product. A \emph{standard parabolic subgroup} is a subgroup $\langle G_j \mid j \in J\rangle$ for some $J \subset I$. A \emph{parabolic subgroup} is a conjugate in $\mathfrak{Q}$ of a standard parabolic subgroup. 
\end{definition}

\noindent
In the rest of the article, we will use the convenient notation $\langle J \rangle$ for the subgroup $\langle G_j \mid j \in J \rangle$ given by $J \subset I$. Our definition extends parabolic subgroups in graph products \cite{MR3365774}, some subgroups of cactus groups studied in \cite[Section~5]{MR4874027}, and parabolic subgroups in trickle groups \cite{Trickle} (see Section~\ref{section:Trickle}).

\medskip \noindent
First of all, notice that, given a quandle product $\mathfrak{Q}:= \mathfrak{Q}(\mathcal{I}, \mathcal{G}, \mathcal{A})$, we can have $\langle R \rangle = \langle S \rangle$ but $R \neq S$ for some $R,S \subset I$. For instance, in Example~\ref{ex:Permutations}, assuming that $\mathfrak{S}$ acts on $S$ transitively, all the generators $s \in S$ are conjugate under $\mathfrak{S}$, so $\langle \{s\} \cup \mathfrak{S} \rangle = \langle S \cup \mathfrak{S} \rangle$ for every $s \in S$. This motivates the following definition:

\begin{definition}
Let $(\mathcal{I},\mathcal{G},\mathcal{A})$ be a quandle system. A subset $R \subset I$ is \emph{stable} if, for all $i,j \in R$ satisfying $i<j$ and every $g \in G_j$, $g \ast i \in R$.
\end{definition}

\noindent
Clearly, an intersection of stable subsets is stable, which allows to define the \emph{stable closure} of a subset as the intersection of all the stable subsets that contain it. Notice that:

\begin{lemma}\label{lem:StableClosure}
Let $\mathfrak{Q}:= \mathfrak{Q}(\mathcal{I},\mathcal{G},\mathcal{A})$ be a quandle product. For all $S \subset I$, the equality $\langle S \rangle = \langle \mathrm{sc}(S) \rangle$ holds, where $\mathrm{sc}(S)$ denotes the stable closure of $S$. 
\end{lemma}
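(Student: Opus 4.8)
The plan is to show the two inclusions $\langle S \rangle \subseteq \langle \mathrm{sc}(S)\rangle$ and $\langle \mathrm{sc}(S)\rangle \subseteq \langle S \rangle$ separately. The first inclusion is immediate since $S \subseteq \mathrm{sc}(S)$ by definition of the stable closure. The real content is the reverse inclusion, and for this I would argue that the subset $S' := \{ i \in I \mid G_i \subseteq \langle S \rangle \}$ is itself stable and contains $S$, so that $\mathrm{sc}(S) \subseteq S'$, which gives $G_i \subseteq \langle S \rangle$ for every $i \in \mathrm{sc}(S)$ and hence $\langle \mathrm{sc}(S)\rangle \subseteq \langle S \rangle$.

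So the key step is to verify that $S'$ is stable. Let $i, j \in S'$ with $i < j$ and let $g \in G_j$; I must show $g \ast i \in S'$, i.e.\ $G_{g \ast i} \subseteq \langle S \rangle$. The point is that conjugation by $g$ inside $\mathfrak{Q}$ carries $G_i$ onto $G_{g \ast i}$: indeed, the defining relation $ab = b(b \ast a)$ for $a \in G_i$, $b = g \in G_j$ rearranges to $g^{-1} a g = g \ast a$, so the isomorphism $a \mapsto g \ast a$ from $G_i$ to $G_{g \ast i}$ (which exists and lands in $G_{g \ast i}$ by condition (1) of the quandle system, since the action preserves $\leq$ and $\perp$ and in particular the underlying object map) coincides with conjugation by $g$. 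Since $i \in S'$ gives $G_i \subseteq \langle S \rangle$, and $j \in S'$ gives $g \in G_j \subseteq \langle S \rangle$, we conclude $G_{g \ast i} = g^{-1} G_i g \subseteq \langle S \rangle$, so $g \ast i \in S'$. That $S \subseteq S'$ is trivial: $G_i \subseteq \langle S \rangle$ for every $i \in S$.

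I expect the main (mild) obstacle to be bookkeeping about what exactly $g \ast i$ denotes and why $a \mapsto g \ast a$ is a well-defined isomorphism $G_i \to G_{g \ast i}$ — this is precisely where one invokes that $\mathcal{A}$ consists of groupoid actions together with condition (1), and it may already have been recorded as a basic consequence of the definition (the system $\mathscr{S}(\mathcal{I},\mathcal{G},\mathcal{A})$ of Section~\ref{section:Holonomy} packages exactly these isomorphisms). Once that identification of $g \ast (-)$ with conjugation by $g$ is in hand, the argument is purely formal: stability of $S'$ follows from the fact that $\langle S\rangle$ is closed under conjugation by its own elements, and the lemma drops out. No induction or word-length argument is needed, though one could alternatively phrase the same proof as: every element of $\langle \mathrm{sc}(S)\rangle$ is a product of conjugates of elements of the $G_i$, $i \in S$, by elements of $\langle S \rangle$, hence already lies in $\langle S \rangle$.
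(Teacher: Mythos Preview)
Your proof is correct and, in fact, a bit slicker than the paper's. Both arguments rest on the same observation: the defining relation $ag=g(g\ast a)$ gives $g^{-1}G_ig = G_{g\ast i}$ inside $\mathfrak{Q}$, so $G_{g\ast i}$ is a conjugate of $G_i$ by an element of $G_j$. The paper packages this via transfinite induction, building an increasing chain $S=S_0\subset S_1\subset\cdots$ by adjoining one $g\ast i$ at a time and checking $\langle S_\alpha\rangle=\langle S\rangle$ is preserved at each step, until the chain stabilises at $\mathrm{sc}(S)$. You instead go straight to the target: the set $S'=\{i\in I\mid G_i\subseteq\langle S\rangle\}$ is stable (by the conjugation identity) and contains $S$, hence contains $\mathrm{sc}(S)$ by minimality. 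Your route avoids ordinals entirely and makes the role of the definition of $\mathrm{sc}(S)$ as an intersection more transparent; the paper's route has the mild advantage of exhibiting $\mathrm{sc}(S)$ constructively from $S$. Either way the content is the single line $G_{g\ast i}=g^{-1}G_ig$, and your justification of it via the groupoid action (so that $a\mapsto g\ast a$ is a bijection $G_i\to G_{g\ast i}$) is exactly what is needed.
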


\begin{proof}
Our goal is to define by transfinite induction an increasing (generalised) sequence of subsets $S \subset S_\alpha \subset \mathrm{sc}(S)$ such that $\langle S_\alpha \rangle = \langle S \rangle$ for every $\alpha$ and such that $S_\alpha  = \mathrm{sc}(S)$ for some sufficiently large ordinal $\alpha$. 

\medskip \noindent
Start by setting $S_0:=S$. Now, assume that $S_\alpha$ has been already defined for some ordinal $\alpha$ and that $\langle S_\alpha \rangle = \langle S \rangle$. If $S_\alpha$ is stable, we stop our sequence. Otherwise, we can find $i,j \in S_\alpha$ and $g \in G_j$ such that $i< j$ and $g \ast i \notin S_\alpha$. Then, set $S_{\alpha+1}:= S_\alpha \cup \{ g \ast i \}$. Notice that, since $G_{g \ast i} = g G_i g^{-1}$, we still have $\langle S_{\alpha+1} \rangle = \langle S \rangle$. Next, assume that $S_\alpha$ has been already defined for every $\alpha < \lambda$ for some limit ordinal $\lambda$, and that $\langle S_\alpha \rangle = \langle S \rangle$ for every $\alpha < \lambda$. Then, we set $S_\lambda:= \bigcup_{\alpha< \lambda} S_\alpha$. It is clear that $\langle S_\lambda \rangle = \langle S \rangle$. 

\medskip \noindent
Since $(S_\alpha)_\alpha$ is increasing, it has to stop for some sufficiently large ordinal $\kappa$, i.e.\ $S_\kappa= \mathrm{sc}(S)$. Then, $\langle S \rangle = \langle S_\kappa \rangle = \langle \mathrm{sc}(S) \rangle$.  
\end{proof}

\noindent
Thus, there is no loss of generality in considering only parabolic subgroups given by stable subsets. We will see in Section~\ref{section:WP} that, given two stable subsets $R$ and $S$, the equality $\langle R \rangle = \langle S \rangle$ holds if and only if $R=S$ (Corollary~\ref{cor:FactorsEmb}). 

\medskip \noindent
For more information on parabolic subgroups, we refer the reader to Corollary~\ref{cor:FactorsEmb} and Proposition~\ref{prop:QuasiRetract} below. In particular, we show that parabolic subgroups are quandle products themselves.

\section{Word problem}\label{section:WP}

\noindent
In this section, our goal is to solve the word problem in quandle products. More precisely, we want to determine when a product of elements coming from factors is trivial. Theorem~\ref{thm:NormalForm} is the main result of this section. We also refer to Section~\ref{section:FirstApplications} for more information on the word problem. As an application, we will be able to deduce that the word problem is algorithmically solvable in quandle products of finitely many groups with solvable word problem (Corollary~\ref{cor:WP}); and that parabolic subgroups in quandle products are themselves quandle products (Corollary~\ref{cor:FactorsEmb}). 

\medskip \noindent
From now on, we fix a quandle system $(\mathcal{I}, \mathcal{G}, \mathcal{A})$. By a \emph{word}, we will refer to a word written over $\bigsqcup_{i \in G} G_i \backslash \{1\}$. Clearly, every element of our quandle product $\mathfrak{Q}:=\mathfrak{Q}(\mathcal{I}, \mathcal{G}, \mathcal{A})$ can be represented by such a word. 

\begin{definition}
Let $w:=s_1 \cdots s_n$ be a word where each letter $s_i$ belongs to some $G_{r(i)}$. A word $w'$ is obtained from $w$ by
\begin{itemize}
	\item a \emph{$\perp$-commutation}, if there exist $1 \leq i < n$ such that $r(i) \perp r(i+1)$ and $w'=s_1 \cdots s_{i-1}s_{i+1}s_i s_{i+2} \cdots s_n$;
	\item a \emph{fusion}, if there exist $1 \leq i <n$ such that $r(i)=r(i+1)$ and $w'=s_1 \cdots s_{i-1}s_{i+2} \cdots s_n$ if $s_is_{i+1} = 1$ in $G_{r(i)}$ and $s_1 \cdots s_{i-1} (s_is_{i+1}) s_{i+2} \cdots s_n$ otherwise;
	\item a \emph{twisted left-commutation}, if there exist $1 \leq i < n$ such that $r(i)<r(i+1)$ and $w'=s_1 \cdots s_{i-1}s_{i+1}(s_{i+1} \ast s_i) s_{i+2} \cdots s_n$. 
\end{itemize}
\end{definition}

\noindent
Clearly, applying a $\perp$-commutation, a fusion, or a twisted left-commutation to a word does not modify the element of $\mathfrak{Q}$ it represents. 

\begin{definition}
A \emph{braid} is an equivalence class of words written over $\bigsqcup_{i \in I} G_i \backslash \{1\}$ with respect to the reflexive-transitive closure of the relation that identifies two words whenever one can be obtained from the other by a $\perp$-commutation. A braid $\beta$ is \emph{rankable} if it has a representative of the form $w_1abw_2$ where $a \in G_i$ and $b \in G_j$ for some $i \leq j$. A braid represented by $w_1(ab)w_2$ if $i=j$ or by $w_1b(b\ast a)w_2$ if $i<j$ is obtained from $\beta$ by applying a \emph{ranking}. A braid that is not rankable is \emph{ranked}. 
\end{definition}

\noindent
The motivation behind the definition of a braid comes from cactus groups. As explained in the introduction, every element of a cactus group can be represented by a braid-like picture. Starting from a braid, the order in which we apply two twists with disjoint supports to the bottom strands does not affect the braid we obtain. This observation amounts to saying that two products of generators yield to the same braid whenever one can be obtained from the other by a $\perp$-commutation. Still in cactus groups, in \cite{MR4874027} we put braids in normal form by pushing to the top each twist as far as possible. In other words, we have proved that every element of a cactus group is represented by a unique ranked braid. As we show now, this result extends to arbitrary quandle products. 

\begin{thm}\label{thm:NormalForm}
An element of $\mathfrak{Q}(\mathcal{I}, \mathcal{G}, \mathcal{A})$ is represented by a unique ranked braid. 
\end{thm}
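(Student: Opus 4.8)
The plan is to establish existence and uniqueness separately. Existence is the easy half: starting from any word $w$ representing an element $g \in \mathfrak{Q}$, one repeatedly applies rankings (together with fusions to discard trivial letters) until the braid is ranked. To see this terminates, I would introduce a complexity measure on braids — for instance, to a word $s_1 \cdots s_n$ with $s_i \in G_{r(i)}$ associate the number of ``inversions'' $\#\{(i,j) : i < j, \ r(i) \leq r(j)\text{ or } r(i) \perp r(j)\text{ fails to be exploited}\}$, or more simply the lexicographically-ordered multiset of ``descents'' measuring how far each twist sits from the top. A ranking either shortens the word (when it triggers a fusion) or strictly decreases this measure, since it moves a letter of a $\leq$-smaller factor one step later in the word; $\perp$-commutations leave the measure unchanged, which is why the measure must be defined on braids rather than words. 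Hence the process terminates at a ranked braid, proving existence.

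The hard part is uniqueness: if $w$ and $w'$ are two ranked braids representing the same element of $\mathfrak{Q}$, they must define the same braid. The natural strategy is a rewriting-system / confluence argument (Newman's lemma): consider the rewriting system on braids whose moves are rankings and fusions. I have just argued it is terminating, so it suffices to prove local confluence — whenever two distinct rankings/fusions apply to a braid $\beta$, the resulting braids can be rewritten to a common descendant. One then checks the critical pairs, which come in a bounded number of shapes according to how the two active pairs of adjacent letters overlap (disjoint positions; sharing one letter in the middle; the two ``pivot'' factors being $\perp$-related, $\leq$-related, or equal). The disjoint case is immediate. The overlapping cases are where the hypotheses on a quandle system are used: condition (1) (the action preserves $\leq$ and $\perp$) guarantees that after performing one ranking the second pair of letters is still of a rankable type, so a ranking is still available; condition (2), the quandle relation $c \ast (b \ast a) = (c\ast b)\ast(c\ast a)$, is exactly what is needed to make the two resolution paths of the ``three consecutive letters'' critical pair agree. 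One must also use the compatibility of the action with the group law in each $G_i$ (so that $g \mapsto h \ast g$ is a homomorphism) to handle fusions interacting with rankings.

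The main obstacle is therefore the bookkeeping in the critical-pair analysis: enumerating all overlap patterns of two adjacent-letter moves inside a braid (equivalence class under $\perp$-commutation), and in each case exhibiting explicit common rewrites, being careful that $\perp$-commutations are built into the objects so that a ``local'' move can in fact interact with letters that are not literally adjacent in a chosen representative. A clean way to organise this is to fix, for a given braid, the total order on its letters induced by position (well-defined up to $\perp$-swaps) and argue that any two ranked representatives induce the same such ordered data; alternatively, one can set up an explicit normal-form map by induction on word length — peel off a topmost letter belonging to a $\leq$-maximal factor among those appearing, recurse on the rest, and show the output is independent of choices using (1) and (2) — and then verify that ranked braids are precisely the fixed points of this map. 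I would expect the paper to follow one of these two routes; either way, conditions (1) and (2) of a quandle system, plus the fact that each $G_i$ acts by groupoid automorphisms, are used in an essential way and the free-product relations beyond $\perp$-commutation, fusion, and the twisted commutation play no role, which is what makes the normal form unconditional.
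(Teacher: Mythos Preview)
Your proposal is correct and follows essentially the same route as the paper: set up the rewriting system on braids whose moves are rankings (fusions and twisted left-commutations), prove termination via a complexity that combines word length with the number of out-of-order pairs, and prove local confluence by a critical-pair analysis, invoking Newman's lemma to conclude. The paper's complexity is precisely $\chi(w)=\text{length}(w)+\#\{(a,b):a<b,\ r(a)<r(b)\}$, which is the clean version of what you are gesturing at; and the four overlap cases (fusion/fusion, fusion/twist, twist/fusion, twist/twist) are resolved exactly as you anticipate, using condition~(1) to keep the second move available and the quandle relation~(2) for the three-letter twist/twist overlap.
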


\begin{proof}
Fix an element $g$ of our quandle product and let $\Gamma(g)$ denote the oriented graph whose vertices are the braids representing $g$ and whose oriented edges connect a braid $\beta_1$ to a braid $\beta_2$ whenever $\beta_2$ can be obtained from $\beta_1$ by applying a ranking. Given two vertices $\beta_1,\beta_2 \in V(\Gamma(g))$, we write $\beta_1 \to \beta_2$ (resp.\ $\beta_1 \overset{\ast}{\to} \beta_2$) if there exists an oriented edge (resp.\ an oriented path) in $\Gamma(g)$ connecting $\beta_1$ to $\beta_2$. Formally, $\Gamma(g)$ defines a rewriting system and our goal is to prove that it is \emph{locally confluent} and \emph{terminating}. 

\begin{claim}\label{claim:Terminating}
$\Gamma(g)$ is terminating, i.e.\ every sequence $\beta_1 \to \beta_2 \to \cdots$ terminates.
\end{claim}

\noindent
It suffices to introduce a complexity $\chi(\cdot) \in \mathbb{N}$ satisfying $\chi(\mu)< \chi (\nu)$ for all braids $\mu \to \nu$. Let $\beta \in \Gamma(g)$ be a braid. First, given a word $w=s_1 \cdots s_n$ where each letter $s_i$ belongs to some factor $G_{r(i)}$, define the complexity
$$\chi(w):= \text{length of } w + \# \{ \text{pairs } a<b \mid r(a)<r(b) \}.$$
Clearly, applying a $\perp$-commutation to a word does not modify its complexity. Consequently, it makes sense to define the complexity $\chi(\beta)$ of a braid as the common complexity of its representatives. One easily sees that, if $\mu,\nu \in V(\Gamma(g))$ are two braids satisfying $\mu \to \nu$, then $\chi(\mu)< \chi(\nu)$ as desired. This proves Claim~\ref{claim:Terminating}.

\begin{claim}\label{claim:LocallyConfluent}
$\Gamma(g)$ is locally confluent, i.e.\ for all $\beta_0,\beta_1,\beta_2 \in V(\Gamma(g))$ satisfying $\beta_0 \to \beta_1, \beta_2$, there exists $\beta \in V(\Gamma(g))$ such that $\beta_1, \beta_2 \overset{\ast}{\to} \beta$.
\end{claim}

\noindent
Let $\beta_0,\beta_1,\beta_2 \in V(\Gamma(g))$ be three braids satisfying $\beta_0 \to \beta_1, \beta_2$ but $\beta_1 \neq \beta_2$. Because there are two types of oriented edges in $\Gamma(g)$, corresponding to fusions and twisted left-commutations, we need to distinguish four cases.

\medskip \noindent
\emph{Case 1:} Assume that $\beta_1$ and $\beta_2$ are both obtained from $\beta_0$ by a fusion. If the two pairs of letters that are merged in order to get $\beta_1$ and $\beta_2$ from $\beta_0$ do not intersect, then $\beta_0$ has a representative $w_1abw_2pqw_3$, where $a,b,p,q$ are letters, such that, up to switching $\beta_1$ and $\beta_2$, $w_1(ab)w_2pqw_3$ is a representative of $\beta_1$ and $w_1abw_2(pq)w_3$ a representative of $\beta_2$. Then, $\beta_1,\beta_2 \to \beta:= [w_1(ab)w_2(pq)w_3]$. Otherwise, if our two pairs of letters intersect, then $\beta_0$ admits a representative $w_1abcw_2$ such that, again up to switching $\beta_1$ and $\beta_2$, $w_1(ab)cw_2$ is a representative of $\beta_1$ and $w_1a(bc)w_2$ a representative of $\beta_2$. Then, $\beta_1, \beta_2 \to \beta:=[w_1(abc)w_2]$.

\medskip \noindent
\emph{Case 2:} Assume that $\beta_1$ is obtained from $\beta_0$ by a fusion but that $\beta_2$ is obtained from $\beta_0$ by a twisted left-commutation. If the two pairs of letters that are merged or twisted-commuted in order to get $\beta_1$ and $\beta_2$ from $\beta_0$ do not intersect, then $\beta_0$, up to a mirror symmetry, has a representative $w_1abw_2pqw_3$ such that $a,b \in G_i$, $p \in G_r$, $q \in G_s$ for some $r<s$ and such that $w_1(ab)w_2pqw_3$ (resp.\ $w_1abw_2q (q \ast p) w_3$) is a representative of $\beta_1$ (resp.\ $\beta_2$). Then, $\beta_1,\beta_2 \to \beta:= [w_1(ab) w_2 q (q \ast p) w_3]$. 

\medskip \noindent
\begin{minipage}{0.45\linewidth}
\includegraphics[width=0.95\linewidth]{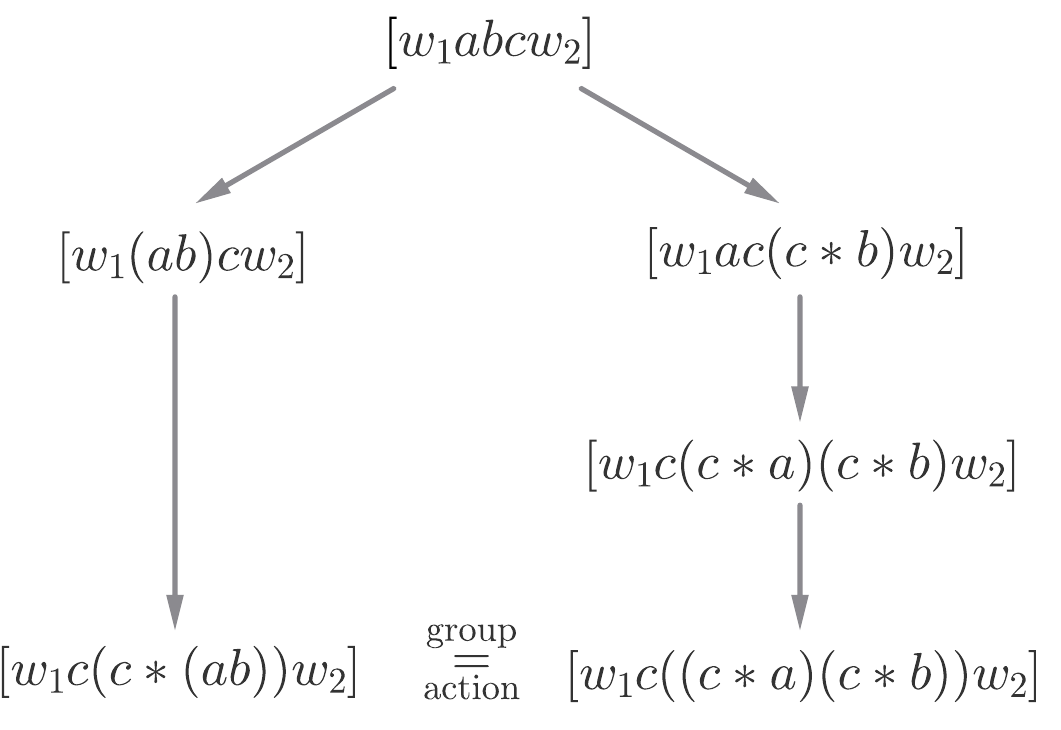}
\end{minipage}
\begin{minipage}{0.55\linewidth}
Otherwise, if our two pairs of letters intersect, then $\beta_0$, up to a mirror symmetry, has a representative $w_1abcw_2$ such that $a,b \in G_i$ and $c \in G_j$ for some $i<j$ and such that $w_1(ab)cw_2$ (resp.\ $w_1ac(c\ast b)w_2$) is a representative of $\beta_1$ (resp.\ $\beta_2$). Then, as justified by the figure on the left, $\beta_1,\beta_2 \overset{\ast}{\to} \beta:= [w_1c (c\ast (ab)) w_2]$. 
\end{minipage}

\medskip \noindent
\emph{Case 3:} Assume that $\beta_2$ is obtained from $\beta_0$ by a fusion but that $\beta_1$ is obtained from $\beta_0$ by a twisted left-commutation. The case is symmetric to the previous one.

\medskip \noindent
\emph{Case 4:} Assume that $\beta_1$ and $\beta_2$ are both obtained from $\beta_0$ by a twisted left-commutation. If the two pairs of letters that are twisted-commuted in order to get $\beta_1$ and $\beta_2$ from $\beta_0$ do not intersect, then $\beta_0$ has a representative $w_1abw_2pqw_3$ such that $a \in G_i$, $b \in G_j$, $p \in G_r$, $q \in G_s$ for some $i<j$, $r<s$ and such that, up to switching $\beta_1$ and $\beta_2$, $w_1b(b\ast a)w_2pqw_3$ (resp.\ $w_1abw_2q (q \ast p) w_3$) is a representative of $\beta_1$ (resp.\ $\beta_2$). Then, $\beta_1,\beta_2 \to \beta:=[w_1b(b\ast a )w_2 q (q\ast p) w_3]$. 

\medskip \noindent
\begin{minipage}{0.55\linewidth}
\includegraphics[width=0.95\linewidth]{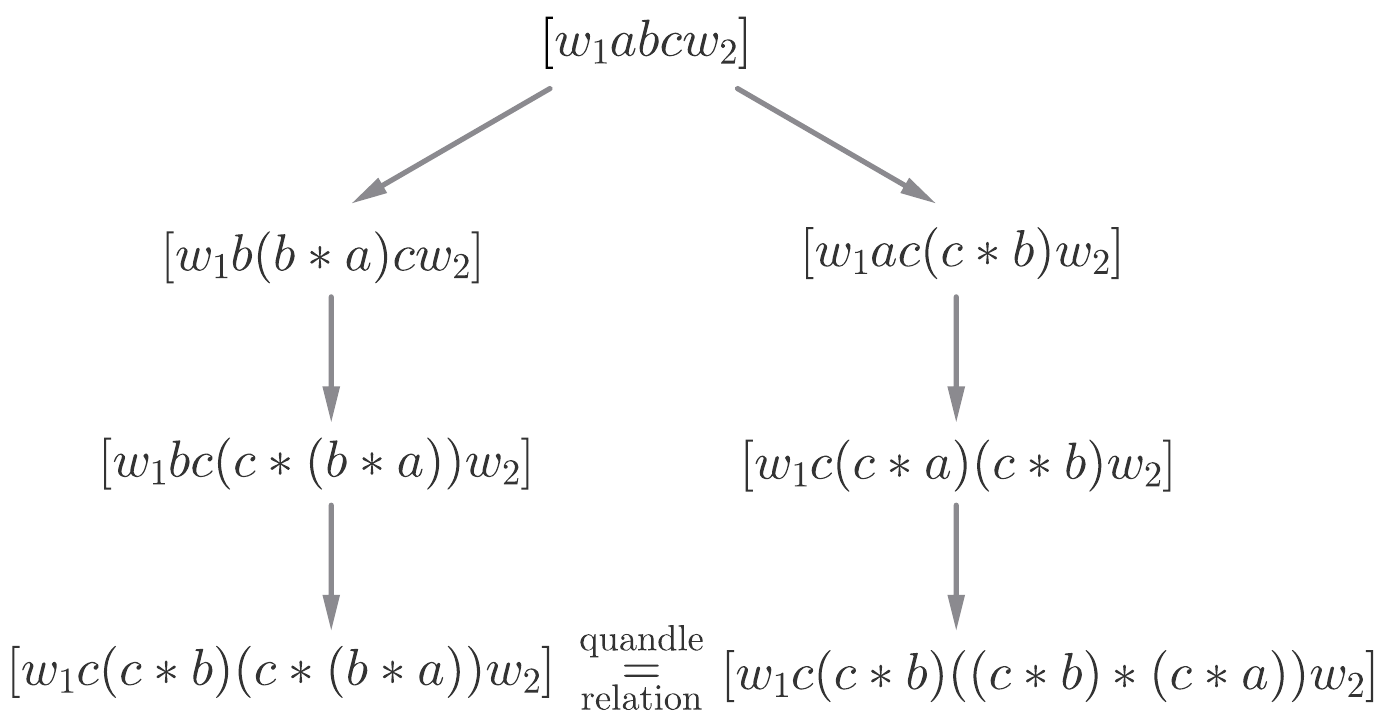}
\end{minipage}
\begin{minipage}{0.45\linewidth}
Otherwise, if our two pairs of letters intersect, then $\beta_0$ admits a representative $w_1abcw_2$ such that $a \in G_i$, $b \in G_j$, $c \in G_k$ for some $i<j<k$ and such that, up to switching $\beta_1$ and $\beta_2$, $w_1b(b\ast a)cw_2$ (resp.\ $w_1ac(c\ast b)w_2$) is a representative of $\beta_1$ (resp.\ $\beta_2$). Then, as justified by the figure on the left, $\beta_1,\beta_2 \overset{\ast}{\to} \beta$ where $\beta:=[w_1 c (c \ast b) (c \ast (b\ast a)) w_2]$.
\end{minipage}

\medskip \noindent
This concludes the proof of Claim~\ref{claim:LocallyConfluent}.

\medskip \noindent
One easily shows that $\Gamma(g)$ locally confluent implies that $\Gamma(g)$ is \emph{confluent}, i.e.\ for all distinct braids $\alpha,\beta,\gamma \in V(\Gamma(g))$ satisfying $\alpha \overset{\ast}{\to} \beta,\gamma$, there exists a braid $\zeta \in V(\Gamma(g))$ such that $\beta,\gamma \overset{\ast}{\to} \zeta$; see for instance \cite[Theorem~1]{MR7372} or \cite[Section~4.1]{MR1127191}. Our theorem now follows by noticing that a vertex in $\Gamma(g)$ corresponds to a ranked braid precisely when there is no oriented edge starting from it. Indeed, the fact that $\Gamma(g)$ is terminating implies that every braid can be turned into a ranked braid by applying rankings; and because $\Gamma(g)$ is confluent, two ranked braids obtained from two braids representing $g$ must coincide.
\end{proof}

\noindent
As an easy consequence of Theorem~\ref{thm:NormalForm}, we can solve the word problem in quandle products:

\begin{cor}\label{cor:WP}
A quandle product of finitely many groups with solvable word problem has solvable word problem. 
\end{cor}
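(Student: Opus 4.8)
The plan is to turn the rewriting system from the proof of Theorem~\ref{thm:NormalForm} into a decision procedure. Let $\mathfrak{Q} = \mathfrak{Q}(\mathcal{I},\mathcal{G},\mathcal{A})$ be a quandle product with $I$ finite and every factor $G_i$ of solvable word problem. As is implicit whenever one speaks of the word problem for such a construction, we take the quandle system to be given effectively: since $I$ is finite, the relations $\le$ and $\perp$ on $I$ are decidable, and the quandle actions can be evaluated, i.e.\ given $g \in G_j$ and $a$ in some $G_i$ with $i < j$ one can compute $g \ast a$ (which is automatic, for instance, when the $G_i$ are given by recursive presentations and the actions are specified on generating sets). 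An element $g \in \mathfrak{Q}$ is handed to us as a word $w = s_1 \cdots s_n$ over $\bigsqcup_i G_i \setminus \{1\}$, each letter being an element of a factor; deciding triviality of a letter in $G_{r(i)}$ is possible by hypothesis.

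First I would record a subroutine that, given a word $w$, decides whether the braid $[w]$ is rankable and, if so, outputs a word representing a braid obtained from $[w]$ by a single ranking. The words $\perp$-equivalent to $w$ form a finite set --- each is a rearrangement of the multiset of letters of $w$, so there are at most $n!$ of them --- and this set is effectively enumerable because $\perp$ on $I$ is decidable. Running through these representatives, one searches for two consecutive letters $s_i \in G_{r(i)}$, $s_{i+1} \in G_{r(i+1)}$ with $r(i) \le r(i+1)$. If $r(i) = r(i+1)$, compute the product $s_is_{i+1}$ in $G_{r(i)}$ and, using the word problem there, either delete the pair (if $s_is_{i+1} = 1$) or replace it by the single letter $(s_is_{i+1})$: a fusion. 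If $r(i) < r(i+1)$, compute $s_{i+1} \ast s_i$ and replace the pair by $s_{i+1}\,(s_{i+1} \ast s_i)$: a twisted left-commutation. If no $\perp$-equivalent representative of $w$ exhibits such a consecutive pair, then $[w]$ is ranked.

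Then I would iterate this subroutine on the input word, applying a ranking for as long as one is available. Since $\Gamma(g)$ is terminating (Claim~\ref{claim:Terminating}) and each step is effective, the loop halts after finitely many iterations, returning a word $\hat w$ such that $[\hat w]$ is a ranked braid. By confluence of $\Gamma(g)$ --- deduced in the proof of Theorem~\ref{thm:NormalForm} from Claim~\ref{claim:LocallyConfluent} --- together with Theorem~\ref{thm:NormalForm}, the braid $[\hat w]$ is the unique ranked braid representing $g$, independently of the choices made inside the loop. Since the empty word is a ranked braid representing $1$, and no word other than the empty one lies in its $\perp$-equivalence class, uniqueness gives that $g = 1$ in $\mathfrak{Q}$ if and only if $\hat w$ is the empty word. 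The latter is plainly decidable, so the word problem in $\mathfrak{Q}$ is solved.

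The mathematical content is already in Theorem~\ref{thm:NormalForm}; the only thing requiring care is checking that every elementary operation of the loop is effective. Deciding rankability searches the finite $\perp$-equivalence class of the current word, so it needs $\perp$ on $I$ to be decidable (free, as $I$ is finite); a fusion uses multiplication in a factor together with the word problem there; and a twisted left-commutation uses the quandle action $\ast$. This last point is the only place where one genuinely needs the quandle system to be presented effectively, rather than merely abstractly --- a hypothesis that is forced as soon as one speaks of the word problem at all.
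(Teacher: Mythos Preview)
Your proof is correct and follows the same approach as the paper: algorithmically rank the braid using the rewriting system from Theorem~\ref{thm:NormalForm}, then test whether the result is empty. The paper's version is terser and frames the input via finite generating sets $S_i$ of the factors (packing consecutive generators from the same factor into syllables), but the core algorithm and the effectiveness checks --- decidability of $\perp$ and $\le$, solvable word problem in each factor for fusions, computability of $\ast$ for twisted left-commutations --- are identical.
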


\begin{proof}
Let $\mathfrak{Q}:= \mathfrak{Q}(\mathcal{I}, \mathcal{G}, \mathcal{A})$ be a quandle product with $\mathcal{I}$ finite and such that every group in $\mathcal{G}$ has a solvable word problem. Fix a finite generating set $S_i$ of $G_i$ for every $i \in I$. Clearly, $S:= \bigcup_{i \in I} S_i$ is a generating set of $\mathfrak{Q}$. Given a product of generators, we can pack letters together into syllables whenever they are generators of a common factor, yielding a word in $\mathfrak{Q}$, and a fortiori a braid. Because the word problem is solvable in each factor, our braid can be ranked algorithmically. (A $\perp$-commutation can always be applied algorithmically, once we assume that $\perp$ is part of the (finite) data of our algorithm. But, in order to apply a fusion, we need to be able to recognise when a product of generators in a factor is trivial. A twisted left-commutation can always be applied algorithmically, once we assume that the actions $G_i \curvearrowright \bigcup_{j<i} G_j$ are part of the data of our algorithm, which can be done by recording the $s_i \ast j$ and the $s_i \ast s_j$ (as products of generators of $S_{s_i \ast j}$) for all $i<j$, $s_i \in S_i$, and $s_j \in S_j$.) Then, according to Theorem~\ref{thm:NormalForm}, our initial element of $\mathfrak{Q}$ is trivial if and only if the ranked braid we obtain is empty.
\end{proof}

\noindent
We conclude this section by recording some information about parabolic subgroups that can be extracted from Theorem~\ref{thm:NormalForm}. 

\begin{cor}\label{cor:FactorsEmb}
Let $\mathfrak{Q}:= \mathfrak{Q}(\mathcal{I},\mathcal{G},\mathcal{A})$ be a quandle product and $S \subset I$ a stable subset. 
\begin{itemize}
	\item The subset $S$ induces a suboposet $\mathcal{S}:= (S, \leq , \perp)$ of $\mathcal{I}$ and a quandle system $(\mathcal{S}, \mathcal{G}_{|S}, \mathcal{A}_{|S})$. The identity maps $G_i \to G_i$ ($i \in S$) induce an isomorphism from the quandle product $\mathfrak{Q}_S:= \mathfrak{Q}(\mathcal{S}, \mathcal{G}_{|S}, \mathcal{A}_{|S})$ to the parabolic subgroup $\langle S \rangle$. 
	\item For all stable $R,S \subset I$, $\langle R \rangle \cap \langle S \rangle = \langle R \cap S \rangle$.
	\item For all stable $R,S \subset I$, $\langle R \rangle \leq \langle S \rangle$ if and only if $R \subset S$. Consequently, $\langle R \rangle = \langle S \rangle$ if and only if $R=S$.
	\item If $\mathcal{I}$ is finite and if every group in $\mathcal{G}$ has a solvable word problem, then the membership problem is solvable for $\langle S \rangle$.
\end{itemize}
\end{cor}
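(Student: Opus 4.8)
The plan is to derive all four assertions from Theorem~\ref{thm:NormalForm} together with one elementary closure observation. Note first that if $S \subseteq I$ is stable, then none of the three rewriting moves ($\perp$-commutation, fusion, twisted left-commutation) applied to a word over $\bigsqcup_{i \in S} G_i \setminus \{1\}$ can introduce a syllable outside $S$: a $\perp$-commutation and a fusion do not change the multiset of syllable-indices, and a twisted left-commutation on $s_i s_{i+1}$ with $r(i) < r(i+1)$ both in $S$ produces the new syllable $s_{i+1} \ast s_i \in G_{r(i+1) \ast r(i)}$, and $r(i+1) \ast r(i) \in S$ by stability. Hence, starting from a word supported on $S$, the whole rewriting graph of Theorem~\ref{thm:NormalForm} stays over $\bigsqcup_{i \in S} G_i$, and in particular so does the unique ranked braid at which it terminates. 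The working lemma I would extract is: for $g \in \mathfrak{Q}$ and $S$ stable, $g \in \langle S \rangle$ if and only if the ranked braid representing $g$ uses only factors indexed by $S$. The ``if'' direction is trivial; the ``only if'' direction follows from the closure observation applied to a representative word of $g$ over $\bigsqcup_{i \in S} G_i \setminus \{1\}$, together with uniqueness of the ranked braid.

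For the first bullet, I would begin by checking that $(\mathcal{S}, \mathcal{G}_{|S}, \mathcal{A}_{|S})$ is a quandle system: the oposet axioms are inherited by restriction to $S$, the closure observation guarantees that each action $G_i \curvearrowright \bigsqcup_{j < i} G_j$ restricts to an action on $\bigsqcup_{j < i,\, j \in S} G_j$, and conditions (1) and (2) of a quandle system are inherited verbatim from $(\mathcal{I}, \mathcal{G}, \mathcal{A})$. Since the defining relations of $\mathfrak{Q}_S$ form a subset of those of $\mathfrak{Q}$, the inclusions $G_i \hookrightarrow \mathfrak{Q}$ ($i \in S$) induce a homomorphism $\phi \colon \mathfrak{Q}_S \to \mathfrak{Q}$ whose image is $\langle S \rangle$ by definition. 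The only point requiring work is injectivity: for a word $w$ over $\bigsqcup_{i \in S} G_i \setminus \{1\}$, the notions of braid and of ranking are purely combinatorial, and by the closure observation the rewriting starting from $w$ stays over $\bigsqcup_{i \in S} G_i$, so $w$ reaches the very same ranked braid whether the rewriting is read in $\mathfrak{Q}_S$ or in $\mathfrak{Q}$. Applying Theorem~\ref{thm:NormalForm} in both groups, $w$ is trivial in $\mathfrak{Q}$ if and only if that ranked braid is empty if and only if $w$ is trivial in $\mathfrak{Q}_S$; hence $\ker \phi$ is trivial and $\phi$ is an isomorphism onto $\langle S \rangle$.

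The remaining three bullets follow from the working lemma. First, $R \cap S$ is stable (an intersection of stable subsets is stable), the inclusion $\langle R \cap S \rangle \subseteq \langle R \rangle \cap \langle S \rangle$ is obvious, and conversely if $g \in \langle R \rangle \cap \langle S \rangle$ then the ranked braid of $g$ uses only factors indexed by $R$ and only factors indexed by $S$, hence only factors indexed by $R \cap S$, so $g \in \langle R \cap S \rangle$. Next, if $\langle R \rangle \leq \langle S \rangle$ then, for any $i \in R$, pick a nontrivial $g \in G_i$ (available by the standing convention): the one-syllable word $g$ is already ranked, so it is the ranked braid of $g$, and $g \in \langle S \rangle$ forces $i \in S$ by the lemma, whence $R \subseteq S$ — the converse and the ``consequently'' clause being immediate by symmetry. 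Finally, when $\mathcal{I}$ is finite and every factor has solvable word problem, the ranking procedure of the proof of Corollary~\ref{cor:WP} computes the ranked braid of a given element effectively, and one then tests whether each of its syllable-indices belongs to the finite set $S$; the lemma shows this decides membership in $\langle S \rangle$.

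The only real obstacle is the injectivity of $\phi$ in the first bullet, i.e.\ verifying that the normal-form rewriting of a word supported on a stable set $S$ is insensitive to the ambient group. This is exactly what the closure observation secures, after which uniqueness in Theorem~\ref{thm:NormalForm} does the rest; everything else is routine bookkeeping.
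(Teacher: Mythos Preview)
Your proof is correct and follows essentially the same route as the paper: the key observation that ranking a word supported on a stable $S$ stays supported on $S$ (your ``closure observation'') is exactly the content of the paper's Fact~\ref{fact:RankedSub}, and your ``working lemma'' is the natural repackaging of it; the remaining bullets are then deduced identically via uniqueness of ranked braids. One cosmetic slip: the index of the new syllable after a twisted left-commutation is $s_{i+1}\ast r(i)$ (the \emph{element} $s_{i+1}$ acting on the index $r(i)$), not ``$r(i+1)\ast r(i)$'', but stability is precisely the statement that this index lies in $S$, so the argument is unaffected.
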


\begin{proof}
It is clear that the identity maps $G_i \to G_i$ ($i \in S$) induce a morphism $\varphi : \mathfrak{Q}_S \to \langle S \rangle$, since relations of $\mathfrak{Q}_S$ are sent to relations in $\mathfrak{Q}$. It is also clear that $\varphi$ is surjective, since $\langle S \rangle$ is generated by the $G_i$, $i \in S$. What we have to prove is that $\varphi$ is injective.

\medskip \noindent
So let $g \in \mathfrak{Q}_S$ be a non-trivial element. We can represent $g$ as a ranked braid $\beta$. Since $\varphi$ sends to $\perp$-commuting generators to two $\perp$-commuting generators, it makes sense to refer to the braid $\varphi(\beta)$ in $\mathfrak{Q}$. The key observation is that every fusion or twisted left-commutation that can be applied to $\beta$ corresponds to a fusion or twisted left-commutation that can be applied to $\varphi(\beta)$, and vice versa. Consequently:

\begin{fact}\label{fact:RankedSub}
The image under $\varphi$ of a braid $\beta$ is ranked in $\mathfrak{Q}$ if and only if $\beta$ is ranked in~$\mathfrak{Q}_S$. 
\end{fact}

\noindent
Thus, because $\beta$ is ranked in $\mathfrak{Q}_S$, $\varphi(\beta)$ must be ranked in $\mathfrak{Q}$. A fortiori, $\varphi(\beta)$ cannot be trivial, proving the injectivity of $\varphi$, as desired.

\medskip \noindent
Now, let $R,S \subset I$ be two stable subsets. If $R \subset S$, then it is clear that $\langle R \rangle \leq \langle S \rangle$. Conversely, assume that $\langle R \langle \leq \langle S \rangle$. Fix an $r \in R$ and a non-trivial element $g \in G_r$. Because $g \in \langle S \rangle$, we can represent $g$ as a ranked braid $\beta$ in $\mathfrak{Q}_S$. According to Fact~\ref{fact:RankedSub}, $\beta$ is also a ranked braid in $\mathfrak{Q}$. But $[g]$ is already a ranked braid representing $g$ in $\mathfrak{Q}$. We deduce from Theorem~\ref{thm:NormalForm} that $\beta = [g]$, which implies that $r \in S$. Thus, we must have $R \subset S$, as desired.

\medskip \noindent
Next, it is clear that $\langle R \cap S \rangle \leq \langle R \rangle \cap \langle S \rangle$. Conversely, consider a non-trivial element $g \in \langle R \rangle \cap \langle S \rangle$. We can represent $g$ as a ranked braid $\alpha$ (resp.\ $\beta$) in $\mathfrak{Q}_R$ (resp.\ $\mathfrak{Q}_S$). According to Fact~\ref{fact:RankedSub}, $\alpha$ (resp.\ $\beta$) is also ranked in $\mathfrak{Q}$. It follows from Theorem~\ref{thm:NormalForm} that $\alpha= \beta$. Since all the letters of $\alpha$ (resp.\ $\beta$) must be generators coming from factors labelled by $R$ (resp.\ $S$), it follows that the letters of the ranked braid representing $g$ are generators all coming from factors labelled by $R \cap S$, hence $g \in \langle R \cap S \rangle$. Thus, we have proved that $\langle R \rangle \cap \langle S \rangle \leq \langle R \cap S \rangle$, as desired. 

\medskip \noindent
Finally, assume that $\mathcal{I}$ is finite and that every group in $\mathcal{G}$ has a solvable word problem. For every $i \in I$, fix a finite generating set $S_i$ of $G_i$. Clearly, $S:= \bigcup_{i \in I} S_i$ is a generating set of $\mathfrak{Q}$. Given a product of generators coming from $S$, we can pack letters together into syllables whenever they belong to a common factor, yielding a word in $\mathfrak{Q}$, and a fortiori a braid. As in Corollary~\ref{cor:WP}, because each group in $\mathfrak{G}$ has a solvable word problem, our braid can be ranked. As a consequence of Fact~\ref{fact:RankedSub}, our initial element belongs to $\langle S \rangle$ if and only if the ranked braid we obtain has all its letters coming from factors indexed by $S$. Thus, we can algorithmically decide when an element of $\mathfrak{Q}$ belongs or not to $\langle S \rangle$. 
\end{proof}

\section{Quasi-median geometry}\label{section:QMquandle}

\noindent
Our key tool in the study of quandle products comes from the observation that quandle products admit quasi-median Cayley graph. In Section~\ref{section:QM}, we first recall basic definitions and properties related to quasi-median geometry, and then we prove in Section~\ref{section:Cayley} that Cayley graphs of quandle products with respect to factors are quasi-median. As a first application, we record in Section~\ref{section:FirstApplications} some useful information about the word problem in quandle products that can be deduced.

\subsection{Preliminary}\label{section:QM}

\noindent
Recall that a connected graph $X$ is \emph{median} if, for all vertices $x_1,x_2,x_3 \in X$, there exists a unique vertex $m \in X$, called the \emph{median point}, satisfying
$$d(x_i,x_j)= d(x_i,m)+d(m,x_j) \text{ for all } i \neq j.$$
Typical examples of median graphs are trees, and, more generally, products of trees, including hypercubes. Non-examples of median graphs are bipartite complete graphs $K_{n,m}$ with $n \geq 2$ and $m \geq 3$, and complete graphs $K_n$ with $n \geq 3$. 

\medskip \noindent
Loosely speaking, quasi-median graphs define the smallest reasonable family of graphs that allows (products of) complete graphs but keep most of the good properties of median graphs. A possible definition of quasi-median graphs is the following.

\medskip \noindent
First, given a connected graph $X$ and three vertices $x_1,x_2,x_3 \in X$, say that $y_1,y_2,y_3 \in X$ is a \emph{median triple} if 
$$d(x_i,x_j)= d(x_i,y_i)+d(y_i,y_j)+d(y_j,x_j) \text{ for all } i \neq j.$$
Notice that $x_1,x_2,x_3$ is itself a median triple. The goal is to find a median triple as small as possible. In median graphs, it may be reduced to a single vertex. 

\begin{definition}
A connected graph $X$ is \emph{quasi-median} if any three vertices admit a unique median triple whose convex hull is a product of complete graphs.
\end{definition}

\noindent
Examples of quasi-median graphs include median graphs, of course. In fact, it is possible to characterise median graphs as specific quasi-median graphs:

\begin{thm}[{\cite[(25) p.\ 149]{MR605838}}]\label{thm:MedianQM}
A graph is median if and only if it is quasi-median and $\triangle$-free (i.e.\ without $3$-cycle). 
\end{thm}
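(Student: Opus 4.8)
The plan is to prove both implications directly from the definitions above; this is Mulder's theorem \cite[(25) p.\ 149]{MR605838}, which in the paper is simply quoted, but the argument runs as follows.

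First I would check the forward direction: that a median graph $X$ is quasi-median and $\triangle$-free. For the latter, if $a,b,c$ spanned a $3$-cycle, a median point $m$ of $a,b,c$ would satisfy $d(a,b)=d(a,m)+d(m,b)=1$ and the two analogous identities, forcing $m\in\{a,b\}\cap\{b,c\}\cap\{a,c\}=\emptyset$, a contradiction. For the former, given $x_1,x_2,x_3$ with median point $m$, the constant triple $(m,m,m)$ is a median triple --- this is exactly the defining property of $m$ --- with convex hull the single vertex $\{m\}$, a trivial product of complete graphs; it is the smallest possible median triple, and it is the only one of this minimal type because any median triple with all pairwise distances zero has the form $(p,p,p)$ with $p$ a median point, hence $p=m$ by uniqueness in a median graph.

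For the converse I would fix $x_1,x_2,x_3$ in a $\triangle$-free quasi-median graph and take the median triple $(y_1,y_2,y_3)$ supplied by the definition, with convex hull $Q\cong K_{n_1}\times\cdots\times K_{n_r}$. Since $Q$ is an induced subgraph of the $\triangle$-free graph $X$ it is itself $\triangle$-free, and a product of complete graphs contains a $3$-cycle exactly when some $n_i\geq 3$ (three mutually adjacent vertices of such a product must lie in a single clique factor); hence every $n_i\leq 2$ and $Q$ is a hypercube. Hypercubes are median, so $y_1,y_2,y_3$ have a median point $\mu\in Q$, and I would use the extremality of $(y_1,y_2,y_3)$ to force $y_1=y_2=y_3=\mu$: sliding each $y_i$ towards $\mu$ inside $Q$ keeps it on all of the geodesics occurring in the median-triple identities while strictly shrinking the triple, unless the $y_i$ already coincide. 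With $y_1=y_2=y_3=:m$, the median-triple identities say precisely that $m$ is a median point of $x_1,x_2,x_3$, and its uniqueness is inherited from the uniqueness of the median triple; so $X$ is median.

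The hard part will be this last step --- making ``push the $y_i$ to the median point of $Q$'' rigorous. It forces one to pin down exactly what ``a median triple as small as possible'' means (say: the $y_i$ mutually equidistant with their common distance minimal, which is Mulder's notion of a quasi-median) and then to verify the elementary but slightly fiddly fact that such a minimal configuration in a hypercube collapses to one vertex. The remaining ingredients --- $\triangle$-free together with convexity forces a hypercube, and hypercubes are median --- are routine, so the whole proof reduces to that single combinatorial point.
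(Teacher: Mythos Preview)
The paper does not prove this theorem; it is simply quoted from Mulder with a reference, so there is no ``paper's own proof'' to compare against. Your sketch is therefore being judged on its own merits.

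Your outline is sound and you have correctly located the one genuine difficulty. A few remarks:

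\medskip
\textbf{On the definition.} You are right that the paper's phrasing is loose: read literally, ``a unique median triple whose convex hull is a product of complete graphs'' fails already in a $4$-cycle (take $x_1,x_2,x_3$ three of its vertices; both $(x_1,x_2,x_3)$ and the degenerate triple at the median are median triples with prism convex hull). The intended meaning, as you surmise, is Mulder's: the \emph{minimal} median triple, i.e.\ one with $d(y_1,y_2)=d(y_2,y_3)=d(y_3,y_1)$ and this common value as small as possible. With that reading your forward direction is complete.

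\medskip
\textbf{On the collapsing step.} Here is a clean way to close the gap you flag. With $(y_1,y_2,y_3)$ minimal and $Q$ its convex hull (a hypercube, as you argue), let $\mu$ be the median of $y_1,y_2,y_3$ in $Q$. Since $Q$ is convex in $X$, the $Q$-geodesics through $\mu$ are $X$-geodesics, so
\[
d(x_i,x_j)=d(x_i,y_i)+d(y_i,\mu)+d(\mu,y_j)+d(y_j,x_j)\geq d(x_i,\mu)+d(\mu,x_j)\geq d(x_i,x_j),
\]
forcing equality throughout. Hence $(\mu,\mu,\mu)$ is itself a median triple of $x_1,x_2,x_3$, with all pairwise distances zero. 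Minimality of $(y_1,y_2,y_3)$ then gives $d(y_i,y_j)=0$ for all $i,j$, so the $y_i$ coincide. No separate ``sliding'' is needed: one jump to $\mu$ suffices. Uniqueness of the median point then follows exactly as you say, from uniqueness of the minimal median triple.
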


\noindent
Other examples of quasi-median graphs include products of complete graphs (also known as Hamming graphs) and block graphs. Bipartite complete graphs $K_{n,m}$ with $n \geq 2$ and $m \geq 3$ are still not quasi-median.

\paragraph{Alternative characterisations.} There exist many equivalent definitions of quasi-median graphs. We refer the reader to \cite{MR1297190} for more information. We mention the following one for future use:

\begin{thm}\label{thm:QMmodular}
A graph is quasi-median if and only if it is weakly modular and does not contain any induced subgraph isomorphic to $K_4^-$ or $K_{3,2}$. 
\end{thm}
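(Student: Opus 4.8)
The statement to prove is a well-known structural characterisation: a graph is quasi-median if and only if it is weakly modular and contains no induced $K_4^-$ (the complete graph on four vertices minus an edge) and no induced $K_{3,2}$. The plan is to deduce this from the definition of quasi-median graphs given just above (three vertices always have a unique median triple whose convex hull is a Hamming graph), rather than to reprove it from scratch; indeed this is the content of the cited references \cite{MR1297190}, so strictly speaking the cleanest write-up is simply to invoke them. If one wants a self-contained argument, here is the route I would take.

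\medskip \noindent
\emph{Necessity.} First I would check that a quasi-median graph is weakly modular. Weak modularity is the conjunction of two local conditions: the triangle condition (if $d(u,v)=d(u,w)=k$ and $v\sim w$, then some common neighbour $x$ of $v,w$ has $d(u,x)=k-1$) and the quadrangle condition (if $d(u,v)=d(u,w)=k$, $d(u,z)=k+1$, and $z$ is a common neighbour of $v,w$, then $v,w$ have a common neighbour at distance $k-1$ from $u$). Both follow by applying the median-triple axiom to well-chosen triples: for the triangle condition apply it to $(u,v,w)$ and observe that since $v\sim w$ the median triple cannot consist of three distinct vertices pairwise at distance $\geq 1$ realising the full distance $d(v,w)=1$ unless it degenerates, forcing a vertex one step closer to $u$; for the quadrangle condition apply it to $(u,v,w)$ again and use that the convex hull of the median triple is a Hamming graph, which cannot contain the forbidden "broken square" configuration. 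Then I would rule out induced $K_4^-$ and $K_{3,2}$: in either graph, pick the three vertices that are pairwise at distance $2$ realised only through the "hub(s)"; their median triple would have to have convex hull a Hamming graph containing these configurations, but a Hamming graph is $K_4^-$-free and $K_{3,2}$-free (a direct check on products of complete graphs), contradiction.

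\medskip \noindent
\emph{Sufficiency.} This is the harder direction, and I expect it to be the main obstacle. Assume $X$ is weakly modular, $K_4^-$-free and $K_{3,2}$-free. Given three vertices $x_1,x_2,x_3$, I would construct the median triple by a descent/convexity argument: among all median triples $(y_1,y_2,y_3)$ of $(x_1,x_2,x_3)$, choose one minimising $\sum d(y_i,y_j)$, and show using the triangle and quadrangle conditions that the "quasi-interval" (the union of geodesics) between the $y_i$ has no proper median triple, i.e.\ the $y_i$ form a \emph{metric triangle} that is \emph{equilateral} and whose one-step neighbourhoods behave like a gated Hamming subgraph. The forbidden subgraphs $K_4^-$ and $K_{3,2}$ are exactly what is needed to force the convex hull of $\{y_1,y_2,y_3\}$ to be a single Hamming graph (a product of cliques): weak modularity plus these exclusions is a classical local-to-global package (this is essentially Bandelt--Mulder--Wilkeit theory), and the absence of $K_{3,2}$ prevents two distinct cliques from overlapping in more than a vertex while the absence of $K_4^-$ prevents a clique from being "thickened" incorrectly, together giving the product-of-complete-graphs structure. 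Uniqueness of the median triple then follows from convexity of Hamming subgraphs and the fact that geodesics in weakly modular graphs cannot escape and re-enter a gated set. The delicate point, and where I would spend most of the effort (or simply cite \cite{MR1297190,MR605838}), is verifying that the convex hull of the chosen equilateral metric triangle really is globally a Hamming graph and not merely locally so — this requires an induction on the size of the triangle using the quadrangle condition to propagate the clique-product structure outward layer by layer.
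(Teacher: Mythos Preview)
The paper does not prove Theorem~\ref{thm:QMmodular}: it is stated without proof as one of ``many equivalent definitions of quasi-median graphs'' with a pointer to \cite{MR1297190}. Your instinct that ``the cleanest write-up is simply to invoke them'' is exactly what the paper does; the self-contained sketch you add is extra and not required here.
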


\noindent
Here, $K_4^-$ denotes the graph obtained from the complete graph $K_4$ by removing an edge. (So it looks like two triangles glued along a single edge.) Recall that a connected graph is \emph{weakly modular} when it satisfies the following two conditions:
\begin{description}
	\item[(Triangle Condition)] For all vertices $o,x,y \in X$ satisfying $d(o,x)=d(o,y)$ and $d(x,y)=1$, there exists a common neighbour $z$ of $x$ and $y$ such that $d(o,z)=d(o,x)-1$.
\end{description}
\begin{center}
\includegraphics[width=0.5\linewidth]{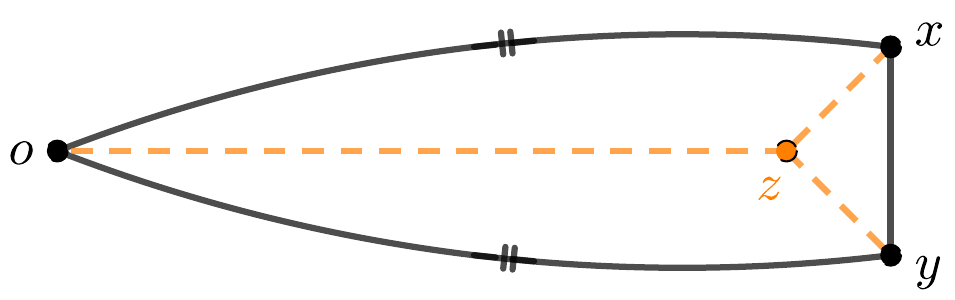}
\end{center}
\begin{description}
	\item[(Quadrangle Condition)] For all vertices $o,x,y,z \in X$ satisfying $d(o,x)=d(o,z)=d(o,y)-1$, $d(y,x)=d(y,z)=1$, and $d(x,z)=2$, there exists a common neighbour $w$ of $x$ and $z$ such that $d(o,w)=d(o,y)-2$.
\end{description}
\begin{center}
\includegraphics[width=0.5\linewidth]{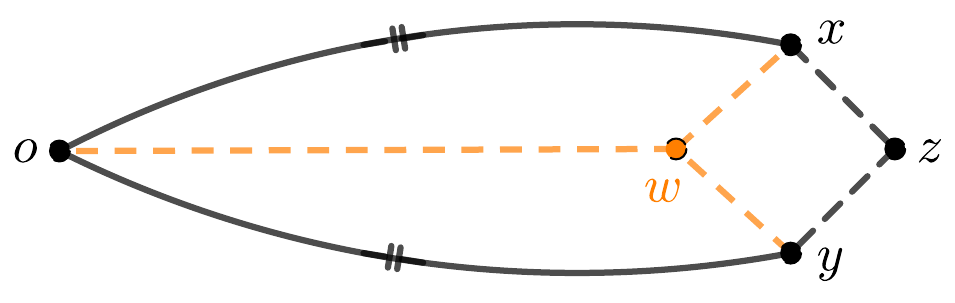}
\end{center}

\noindent
Let us also mention a local-to-global characterisation of quasi-median graphs that will be useful later. 

\begin{thm}
A connected graph $X$ is quasi-median if and only if it satisfies the following conditions:
\begin{itemize}
	\item its square-triangle completion $X^{\square\triangle}$ is simply connected;
	\item $X$ does not contain an induced copy of $K_4^-$;
	\item for every vertex $x \in X$ and all neighbours $y_1,y_2,y_3 \in X$, if the edges $[x,y_1],[x,y_2]$ span a $4$-cycle and $[x,y_2],[x,y_3]$ a $4$-cycle, then they globally span a prism $K_3 \times K_2$;
	\item for every vertex $x \in X$ and all neighbours $y_1,y_2,y_3 \in X$, if the edges $[x,y_1]$, $[x,y_2]$, $[x,y_3]$ pairwise span a $4$-cycle, then they globally span a $3$-cube $Q_3$.
\end{itemize}
\end{thm}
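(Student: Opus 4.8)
The plan is to establish the final local-to-global characterisation by reducing it to the weakly-modular description of quasi-median graphs given in Theorem~\ref{thm:QMmodular}. One direction is essentially bookkeeping: if $X$ is quasi-median, then by the standard theory (see \cite{MR1297190}) its square-triangle completion is simply connected, it contains no induced $K_4^-$, and the ``three $4$-cycles through a vertex'' conditions hold because quasi-median graphs satisfy a strong form of the cube and prism conditions — any such local configuration must close up into the convex hull of the two/three edges, which is a product of complete graphs, forcing $K_3 \times K_2$ or $Q_3$ respectively. So the substance is the converse.

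For the converse, suppose $X$ satisfies the four bulleted conditions; I want to verify the hypotheses of Theorem~\ref{thm:QMmodular}, namely weak modularity together with the absence of induced $K_4^-$ and $K_{3,2}$. The $K_4^-$-freeness is assumed outright. To rule out induced $K_{3,2}$, I would argue that the two degree-three vertices of a $K_{3,2}$ are non-adjacent vertices each adjacent to three common neighbours, pairwise non-adjacent; applying the fourth condition at one of the degree-three vertices (its three edges to the common neighbours pairwise span $4$-cycles, since no two common neighbours are joined) forces a $3$-cube $Q_3$, and inside $Q_3$ the two would-be degree-three vertices of $K_{3,2}$ cannot be realised without an extra edge — so an induced $K_{3,2}$ is impossible. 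The real work is deducing weak modularity (the Triangle and Quadrangle Conditions) from simple connectivity of $X^{\square\triangle}$ plus the local conditions. Here I would run the classical argument that turns ``simply connected with prescribed local structure'' into global metric conditions: given a violation of the Triangle or Quadrangle Condition at some basepoint $o$, pick a counterexample minimising $d(o,\cdot)$, lift the relevant short cycle in $X$ to a disc diagram in $X^{\square\triangle}$, and use simple connectivity to fill it by squares and triangles; the local conditions (no $K_4^-$, the prism condition, the cube condition) control how adjacent $2$-cells fit together, so one can push the filling disc toward $o$ and extract the required common neighbour $z$ (resp.\ $w$), contradicting minimality. This is the Chepoi–type local-to-global scheme for weakly modular graphs.

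The main obstacle I expect is exactly this last step: carefully setting up the disc-diagram / induction-on-distance argument so that the three local hypotheses are genuinely sufficient to propagate the Triangle and Quadrangle Conditions, without tacitly using more than simple connectivity. In particular one must check that whenever the filling disc has an interior vertex closer to $o$ than expected, the relevant $2$-cells around it are forced by the prism and cube conditions to be arranged so that a ``median-type'' vertex appears — the delicate point being configurations where a vertex has several neighbours at the same distance from $o$, some pairs spanning $4$-cycles and some spanning triangles, and one needs the hypotheses to rule out the bad mixed patterns. Once weak modularity is in hand, Theorem~\ref{thm:QMmodular} closes the argument immediately. I would present the proof by first recording the easy ``only if'' direction in a couple of sentences, then proving $K_{3,2}$-freeness as a short lemma, and finally devoting the bulk to the inductive proof of the Triangle and Quadrangle Conditions, citing \cite{MR1297190} for the parts of the local-to-global machinery that are standard.
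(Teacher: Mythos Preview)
Your approach is essentially the paper's: reduce to Theorem~\ref{thm:QMmodular}, taking $K_4^-$-freeness from the hypothesis, deriving $K_{3,2}$-freeness from the cube condition, and obtaining weak modularity from simple connectivity of $X^{\square\triangle}$ together with the local conditions. The only difference is that the paper does not carry out the disc-diagram argument by hand but simply invokes \cite[Theorem~1.1]{MR3062742}, which is exactly the Chepoi-type local-to-global theorem you propose to reprove; likewise, for the forward direction it cites \cite[Theorem~2.127]{QM} rather than unpacking the prism and cube conditions.
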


\noindent
Here, the \emph{square-triangle completion} $X^{\square\triangle}$ of a graph $X$ refers to the $2$-complex obtained from $X$ by filling in all the induced $3$- and $4$-cycles with triangles and squares respectively.
\begin{figure}[h!]
\begin{center}
\includegraphics[width=0.6\linewidth]{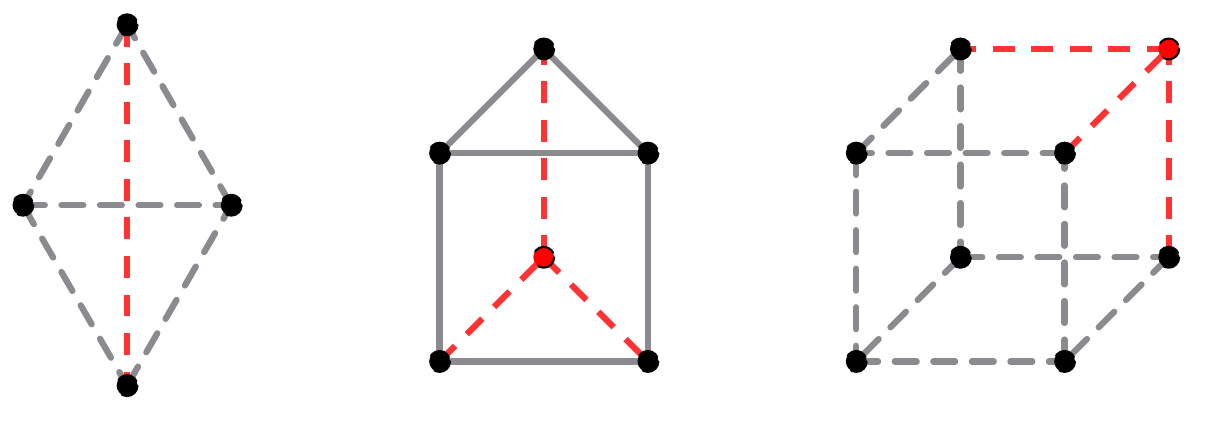}
\caption{From left to right: the no-$K_4^-$ condition, the house condition, and the $3$-cube condition.}
\label{Conditions}
\end{center}
\end{figure}

\begin{proof}
Assume that $X$ is quasi-median. Then the four items from our statement are satisfied as a consequence of \cite[Theorem~2.127]{QM}.

\medskip \noindent
Conversely, assume that the four items from our statement are satisfied by $X$. It follows from \cite[Theorem~1.1]{MR3062742} that $X$ is weakly modular. Therefore, according to Theorem~\ref{thm:QMmodular}, it suffices to verify that $X$ does not contain induced copies of $K_4^-$ and $K_{3,2}$ in order to conclude that $X$ is quasi-median. There is no $K_4^-$ by assumption, and the fourth item for our statement implies that there is no $K_{3,2}$. 
\end{proof}

\paragraph{Paths and geodesics.} Admitting a quasi-median Cayley graph has nice applications to the word problem of the group under consideration because paths and geodesics are connected through elementary transformations. This observation will be useful during our study of quandle products. 

\begin{definition}
Let $X$ be a graph and $\gamma=(x_1, \ldots, x_n)$ a path. A path $\gamma'$ is obtained from $\gamma$ by
\begin{itemize}
	\item \emph{removing a backtrack} if there exists $1 \leq i \leq n-2$ such that $x_i=x_{i+2}$ and $\gamma'= (x_1, \ldots, x_{i-1}, x_{i+2}, x_{i+3} ,\ldots, x_n)$;
	\item \emph{shortening a $3$-cycle} if there exists $1 \leq i \leq n-2$ such that $x_i,x_{i+2}$ are adjacent and $\gamma'= (x_1, \ldots, x_i,x_{i+2},x_{i+3}, \ldots,x_n)$;
	\item \emph{bypassing a $4$-cycle} if there exist $2 \leq i \leq n-1$ and $x_i' \in X$ such that $x_{i-1},x_i,x_{i+1},x_i'$ define an induced $4$-cycle and $\gamma'= (x_1, \ldots, x_{i-1},x_i',x_{i+1}, \ldots, x_n)$. 
\end{itemize}
\end{definition}

\noindent
The following statement is a particular case of \cite[Lemmas~3.3 and~3.4]{Mediangle}.

\begin{prop}\label{prop:PathAndGeodQM}
Let $X$ be a quasi-median graph and $x,y \in V(X)$ two vertices. A path connecting $x$ and $y$ is a geodesic if and only if it cannot be shortened by removing backtracks, shortening $3$-cycles, and bypassing $4$-cycles. Moreover, any two geodesic connecting $x$ and $y$ only differ by bypassing $4$-cycles. 
\end{prop}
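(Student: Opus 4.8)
The plan is to derive the statement from the weak modularity of quasi-median graphs, i.e.\ from the Triangle and Quadrangle conditions supplied by Theorem~\ref{thm:QMmodular}, together with the absence of induced $K_4^-$ and $K_{3,2}$. The ``only if'' direction is immediate: removing a backtrack decreases the length of a path by two and shortening a $3$-cycle decreases it by one, so a geodesic admits neither move; and bypassing a $4$-cycle preserves length, so it never affects geodesicity. In particular, none of the three moves ever turns a geodesic into a non-geodesic, which is what the ``moreover'' part will ultimately require.

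For the ``if'' direction I would argue by contraposition: I claim that any path $\gamma=(x_0,\dots,x_n)$ with $n>d(x_0,x_n)$ admits one of the three moves, and in fact a move that strictly decreases the monovariant $\Phi(\gamma):=\sum_{i=0}^n d(x_0,x_i)$ (a backtrack removal or a $3$-cycle shortening deletes positive-distance terms, and the $4$-cycle bypasses we produce replace a term by a strictly smaller one). Since $\Phi$ is a non-negative integer, finitely many moves then bring $\gamma$ to a geodesic. To locate the move, let $i$ be the least index with $d(x_0,x_{i+1})\le d(x_0,x_i)$, so $d(x_0,x_j)=j$ for $j\le i$ and $d(x_0,x_{i+1})\in\{i-1,i\}$. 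If $x_{i-1}=x_{i+1}$, remove the backtrack; if $x_{i-1}\sim x_{i+1}$, shorten the $3$-cycle $x_{i-1}x_ix_{i+1}$; otherwise $d(x_{i-1},x_{i+1})=2$. If $d(x_0,x_{i+1})=i-1$, apply the Quadrangle condition to $(o,y,x,z)=(x_0,x_i,x_{i-1},x_{i+1})$ to obtain a common neighbour $w$ of $x_{i-1},x_{i+1}$ with $d(x_0,w)=i-2$; then $x_{i-1},x_i,x_{i+1},w$ is an induced $4$-cycle (no chord $x_{i-1}x_{i+1}$ by assumption, no chord $x_iw$ because their distances to $x_0$ differ by $2$), and bypassing it replaces $x_i$ by $w$, dropping $\Phi$. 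The remaining case is the ``plateau'' $d(x_0,x_{i+1})=i$: here one first uses the Triangle condition to find a common neighbour $z$ of $x_i,x_{i+1}$ at distance $i-1$, rules out $z=x_{i-1}$ and $z\sim x_{i-1}$ via $x_{i-1}\not\sim x_{i+1}$ and $K_4^-$-freeness, and must then extract from the resulting two-dimensional local configuration an induced $4$-cycle through $x_{i-1},x_i,x_{i+1}$ to bypass.

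I expect this plateau case to be the main obstacle: unlike the others it is genuinely two-dimensional, and it is precisely where the full quasi-median hypothesis (the forbidden induced subgraphs, not merely weak modularity) must be invoked — this is the heart of \cite[Lemma~3.3]{Mediangle}, and I would either reproduce that combinatorial analysis or, alternatively, bypass it by appealing to the hyperplane theory of quasi-median graphs (a path being geodesic exactly when it crosses each hyperplane at most once). For the ``moreover'' assertion I would proceed in the same spirit: given two geodesics $\gamma,\gamma'$ from $x$ to $y$, let $k$ be the first index at which they diverge; applying the Quadrangle condition to $x$ together with $\gamma(k-1)=\gamma'(k-1)$, $\gamma(k)$ and $\gamma'(k)$ produces an induced $4$-cycle whose bypass pushes the point of divergence one step further while keeping $\gamma$ a geodesic. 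Iterating carries $\gamma$ onto $\gamma'$ by $4$-cycle bypasses, termination being guaranteed by, say, the area of a minimal disc between $\gamma$ and $\gamma'$ in the simply connected complex $X^{\square\triangle}$.
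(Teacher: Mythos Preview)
The paper does not give a proof of this proposition at all: it is simply recorded as a particular case of \cite[Lemmas~3.3 and~3.4]{Mediangle}. So there is no in-paper argument to compare against, and your proposal is essentially an attempt to reconstruct the proof of those cited lemmas. The outline for the ``if'' direction is sound, and you are right that the plateau case is where the real work lies; since you explicitly defer that case to \cite{Mediangle} or to hyperplane theory, your first half is, as written, a sketch rather than a proof, but an honest one.

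There is, however, a genuine problem in your ``moreover'' argument. The bypass move is defined \emph{along the path}: to bypass at position $k$ on $\gamma$ you need a vertex $x_k'$ such that $\gamma(k-1),\gamma(k),\gamma(k+1),x_k'$ is an induced $4$-cycle. What the Quadrangle Condition (applied with base point $y$) gives you from $\gamma(k-1)=\gamma'(k-1)$, $\gamma(k)$, $\gamma'(k)$ is a vertex $w$ adjacent to $\gamma(k)$ and $\gamma'(k)$; the resulting $4$-cycle $\gamma(k-1),\gamma(k),w,\gamma'(k)$ is \emph{not} of the required shape, since neither $w$ nor $\gamma'(k)$ is known to be adjacent to $\gamma(k+1)$. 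So this single step does not ``push the divergence one step further''. You also need to handle the case $\gamma(k)\sim\gamma'(k)$ (same clique through $\gamma(k-1)$), where the Quadrangle Condition does not even apply. A correct argument here is closer to the hyperplane approach you mention as an alternative: two geodesics cross the same set of separating hyperplanes, any two admissible orderings of these crossings differ by adjacent transpositions of transverse pairs, and each such transposition is precisely a $4$-cycle bypass along the path. Your disc-area idea can be made to work too, but the inductive step must produce a bypass along $\gamma$, not merely some $4$-cycle in the neighbourhood of the divergence.
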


\paragraph{Cliques and prisms.} In a quasi-graph, a \emph{clique} refers to a maximal complete subgraph. A \emph{prism} is a subgraph that decomposes as a product of cliques. The number clique-factors is referred to as the \emph{cubical dimension of the prism}. By extension, the \emph{cubical dimension} of a quasi-median graph is the maximal (possibly infinite) cubical dimension of its prisms. 

\begin{lemma}[{\cite[Lemmas~2.16 and~2.80]{QM}}]
In quasi-median graphs, cliques and prisms are gated. 
\end{lemma}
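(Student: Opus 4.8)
The plan is to reduce the statement to two separate facts: (i) cliques are gated, and (ii) a product of gated subgraphs satisfying a mild compatibility condition is gated. Recall that a subgraph $Y$ of a graph $X$ is \emph{gated} if for every vertex $x \in X$ there is a vertex $y \in Y$ (the \emph{gate}) such that $y$ lies on a geodesic from $x$ to every vertex of $Y$; equivalently, $y$ realises $d(x,Y)$ and $d(x,z) = d(x,y) + d(y,z)$ for all $z \in Y$. Since the statement is quoted as \cite[Lemmas~2.16 and~2.80]{QM}, I would expect the intended proof to cite that reference directly; but to make the argument self-contained I would proceed as follows.

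First I would handle cliques. Let $C$ be a clique and $x \in V(X)$ a vertex. Among the vertices of $C$, pick one, say $c_0$, minimising $d(x, \cdot)$; I claim $c_0$ is a gate. Suppose not: then there is $c_1 \in C$ with $d(x, c_1) = d(x, c_0)$ (it cannot be larger than $d(x,c_0)+1$ by the triangle inequality, and if $d(x,c_1) = d(x,c_0)+1$ we are fine since the geodesic from $x$ to $c_0$ followed by the edge $c_0 c_1$ is a geodesic). So $d(x,c_0) = d(x,c_1)$ with $c_0 c_1$ an edge. Applying the Triangle Condition (from Theorem~\ref{thm:QMmodular}, weak modularity) with $o = x$, there is a common neighbour $z$ of $c_0, c_1$ with $d(x,z) = d(x,c_0) - 1$. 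Now $z, c_0, c_1$ form a triangle; if $z \notin C$, then $z$ together with $C$ would force, by maximality of the clique and the no-$K_4^-$ condition, a contradiction — more precisely, one argues that $z$ must be adjacent to every vertex of $C$ (else some $c \in C$ with $c z$ a non-edge gives an induced $K_4^-$ on $\{z, c_0, c_1, c\}$), contradicting maximality of $C$. Hence $z \in C$ with $d(x,z) < d(x,c_0)$, contradicting the minimality in the choice of $c_0$. This shows the minimiser is unique and is the gate.

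Next I would treat prisms. Let $P = C_1 \times \cdots \times C_k$ be a prism, each $C_i$ a clique. The key structural input is that in a quasi-median graph prisms are \emph{convex} and that the coordinate projections $\pi_i : P \to C_i$ are well-behaved: distance in $P$ is the $\ell^1$-sum of the coordinate distances. Given $x \in V(X)$, one wants to produce a gate. I would use the standard fact that an intersection of gated subgraphs is gated, and that the "fibers" $C_i$ sitting inside $P$ as gated subgraphs (by part (i), once one knows each $C_i$ embeds gatedly in $X$, which itself needs the convexity of prisms) can be combined: the gate of $x$ in $P$ has as its $i$-th coordinate the gate of $x$ in the appropriate translate of $C_i$. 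Concretely, let $p_1$ be the gate of $x$ in one clique-fiber, then the gate of $x$ in the fiber through $p_1$ in the next coordinate direction, and so on; one checks by induction on $k$, using that each step changes only one coordinate and that the partial gates accumulate additively in distance, that the resulting vertex $p \in P$ satisfies $d(x,q) = d(x,p) + d(p,q)$ for all $q \in P$. The termination and correctness of this coordinate-by-coordinate procedure, i.e.\ that the gates in successive fibers are mutually compatible, is where I expect the real work to lie.

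The main obstacle is precisely establishing that prisms are convex (and hence that the clique-fibers inside a prism are gated subgraphs of $X$, not merely isometrically embedded), since without that the coordinate-wise gate construction does not obviously live inside $P$. Proving convexity of prisms in a quasi-median graph typically goes through Proposition~\ref{prop:PathAndGeodQM} (geodesics differ only by bypassing $4$-cycles, so a shortest path between two vertices of $P$ can be pushed into $P$ square by square) together with the no-$K_4^-$ and $3$-cube conditions to control how squares attach. Once convexity of prisms and gatedness of cliques are in hand, assembling the gate of a prism is a routine induction, so I would front-load the argument with the convexity lemma and then deduce both bullet points. If one is willing to simply invoke \cite[Lemmas~2.16 and~2.80]{QM}, the proof collapses to one line; the sketch above is the route I would take to reprove it from the characterisations already recalled in this section.
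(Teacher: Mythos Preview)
The paper does not prove this lemma at all: it is stated with a citation to \cite[Lemmas~2.16 and~2.80]{QM} and no argument is given. You yourself anticipated this, and your one-line version (``invoke \cite{QM}'') is exactly what the paper does.

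As for the self-contained sketch you offer: your argument for cliques is correct and is the standard one. The minimiser $c_0$ is unique by the Triangle Condition plus the no-$K_4^-$ axiom exactly as you say, and uniqueness of the minimiser together with the triangle inequality immediately gives the gate property. For prisms, your plan is reasonable in outline but, as you candidly flag, the real content is the convexity of prisms, and you do not supply that argument. Your suggestion to push geodesics into $P$ via Proposition~\ref{prop:PathAndGeodQM} is circular as stated, since that proposition in the paper is itself quoted from a source that already presupposes the basic structure theory of quasi-median graphs (including gatedness of prisms). A from-scratch proof of prism gatedness typically proceeds instead by first proving that the carrier of a hyperplane is gated (via an inductive argument using the Triangle and Quadrangle Conditions directly), then observing that a prism is a finite intersection of carriers, and finally invoking the general fact that intersections of gated subgraphs are gated. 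Your coordinate-by-coordinate gate construction can be made to work, but only once convexity is in hand, so it is downstream of the hard step rather than a route around it.
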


\noindent
Recall that, given a graph $X$, an induced subgraph $Y \subset X$ is \emph{gated} if, for every $x \in X$, there exists a vertex $p \in Y$, refers to as the \emph{gate of $x$ in $Y$}, such that $x$ can be connected to every vertex of $Y$ by geodesics passing through $p$. Notice that, when it exists, the gate is unique. Gatedness can be thought as a strong convexity condition.

\paragraph{Hyperplanes.} Similarly to median graphs, a key tool in the study of quasi-median graphs is given by \emph{hyperplanes}. Roughly speaking, the geometry of a quasi-median graph reduces to the combinatorics of its hyperplanes. 

\begin{definition}
In a quasi-median graph, a \emph{hyperplane} is an equivalence class of edges with respect to the reflexive-transitive closure of the relation that identifies to edges when they belong to a common $3$-cycle or are \emph{parallel} (i.e.\ are opposite sides of a $4$-cycle).
\end{definition}
\begin{figure}[h!]
\begin{center}
\includegraphics[trim=0 16.5cm 10cm 0, clip, width=0.6\linewidth]{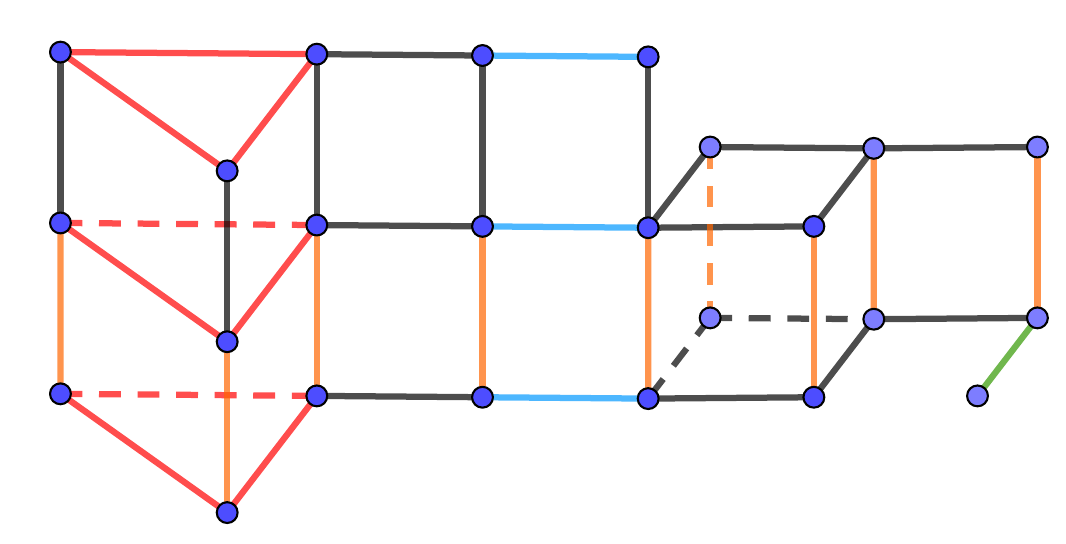}
\caption{Hyperplanes in a quasi-median graph.}
\label{HypEx}
\end{center}
\end{figure}

\noindent
Equivalently, one can think of a hyperplane as an equivalence class of cliques with respect to the reflexive-transitive closure of the relation that identifies two cliques whenever they are \emph{parallel}, i.e.\ are parallel cliques in a prism. 

\medskip \noindent
Let us record some vocabulary related to hyperplanes in quasi-median graphs that will be used later. 

\begin{definition}
Let $X$ be a quasi-median graph.
\begin{itemize}
	\item Let $J$ be a hyperplane. The \emph{carrier of $J$}, denoted by $N(J)$, is the smallest induced subgraph containing all the edges of $J$. A \emph{sector delimited by $J$} is a connected component of the graph $X \backslash\backslash J$ obtained from $X$ by removing the edges in $J$. The \emph{fibres of $J$} are the connected components of $N(J) \backslash \backslash J$.
	\item Two hyperplanes $J$ and $H$ are \emph{transverse} whenever there exist intersecting edges $e \in J$ and $f \in H$ that span a $4$-cycle.
	\item Given a group $G$ acting on $X$, the \emph{rotative-stabiliser} of a hyperplane $J$ is $$\mathrm{stab}_\circlearrowright(J):= \bigcap\limits_{C \subset J \text{ clique}} \mathrm{stab}(C).$$
\end{itemize}
\end{definition}

\noindent
The main properties satisfied by hyperplanes in quasi-median graphs are recorded by the next statement.

\begin{thm}[{\cite[Propositions~2.15 and~2.30]{QM}}]\label{thm:QMbig}
Let $X$ be a quasi-median graph. The following assertions hold.
\begin{itemize}
	\item Every hyperplane delimits at least two sectors.
	\item Carriers, sectors, and fibres are gated.
	\item For every hyperplane $J$, if $C \subset J$ is a clique and $F$ a fibre of $J$, then $N(J)= C \times F$.
	\item A path in $X$ is a geodesic if and only if it crosses each hyperplane at most once.
	\item The distance between two vertices coincides with the number of hyperplanes separating them. 
\end{itemize}
\end{thm}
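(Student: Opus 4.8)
The plan is to analyse a hyperplane $J$ through the \emph{gate-projection onto one of its cliques}. Fix a clique $C \subseteq J$ with $V(C) = \{v_1, \ldots, v_k\}$, $k \geq 2$; since cliques are gated, there is a well-defined gate map $\pi = \pi_C : X \to C$, and I would show that all five assertions follow from a study of the fibres $\pi^{-1}(v_1), \ldots, \pi^{-1}(v_k)$. Throughout I would use freely that $X$ is weakly modular (from Theorem~\ref{thm:QMmodular}) and that it has no induced $K_4^-$ or $K_{3,2}$, so that induced $3$- and $4$-cycles can be filled uniquely, as well as the prism and $3$-cube conditions from the local-to-global characterisation just proved.

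\emph{Identifying $J$, the sectors, and gatedness.} First I would prove: (i) each $\pi^{-1}(v_i)$ is gated (preimage of a gated subgraph under a gate map, a short argument from the triangle condition and the defining property of the gate); (ii) an edge $[a,b]$ with $\pi(a) = v_i \neq v_j = \pi(b)$ spans a $4$-cycle $a, v_i, v_j, b$ — because the gate property gives $d(a,v_j) = d(a,v_i)+1$ and $d(b,v_i) = d(b,v_j)+1$, and then the quadrangle/triangle condition produces the missing square — so that $[a,b]$ is parallel to $[v_i,v_j] \subseteq J$ and hence lies in $J$; (iii) conversely, every edge of $J$ runs between two distinct fibres, by chasing the hyperplane equivalence and checking that triangle moves and parallelism moves both preserve the pair of fibres an edge runs between (this is where the prism and $3$-cube conditions enter). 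Since edges outside $J$ keep $\pi$ constant whereas edges of $J$ change it, and since a geodesic from a vertex to its gate stays inside the corresponding fibre, the connected components of $X \backslash\backslash J$ are \emph{exactly} the sets $\pi^{-1}(v_i)$: there are $k \geq 2$ sectors, and by (i) sectors are gated. Carriers are then gated by a direct convexity argument (a carrier is an intersection of gated pieces coming from the sector structure), and fibres, being intersections of the gated carrier with the gated $\pi^{-1}(v_i)$, are gated as well.

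\emph{Product decomposition and metric statements.} Every edge of $J$ is parallel, inside a prism, to an edge of $C$ (all cliques of $J$ are parallel to $C$), so setting $F := N(J) \cap \pi^{-1}(v_1)$ one gets a map $N(J) \to C \times F$, $x \mapsto (\pi(x), \rho(x))$, where $\rho$ transports $x$ back to $F$ along the unique square with a side on $C$; bijectivity comes from uniqueness of these squares (no induced $K_4^-$ or $K_{3,2}$), and that it is a graph isomorphism in both directions is precisely the house/prism condition applied edge by edge, giving $N(J) \cong C \times F$. For geodesics: a path crossing $J$ twice leaves and re-enters the gated set $\pi^{-1}(v_i)$ containing its origin, and projecting the offending subpath to $C$ and rerouting through the gate strictly shortens it — so no geodesic crosses a hyperplane twice; hence $d(x,y) \geq \#\{\text{hyperplanes separating } x,y\}$, with equality realised by any geodesic (its edges lie in pairwise distinct, necessarily separating, hyperplanes). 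Consequently a path crossing each hyperplane at most once has length equal to this count, so it is a geodesic, and the last assertion $d(x,y) = \#\{\text{hyperplanes separating } x,y\}$ follows.

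The main obstacle is the global coherence needed in items (ii)--(iii): locally, triangles and $4$-cycles shuffle edges of $J$ around, and one must rule out a chain of such moves that ``wraps around'' $X$ and returns an edge of $J$ to the same pair of fibres with a twist, or — worse — produces an edge of $J$ inside a single fibre. This is exactly where the simple-connectivity of the square--triangle completion $X^{\square\triangle}$ from the preceding theorem is used: such an inconsistency would produce a non-null-homotopic loop in $X^{\square\triangle}$. Everything else — gatedness of preimages, the square-filling arguments, the shortcutting of non-geodesics — is routine once this coherence is established.
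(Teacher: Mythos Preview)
The paper does not prove this theorem at all: it is quoted as a black box from \cite[Propositions~2.15 and~2.30]{QM}, with no argument given, so there is no in-paper proof to compare against. Your overall strategy --- fix a clique $C\subset J$, use the gate-projection $\pi_C:X\to C$, identify the sectors with the fibres $\pi_C^{-1}(v_i)$, and read everything off from that --- is essentially the standard one carried out in \cite{QM}, and the broad outline is sound.

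One genuine imprecision deserves flagging. In your item~(ii) you assert that an edge $[a,b]$ with $\pi(a)=v_i\neq v_j=\pi(b)$ ``spans a $4$-cycle $a,v_i,v_j,b$''. As written this requires $a$ adjacent to $v_i$ and $b$ adjacent to $v_j$, which is false when $a,b$ lie far from $C$. What one actually builds is a \emph{ladder} of $4$-cycles from $[a,b]$ down to $[v_i,v_j]$, by induction on $n:=d(a,v_i)=d(b,v_j)$ (equality forced by the gate property and $d(a,b)=1$): pick $a'$ one step from $a$ toward $v_i$, check $d(b,a')=2$, and apply the quadrangle condition with basepoint $v_j$ to produce $b'$ adjacent to both $a'$ and $b$ with $d(b',v_j)=n-1$; the square $a,b,b',a'$ is the next rung. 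You seem aware that some global coherence argument is needed (your last paragraph), but the specific ``single $4$-cycle'' claim is wrong and should be replaced by this ladder induction. Once that is fixed, the rest of your sketch --- gatedness of the $\pi^{-1}(v_i)$, identification of sectors, the product $N(J)\cong C\times F$, and the metric statements via ``geodesics cross each hyperplane at most once'' --- goes through along the lines you indicate.
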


\noindent
Let us conclude this section with a preliminary lemma that will be useful later.

\begin{lemma}\label{lem:Ladder}
Let $J$ be a hyperplane, $F$ a fibre. For all $x,y \in F$ and for every neighbour $z$ of $y$ such that $\{y,z\} \in J$, there exists a neighbour $w$ of $x$ such that $\{x,w\} \in J$ and such that some ladder connects $(x,w)$ to $(y, z)$. 
\end{lemma}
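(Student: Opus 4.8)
The plan is to prove Lemma~\ref{lem:Ladder} by induction on $d(x,y)$, the distance inside the fibre $F$ between the two given vertices. Since fibres are gated (hence connected and convex) by Theorem~\ref{thm:QMbig}, we may assume $x$ and $y$ are joined by a geodesic lying entirely in $F$. The base case $d(x,y)=0$ is trivial: take $w=z$. For the inductive step, pick a vertex $x'\in F$ adjacent to $y$ with $d(x,x')=d(x,y)-1$; by induction there is a neighbour $w'$ of $x'$ with $\{x',w'\}\in J$ and a ladder from $(x',w')$ to $(y,z)$. It then suffices to handle the single-edge case: find a neighbour $w$ of $x$ with $\{x,w\}\in J$ such that a single $4$-cycle (or, degenerately, a $3$-cycle reduced to an edge if $x$ and $x'$ already play symmetric roles) connects $(x,w)$ to $(x',w')$, and then concatenate the two ladders.

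So the heart of the argument is the one-step claim: given an edge $\{x,x'\}$ inside $F$ and an edge $\{x',w'\}\in J$, produce an edge $\{x,w\}\in J$ and a $4$-cycle $x,x',w',w$. Here is where I would use the product structure of the carrier from Theorem~\ref{thm:QMbig}: if $C\subset J$ is the clique containing $\{x',w'\}$, then $N(J)=C\times F$, so $x'=(c',f')$ and $w'=(c,f')$ for vertices $c,c'\in C$ and $f'\in F$. Since $x\in F$ is a neighbour of $x'$ and $x\notin J$ (it lies in the fibre $F$), the edge $\{x,x'\}$ is an $F$-edge in the product, so $x=(c',f)$ for a neighbour $f$ of $f'$ in $F$. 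Then $w:=(c,f)$ is a vertex of $N(J)=C\times F$, it is adjacent to $x=(c',f)$ via a $C$-edge (hence $\{x,w\}\in J$) and adjacent to $w'=(c,f')$ via an $F$-edge, and $x,x',w',w$ is exactly the $4$-cycle $\{c,c'\}\times\{f,f'\}$. This closes the induction.

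The main obstacle to watch is making the product decomposition $N(J)=C\times F$ interact correctly with the fibre structure: one must be careful that the fibre $F$ mentioned in the statement is the same $F$-factor appearing in $N(J)=C\times F$ (this is the content of the third bullet of Theorem~\ref{thm:QMbig}, that $N(J)$ splits as clique-times-fibre), and that an edge of $N(J)$ is either a ``clique edge'' belonging to $J$ or a ``fibre edge'' not belonging to $J$, with no mixed edges — this dichotomy is precisely what lets us conclude $\{x,x'\}$ is a fibre edge and $\{x,w\}$ a clique edge. A secondary point is bookkeeping in the concatenation: the definition of a ladder as a sequence of $4$-cycles (together with possibly degenerate squares) between two edges of $J$ must be set up so that glueing a length-one ladder to a length-$(d-1)$ ladder yields a ladder of length $d$; I would state this closure-under-concatenation observation explicitly (it is immediate from the definition) rather than re-derive it. If the paper's notion of ``ladder'' is the standard one (a subgraph isomorphic to $P_k\times K_2$, or a concatenation of squares all of whose ``rungs'' lie in $J$), everything goes through, and no genuinely hard step remains beyond correctly invoking the carrier splitting.
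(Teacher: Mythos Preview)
Your approach is the same as the paper's: the paper simply says the lemma is an immediate consequence of the product decomposition $N(J)=C\times F$ from Theorem~\ref{thm:QMbig}, and your ``heart of the argument'' paragraph is exactly a spelled-out version of this. So conceptually you are on the right track and nothing substantive is missing.

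There is, however, a small bookkeeping slip in the induction. You pick $x'$ adjacent to $y$ with $d(x,x')=d(x,y)-1$, then say you will ``handle the single-edge case'' to connect $(x,w)$ to $(x',w')$. But $d(x,x')=d(x,y)-1$, which need not be $1$, so a single $4$-cycle does not suffice there. The fix is immediate: either pick $x'$ adjacent to $x$ on a geodesic to $y$ (so $d(x',y)=d(x,y)-1$, apply induction to $(x',y,z)$, then close with one square at the $x$ end), or skip the induction entirely and read the whole ladder off the product structure at once: if $x=(c_0,f_x)$, $y=(c_0,f_y)$, $z=(c_1,f_y)$ in $N(J)=C\times F$, take $w=(c_1,f_x)$ and let the rungs be $((c_0,g_i),(c_1,g_i))$ along a geodesic $f_x=g_0,\ldots,g_n=f_y$ in $F$.
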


\noindent
Here, a \emph{ladder} connecting two oriented edges $(a,b)$ and $(p,q)$ refers to a sequence of oriented edges $(x_1,y_1)=(a,b), \ldots, (x_n,y_n)=(p,q)$ such that, for every $1 \leq i \leq n-1$, $(x_i,y_i)$ and $(x_n,y_n)$ are opposite sides of a $4$-cycle. The oriented edges $(x_i,y_i)$ are referred to as the \emph{(oriented) rungs} of the ladder. 
\begin{center}
\includegraphics[width=0.6\linewidth]{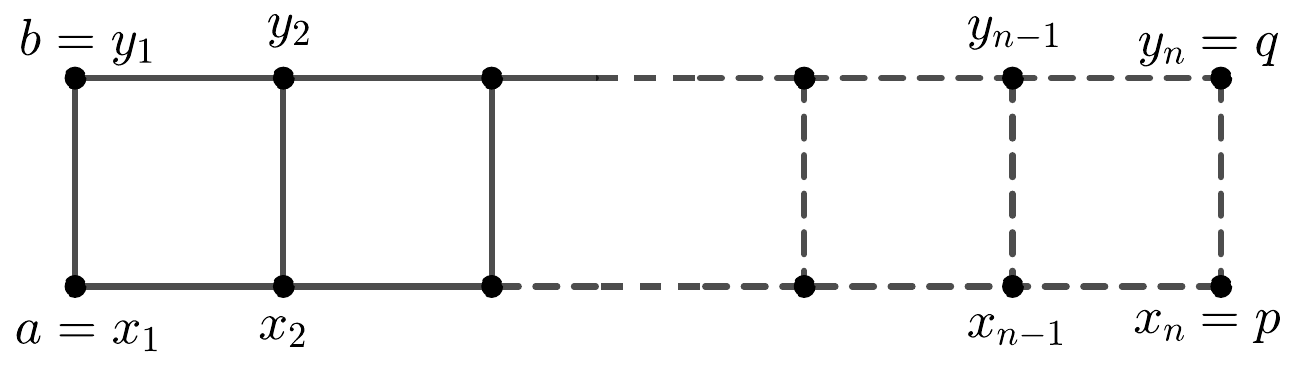}
\end{center}

\noindent
Lemma~\ref{lem:Ladder} is an immediate consequence of the product decomposition of carriers provided by Theorem~\ref{thm:QMbig}.

\subsection{Quasi-median Cayley graphs}\label{section:Cayley}

\noindent
This subsection is dedicated to the proof of the following statement, which will be central in our study of quandle products:

\begin{thm}\label{thm:QuandleQM}
Let $\mathfrak{Q}:= \mathfrak{Q}(\mathcal{I},\mathcal{G},\mathcal{A})$ be a quandle product. The Cayley graph 
$$\mathfrak{M}(\mathcal{I}, \mathcal{G}, \mathcal{A}):= \mathrm{Cayl}(\mathfrak{Q}, \bigcup \mathcal{G})$$ 
is quasi-median of cubical dimension at most 
$$\sup \left\{n \geq 0 \mid \exists i_1, \ldots, i_n \in I \text{ pairwise $<$- or $\perp$-comparable} \right\}.$$
\end{thm}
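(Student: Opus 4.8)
The plan is to identify the quasi-median graph via the local-to-global characterisation recorded just above the statement, rather than by trying to verify weak modularity by hand. So first I would set $\mathfrak{M}:= \mathrm{Cayl}(\mathfrak{Q}, \bigcup\mathcal{G})$ and describe its local structure explicitly: the edges incident to a vertex $g$ are the $\{g, gh\}$ with $h \in G_i\setminus\{1\}$, and two such edges $\{g,gh\}$, $\{g,gh'\}$ with $h\in G_i$, $h'\in G_j$ span a triangle exactly when $i=j$, span a $4$-cycle exactly when $i\perp j$ (using the defining relation $hh'=h'h$), and — crucially — when $i<j$ they do \emph{not} span a $4$-cycle but the relation $hh'=h'(h'\ast h)$ produces a $3$-cycle-free hexagon. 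This local picture, together with Theorem~\ref{thm:NormalForm} (ranked braids) to control when products of factor-elements are trivial, is the engine for everything.

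Next I would check the four conditions of the local-to-global theorem in turn. The no-$K_4^-$ condition: a $K_4^-$ would force a word $s_1s_2s_3s_4$ over the factors closing up in a way incompatible with the triangle/square structure above; one rules this out by looking at the first-syllable data and invoking uniqueness of ranked braids. Simple connectedness of $\mathfrak{M}^{\square\triangle}$: since $\mathfrak{Q}$ is presented by the relators $\{h^2h^{-2}\}$ (trivially), the fusion relations inside each $G_i$, the $\perp$-commutation relations $[G_i,G_j]$, and the twisted relations $ab=b(b\ast a)$, I would argue that every relator of this presentation is already filled by triangles (from fusions inside a factor, and from the hexagon $ab=b(b\ast a)$, which bounds a disc of three squares/triangles as in the $\mathrm{ConfluOne}$/$\mathrm{ConfluTwo}$ pictures of the confluence proof) and squares (from $\perp$-commutations); hence the presentation $2$-complex, and so $\mathfrak{M}^{\square\triangle}$, is simply connected. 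The house condition and the $3$-cube condition reduce to purely combinatorial statements about the oposet: if $\{g,gh_1\},\{g,gh_2\}$ span a square and $\{g,gh_2\},\{g,gh_3\}$ span a square, then $r(h_1)\perp r(h_2)$ and $r(h_2)\perp r(h_3)$, and one must produce the missing square on $\{g,gh_1\},\{g,gh_3\}$ — this is where transitivity of $\perp$-style axioms of an oposet, the quandle relation, and again ranked-braid uniqueness are combined. The pairwise-square case for three edges similarly forces $r(h_1),r(h_2),r(h_3)$ pairwise orthogonal and yields the $3$-cube.

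For the cubical-dimension bound I would argue that cliques of $\mathfrak{M}$ are exactly the cosets $gG_i$ (maximality and completeness are immediate from the local structure, since distinct factors never give complete bipartite pieces across all vertices), and a prism of cubical dimension $n$ based at $g$ decomposes as a product of $n$ cliques $gG_{i_1},\dots,gG_{i_n}$; commutativity of these cliques inside the prism forces, after translating to the presentation and ranking, that the indices $i_1,\dots,i_n$ are pairwise $\perp$- or $<$-comparable (orthogonal pairs give genuine squares, comparable pairs $i<j$ give the hexagonal relation which still allows the two cliques to sit in a common prism after the twist). Conversely such a family of indices produces a prism, which matches the stated supremum. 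The main obstacle I expect is the house/$3$-cube verification: one has to be careful that the vertex completing the square or cube is not merely a candidate word but actually equals the required group element, and this is precisely where Theorem~\ref{thm:NormalForm} must be invoked to rule out accidental identifications — in particular the quandle relation (2) is exactly what is needed to make the two ways of completing a $3$-cube agree, so that step is the crux where all the hypotheses on the quandle system get used.
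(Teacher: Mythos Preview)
Your overall strategy (use the local-to-global criterion, identify cliques with factor-cosets, read off the cubical dimension from pairwise-comparable index sets) matches the paper's, but there is a genuine error in your description of the local structure that derails the rest of the argument. You assert that when $i<j$ the edges $\{g,ga\}$ and $\{g,gb\}$ (with $a\in G_i$, $b\in G_j$) do \emph{not} span a $4$-cycle but rather a hexagon. This is false: the relation $ab=b(b\ast a)$ means that the four vertices $g,\ ga,\ gb,\ gab$ form a $4$-cycle in $\mathfrak{M}$, with edges labelled $a\in G_i$, $b\in G_j$, $b\in G_j$, and $b\ast a\in G_{b\ast i}$ respectively --- all of which lie in the generating set $\bigcup\mathcal{G}$. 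The paper proves exactly this in Lemma~\ref{lem:QuandleCycle}: the induced $4$-cycles are precisely those coming from pairs $i,j$ that are $\perp$-comparable \emph{or} $<$-comparable.

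This misidentification propagates everywhere. Your simple-connectedness argument invokes nonexistent hexagons that would need to be subdivided; in fact the twisted relations are already length-$4$ relators, so the square-triangle completion of $\mathfrak{M}$ coincides with the Cayley $2$-complex and is simply connected for free. More seriously, your house and $3$-cube verifications only treat the case where the relevant indices are pairwise $\perp$-orthogonal, whereas the actual case analysis (as the paper carries out with pictures) must cover all mixtures of $\perp$ and $<$ among the two or three indices --- and it is precisely in the mixed cases that the quandle relation $c\ast(b\ast a)=(c\ast b)\ast(c\ast a)$ is needed to show that the two candidate completions of the $3$-cube agree. Finally, your cubical-dimension paragraph is confused for the same reason: a prism at $g$ corresponds to neighbours $a_1,\dots,a_n$ such that each pair $\{g,a_r\},\{g,a_s\}$ spans a $4$-cycle, and by the corrected Lemma~\ref{lem:QuandleCycle} this forces the indices to be pairwise $<$- or $\perp$-comparable directly, with no ``hexagonal twist'' to accommodate.
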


\noindent
The theorem generalises \cite[Proposition~8.2]{QM}, when applied to graph products, and \cite[Theorem~2.7]{MR4874027}, when applied to cactus groups. 

\medskip \noindent
We start by describing cliques in our Cayley graphs.

\begin{lemma}\label{lem:QuandleClique}
Let $\mathfrak{Q}:= \mathfrak{Q}(\mathcal{I},\mathcal{G},\mathcal{A})$ be a quandle product. Every complete subgraph in $\mathfrak{M}(\mathcal{I}, \mathcal{G}, \mathcal{A})$ is contained in a coset of a factor.
\end{lemma}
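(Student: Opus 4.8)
The claim is that any clique in the Cayley graph $\mathfrak{M}(\mathcal{I},\mathcal{G},\mathcal{A})$ lies inside a coset $g G_i$ of a single factor. By left-invariance of the Cayley graph it suffices to treat a complete subgraph containing the identity vertex $1$. So let $F = \{1, g_1, \ldots, g_k\}$ be a set of vertices that are pairwise adjacent; I must show there is a single index $i \in I$ with $g_1, \ldots, g_k \in G_i$. Since each $g_m$ is adjacent to $1$, each $g_m$ is a non-trivial element of some factor $G_{i_m}$. The content of the lemma is therefore: if $a \in G_i \setminus\{1\}$ and $b \in G_j \setminus\{1\}$ with $i \ne j$, then $a$ and $b$ are \emph{not} adjacent in $\mathfrak{M}$, i.e. $a^{-1}b$ is not a single non-trivial factor-element.

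\textbf{Main step.} I would reduce to this two-element statement and then attack it with the normal form from Theorem~\ref{thm:NormalForm}. Suppose, for contradiction, that $a^{-1}b = c$ with $a \in G_i\setminus\{1\}$, $b \in G_j\setminus\{1\}$, $c \in G_\ell \setminus\{1\}$, and $i \ne j$. Rewrite this as $b = a c$, i.e. the word $w := a c$ (a product of two non-trivial factor-letters, with $a \in G_i$ and $c \in G_\ell$) represents the same element of $\mathfrak{Q}$ as the single-letter word $b \in G_j$. Now I compare ranked braids. The single letter $[b]$ is already a ranked braid (a one-letter word admits no fusion or twisted left-commutation, and it is certainly not $\perp$-commutation-equivalent to anything else). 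On the other side, I take the two-letter word $ac$ and run the ranking rewriting system of Theorem~\ref{thm:NormalForm}: depending on how $i$ and $\ell$ compare under $\leq$, $\perp$, the word $ac$ either is already ranked (if $i \perp \ell$, or if $i,\ell$ are incomparable and $\ne$), or it ranks to $c(c \ast a)$ when $\ell > i$, or to $a' c'$ after a fusion when $\ell = i$ — but if $\ell = i$ then $ac$ rewrites to the single letter $(ac) \in G_i$, and then we would have $b \in G_j$ equal to an element of $G_i$ with $i \ne j$, contradicting uniqueness of ranked braids (one-letter braids in different factors are distinct, both being ranked). In every remaining case the ranked braid obtained from $ac$ has length $2$ (two non-trivial letters — one must check the letters stay non-trivial: in a twisted left-commutation the letter $c \ast a$ is the image of $a$ under an isomorphism, hence non-trivial since $a \ne 1$, and $c$ itself is untouched). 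A ranked braid of length $2$ cannot equal the length-$1$ ranked braid $[b]$, so by the uniqueness part of Theorem~\ref{thm:NormalForm} we reach a contradiction. Hence no such $c$ exists, and any two factor-letters from distinct factors are non-adjacent. Therefore all of $g_1, \ldots, g_k$ lie in a common factor $G_i$, and $F \subset G_i$; translating back, an arbitrary complete subgraph lies in a coset $g G_i$.

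\textbf{Expected obstacle.} The delicate point is organizing the case analysis for the two-letter word $ac$ cleanly and making sure the ``length stays $\geq 2$'' bookkeeping is airtight — in particular handling the case $\ell = i$ (fusion) separately, where the reduction is instead to a contradiction with distinct one-letter ranked braids rather than a length comparison, and handling the sub-case where $a c$ rewrites (after a fusion giving $ac = 1$) to the empty braid, which would say $b = 1$, again a contradiction. A subtler worry is whether $i \perp \ell$ or the incomparable-$\ne$ case can actually occur consistently with $b = ac$; it doesn't, precisely because the resulting ranked braid has length $2 \ne 1$, but one should state this uniformly rather than casing on it. I do not anticipate needing the quasi-median structure itself here — this lemma is purely a consequence of the combinatorics of ranked braids — so the proof should be short once the case split is set up, with the only real care being the verification that no rewriting move collapses $ac$ below length $2$ except in the $\ell = i$ case.
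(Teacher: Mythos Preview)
Your proof is correct and follows essentially the same approach as the paper: both reduce to showing that two non-trivial factor elements $a\in G_i$, $b\in G_j$ with $i\neq j$ cannot be adjacent, and both establish this via the uniqueness of ranked braids (Theorem~\ref{thm:NormalForm}). Your version is slightly more explicit in unpacking the case analysis on $i$ versus $\ell$ that the paper compresses into the phrase ``$b^{-1}c$ is represented by a ranked braid with two letters'', and your extension to arbitrary complete subgraphs (comparing every vertex directly to $1$) is marginally cleaner than the paper's route through $3$-cycles plus Corollary~\ref{cor:FactorsEmb}, but the substance is the same.
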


\begin{proof}
We start by considering the case of a $3$-cycle. So let $a,b,c$ be three pairwise adjacent vertices in $\mathfrak{M}:= \mathfrak{M}(\mathcal{I}, \mathcal{G}, \mathcal{A})$. Up to translating by an element of $\mathfrak{Q}$, we can assume for simplicity that $a=1$. Let $i,j \in I$ be such that $b \in G_i$ and $c \in G_j$. If $i \neq j$, then $b^{-1}c$ is represented by a ranked braid with two letters, so Theorem~\ref{thm:NormalForm} implies that $b^{-1}c$ is not a generator, contradicting the fact that $b$ and $c$ are adjacent in $\mathfrak{Q}$. Therefore, we must have $i=j$. 

\medskip \noindent
Thus, the conclusion of our lemma holds for $3$-cycles. Now, consider an arbitrary complete subgraph $K$ in $\mathfrak{M}$. If $K$ has $\leq 2$ vertices, there is nothing to prove; and, if $K$ has three vertices, then we already know that the desired conclusion holds. So assume that $K$ contains at least four vertices. Fix three distinct vertices $a,b,c \in K$. Again, we assume for simplicity that $a=1$. We already know that there exists some $i \in I$ such that $b,c \in G_i$. Let $d \in K$ be a vertex distinct from $a,b,c$. Since $b,c,d$ span a $3$-cycle, we know that they must all belong to the same factor. But we already know that $b$ and $c$ belongs to $G_i$ and two distinct factors intersect trivially according to Corollary~\ref{cor:FactorsEmb}. Thus, we also have $d \in G_i$. We conclude that $K \subset G_i$, as desired. 
\end{proof}

\begin{cor}\label{cor:QuandleClique}
Let $\mathfrak{Q}:= \mathfrak{Q}(\mathcal{I},\mathcal{G},\mathcal{A})$ be a quandle product. The cliques in $\mathfrak{M}(\mathcal{I}, \mathcal{G}, \mathcal{A})$ are the cosets of the factors.
\end{cor}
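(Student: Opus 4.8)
The plan is to deduce Corollary~\ref{cor:QuandleClique} from Lemma~\ref{lem:QuandleClique} together with two elementary observations: that every coset of a factor is itself a complete subgraph of $\mathfrak{M}(\mathcal{I},\mathcal{G},\mathcal{A})$, and that such a coset is maximal among complete subgraphs. Recall that a clique, by the convention recalled in the paragraph on cliques and prisms, is a maximal complete subgraph; so the statement asserts exactly that the maximal complete subgraphs of $\mathfrak{M}(\mathcal{I},\mathcal{G},\mathcal{A})$ are precisely the cosets $gG_i$ with $g \in \mathfrak{Q}$ and $i \in I$.

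First I would check that a coset $gG_i$ spans a complete subgraph. By left-invariance of the Cayley graph it suffices to treat $g=1$, i.e.\ to see that $G_i$ is complete. Any two distinct elements $x,y \in G_i$ satisfy $x^{-1}y \in G_i \setminus \{1\}$, which is one of the generators in $\bigcup \mathcal{G}$; hence $x$ and $y$ are adjacent. So $G_i$ (and therefore every coset $gG_i$) is complete.

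Next I would verify maximality. Suppose a complete subgraph $K$ of $\mathfrak{M}(\mathcal{I},\mathcal{G},\mathcal{A})$ contains $gG_i$. By Lemma~\ref{lem:QuandleClique}, $K$ is contained in some coset $hG_j$ of a factor. Then $gG_i \subseteq hG_j$, so $h^{-1}g G_i \subseteq G_j$; since $1 \in G_i$ this forces $h^{-1}g \in G_j$, hence $h^{-1}gG_i \subseteq G_j$ gives $G_i \subseteq G_j$ after translating, and thus $G_i \leq G_j$ inside $\mathfrak{Q}$. By Corollary~\ref{cor:FactorsEmb} (applied to the stable subsets $\mathrm{sc}(\{i\})$ and $\mathrm{sc}(\{j\})$, or more directly to the fact that distinct factors intersect trivially and a factor cannot be properly contained in another), this forces $i=j$ and $h^{-1}g \in G_i$, whence $hG_j = gG_i$. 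Therefore $K \subseteq gG_i$, so $K = gG_i$, proving $gG_i$ is a clique. Conversely, any clique $K$ is by Lemma~\ref{lem:QuandleClique} contained in a coset $gG_i$, which we have just shown to be a complete subgraph; by maximality of $K$ we get $K = gG_i$. This establishes the equality between cliques and cosets of factors.

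The only mildly delicate point is the containment argument $G_i \leq G_j \Rightarrow i = j$, which relies on the injectivity of factors into $\mathfrak{Q}$ and the separation of distinct factors established in Corollary~\ref{cor:FactorsEmb}; everything else is a formal manipulation of cosets in a Cayley graph. I expect no real obstacle here — this corollary is essentially a repackaging of the preceding lemma once one notes that cosets of factors are complete and pairwise incomparable under inclusion.
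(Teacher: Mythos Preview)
Your proof is correct and follows essentially the same approach as the paper: both rely on Lemma~\ref{lem:QuandleClique} to place any complete subgraph inside a coset of a factor, and on Corollary~\ref{cor:FactorsEmb} (trivial intersection of distinct factors) to rule out one coset of a factor being properly contained in another. The paper's version is simply more compressed; your expansion of the maximality step is accurate, and note that singletons $\{i\}$ are already stable, so invoking $\mathrm{sc}(\{i\})$ is unnecessary.
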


\begin{proof}
Since every complete subgraph is contained in a coset of a factor according to Lemma~\ref{lem:QuandleClique}, and since two distinct factors always intersect trivially according to Corollary~\ref{cor:FactorsEmb}, the desired conclusion follows. 
\end{proof}

\noindent
Then, we describe induced $4$-cycles in our Cayley graphs. The next lemma essentially shows that such cycles only come from the relations in the presentations defining our quandle products. 

\begin{lemma}\label{lem:QuandleCycle}
Let $\mathfrak{Q}:= \mathfrak{Q}(\mathcal{I},\mathcal{G},\mathcal{A})$ be a quandle product. For every induced $4$-cycle $C$ in $\mathfrak{M}(\mathcal{I}, \mathcal{G}, \mathcal{A})$, there exist $g \in \mathfrak{Q}$ and $a \in G_i$, $b \in G_j$ for some $i,j \in I$ satisfying either $i \perp j$ or $i<j$ such that the vertices of $C$ are $g,ga,gb,gab$. Conversely, every $4$-cycle of this form is induced. 
\end{lemma}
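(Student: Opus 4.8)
The plan is to analyze an induced $4$-cycle $C$ in $\mathfrak{M} := \mathfrak{M}(\mathcal{I},\mathcal{G},\mathcal{A})$ by first normalizing its position, then reading off the structure from the two edges incident to a chosen vertex and the clique description (Corollary~\ref{cor:QuandleClique}). After translating by a group element, I may assume the four vertices of $C$ are $1, a, ab', c$ in cyclic order, where $a$ lies in some factor $G_i$ and $b' = a^{-1}c'$ for the appropriate neighbour; more precisely, writing the cycle as $1 \sim x \sim y \sim z \sim 1$, the two edges at $1$ give $x \in G_i$ and $z \in G_j$ for some $i,j \in I$ (each single edge from $1$ lands in a factor by the definition of the Cayley graph over $\bigcup \mathcal{G}$). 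Since $C$ is induced, $x$ and $z$ are non-adjacent, so $x^{-1}z$ is not a generator; by Corollary~\ref{cor:QuandleClique} this forces $i \neq j$ (otherwise $x,z$ would lie in the common clique $G_i$ and be adjacent). The remaining vertex $y$ is a common neighbour of $x$ and $z$: the edge $x \sim y$ means $x^{-1}y \in G_{k}$ for some $k$, and the edge $z \sim y$ means $z^{-1}y \in G_{\ell}$ for some $\ell$.

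The main step is then to identify $k$ and $\ell$ and to pin down the precise relation between $i$ and $j$. I would argue that $y$, being at distance $1$ from both $x \in G_i$ and $z \in G_j$, must be of the form $xb$ with $b \in G_j$ and simultaneously of the form $z a''$ with $a'' \in G_i$-conjugate; the cleanest route is to invoke Theorem~\ref{thm:NormalForm}: express $x^{-1}y$ and $z^{-1}y$ as ranked braids and compare with the ranked braid forms of $x$ and $z$. Concretely, $y = xb$ for some $b \in G_\ell$, and the cycle closes as $z^{-1}xb \in G_{\ell'}$-type relation; working out when a two-letter word $ab$ (with $a\in G_i$, $b\in G_j$) can be rewritten as a two-letter word $b' a'$ — which is exactly what an induced $4$-cycle $1, a, ab, b'$ demands — and using Theorem~\ref{thm:NormalForm} to see this rewriting is possible only via the defining relations, one concludes that either $i \perp j$ (giving $ab = ba$, so the cycle is $1, a, ab, b$) or $i < j$ or $j < i$ (giving $ab = b(b\ast a)$, so after relabelling the cycle is $g, ga, gab, gb$ with $a \in G_i$, $b\in G_j$, $i<j$). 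The case $i,j$ incomparable and not orthogonal must be excluded: there is no relation linking $G_i$ and $G_j$, so $G_i \ast G_j$ embeds and the Cayley graph restricted to that coset is a tree of cliques, which contains no induced $4$-cycle; this again follows from Theorem~\ref{thm:NormalForm} (a braid $abab^{-1}$ or similar closing word cannot be trivial). Assembling these cases and translating back by $g$ yields the stated normal form $g, ga, gb, gab$ with $a \in G_i$, $b \in G_j$, and $i \perp j$ or $i < j$.

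For the converse, given $g \in \mathfrak{Q}$ and $a \in G_i$, $b \in G_j$ with $i \perp j$ or $i < j$, I must check that $\{g, ga, gb, gab\}$ (which form a closed $4$-walk by the defining relations: when $i\perp j$, $ab = ba$ so $gab = gba$; when $i<j$, $ab = b(b\ast a)$ and $b \ast a \in G_{j\ast i}$, so the four cosets $g, ga, gb, gab$ are pairwise adjacent along edges in $G_i$, $G_j$, $G_{j\ast i}$, $G_j$ respectively) is an \emph{induced} cycle, i.e.\ that the two diagonals $g \sim gab$ and $ga \sim gb$ are non-edges. Non-adjacency of $g$ and $gab$ follows because $ab$ is a two-letter ranked braid (when $i\le j$ it is already ranked, being of the form $b(b\ast a)$ or $ab$ with no further ranking applicable since $b\ast a \in G_{j\ast i}$ with $j \ast i \leq j$ — I should double-check $j\ast i$ is $\perp$ or $<$-below $j$ via condition (1) of a quandle system, which guarantees the braid $b(b\ast a)$ is ranked), so by Theorem~\ref{thm:NormalForm} $ab$ is not a generator; similarly $a^{-1}b$ (the element from $ga$ to $gb$) is a two-letter word in distinct factors $G_i, G_j$ with $i \neq j$, hence not a generator. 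The hard part, and the place where I expect to spend the most care, is the forward direction: controlling exactly which rewritings of a short word are permitted, and in particular cleanly ruling out the incomparable-non-orthogonal case and the degenerate cases where vertices collide (e.g.\ $a$ or $b$ trivial, or $ab$ happening to equal $1$) — all of which are handled uniformly by appealing to the uniqueness of ranked braids in Theorem~\ref{thm:NormalForm}, but require a careful case bookkeeping to present rigorously.
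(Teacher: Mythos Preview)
Your proposal is correct and follows essentially the same approach as the paper: both reduce the classification of induced $4$-cycles to understanding, via Theorem~\ref{thm:NormalForm}, when a two-letter word in distinct factors admits a second two-letter expression. The paper organizes the argument slightly more cleanly by fixing the vertex $x$ opposite to $1$ (at distance~$2$), writing it as a ranked braid $[ab]$ with $a\in G_i$, $b\in G_j$, and then classifying \emph{all} length-two paths from $1$ to $x$; this yields directly that there are exactly two such paths when $i\perp j$ or $i>j$, and only one when $i,j$ are incomparable, so the no-$K_{3,2}$ fact falls out for free. Your setup, starting instead from the two neighbours $x\in G_i$, $z\in G_j$ of $1$ and locating the fourth vertex, is equivalent but requires a bit more bookkeeping to reach the same case split. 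One small slip: in your converse argument you write $b\ast a\in G_{j\ast i}$, but the index should be $b\ast i$ (the action is by the element $b$, not by $j$); the rest of that verification is fine, since $b\ast i<j$ ensures $[b(b\ast a)]$ is ranked.
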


\begin{proof}
Our goal is to describe all the paths of length two connecting two vertices of $\mathfrak{M}:= \mathfrak{M}(\mathcal{I}, \mathcal{G}, \mathcal{A})$ at distance two. Up to translating by an element of $\mathfrak{Q}$, we can assume for simplicity that our two vertices are $1$ and $x$. Let $[ab]$ be a ranked braid representing $x$. We know that it has two letters because $x$ lies at distance two from $1$. Let $i,j \in I$ be such that $a \in G_i$ and $b \in G_j$. Notice that $i \neq j$, since otherwise $1$ and $x$ would be adjacent; and that $i<j$ cannot hold since $[ab]$ is ranked. 

\medskip \noindent
First, assume that $i \perp j$. A path of length two connecting $1$ to $x$ amounts to an equality $x=pq$ where $p \in G_r$ and $q \in G_s$ for some $r,s \in I$. Notice that $r \neq s$ since otherwise $1$ and $x$ would be adjacent. If $[pq]$ is rankable, then we must have $r<s$, in which case $[q(q \ast p)]$ is ranked. As a consequence of Theorem~\ref{thm:NormalForm}, we must have either $q=a$ and $q \ast p = b$ or $q=b$ and $q \ast p = a$. In the former case, $s=i$ and $q \ast r = j$, hence $j = q \ast r < s =i$, contradicting that $i \perp j$. In the latter case, similarly, $s=j$ and $q \ast r = i$, hence $i = q \ast r < s = j$, contradicting that $i \perp j$. So $[pq]$ must be ranked, which implies that $p=a$ and $q=b$ or $p=b$ and $q=a$. Thus, we have proved that $1, a, ab=x$ and $1,b,ba=x$ are the only two paths of length two connecting $1$ to $x$.

\medskip \noindent
Next, assume that $i>j$. Similarly, a path of length two connecting $1$ to $x$ amounts to an equality $x=pq$ where $p \in G_r$ and $q \in G_s$ for some $r,s \in I$ satisfying $r \neq s$. If $[pq]$ is rankable, then $r<s$, in which case $[q(q\ast p )]$ is ranked. As a consequence of Theorem~\ref{thm:NormalForm}, we must have $q=a$ and $q\ast p =b$. Otherwise, if $[pq]$ is ranked, it follows from Theorem~\ref{thm:NormalForm} that $p=a$ and $q=b$. Thus, we have proved that $1,a,ab=x$ and $1, a^{-1} \ast b, (a^{-1} \ast b)a=x$ are the only paths of length two connecting $1$ to $x$.

\medskip \noindent
Finally, assume that $i$ and $j$ are not comparable for either $\perp$ or $\leq$. Again, a path of length two connecting $1$ to $x$ amounts to an equality $x=pq$ where $p \in G_r$ and $q \in G_s$ for some $r,s \in I$ satisfying $r \neq s$. If $[pq]$ is rankable, then $r<s$, in which case $[q (q \ast p)]$ is ranked. As a consequence of Theorem~\ref{thm:NormalForm}, we must have $q=a$ and $q \ast p = b$. Hence $j=q \ast r < s=i$, contradicting that $i$ and $j$ are not $\leq$-comparable. So $[pq]$ must be ranked, which implies, according to Theorem~\ref{thm:NormalForm}, that $p=a$ and $q=b$. Thus, we have proved that $1,a,ab=x$ is the only path of length two connecting $1$ to $x$. 

\medskip \noindent
This concludes the proof of Lemma~\ref{lem:QuandleCycle}. Notice that, as a by-product of our argument, we get freely:

\begin{fact}
There is no induced copy of $K_{3,2}$ in $\mathfrak{M}(\mathcal{I}, \mathcal{G}, \mathcal{A})$. 
\end{fact}

\noindent
Indeed, we have shown that any two vertices at distance two are connected by at most two paths of length two. 
\end{proof}

\begin{proof}[Proof of Theorem~\ref{thm:QuandleQM}.]
Our goal is to show that Theorem~\ref{thm:QuandleQM} applies. First of all, notice that it follows from Lemmas~\ref{lem:QuandleClique} and~\ref{lem:QuandleCycle} that the square-triangle completion of $\mathfrak{M}:=\mathfrak{M}(\mathcal{I}, \mathcal{G}, \mathcal{A})$ coincides with a Cayley complex of our quandle product. Consequently, $\mathfrak{M}^{\square\triangle}$ is simply connected. 

\medskip \noindent
The fact that there is no induced copy of $K_4^-$ in $\mathfrak{M}$ is straightforward consequence of Lemma~\ref{lem:QuandleClique}. Indeed, if $x \in \mathfrak{M}$ is a vertex and $y_1,y_2,y_3 \in \mathfrak{M}$ three neighbours such that $y_2$ is adjacent to both $y_1$ and $y_3$, then it follows from Lemma~\ref{lem:QuandleClique} that there exist $i \in I$ and $a_1,a_2,a_3 \in G_i$ such that $y_1=xa_1$, $y_2= xa_2$, and $y_3=xa_3$. So $y_1$ must be adjacent to $y_3$ as $y_3= y_1(a_1^{-1}a_3)$ with $a_1^{-1}a_3 \in G_i$.

\medskip \noindent
Now, let $x \in \mathfrak{M}$ be a vertex and $y_1,y_2,y_3 \in \mathfrak{M}$ three neighbours such that $[x,y_1],[x,y_2]$ span a $4$-cycle and $[x,y_2],[x,y_3]$ a $4$-cycle. Up to translating by an element of $\mathfrak{Q}$, we can assume for simplicity that $x=1$. According to Lemma~\ref{lem:QuandleClique}, there exists $i \in I$ such that $y_2,y_3 \in G_i$. Let $j \in I$ be such that $y_1 \in G_j$. According to Lemma~\ref{lem:QuandleCycle}, three cases may happen, depending on whether $i \perp j$, $i<j$, or $i>j$. As shown below, we easily verify case by case that our three edges $[x,y_i]$ span a prism $K_3 \times K_2$. 
\begin{center}
\includegraphics[width=0.8\linewidth]{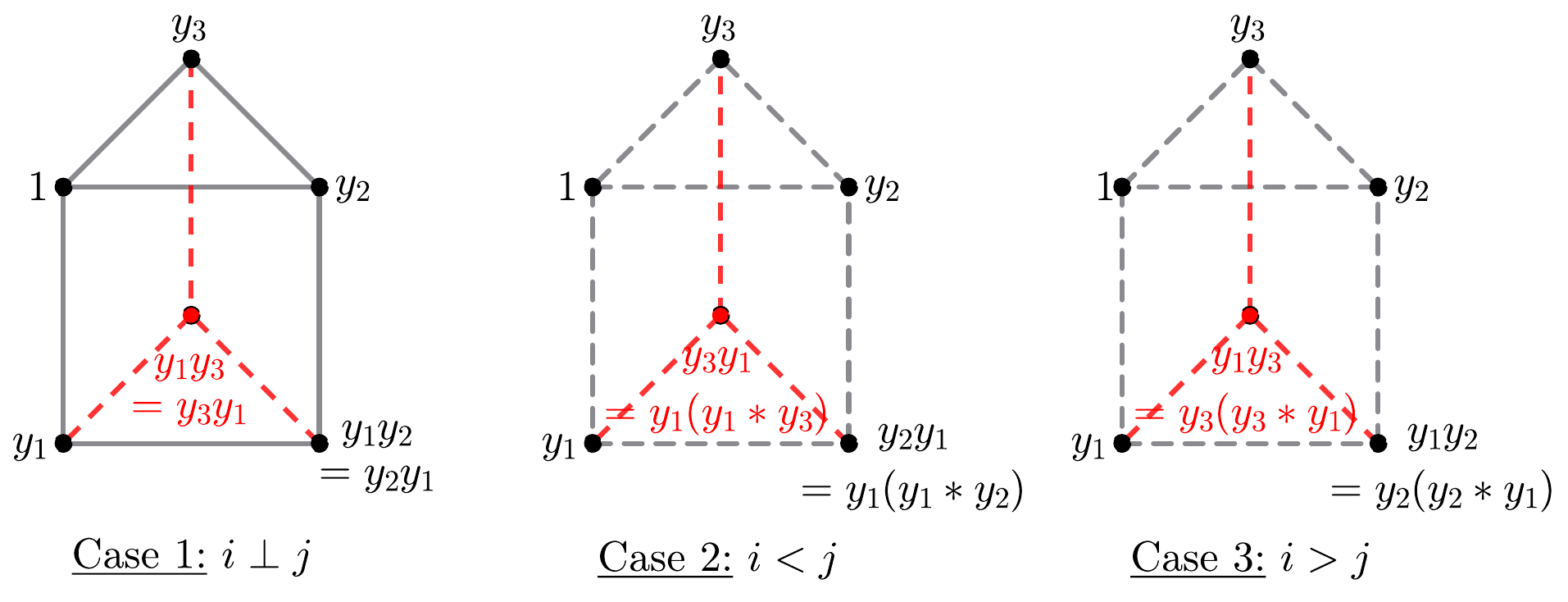}
\end{center}

\noindent
Finally, let $x \in \mathfrak{M}$ be a vertex and $a,b,c \in \mathfrak{M}$ three neighbours such that $[x,a]$, $[x,b]$, $[x,c]$ pairwise span a $4$-cycle. Up to translating by an element of $\mathfrak{Q}$, we can assume for simplicity that $x=1$. Let $i,j,k \in I$ be such that $a \in G_i$, $b \in G_j$, and $c \in G_k$. According to Lemma~\ref{lem:QuandleCycle}, the indices $i,j,k$ are pairwise comparable with respect to $\perp$ or $<$. Up to symmetry, there are four cases to consider. As shown below, we easily verify case by case that our three edges $[x,a]$, $[x,b]$, $[x,c]$ span a $3$-cube whose missing vertex is $abc$.
\begin{center}
\includegraphics[width=0.9\linewidth]{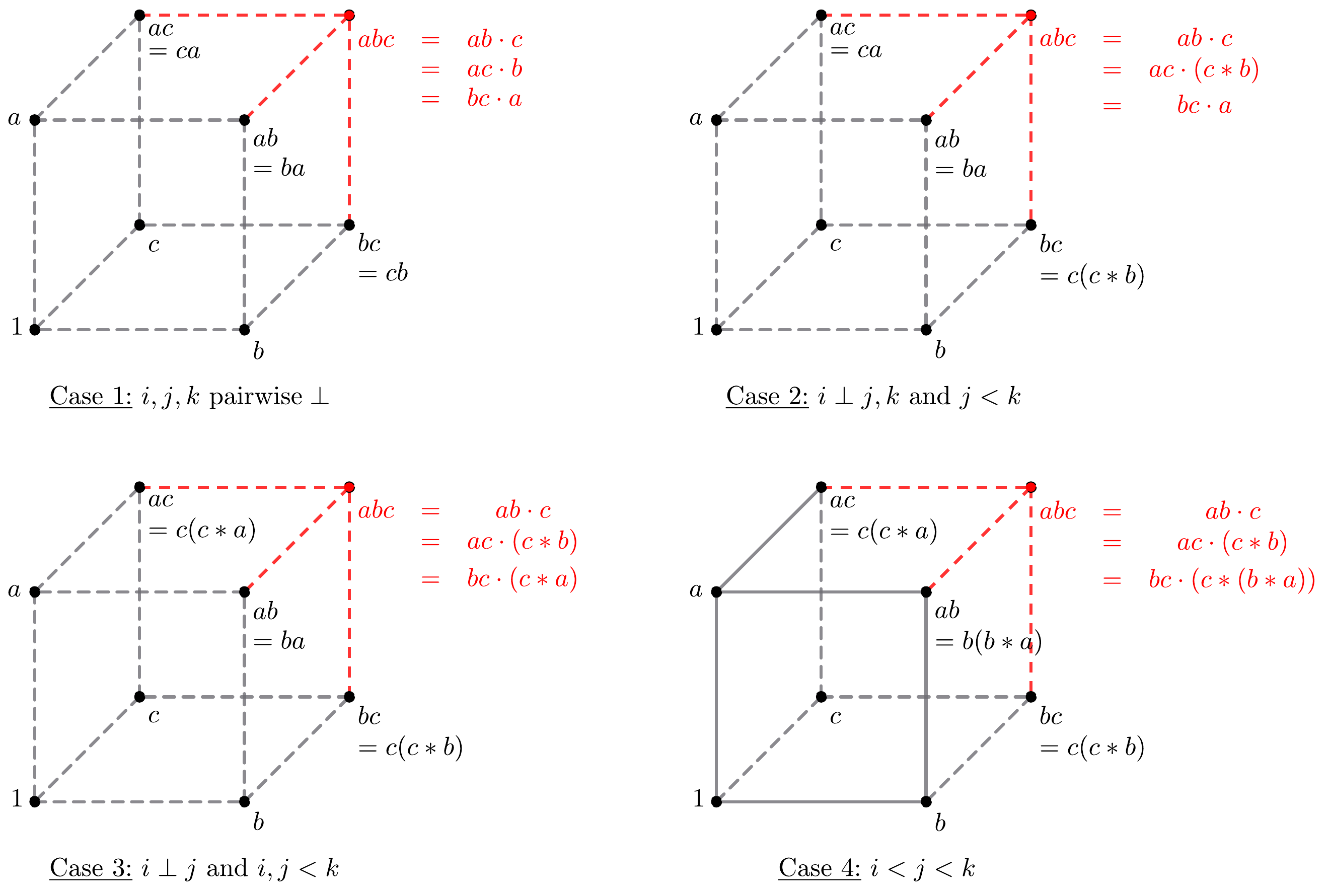}
\end{center}

\noindent
Thus, Theorem~\ref{thm:QuandleQM} applies and shows that our Cayley graph $\mathfrak{M}$ is indeed a quasi-median graph. It remains to verify the upper bound on its cubical dimension.

\medskip \noindent
Let $P$ be a prism of $\mathfrak{M}$. Up to translating by an element of $\mathfrak{Q}$, we can assume for simplicity that $1 \in P$. Let $a_1, \ldots, a_n \in P$ be neighbours of $1$ lying in distinct cliques. For every $1 \leq k \leq n$, let $i_k \in I$ be such that $a_k \in G_{i_k}$. For all distinct $1 \leq r,s \leq n$, we know that the edges $\{1,a_r\}$ and $\{1,a_s\}$ must span an induced $4$-cycle. It follows from Lemma~\ref{lem:QuandleCycle} that $i_r$ and $i_s$ are $<$- or $\perp$-comparable. Thus, we find the desired upper bound on the cubical dimension of $P$. This complete the proof of our theorem.  
\end{proof}

\subsection{The word problem again}\label{section:FirstApplications}

\noindent
We saw in Section~\ref{section:WP} how to represent efficiently elements in quandle products by some products of generators. However, our definition of ranked braids sounds articial: there is no reason to push our letters to the left as much as possible instead of pushing them to the right. In this section, we propose a different approach. 

\begin{definition}
Let $w:=s_1 \cdots s_n$ be a word where each letter $s_i$ belongs to some $G_{r(i)}$. A word $w'$ is obtained from $w$ by
\begin{itemize}
	\item a \emph{twisted right-commutation} if there exist $1 \leq i < n$ such that $r(i)>r(i+1)$ and $w'=s_1 \cdots s_{i-1} (s_i^{-1} \ast s_{i+1} ) s_i s_{i+2} \cdots s_n$;
	\item a \emph{twisted commutation} if $w'$ can be obtained from $w$ by a twisted left- or right-commutation. 
\end{itemize}
A word is \emph{$\mathfrak{Q}$-reduced} if it cannot be shortened by applying $\perp$-commutations, fusions, and twisted commutations.
\end{definition}

\noindent
It is worth mentioning that, if $[w]$ is a ranked braid, then $w$ is necessarily $\mathfrak{Q}$-reduced. However, if $w$ is $\mathfrak{Q}$-reduced, then the braid $[w]$ may not be ranked. 

\medskip \noindent
Our next statement, which will easily follow from the quasi-median geometry provided by Theorem~\ref{thm:QuandleQM}, shows how to solve the word problem in quandle products by using $\mathfrak{Q}$-reduced words instead of ranked braids. It extends what is known for graph products \cite{GreenGP} and what has been done for cactus groups \cite{MR4701892, MR4874027}. 

\begin{thm}\label{thm:ShortestWords}
Let $\mathfrak{Q}:= \mathfrak{Q}(\mathcal{I}, \mathcal{G}, \mathcal{A})$ be a quandle product. A word representing a given element of $\mathfrak{Q}$ has minimal length if and only if it is $\mathfrak{Q}$-reduced. Moreover, any two such words only differ by $\perp$- and twisted commutations. 
\end{thm}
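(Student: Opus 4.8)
The plan is to translate the word problem into the quasi-median geometry of $\mathfrak{M} := \mathfrak{M}(\mathcal{I},\mathcal{G},\mathcal{A})$ provided by Theorem~\ref{thm:QuandleQM}. A word $w = s_1 \cdots s_n$ over $\bigsqcup_i G_i \setminus \{1\}$ representing an element $g$ corresponds precisely to a path in $\mathfrak{M}$ from $1$ to $g$, where each letter $s_i \in G_{r(i)}$ traverses a single edge of the clique-coset it lies in. Conversely, every path from $1$ to $g$ whose consecutive vertices differ by generators decomposes into such a word, after merging consecutive steps that stay inside a common clique (which corresponds exactly to a fusion, or its inverse). So the length of $w$ equals the length of the associated path, and minimal-length words correspond to geodesics. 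Theorem~\ref{thm:ShortestWords} should then follow from Proposition~\ref{prop:PathAndGeodQM}: a path is geodesic iff it cannot be shortened by removing backtracks, shortening $3$-cycles, or bypassing $4$-cycles, and any two geodesics differ only by bypassing $4$-cycles.

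First I would set up the dictionary carefully: define the path $\gamma(w) = (1, s_1, s_1 s_2, \ldots, s_1 \cdots s_n)$ and check it is well-defined (each step is an edge, using that letters are non-trivial so no step is stationary). Then I would match each of the three path-shortening moves of Proposition~\ref{prop:PathAndGeodQM} with a word move. Removing a backtrack $x_i = x_{i+2}$ means $s_{i+1} = s_i^{-1}$ inside a common factor, i.e. a fusion that cancels; shortening a $3$-cycle (with $x_i, x_{i+2}$ adjacent) means $s_i, s_{i+1}$ lie in a common factor and $s_i s_{i+1} \neq 1$, i.e. again a fusion. Bypassing a $4$-cycle is the subtle case: by Lemma~\ref{lem:QuandleCycle} the induced $4$-cycles of $\mathfrak{M}$ are exactly the translates of squares $\{g, ga, gb, gab\}$ with $a \in G_i$, $b \in G_j$ and $i \perp j$ or $i < j$. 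Replacing the subpath $(x_{i-1}, x_i, x_{i+1})$ through one corner by the subpath through the opposite corner therefore corresponds to either a $\perp$-commutation ($i \perp j$, $ab \leftrightarrow ba$) or a twisted commutation ($i<j$ gives $ab = b(b\ast a)$, a twisted left-commutation reading one way and a twisted right-commutation the other). This is the step I expect to absorb most of the work, because one must be attentive to orientation — which vertex of the $4$-cycle the path passes through — to see that \emph{both} twisted left- and twisted right-commutations are needed, and to confirm that no other induced $4$-cycles exist (Lemma~\ref{lem:QuandleCycle} handles this, and the accompanying Fact rules out $K_{3,2}$, so distinct parallel classes of length-two detours do not create spurious moves).

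With the dictionary in place, the two assertions fall out. For the first: if $w$ is $\mathfrak{Q}$-reduced then $\gamma(w)$ admits none of the three shortening moves (a $\perp$- or twisted commutation followed by a fusion is exactly a $4$-cycle bypass that then collapses, but even a $4$-cycle bypass alone keeps the length equal, so what we really need is that non-reducedness yields a genuine length decrease — this is where one notes that a $\mathfrak{Q}$-reduced word cannot be shortened by fusion, matching the absence of backtracks and $3$-cycle shortenings after applying commutations; one should phrase Proposition~\ref{prop:PathAndGeodQM} as saying a path is non-geodesic iff \emph{some sequence} of moves shortens it, and argue that any such sequence can be reorganized to first perform commutations/bypasses then a strict shortening, which on the word side becomes: apply $\perp$- and twisted commutations, then a fusion that cancels). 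Conversely a geodesic, being unshortenable, gives a $\mathfrak{Q}$-reduced word. The "moreover" clause is the direct translation of the last sentence of Proposition~\ref{prop:PathAndGeodQM}: two geodesics between $1$ and $g$ differ only by $4$-cycle bypasses, hence the corresponding $\mathfrak{Q}$-reduced words differ only by $\perp$- and twisted commutations.

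The main obstacle, as indicated, is the bookkeeping around the $4$-cycle bypass: one must verify that traversing an induced $4$-cycle via its two complementary length-two paths corresponds, in every one of the cases $i \perp j$ and $i < j$ (and reading the square with either orientation), to exactly the declared word moves, and — conversely — that applying a $\perp$-commutation, twisted left-commutation, or twisted right-commutation to a word always realizes a genuine $4$-cycle bypass on $\gamma(w)$ (so no "move" escapes the geometric picture). A secondary subtlety is the interaction between fusions and commutations when a fusion produces the trivial element, momentarily breaking the path-into-edges correspondence; this is handled by treating a trivial fusion as a backtrack removal and a non-trivial fusion as a $3$-cycle shortening, and by observing that a word move may require a fusion \emph{before} a commutation becomes visible — but since we only claim that minimal-length words are characterized, not that every reduction sequence has a fixed shape, it suffices to invoke the confluence already packaged in Proposition~\ref{prop:PathAndGeodQM}.
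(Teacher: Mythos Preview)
Your proposal is correct and follows essentially the same approach as the paper: set up the word--path dictionary in the Cayley graph $\mathfrak{M}$, use Lemmas~\ref{lem:QuandleClique} and~\ref{lem:QuandleCycle} to match backtrack removal and $3$-cycle shortening with fusions and $4$-cycle bypasses with $\perp$- and twisted commutations, and then invoke Proposition~\ref{prop:PathAndGeodQM}. The paper's proof is simply the two-sentence version of what you wrote; your extra discussion of orientation in the $4$-cycle case and of the fusion/commutation interplay is accurate but more detailed than necessary.
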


\begin{proof}
In $\mathfrak{M}:= \mathfrak{M}(\mathcal{I}, \mathcal{G},\mathcal{A})$, as in any Cayley graph, a path is naturally labelled by a word of generators. As a consequence of the descriptions of $3$- and $4$-cycles provided by Lemmas~\ref{lem:QuandleClique} and~\ref{lem:QuandleCycle}, we know that removing a backtrack or shortening a $3$-cycle amounts to applying a fusion to the corresponding word; and that bypassing a $4$-cycle amounts to applying a $\perp$- or twisted commutation to the word. Therefore, since $\mathfrak{M}$ is quasi-median according to ~\ref{thm:QuandleQM}, Proposition~\ref{prop:PathAndGeodQM} applies and yields the desired conclusion. 
\end{proof}

\section{Combination results}\label{section:Combination}

\noindent
A natural question to ask is which group properties are preserved by quandle products. In \cite{QM}, we proved such combination results for groups acting on quasi-median graphs under some assumptions. The main assumption is that the action must be \emph{topical-transitive} (see Definition~\ref{def:Topic}). Proposition~\ref{prop:TopicHolonomy} below shows that this assumption holds exactly when the holonomy is trivial. This allows us to apply the theory developed in \cite{QM} to quandle products with trivial holonomy.

\subsection{Proper cubulation}\label{section:ProperCub}

\noindent
We start by proving that quandle products preserve proper actions on median graphs. (See Conjecture~\ref{conj:CAT} and the related discussion for cocompact actions.) 

\begin{thm}\label{thm:NPC}
Let $\mathfrak{Q}:= \mathfrak{Q}(\mathcal{I}, \mathcal{G}, \mathcal{A})$ be a quandle product with trivial holonomy. 
\begin{itemize}
	\item If every group in $\mathcal{G}$ acts properly on a median graph, then so does $\mathfrak{Q}$.
	\item If $\mathcal{I}$ is finite and if every group in $\mathcal{G}$ acts metrically properly on a median graph, then so does $\mathfrak{Q}$.
\end{itemize}
\end{thm}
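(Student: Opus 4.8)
The plan is to deduce this from the quasi-median geometry of $\mathfrak{M}(\mathcal{I},\mathcal{G},\mathcal{A})$ established in Theorem~\ref{thm:QuandleQM}, together with the machinery of \cite{QM} for groups acting on quasi-median graphs. The starting point is that $\mathfrak{Q}$ acts on the quasi-median graph $\mathfrak{M}$, and the cliques of $\mathfrak{M}$ are exactly the cosets of the factors (Corollary~\ref{cor:QuandleClique}), so the rotative-stabiliser of the hyperplane through a factor-coset $gG_i$ is a conjugate of $G_i$ acting on its clique as $G_i$ acts on itself by left multiplication. The crucial hypothesis is that $\mathfrak{Q}$ has trivial holonomy: by Proposition~\ref{prop:TopicHolonomy} (cited forward in the excerpt), this is precisely what is needed for the action $\mathfrak{Q}\curvearrowright\mathfrak{M}$ to be topical-transitive in the sense of \cite{QM}. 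That places us squarely in the setting where the combination theorems of \cite{QM} apply: a group acting topically-transitively on a quasi-median graph whose clique-stabilisers act on their cliques in a prescribed way.

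First I would set up notation and verify the hypotheses of the relevant theorem from \cite{QM}: the action is topical-transitive (by trivial holonomy and Proposition~\ref{prop:TopicHolonomy}), the quotient $\mathfrak{M}/\mathfrak{Q}$ is described combinatorially by $\mathcal{I}$, and the rotative-stabilisers of hyperplanes are the (conjugates of) factors $G_i$, each acting on its clique freely and transitively on vertices (equivalently, as a regular action). The theory of \cite{QM} then produces, from a proper action of each clique-stabiliser $G_i$ on a median graph $Y_i$, a median graph $Y$ on which $\mathfrak{Q}$ acts, built by "blowing up" each clique of $\mathfrak{M}$ using the spaces $Y_i$ and gluing along the hyperplane combinatorics. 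I would invoke the precise statement from \cite{QM} that says: if each factor acts properly on a median graph then $\mathfrak{Q}$ acts properly on a median graph. The properness of the action on $Y$ follows because a stabiliser of a vertex of $Y$ projects, through finitely many (or, in the bounded-distance case, controlled) hyperplane-stabilisers, into stabilisers of vertices of the $Y_i$; triviality of holonomy is what prevents the "twisting" that would otherwise break this.

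For the second bullet, the finiteness of $\mathcal{I}$ plus metrically proper actions of the factors on median graphs $Y_i$ gives a metrically proper action of $\mathfrak{Q}$ on the blown-up median graph $Y$: here one needs that distances in $Y$ are controlled by distances in $\mathfrak{M}$ (which counts separating hyperplanes, cf. Theorem~\ref{thm:QMbig}) together with distances in the fibres $Y_i$, and with only finitely many factor-types this combination is quasi-isometrically faithful, so a metrically proper action is preserved. Concretely I would cite the metric-properness version of the cubulation/medianisation theorem from \cite{QM} and check that its hypotheses — topical-transitivity, finitely many $\mathfrak{Q}$-orbits of hyperplanes, metrically proper actions of rotative-stabilisers — are all met.

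The main obstacle I expect is bookkeeping around holonomy rather than anything deep: one must be careful that, when blowing up cliques and gluing the median spaces $Y_i$ across the quasi-median structure, the gluing maps are well-defined and $\mathfrak{Q}$-equivariant. This is exactly where trivial holonomy is indispensable — as noted in Remark~\ref{remark:NoHolonomy}, trivial holonomy lets one canonically identify conjugate factors $G_i$ and $G_{j\ast i}$, so the median spaces attached to parallel cliques agree and the identifications compose consistently around loops in $\mathfrak{M}$. Without this, the system of isomorphisms $\mathscr{S}(\mathcal{I},\mathcal{G},\mathcal{A})$ would act on the $Y_i$'s by a possibly nontrivial holonomy group, and there would be no coherent way to glue. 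The rest — properness, the median property of $Y$, equivariance — is then a direct appeal to \cite{QM}, whose abstract framework was designed for precisely this situation (it already covers graph products and cactus groups as special cases).
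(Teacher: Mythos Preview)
Your proposal is correct and follows the same route as the paper: verify topical-transitivity via Proposition~\ref{prop:TopicHolonomy}, identify clique-stabilisers as conjugates of factors via Corollary~\ref{cor:QuandleClique}, and then invoke the combination results from \cite{QM} (specifically \cite[Proposition~7.4]{QM} for properness and \cite[Proposition~5.22]{QM} for metric properness). Two small corrections: the rotative-stabiliser of the hyperplane through $gG_i$ is \emph{not} in general a conjugate of $G_i$ (only for minimal $i$, cf.\ Lemma~\ref{lem:RotativeStab}) --- what you actually need, and what the paper uses, is that \emph{clique}-stabilisers are conjugates of factors; and the finiteness hypothesis the paper feeds into \cite[Proposition~5.22]{QM} for the second bullet is that every vertex lies in only finitely many cliques (immediate from $|\mathcal{I}|<\infty$), rather than finitely many hyperplane-orbits.
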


\noindent
It is worth mentioning that, as a consequence of Example~\ref{ex:SemiDirect}, the Baumslag-Solitar group $\mathrm{BS}(1,2)= \mathbb{Z}[1/2] \rtimes \mathbb{Z}$ is a quandle product of $\mathbb{Z}[1/2]$ and $\mathbb{Z}$. Despite the fact that $\mathbb{Z}[1/2]$ and $\mathbb{Z}$ both admit a proper action on a median graph, $BS(1,2)$ cannot act properly on a median graph \cite{MR4645691}. This shows that assuming the triviality of the holonomy in Theorem~\ref{thm:NPC} is necessary.

\medskip \noindent
As already said, our goal is to apply the theory built in \cite{QM}, which allows us to prove combination results starting from specific actions on quasi-median graphs. First of all, let us recall the following central definition:

\begin{definition}\label{def:Topic}
Let $G$ be a group acting on a quasi-median graph $X$. The action is \emph{topical-transitive} if
\begin{itemize}
	\item for all hyperplane $J$, clique $C \subset J$, and $g \in \mathrm{stab}(J)$, there exists $h \in \mathrm{stab}(C)$ such that $g$ and $h$ induce the same permutation on the set of fibres delimited by $J$;
	\item for every clique $C$, either $C$ is finite and $\mathrm{stab}(C)= \mathrm{fix}(C)$ or $\mathrm{stab}(C) \curvearrowright C$ is free-transitive.
\end{itemize}
\end{definition}

\noindent
Notice that, when considering quandle products acting on their quasi-median Cayley graphs, clique-stabilisers are always free-transitive according to Corollary~\ref{cor:QuandleClique}, so the second item of the previous definition is always satisfied. Our next result characterises exactly when such actions are topical-transitive. 

\begin{prop}\label{prop:TopicHolonomy}
Let $\mathfrak{Q}:= \mathfrak{Q}(\mathcal{I}, \mathcal{G}, \mathcal{A})$ be a quandle product. The action $\mathfrak{Q} \curvearrowright \mathfrak{M}(\mathcal{I}, \mathcal{G}, \mathcal{A})$ is topical-transitive if and only if $(\mathcal{I}, \mathcal{G}, \mathcal{A})$ has trivial holonomy. 
\end{prop}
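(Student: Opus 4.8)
The plan is to analyse the two bullets of topical-transitivity (Definition~\ref{def:Topic}) directly in the Cayley graph $\mathfrak{M} := \mathfrak{M}(\mathcal{I}, \mathcal{G}, \mathcal{A})$, using the explicit descriptions of cliques and $4$-cycles from Corollary~\ref{cor:QuandleClique} and Lemma~\ref{lem:QuandleCycle}. As noted in the excerpt, the second bullet is automatic since clique-stabilisers act freely transitively (the cliques being cosets of factors), so the whole statement reduces to the first bullet: for every hyperplane $J$, every clique $C \subset J$, and every $g \in \mathrm{stab}(J)$, there is $h \in \mathrm{stab}(C)$ inducing the same permutation of the fibres of $J$ as $g$. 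The strategy is to show that this "correction" condition translates exactly into the vanishing of the compositions of twist-isomorphisms that define the holonomy.

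First I would fix a base hyperplane: by translating, take $C$ to be the clique through $1$ dual to a factor $G_i$, so $C = G_i$ and $J$ is the hyperplane containing it. Using Lemma~\ref{lem:QuandleCycle} and the ladder/parallelism description of hyperplanes (Theorem~\ref{thm:QMbig}, Lemma~\ref{lem:Ladder}), I would identify the cliques in $J$: starting from $C = G_i$, a parallel clique is reached by a $4$-cycle $1, a, ga, g$ with $a \in G_i$, $g \in G_j$ for some $j$ with $i < j$ or $i \perp j$; in the first case the parallel clique is the coset $gG_i = G_{g\ast i}g$, dual to the factor $G_{g\ast i}$, and moving along $J$ composes the twist-isomorphisms $h \mapsto g \ast h$. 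Thus the fibres of $J$ are indexed (via carriers $\cong C \times F$) in a way that records a word of factor-elements, and an element $g \in \mathrm{stab}(J)$ that happens to fix a fibre setwise but acts on $C$ via the induced automorphism $\phi \in \mathrm{Aut}(G_i)$ produces, by construction, precisely an element of $\mathrm{Hol}(i)$; conversely every element of $\mathrm{Hol}(i)$ arises this way from some element of $\mathrm{stab}(J)$ stabilising a fibre.

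With this dictionary, the argument splits in two directions. If the holonomy is trivial, then given $g \in \mathrm{stab}(J)$, pick any fibre $F$ and any vertex $v \in g(F) \cap$ (suitable coset); the element $g$ followed by an element of $\mathrm{stab}_\circlearrowright(J)$ that transports $F$ back is forced to act on each clique by a holonomy automorphism, hence trivially, so after this correction $g$ coincides on $C$ with an element of $\mathrm{stab}(C)$ inducing the same fibre-permutation — giving the required $h$. Conversely, if some $\mathrm{Hol}(i)$ contains a non-trivial $\phi$, realise $\phi$ as the automorphism induced on $C = G_i$ by an explicit element $g \in \mathfrak{Q}$ obtained as the product $a_1 a_2 \cdots a_n$ witnessing $\phi = (a_1 \ast (a_2 \ast (\cdots \ast (a_n \ast -))))$ with $a_1 \ast(\cdots\ast(a_n\ast i))=i$; one checks this $g$ stabilises $J$ and a fibre of $J$ but acts on $C$ by $\phi \neq \mathrm{id}$, while every element of $\mathrm{stab}(C)$ acts on $C$ by left multiplication and so can never be corrected to match $g$ on $C$ — violating topical-transitivity.

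The main obstacle I anticipate is bookkeeping the indexing of the fibres of $J$ and verifying that the permutation of fibres induced by $g \in \mathrm{stab}(J)$ is genuinely governed by the holonomy and nothing else — i.e.\ that there is no extra freedom coming from $\perp$-relations or from cliques in $J$ not of the form $G_{g\ast i}$. Handling this cleanly requires care with Lemma~\ref{lem:Ladder} and the product decomposition $N(J) = C \times F$: one must show that two parallel cliques in $J$ are identified by a \emph{unique} composition of twist-isomorphisms (this is exactly where trivial holonomy makes the identification canonical, cf.\ Remark~\ref{remark:NoHolonomy}), and that the $\perp$-generated $4$-cycles contribute only to transversality with other hyperplanes, not to the internal structure of $J$. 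Once that is pinned down, both implications follow by the case analysis sketched above.
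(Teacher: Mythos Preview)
Your strategy is correct and matches the paper's: parallel transport of edge-labels along ladders in a hyperplane is governed exactly by the holonomy, and your converse direction (realising a non-trivial $\varphi\in\mathrm{Hol}(i)$ by an explicit word, then noting that $G_i$ acts freely on fibres so no element of $\mathrm{stab}(C)=G_i$ can match a permutation fixing one fibre but not another) is essentially the paper's argument.

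The gap is in your forward direction. Invoking ``an element of $\mathrm{stab}_\circlearrowright(J)$ that transports $F$ back'' does not work as stated: you have not identified $\mathrm{stab}_\circlearrowright(J)$, you have not shown it acts transitively on fibres, and it is in any case not the object that makes the argument run. What the paper uses, and what cleanly resolves the bookkeeping obstacle you flag in your last paragraph, is the notion of \emph{diaphanous elements} (Lemma~\ref{lem:PathFibreDiaph} and Corollary~\ref{cor:DiaphElements}): $\mathrm{Diaph}(i)$ is the set of elements through which a letter of $G_i$ can be shuffled and re-emerge in $G_i$. Corollary~\ref{cor:HypStab} identifies $\mathrm{Diaph}(i)$ with the stabiliser of the fibre through $1$ and gives the decomposition $\mathrm{stab}(J_i)=G_i\cdot\mathrm{Diaph}(i)$. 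Lemma~\ref{lem:ParallelEdges} then makes the ladder computation precise: across each $4$-cycle of the ladder the rung-label is either unchanged or transformed by a single isomorphism from $\mathscr{S}(\mathcal{I},\mathcal{G},\mathcal{A})$, so composing along a ladder from $(w,wg)$ to $(1,g')$ for $w\in\mathrm{Diaph}(i)$ shows $g'=\varphi(g)$ for some $\varphi\in\mathrm{Hol}(i)$. This also disposes of your worry about $\perp$-contributions: those rungs contribute the identity. With this in hand both directions are immediate: trivial holonomy forces every $w\in\mathrm{Diaph}(i)$ to fix every fibre, so any element $ad\in\mathrm{stab}(J_i)$ with $a\in G_i$, $d\in\mathrm{Diaph}(i)$ acts on fibres exactly as $a\in\mathrm{stab}(C)$ does.
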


\noindent
In order to prove the proposition, we need to describe hyperplanes in quasi-median Cayley graphs of quandle products. For this purpose, we first have to introduce some terminology. Roughly speaking, given a quandle product $\mathfrak{Q}(\mathcal{I},\mathcal{G},\mathcal{A})$ and an $i \in I$, a word $w$ is \emph{$i$-diaphanous} if, for some (or equivalently, any) non-trivial element $s \in G_i$, the letter $s$ in $sw$ can be shuffled to the right by using the relations defining the quandle product. During the process, the letter $s$ becomes a possibly different letter $r \in G_j$. Then, $w$ is \emph{$(i,j)$-diaphanous}. More formally:

\begin{definition}
Let $(\mathcal{I}, \mathcal{G}, \mathcal{A})$ be a quandle system.
\begin{itemize}
	\item The empty word is $i$- and $(i,i)$-diaphanous for every $i \in I$.
	\item Let $w$ be a word of positive length. Write $w=w_0s$ where $s \in G_k$ is its last letter. If $w_0$ is $(i,j)$-diaphanous for some $i,j \in I$ and $k$ is $<-$ or $\perp$-comparable with $j$, then $w$ is $i$-diaphanous. Moreover, if $k \perp j$ or $k<j$, then $w$ is $(i,j)$-diaphanous; and, if $k>j$, then $w$ is $(i,s \ast j)$-diaphanous. 
\end{itemize}
\end{definition}

\noindent
Let us record a few straightforward consequences of this definition:

\begin{fact}\label{fact:Diaph}
Let $(\mathcal{I}, \mathcal{G}, \mathcal{A})$ be a quandle system. The following assertions hold:
\begin{itemize}
	\item For all $i,j \in I$, an $(i,j)$-diaphanous word is $i$-diaphanous. Conversely, for every $i \in I$, every $i$-diaphanous word is $(i,j)$-diaphanous for some $j \in I$. 
	\item For all $i,j,k \in I$, if $w_1$ is an $(i,j)$-diaphanous word and if $w_2$ is a $(j,k)$-diaphanous word, then $w_1w_2$ is $(i,k)$-diaphanous. 
	\item For all $i,j \in I$, if $w=w_1w_2$ is an $(i,j)$-diaphanous word, then there exists $\ell \in I$ such that $w_1$ is $(i, \ell)$-diaphanous and $w_2$ is $(\ell,j)$-diaphanous. \qed
\end{itemize}
\end{fact}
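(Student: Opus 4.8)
The plan is to verify each of the three assertions by straightforward induction on the length of the word(s) involved, unfolding the recursive definition of $(i,j)$-diaphanous words. All three statements are essentially bookkeeping once the definition is set up correctly; the only subtlety is keeping track of how the ``current'' index $j$ transforms as letters are appended.

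\emph{First assertion.} The fact that an $(i,j)$-diaphanous word is $i$-diaphanous is immediate from the definition: the inductive clause that declares a word $w = w_0 s$ to be $(i,j)$-diaphanous (for whichever $j$) simultaneously declares it to be $i$-diaphanous, and the base case (empty word) is $i$- and $(i,i)$-diaphanous by fiat. For the converse, I would argue by induction on the length of $w$. If $w$ is empty it is $(i,i)$-diaphanous. If $w = w_0 s$ with $s \in G_k$ is $i$-diaphanous, then by definition $w_0$ is $(i,j)$-diaphanous for some $j$ with $k$ being $<$- or $\perp$-comparable to $j$; inspecting the three cases ($k \perp j$, $k < j$, $k > j$) shows $w$ is $(i,j)$-diaphanous, $(i,j)$-diaphanous, or $(i, s \ast j)$-diaphanous respectively, so in each case $w$ is $(i,\ell)$-diaphanous for a suitable $\ell \in I$.

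\emph{Second assertion (concatenation).} Fix $i,j,k$, let $w_1$ be $(i,j)$-diaphanous and $w_2$ be $(j,k)$-diaphanous, and induct on the length of $w_2$. If $w_2$ is empty then $j = k$ and $w_1 w_2 = w_1$ is $(i,j)$-diaphanous $= (i,k)$-diaphanous. Otherwise write $w_2 = w_2' t$ with $t \in G_m$; by definition of $(j,k)$-diaphanous there is $\ell$ with $w_2'$ being $(j,\ell)$-diaphanous, $m$ being $<$- or $\perp$-comparable to $\ell$, and $k$ equal to $\ell$ (if $m \perp \ell$ or $m < \ell$) or to $t \ast \ell$ (if $m > \ell$). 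By the induction hypothesis $w_1 w_2' = w_1 w_2'$ is $(i,\ell)$-diaphanous; appending the last letter $t$ and applying the same inductive clause of the definition (the comparability condition of $m$ with $\ell$ is unchanged) shows $w_1 w_2 = w_1 w_2' t$ is $(i,k)$-diaphanous.

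\emph{Third assertion (factorisation).} Again induct on the length of $w_2$, where $w = w_1 w_2$ is $(i,j)$-diaphanous. If $w_2$ is empty, take $\ell = j$; then $w_1 = w$ is $(i,j)$-diaphanous and $w_2$ is $(j,j)$-diaphanous. Otherwise write $w_2 = w_2' t$ with $t \in G_m$, so $w = (w_1 w_2') t$; by the definition of $(i,j)$-diaphanous applied to $w$, the prefix $w_1 w_2'$ is $(i,j')$-diaphanous for some $j'$ with $m$ suitably comparable to $j'$ and $j$ determined from $j'$ and $t$ exactly as in the clause. Applying the induction hypothesis to $w_1 w_2'$ (which has $w_2'$ shorter than $w_2$) produces $\ell \in I$ with $w_1$ being $(i,\ell)$-diaphanous and $w_2'$ being $(\ell, j')$-diaphanous; appending $t$ and reusing the same clause gives that $w_2 = w_2' t$ is $(\ell, j)$-diaphanous, completing the induction. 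The main (mild) obstacle throughout is simply matching up the case analysis in each inductive step with the recursive clause of the definition and verifying that the comparability conditions and the transformation $j' \mapsto j$ are preserved under concatenation; there is no genuine difficulty, which is why the statement is flagged as a fact rather than a lemma.
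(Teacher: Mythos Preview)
Your proposal is correct and matches the paper's approach: the paper simply records Fact~\ref{fact:Diaph} with a \qed and no argument, treating all three items as immediate from the recursive definition, and your inductions on word length are exactly the routine unpacking one would supply if pressed. There is nothing to add or correct.
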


\noindent
Our first lemma describe fibres of hyperplanes in quasi-median Cayley graphs of quandle products. 

\begin{lemma}\label{lem:PathFibreDiaph}
Let $(\mathcal{I}, \mathcal{G}, \mathcal{A})$ be a quandle system. Let $J_i$ denote the hyperplane of $\mathfrak{M}:= \mathfrak{M} (\mathcal{I}, \mathcal{G}, \mathcal{A})$ containing the clique $G_i$. A path $\gamma$ starting from $1$ is contained in a fibre of $J_i$ if and only if the word $w$ labelling $\gamma$ is $i$-diaphanous. Moreover, if $j \in I$ is such that $w$ is $(i,j)$-diaphanous, then the clique of $J_i$ containing the terminus of $\gamma$ is a coset of~$G_j$.
\end{lemma}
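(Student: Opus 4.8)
The plan is to prove both directions by induction on the length of the path $\gamma$, using the combinatorial description of fibres as connected components of $N(J_i) \backslash\backslash J_i$. The base case is immediate: a path of length $0$ is just the vertex $1$, which lies in the fibre of $J_i$ containing $1$, and the empty word is $i$- (indeed $(i,i)$-) diaphanous, with the clique through $1$ being $G_i$ itself.

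For the inductive step, write $\gamma = \gamma_0 \cdot e$ where $e$ is the last edge, labelled by a generator $s \in G_k$, so that $\gamma_0$ terminates at some vertex $g$ and $\gamma$ terminates at $gs$. First I would handle the "only if" direction: assume $\gamma$ lies in a fibre $F$ of $J_i$. Then $\gamma_0 \subset F$ as well (fibres are induced subgraphs), so by induction the word $w_0$ labelling $\gamma_0$ is $(i,j)$-diaphanous for some $j$, and the clique $C$ of $J_i$ through $g$ is a coset $gG_j$ (here I should note $g \in C$ means $C = g'G_j$ for some $g'$ with $g'G_j = gG_j$). The edge $e$ from $g$ to $gs$ stays in $F$, hence is \emph{not} an edge of $J_i$; since $C = gG_j$ is a clique of $J_i$ through $g$, the edge $e$ together with the edges of $C$ must span prisms — i.e. the edge $\{g, gs\}$ and the clique $gG_j$ lie in a common prism $N(J_i)$-locally, which by the description of induced $4$-cycles in Lemma~\ref{lem:QuandleCycle} (translated so $g = 1$) forces $k$ and $j$ to be $<$- or $\perp$-comparable, and moreover the new clique of $J_i$ through $gs$ is: $sG_j = G_jG_j/\!\!\sim$... more carefully, if $k \perp j$ or $k < j$ then $s$ commutes past $G_j$ giving the same clique-coset "type" $G_j$, so the new clique is $gsG_j$ and $w = w_0 s$ is $(i,j)$-diaphanous; if $k > j$ then $sG_j = G_{s\ast j}s$, so the new clique through $gs$ is $g G_{s \ast j} s = (gs) G_{...}$ — I'd verify that it is a coset of $G_{s \ast j}$ — and $w$ is $(i, s\ast j)$-diaphanous. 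This matches the recursive definition exactly. The case where $k$ and $j$ are incomparable is excluded precisely because then, by Lemma~\ref{lem:QuandleCycle}, the edge $e$ and the clique $gG_j$ would not span a $4$-cycle, so $e$ would be \emph{parallel to no edge of $J_i$ within the carrier}, meaning $e \in J_i$ or $e$ leaves $N(J_i)$ — contradicting $\gamma \subset F$.

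Conversely, for the "if" direction, assume $w = w_0 s$ is $i$-diaphanous. Then by definition $w_0$ is $(i,j)$-diaphanous for some $j$ with $k$ $<$- or $\perp$-comparable to $j$, so by induction $\gamma_0 \subset F$ for some fibre $F$ and the clique of $J_i$ through $g$ is $gG_j$. Running the same case analysis via Lemma~\ref{lem:QuandleCycle} in reverse: comparability of $k$ and $j$ guarantees that $\{g, gs\}$ and a clique of $gG_j$ span an induced $4$-cycle inside $\mathfrak{M}$, hence both cliques lie in a prism that is part of $N(J_i)$, hence the edge $e$ is not in $J_i$ and stays inside $N(J_i)$; being adjacent to $g \in F$, it stays in the same fibre $F$. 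The "moreover" clause then follows by reading off which coset the new clique is, exactly as in the previous paragraph.

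The main obstacle I anticipate is the bookkeeping of cosets versus clique-labels: Corollary~\ref{cor:QuandleClique} identifies cliques with cosets of factors, but when $k > j$ the relation $sG_j = G_{s \ast j} s$ means the clique through $gs$ is naturally written as a \emph{right} multiple, and I must check carefully that $g G_{s\ast j} s$ really equals $(gs)\,h\,G_{s \ast j}$ for a suitable $h$, i.e. that it is genuinely a coset of $G_{s\ast j}$ and that this is the clique-type the definition predicts. This is where the quandle relation (condition (2) of a quandle system) and the well-definedness of $s \ast j$ (condition (1)) get used, together with $G_{s \ast j} = s G_j s^{-1}$ inside $\mathfrak{Q}$. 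Once the translation between the "diaphanous" recursion and the local prism structure around $J_i$ is set up cleanly, the induction is routine; the whole point is that the recursive definition of $(i,j)$-diaphanous was engineered to mirror step-by-step how one walks within $N(J_i) \backslash\backslash J_i$.
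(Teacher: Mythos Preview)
Your proposal is correct and follows essentially the same inductive strategy as the paper's proof: decompose $\gamma=\gamma_0\cup e$, apply the inductive hypothesis to $\gamma_0$, and then use the local structure of the carrier together with Lemma~\ref{lem:QuandleCycle} to handle the last edge. The only cosmetic difference is that the paper packages the step ``the edge $e$ and the clique $gG_j$ must span a $4$-cycle'' via Lemma~\ref{lem:Ladder}, whereas you invoke the product decomposition $N(J_i)=C\times F$ from Theorem~\ref{thm:QMbig} directly; since Lemma~\ref{lem:Ladder} is stated precisely as an immediate consequence of that decomposition, the two arguments are the same. Your anticipated bookkeeping obstacle dissolves once you note that $s^{-1}G_js=G_{s\ast j}$ in $\mathfrak{Q}$ (from $as=s(s\ast a)$), so the parallel clique through $gs$ is exactly $gsG_{s\ast j}$, matching the recursive definition of $(i,s\ast j)$-diaphanous.
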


\begin{proof}
Let $\gamma$ be a path starting from $1$ and assume that $\gamma$ is contained in a fibre of $J_i$. Let $w$ denote the word labelling $\gamma$. If $\gamma$ has is reduced to a single vertex, necessarily $1$, then it is clear that $w$ is $(i,i)$-diaphanous and that the clique of $J_i$ containing the terminus of $\gamma$ is a coset of $G_i$. From now on, assume that $\gamma$ has positive length. So we can decompose $\gamma$ as a concatenation $\gamma_0 \cup e$ where $e$ denotes the last edge of $\gamma$. Accordingly, the label $w$ decomposes as $w_0s$ where $s$ is the last letter of $w$. Arguing by induction over the length of $w$, we already know that $w_0$ is $(i,j)$-diaphanous for some $j \in I$ and that the clique of $J_i$ containing the terminus of $\gamma_0$ is a coset of $G_j$. It follows from Lemma~\ref{lem:Ladder} that, geometrically, the situation is the following:
\begin{center}
\includegraphics[width=0.8\linewidth]{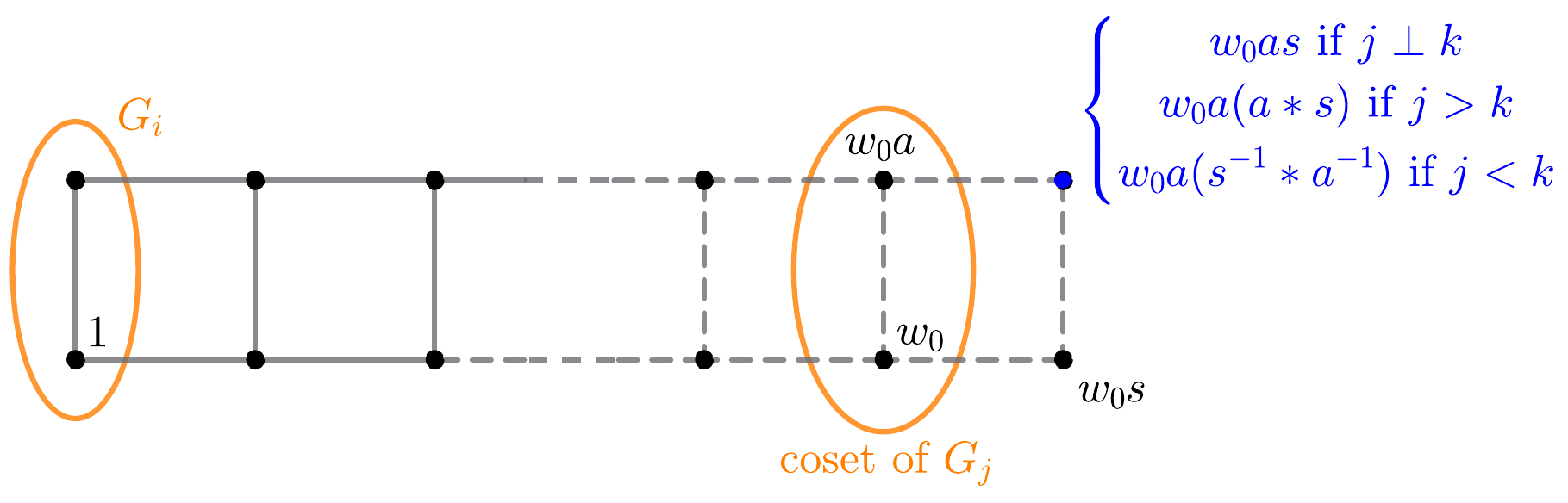}
\end{center}

\noindent
Let $k \in I$ be such that $s \in G_k$. By applying Lemma~\ref{lem:QuandleCycle} to the rightmost $4$-cycle in the figure above, we know that $j$ and $k$ are $<$- or $\perp$-comparable and that the rightmost edge is contained in a coset of $G_\ell$ where $\ell=j$ if $j \perp k$ or $j>k$ and $\ell= s^{-1} \ast j$ if $j<k$. We conclude that $s$ is $(j,\ell)$-diaphanous and that the clique of $J_i$ containing $w$ is a coset of $G_\ell$, as desired.

\medskip \noindent
Conversely, assume that $w$ is $(i,j)$-diaphanous for some $i,j \in I$. If $w$ is the empty word, then $\gamma$ is reduced to a single vertex, namely $1$, and it is clear that $\gamma$ is contained in a fibre of $J_i$ and that the clique of $J_i$ containing the terminus of $\gamma$ is $G_j=G_i$. From now on, assume that $w$ has positive length. So we can decompose $w$ as a concatenation $w_0s$ where $s$ denotes the last letter of $w$. Accordingly, $\gamma$ decomposes as $\gamma_0 \cup e$ where $\gamma_0$ is labelled by $w_0$ and where $e$ is the last edge of $\gamma$, which is then labelled by $s$. Let $k,\ell \in I$ be such that $w_0$ is $(i,k)$-diaphanous and $s \in G_\ell$. Arguing by induction over the length of $w$, we already know that $\gamma_0$ is contained in a fibre of $J_i$ and that the clique of $J_i$ containing the terminus of $\gamma_0$ is a coset of $G_k$. Notice that, since $w$ is $(i,j)$-diaphanous and that $w_0$ is $(i,k)$-diaphanous, necessarily $s$ is $(k,j)$-diaphanous. This amounts to saying that $k$ and $\ell$ are $<$- or $\perp$-comparable and that $j= s \ast k$ if $k < \ell$ and $j=k$ if $k \perp \ell$ or $k> \ell$. Thus, fixing some non-trivial element $a \in G_k$, we have the following geometric situation:
\begin{center}
\includegraphics[width=0.8\linewidth]{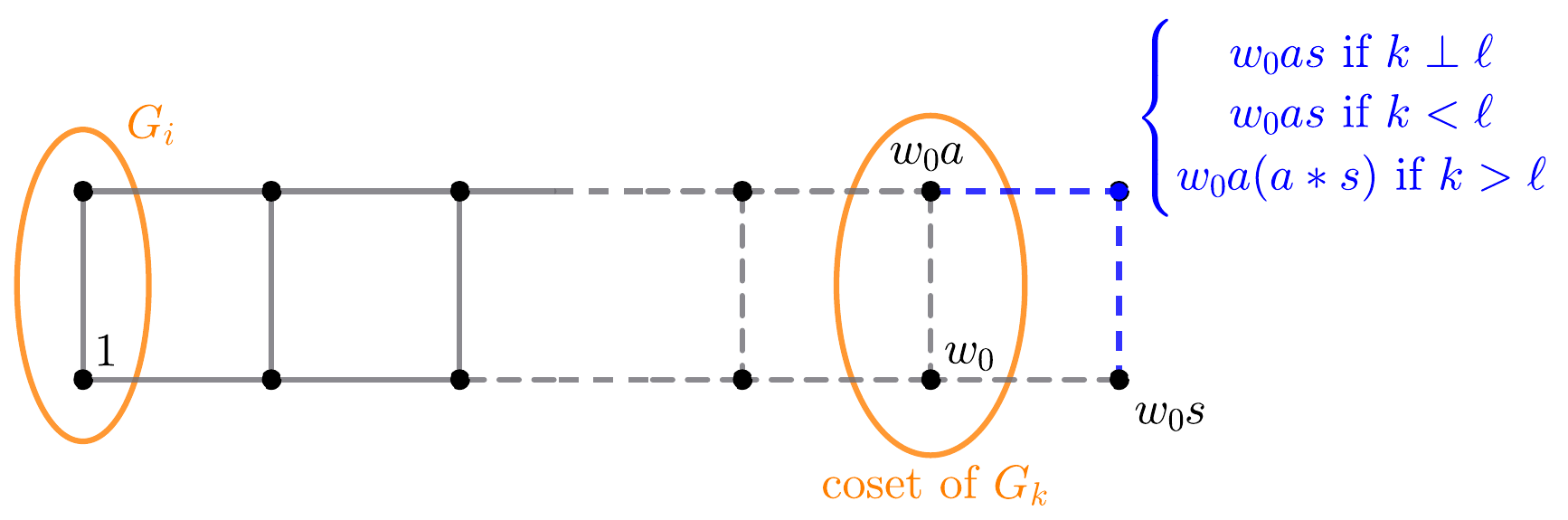}
\end{center}

\noindent
This implies that $\gamma$ is contained in a fibre of $J_i$ and that the clique of $J_i$ containing the terminus of $\gamma$ is a coset of $G_j$. 
\end{proof}

\begin{cor}\label{cor:DiaphElements}
Let $\mathfrak{Q}:= \mathfrak{Q}(\mathcal{I}, \mathcal{G}, \mathcal{A})$ be a quandle product. If an element $g \in \mathfrak{Q}$ can be represented by a $\mathfrak{Q}$-reduced word that is $i$-diaphanous (resp.\ $(i,j)$-diaphanous), then every $\mathfrak{Q}$-reduced word representing $g$ is $i$-diaphanous (resp.\ $(i,j)$-diaphanous).  
\end{cor}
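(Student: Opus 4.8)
The plan is to reduce the statement to the fact, already available to us, that any two $\mathfrak{Q}$-reduced words representing the same element differ only by $\perp$- and twisted commutations (Theorem~\ref{thm:ShortestWords}). So the core task is to check that each of these three elementary moves preserves $i$-diaphanousness and, more precisely, sends an $(i,j)$-diaphanous word to an $(i,j)$-diaphanous word. Since a finite sequence of such moves relates any two $\mathfrak{Q}$-reduced representatives of $g$, the conclusion will follow immediately.

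First I would handle a $\perp$-commutation: suppose $w = w_1 s t w_2$ with $s \in G_r$, $t \in G_s$ (apologies, say $t \in G_{r'}$), $r \perp r'$, and $w' = w_1 t s w_2$. Using the third item of Fact~\ref{fact:Diaph}, write the diaphanousness of $w$ as $w_1$ being $(i,\ell)$-diaphanous, $st$ being $(\ell,m)$-diaphanous, and $w_2$ being $(m,j)$-diaphanous. The point is then purely local: an $(\ell,m)$-diaphanous word $st$ with $r \perp r'$ must have $ts$ also $(\ell, m)$-diaphanous. This is a short case analysis following the definition of diaphanous (the letter $s$ is pushed past $t$ or absorbed into it, and the two orders of disjoint factors produce the same target index), and one concludes with the second item of Fact~\ref{fact:Diaph} that $w'$ is again $(i,j)$-diaphanous. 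The twisted-commutation cases are analogous in structure: for a twisted left-commutation $st \rightsquigarrow t(t\ast s)$ with $r < r'$, and for a twisted right-commutation $st \rightsquigarrow (s^{-1}\ast t)s$ with $r > r'$, one checks directly from the definition that diaphanousness of the two-letter block, with the same pair of endpoint indices, is preserved; here the quandle relation (condition~(2) of a quandle system) is exactly what is needed to make the composition of the elementary shuffles through the block well-behaved.

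The main obstacle — really the only non-bookkeeping point — is the local lemma that the elementary moves preserve the $(i,j)$-diaphanous label of a two-letter block. The subtlety is that diaphanousness is defined by processing a word \emph{from the right}, whereas the rewriting moves act in the middle of a word, so one must be careful to split $w$ at the block using Fact~\ref{fact:Diaph} and argue that the index that ``enters'' the block from the right and the index that ``exits'' on the left are both unchanged by the move; only then can the pieces be reassembled. Once this local statement is isolated, it is verified by inspecting the handful of sub-cases in the definition of diaphanous (each letter is either $\perp$-comparable, $<$-smaller, or $>$-larger than the running index), using symmetry of $\perp$ and the quandle relation to match the two sides. Finally, since every $\mathfrak{Q}$-reduced word representing $g$ is obtained from any fixed one by such moves, and $g$ admits at least one $i$-diaphanous $\mathfrak{Q}$-reduced representative by hypothesis, all of them are $i$-diaphanous (and $(i,j)$-diaphanous for the same $j$), which is the claim.
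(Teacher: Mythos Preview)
Your proposal is correct, but it takes a genuinely different route from the paper's proof. The paper argues geometrically: by Lemma~\ref{lem:PathFibreDiaph}, an $(i,j)$-diaphanous $\mathfrak{Q}$-reduced word labels a geodesic from $1$ to $g$ lying in the fibre $F_i$ of the hyperplane $J_i$; since fibres are gated (Theorem~\ref{thm:QMbig}), every geodesic from $1$ to $g$ stays in $F_i$, and Lemma~\ref{lem:PathFibreDiaph} then forces its label to be $(i,j)$-diaphanous. Your approach instead stays purely combinatorial: you use Theorem~\ref{thm:ShortestWords} to reduce to checking that each of the three elementary moves preserves $(i,j)$-diaphanousness of a two-letter block, and you localize via Fact~\ref{fact:Diaph}. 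This is valid, and your identification of the quandle relation as the key ingredient in the twisted-commutation case is exactly right (e.g.\ when $r>\ell$ and $r'>\ell$ one needs $(t\ast s)\ast(t\ast\ell)=t\ast(s\ast\ell)$). The case analysis is slightly longer than ``a handful of sub-cases'' suggests --- several subcases are ruled out only by invoking the oposet axiom (for instance $r<\ell$, $r<r'$, $r'\perp\ell$ cannot occur, since $r\leq r'$ and $r'\perp\ell$ force $r\perp\ell$) --- but all cases do go through. What your approach buys is independence from the convexity of fibres in the quasi-median graph; what the paper's approach buys is that the case analysis is absorbed into Lemma~\ref{lem:PathFibreDiaph}, which is needed elsewhere anyway.
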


\begin{proof}
Assume that an element $g \in \mathfrak{Q}$ can be represented by a $\mathfrak{Q}$-reduced word $w$ that is $i$-diaphanous. It follows from Lemma~\ref{lem:PathFibreDiaph} that the path $\gamma$ labelled by $w$ that connects $1$ to $g$ is contained in the fibre $F_i$ passing through $1$ of the hyperplane $J_i$ containing the clique $G_i$. In particular, $g$ belongs to $F_i$. It follows from the convexity of fibres (Theorem~\ref{thm:QMbig}) and the description of geodesics provided by Theorem~\ref{thm:ShortestWords} that the $\mathfrak{Q}$-reduced words representing $g$ are exactly the words labelling the geodesics in $F_i$ connecting $1$ to $g$. So the desired conclusion follows from Lemma~\ref{lem:PathFibreDiaph}. 
\end{proof}

\noindent
As a consequence of Corollary~\ref{cor:DiaphElements}, it makes sense to define $i$- and $(i,j)$-diaphanous elements in quandle products. Notice that, according to Fact~\ref{fact:Diaph}, the product of an $(i,j)$-diaphanous element with a $(j,k)$-diaphanous elements yields an $(i,k)$-diaphanous element. As a consequence, the set of all the $(i,i)$-diaphanous elements defines a subgroup, which we denote by $\mathrm{Diaph}(i)$. 

\begin{cor}\label{cor:HypStab}
Let $\mathfrak{Q}:= \mathfrak{Q}(\mathcal{I}, \mathcal{G}, \mathcal{A})$ be a quandle product, $i \in I$ an index, and let $J_i$ denote the hyperplane of $\mathfrak{M}:= \mathfrak{M}(\mathcal{I}, \mathcal{G}, \mathcal{A})$ containing the clique $G_i$. The fibre of $J_i$ passing through $1$ is $F_i:= \{ \text{$i$-diaphanous elements}\}$. The stabiliser of $J_i$ is $\langle G_i, \mathrm{Diaph}(i) \rangle$. 
\end{cor}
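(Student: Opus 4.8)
The plan is to verify the two claims of Corollary~\ref{cor:HypStab} separately, building directly on Lemma~\ref{lem:PathFibreDiaph} and the combinatorics of hyperplanes recalled in Section~\ref{section:QM}.

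\medskip

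\textbf{Description of the fibre.} First I would identify the fibre $F_i$ of $J_i$ through $1$ with the set of $i$-diaphanous elements. The inclusion ``$i$-diaphanous $\subseteq F_i$'' is already contained in the proof of Corollary~\ref{cor:DiaphElements}: if $g$ is represented by a $\mathfrak{Q}$-reduced word $w$ that is $i$-diaphanous, then by Lemma~\ref{lem:PathFibreDiaph} the path labelled by $w$ from $1$ to $g$ lies in a fibre of $J_i$, and since it starts at $1$ this fibre is $F_i$, so $g \in F_i$. For the converse, take $g \in F_i$. Since fibres are gated, hence convex (Theorem~\ref{thm:QMbig}), and since by Theorem~\ref{thm:ShortestWords} the geodesics from $1$ to $g$ are exactly the paths labelled by $\mathfrak{Q}$-reduced words representing $g$, any such geodesic stays inside $F_i$; applying the forward direction of Lemma~\ref{lem:PathFibreDiaph} to this path shows the corresponding word is $i$-diaphanous, hence $g$ is an $i$-diaphanous element. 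This settles $F_i = \{\,i\text{-diaphanous elements}\,\}$.

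\medskip

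\textbf{Description of the stabiliser.} Next I would compute $\mathrm{stab}(J_i)$. For the inclusion $\langle G_i, \mathrm{Diaph}(i)\rangle \subseteq \mathrm{stab}(J_i)$: the clique $G_i$ is an edge-clique of $J_i$, so $G_i \subseteq \mathrm{stab}_\circlearrowright(J_i) \subseteq \mathrm{stab}(J_i)$ by left multiplication; and if $g \in \mathrm{Diaph}(i)$, i.e.\ $g$ is $(i,i)$-diaphanous, then by Lemma~\ref{lem:PathFibreDiaph} a $\mathfrak{Q}$-reduced geodesic from $1$ to $g$ lies in $F_i$ and terminates in a coset of $G_i$, namely $gG_i$; thus $g$ maps the clique $G_i \subseteq J_i$ to the clique $gG_i \subseteq J_i$, so $g \in \mathrm{stab}(J_i)$. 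For the reverse inclusion, let $g \in \mathrm{stab}(J_i)$. Then $g \cdot G_i$ is a clique of $J_i$, hence parallel to $G_i$, so it lies in the carrier $N(J_i)$; using the product decomposition $N(J_i) = G_i \times F_i$ from Theorem~\ref{thm:QMbig}, the gate of $g$ (equivalently of $g G_i$) onto $F_i$ is some vertex $h \in F_i$, and $g G_i = h G_i$. Therefore $h^{-1} g \in G_i$, i.e.\ $g = h k$ with $h \in F_i$ and $k \in G_i$. By the first part $h$ is $i$-diaphanous, say $(i,j)$-diaphanous; but the terminal clique of a $\mathfrak{Q}$-reduced geodesic to $h$ is a coset of $G_j$ by Lemma~\ref{lem:PathFibreDiaph}, and this clique is $h G_i$ itself (a coset of $G_i$), so $G_j = G_i$ as cliques, forcing $j = i$ by Corollary~\ref{cor:QuandleClique} (cliques are exactly cosets of factors, and distinct cosets of distinct factors are distinct). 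Hence $h \in \mathrm{Diaph}(i)$ and $g = hk \in \langle G_i, \mathrm{Diaph}(i)\rangle$.

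\medskip

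\textbf{Main obstacle.} The delicate point is the last step: extracting from $g \in \mathrm{stab}(J_i)$ the factorization $g = hk$ with $h$ in the fibre and $k \in G_i$, and then checking that $h$ is not merely $i$-diaphanous but genuinely $(i,i)$-diaphanous (so that it normalizes $G_i$ rather than merely carrying $G_i$ to some other clique of $J_i$). This requires using the carrier product decomposition $N(J_i) = G_i \times F_i$ carefully, together with the precise refinement in Lemma~\ref{lem:PathFibreDiaph} pinning down which coset the terminal clique is. Everything else is a routine unwinding of the definitions of $i$- and $(i,j)$-diaphanous words and of the convexity of fibres.
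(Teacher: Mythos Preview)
Your proof is correct and follows essentially the same route as the paper. The only cosmetic difference is the order of the factorisation: the paper writes $g = a\cdot d$ with $a\in G_i$ (obtained as the vertex of $G_i$ lying in the fibre $gF_i$) and $d:=a^{-1}g\in\mathrm{stab}(F_i)=\mathrm{Diaph}(i)$, whereas you write $g=h\cdot k$ with $h\in F_i$ (the gate of $g$ on $F_i$) and $k\in G_i$, and then check directly via the terminal-clique clause of Lemma~\ref{lem:PathFibreDiaph} that $h$ is $(i,i)$-diaphanous. Both arguments rest on the same carrier decomposition $N(J_i)\cong G_i\times F_i$ and the same lemma, so there is no substantive difference.
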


\begin{proof}
The description of the fibre $F_i$ follows from Lemma~\ref{lem:PathFibreDiaph} and the connexity of fibres. It is clear that $\mathrm{Diaph}(i)$ stabilises the fibre $F_i$. Conversely, if an element $g \in \mathfrak{Q}$ stabilises $F_i$, then it must be an $i$-diaphanous element such that $gG_i$ belongs to $J_i$. It follows from Lemma~\ref{lem:PathFibreDiaph} that $g$ must be $(i,i)$-diaphanous. Thus, 

\begin{fact}
The stabiliser of the fibre $F_i$ is $\mathrm{Diaph}(i)$.
\end{fact}

\noindent
Now, let $g \in \mathrm{stab}(J_i)$. The fibre $F_i$ is sent under $g$ to a fibre passing through some vertex $a \in G_i$. So $a^{-1}g$ stabilises $F_i$, and a fortiori belongs to $\mathrm{Diaph}(i)$, hence $g \in \langle G_i, \mathrm{Diaph}(i) \rangle$. Conversely, it is clear that $G_i$ and $\mathrm{Diaph}(i)$ stabilise $J_i$. 
\end{proof}

\noindent
We need a last preliminary lemma in order to prove Proposition~\ref{prop:TopicHolonomy}. Roughly speaking, it states that, given a quandle system $(\mathcal{I},\mathcal{G},\mathcal{A})$ and given two parallel cliques in the corresponding quasi-median graph, the generators labelling two parallel edges contained in these cliques can be obtained from one another by applying an isomorphism that is either the identity or that belongs to the system of groups $\mathscr{S}(\mathcal{I},\mathcal{G},\mathcal{A})$; moreover, this isomorphism only depends on the cliques and not of the particular edges under consideration. More formally:

\begin{lemma}\label{lem:ParallelEdges}
Let $\mathfrak{Q}:= \mathfrak{Q}(\mathcal{I}, \mathcal{G}, \mathcal{A})$ be a quandle product, $i,j \in I$, $a \in G_i \backslash \{1\}$, and $g \in \mathfrak{Q}$. There exists a clique containing $g$ that is parallel to $gaG_j$ if and only if $i$ and $j$ are $<$- or $\perp$-comparable. If so, then the clique is $gG_k$ where $k=a^{-1} \ast j$ if $i>j$ and $k=j$ if $i \perp j$ or $i<j$; moreover, for every $b \in G_j \backslash \{j\}$, the oriented edge of $gG_k$ parallel to $(ga,gab)$ is $(g,gc)$ where $c=a^{-1} \ast b$ if $i>j$ and $c=b$ if $i \perp j$ or $i<j$. 
\end{lemma}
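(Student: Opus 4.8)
The plan is to analyze the local structure around the vertex $g$ and the clique $gaG_j$ using the description of induced $4$-cycles from Lemma~\ref{lem:QuandleCycle} and the description of cliques from Corollary~\ref{cor:QuandleClique}. First I would translate by $g^{-1}$ so that we may assume $g=1$; thus we are looking for a clique containing $1$ parallel to $aG_j$, where $a \in G_i \backslash \{1\}$. By definition of parallelism of cliques (they are the two clique-factors of a prism $K_2 \times C$), such a clique exists if and only if there is an induced $4$-cycle with one edge $\{1,c\}$ for some generator $c$ and the opposite edge contained in $aG_j$; equivalently, an induced $4$-cycle through $1$, $a$, $ab$ for some $b \in G_j \backslash \{1\}$, with the fourth vertex $c$ adjacent to $1$. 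This forces $c$ to lie in a factor $G_k$, and the $4$-cycle has vertices $1, c, a, ab$ with $\{c, ab\}$ an edge contained in $aG_j$.

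The core of the argument is then a direct application of Lemma~\ref{lem:QuandleCycle}: an induced $4$-cycle on vertices $1, a, ab, \ast$ (read off from one corner) exists precisely when $i$ and $j$ are $<$- or $\perp$-comparable, and in that case the cycle is forced. Concretely, if $i \perp j$ or $i < j$, the braid $[ab]$ is already in a ``ranked-enough'' configuration and the fourth vertex is $b \cdot (b\ast a)$ type — but since we want the edge through $1$, we read the cycle as $1, b, ba = 1 \cdot b \cdot (b^{-1}\ast a)\cdots$; more carefully, when $i<j$ the twisted left-commutation rewrites $ab = b(b\ast a)$, so the $4$-cycle has vertices $1, a, ab=b(b\ast a), b$, giving the parallel clique $G_j$ and the parallel edge to $(a, ab)$ equal to $(1, b)$. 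When $i \perp j$ the $4$-cycle is the commutation square $1, a, ab=ba, b$, again with parallel clique $G_j$ and parallel edge $(1,b)$. When $i > j$, Lemma~\ref{lem:QuandleCycle} tells us the only paths of length two from $1$ to $ab$ are $1, a, ab$ and $1, a^{-1}\ast b, (a^{-1}\ast b)a = ab$; hence the fourth vertex is $a^{-1}\ast b \in G_{a^{-1}\ast j}$, so $k = a^{-1}\ast j$ and the parallel edge is $(1, a^{-1}\ast b)$. When $i,j$ are incomparable for both relations, Lemma~\ref{lem:QuandleCycle} says there is only one path of length two, so no induced $4$-cycle and hence no parallel clique — this gives the ``only if'' direction.

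I would then assemble these three cases into the uniform statement, verifying the formula $c = a^{-1}\ast b$ (for $i>j$) respectively $c=b$ (for $i\perp j$ or $i<j$) gives the parallel edge $(g, gc)$ for \emph{every} $b \in G_j \backslash \{1\}$ — this is automatic because in each case the defining relation of the quandle product exhibits the whole square $g, ga, gc, gab$ simultaneously for all $b$, so the parallelism class of the clique $gG_k$ is determined, and parallel edges within a prism $K_2 \times C$ correspond bijectively to the vertices of $C$. The notation $b \in G_j \backslash \{j\}$ in the statement should of course read $b \in G_j \backslash \{1\}$; I would use the latter. The one point requiring a little care is that I must confirm the resulting $4$-cycles are genuinely \emph{induced} (not filled by a diagonal), but this is exactly the ``Conversely, every $4$-cycle of this form is induced'' clause of Lemma~\ref{lem:QuandleCycle}, so no new work is needed. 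The main obstacle, such as it is, is purely bookkeeping: keeping straight which corner of the square we read the $4$-cycle from and making sure the $a^{-1}\ast b$ versus $b\ast a$ bookkeeping matches the orientation conventions in the definition of a ladder/parallel oriented edges; there is no genuine mathematical difficulty beyond Lemma~\ref{lem:QuandleCycle} itself.
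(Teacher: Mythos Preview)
Your proposal is correct and takes essentially the same approach as the paper: both reduce everything to the description of induced $4$-cycles in Lemma~\ref{lem:QuandleCycle}. The paper's proof is a terse two-sentence appeal to that lemma, while you spell out the three cases ($i\perp j$, $i<j$, $i>j$) and the incomparable case explicitly; your observation about the typo $G_j\backslash\{j\}$ is also right.
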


\begin{proof}
The (non-)existence and description of the clique $gG_k$ is a direct consequence of the description of $4$-cycles in $\mathfrak{M}$ provided by Lemma~\ref{lem:QuandleCycle}. Assuming that this clique $gG_k$ exists, and given a non-trivial $b \in G_j$, if $c \in G_k$ is the element such that $(ga,gab)$ is parallel to $(g,gc)$, then $g,ga,gab, gc$ define a $4$-cycle, and the description of $c$ also follows from Lemma~\ref{lem:QuandleCycle}. 
\end{proof}

\begin{proof}[Proof of Proposition~\ref{prop:TopicHolonomy}.]
First, assume that the holonomy is trivial. Fix a hyperplane $J$ of $\mathfrak{M}$. Up to translating by an element of $\mathfrak{Q}$, we can assume that $J$ contains a clique $G_i$ for some $i \in I$. We know from Corollary~\ref{cor:HypStab} that $\mathrm{stab}(J)= \langle G_i, \mathrm{Diaph}(i) \rangle$. We claim that $\mathrm{Diaph}(i)$ stabilises each fibre delimited by $J$. This will be enough to conclude that the action of $\mathfrak{Q}$ on $\mathfrak{M}$ is topical-transitive.

\medskip \noindent
Consider an element of $\mathrm{Diaph}(i)$, which we will write as an $i$-diaphanous word $w=s_1 \cdots s_n$. We know from Corollary~\ref{cor:HypStab} that that $w$ stabilises the fibre of $J$ containing $1$. Given a non-trivial $g \in G_i$, we know from Lemma~\ref{lem:Ladder} that there must be a ladder connecting the edge $\{w,wg\}$ to the edge $\{1,g'\}$ for some non-trivial $g' \in G_i$. 
\begin{center}
\includegraphics[width=0.6\linewidth]{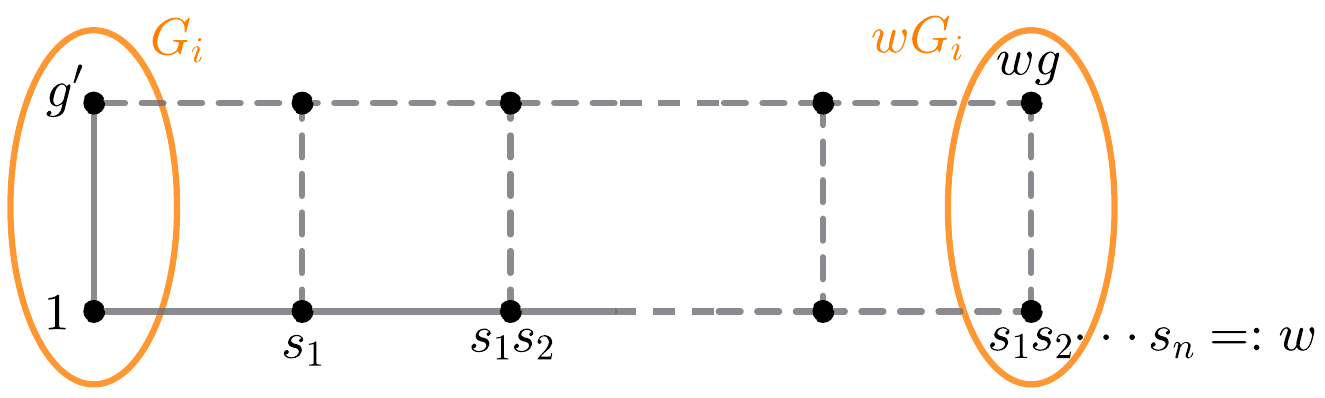}
\end{center}

\noindent
So $w$ sends the fibre of $J$ containing $g$ to the fibre containing $g'$. But we know from Lemma~\ref{lem:ParallelEdges} that, for any two consecutive parallel edges of our ladder, the labels can be obtained from one another by applying the identity or an isomorphism of the system of groups $\mathscr{S}(\mathcal{I}, \mathcal{G},\mathcal{A})$. Consequently, the label of $(1,g')$ - namely $g'$ - must be the image of the label of $(w,wg)$ - namely $g$ - under an automorphism of $G_i$ coming from $\mathrm{Hol}(i)$. Since $\mathrm{Hol}(i)$ is trivial by assumption, we conclude that $g'=g$; and a fortiori that $w$ stabilises the fibre of $J$ containing $g$. 

\medskip \noindent
Next, assume that the holonomy is non-trivial. So there exists an $i \in I$ such that $\mathrm{Hol}(i)$ contains a non-trivial automorphism, say $\varphi$ such that $\varphi(g) \neq g$ for some $g \in G_i \backslash \{1\}$. As explained in Section~\ref{section:Holonomy}, such an automorphism can be described as
$$\varphi : g \mapsto a_1 \ast (a_2 \ast ( \cdots \ast ( a_n \ast g)))$$
for some $a_1 \in G_{r(1)}, \ldots, a_n \in G_{r(n)}$. Moreover, the inequalities
$$\left\{ \begin{array}{l} r(n)>i \\ r(n-1) > a_n \ast i \\ r(n-2)> a_{n-1} \ast (a_n \ast i) \\ \vdots \\ r(1) > a_2 \ast ( \cdots \ast (a_{n-1} \ast (a_n \ast i))) \end{array} \right.$$
must be satisfied, as well as the equality $a_1 \ast (a_2 \ast ( \cdots \ast (a_n \ast i)))=i$. Set $w:= a_1^{-1} \cdots a_n^{-1}$. We deduce from these conditions that $w \in \mathrm{Diaph}(i)$; and, according to Lemma~\ref{lem:ParallelEdges}, that we have the following geometric configuration:
\begin{center}
\includegraphics[width=0.7\linewidth]{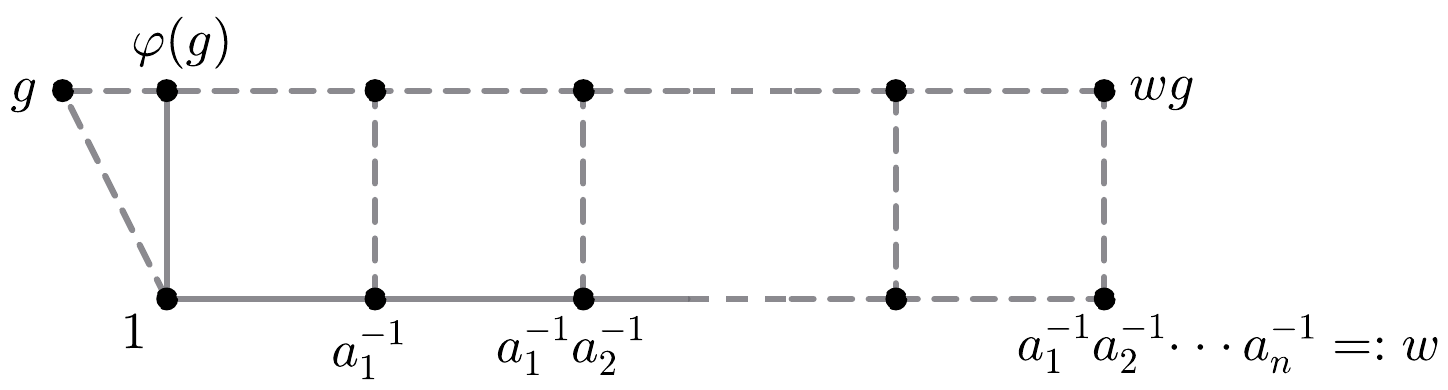}
\end{center}
Consequently, if $J_i$ denotes the hyperplane containing the clique $G_i$, then $w$ stabilises the fibre of $J_i$ containing $1$ but not the fibre containing $g$. Such an action cannot coincide with the action of an element of $G_i$ on the fibres delimited by $J_i$ since the action of $G_i$ is free. We conclude that the action of $\mathfrak{Q}$ on $\mathfrak{M}$ is not topical-transitive. 
\end{proof}

\begin{proof}[Proof of Theorem~\ref{thm:NPC}.]
It is clear that, with respect to the action of $\mathfrak{Q}$ on $\mathfrak{M}:= \mathfrak{M}(\mathcal{I},\mathcal{G},\mathcal{A})$, vertex-stabilisers are trivial. Moreover, according to Corollary~\ref{cor:QuandleClique}, clique-stabilisers are conjugates of factors. Thus, \cite[Proposition~7.4]{QM} shows that $\mathfrak{Q}$ admits a proper action on a median graph as soon as every group in $\mathcal{G}$ also admits a proper action on a median graph. The same statement about metrically proper actions follows from \cite[Proposition~5.22]{QM} by noticing that:

\begin{claim}\label{claim:FinetelyManyCliques}
If $\mathcal{I}$ is finite, then every vertex of $\mathfrak{M}$ belongs to only finitely many cliques.
\end{claim}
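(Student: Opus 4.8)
The plan is to read off the claim directly from the description of cliques obtained in Corollary~\ref{cor:QuandleClique}, which asserts that the cliques of $\mathfrak{M}:=\mathfrak{M}(\mathcal{I},\mathcal{G},\mathcal{A})$ are exactly the cosets of the factors, i.e.\ the subsets of the form $hG_i$ with $h\in\mathfrak{Q}$ and $i\in I$. So the first (and essentially only) step is to identify, for a fixed vertex $g\in\mathfrak{Q}$, the set of cliques containing $g$. Since cosets of a fixed subgroup $G_i$ partition $\mathfrak{Q}$, we have $g\in hG_i$ if and only if $hG_i=gG_i$; hence the cliques passing through $g$ are precisely the cosets $gG_i$ for $i\in I$.

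The second step is then immediate: the assignment $i\mapsto gG_i$ has domain $I$, so the set of cliques containing $g$ has cardinality at most $|I|$, which is finite by hypothesis. (One could remark that this map need not be injective in general, but that is irrelevant here — an upper bound of $|I|$ suffices; in fact, by Corollary~\ref{cor:FactorsEmb} applied to the stable singletons $\{i\}$, distinct indices give distinct factors, hence distinct cosets, so the count is exactly $|I|$, but this refinement is not needed for the claim.)

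I do not anticipate any real obstacle: the only point that requires the previously developed machinery is the identification of cliques with cosets of factors, which is already in hand via Corollary~\ref{cor:QuandleClique} (itself resting on Lemma~\ref{lem:QuandleClique} and Corollary~\ref{cor:FactorsEmb}). Everything after that is an elementary coset-counting argument, and the finiteness of $\mathcal{I}$ does all the work. I would therefore write the proof of Claim~\ref{claim:FinetelyManyCliques} in two sentences, and then feed it into \cite[Proposition~5.22]{QM} to conclude the metrically proper case of Theorem~\ref{thm:NPC}.
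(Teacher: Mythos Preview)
Your proposal is correct and is essentially the paper's own argument: invoke Corollary~\ref{cor:QuandleClique} to identify the cliques through a vertex $g$ with the cosets $gG_i$, $i\in I$, and conclude by the finiteness of $I$. The paper states this in a single sentence, and your extra remark on injectivity is a harmless refinement that is not needed for the claim.
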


\noindent
It follows from Corollary~\ref{cor:QuandleClique} that every vertex $g$ of $\mathfrak{M}$ belongs to only finitely many cliques, namely $gG_i$ for $i \in I$.
\end{proof}

\subsection{Locally finite Cayley graphs}\label{section:LocallyFiniteCayley}

\noindent
Theorem~\ref{thm:NPC} shows that (metrically) proper actions on median graphs are preserved by quandle products with trivial holonomy. However, the theory developed in \cite{QM} does not allow us to show that geometric actions are preserved by quandle products. See Conjecture~\ref{conj:CAT} below and the related discussion. In this section, we prove a weaker result in this direction. 

\begin{thm}\label{thm:MedianCayl}
Let $\mathfrak{Q}:= \mathfrak{Q}(\mathcal{I}, \mathcal{G}, \mathcal{A})$ be a quandle product. For every $i \in I$, let $S_i \subset G_i$ be a generating set. Assume that, for all $i,j \in I$ and $g \in G_j$, if $i<j$ then $g \ast S_i = S_{g \ast i}$. If $\mathrm{Cayl}(G_i,S_i)$ is median (resp.\ quasi-median) for every $i \in I$, then 
$$\mathrm{Cayl} \left( \mathfrak{Q}, \bigcup\limits_{i \in I} S_i \right)$$
is median (resp.\ quasi-median). 
\end{thm}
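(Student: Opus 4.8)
The plan is to realise $\mathrm{Cayl}(\mathfrak{Q}, \bigcup_i S_i)$ as a subdivision-type object sitting inside the quasi-median graph $\mathfrak{M} = \mathfrak{M}(\mathcal{I},\mathcal{G},\mathcal{A})$, by replacing each clique $gG_i$ with a copy of $\mathrm{Cayl}(G_i, S_i)$. Concretely, let $Y$ denote the graph with the same vertex set $\mathfrak{Q}$ as $\mathfrak{M}$, but where $g$ and $h$ are joined by an edge precisely when $g^{-1}h \in S_i$ for some $i \in I$; this is exactly $\mathrm{Cayl}(\mathfrak{Q}, \bigcup_i S_i)$. The key structural observation is that, since the cliques of $\mathfrak{M}$ are the cosets $gG_i$ (Corollary~\ref{cor:QuandleClique}), the graph $Y$ is obtained from $\mathfrak{M}$ by replacing, inside each clique $gG_i \subset \mathfrak{M}$, the complete graph on $G_i$ with a copy of $\mathrm{Cayl}(G_i, S_i)$ glued along the vertex set. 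The compatibility hypothesis $g \ast S_i = S_{g \ast i}$ is exactly what is needed to make this replacement coherent with the parallelism between cliques: by Lemma~\ref{lem:ParallelEdges}, parallel edges of $\mathfrak{M}$ correspond under an isomorphism $G_i \to G_{g\ast i}$ coming from the system $\mathscr{S}(\mathcal{I},\mathcal{G},\mathcal{A})$ (or the identity), and the hypothesis guarantees this isomorphism sends $S_i$ to $S_{g\ast i}$, so parallel edges of $\mathfrak{M}$ that are ``$S$-edges'' on one side are ``$S$-edges'' on the other. Thus the replacement is well-defined on hyperplanes.

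The heart of the argument is then a general ``clique-replacement'' lemma for quasi-median graphs: if $X$ is quasi-median with cliques $\{C_\lambda\}$, and for each clique $C_\lambda$ one chooses a connected graph $Y_\lambda$ on the vertex set of $C_\lambda$, in such a way that parallel cliques receive ``parallel'' choices (i.e.\ the choices are constant on each hyperplane up to the parallelism isomorphisms), and each $Y_\lambda$ is itself (quasi-)median, then the resulting graph $Y$ is (quasi-)median. I would prove this by checking the weak modularity conditions (Triangle and Quadrangle Conditions) and the forbidden-subgraph conditions of Theorem~\ref{thm:QMmodular} directly in $Y$. The crucial point is that distances in $Y$ decompose cleanly: a geodesic in $Y$ crosses a ``block'' of $\mathfrak{M}$-hyperplanes, and within each clique $gG_i$ it runs along a geodesic of $\mathrm{Cayl}(G_i,S_i)$; formally, $d_Y(x,y) = \sum_{J} d_{Y_J}(\cdot,\cdot)$ summed over the hyperplanes $J$ of $\mathfrak{M}$ separating $x$ and $y$, where $d_{Y_J}$ is the $S$-distance read off in the clique crossed. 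This is an instance of a ``graph of spaces / tree-graded''-style decomposition, and it is here that gatedness of cliques and carriers (Theorem~\ref{thm:QMbig}) does the work: the gate of a vertex onto a clique lets one project and reduce dimension. With this distance formula in hand, the median point (resp.\ quasi-median triple) of three vertices of $Y$ is assembled hyperplane-by-hyperplane from the median points (resp.\ quasi-median triples) of the three corresponding projections in the individual factor Cayley graphs $\mathrm{Cayl}(G_i,S_i)$, using that $\mathfrak{M}$ itself is quasi-median so that the ``combinatorial skeleton'' (which hyperplanes separate what) behaves well, and that each $Y_\lambda$ supplies the refined median structure inside its clique.

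The main obstacle I anticipate is verifying the Quadrangle Condition and the no-$K_{3,2}$ / no-$K_4^-$ conditions for $Y$: in $\mathfrak{M}$ these held because $4$-cycles and triangles came only from the defining relations (Lemma~\ref{lem:QuandleCycle}), but in $Y$ a triangle could in principle arise either from a $3$-cycle of $\mathfrak{M}$ inside a clique (now only present if $\mathrm{Cayl}(G_i,S_i)$ has a triangle, which is fine) or from a $4$-cycle of $\mathfrak{M}$ whose sides got subdivided — one must check no spurious short cycles are created. The compatibility hypothesis on the $S_i$ is precisely the hypothesis that prevents this: it forces the subdivision pattern to be hyperplane-invariant, so that a cycle of $Y$ either lies in a single clique or projects to a cycle of $\mathfrak{M}$ of the same ``hyperplane type'', and the forbidden configurations are ruled out factor by factor. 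The median case then follows from the quasi-median case via Theorem~\ref{thm:MedianQM}, since if every $\mathrm{Cayl}(G_i,S_i)$ is triangle-free then so is $Y$ (a triangle in $Y$ would have to lie in one clique by the cycle analysis above). I would also remark that this theorem is the natural generalisation of the standard fact that a graph product of groups with median Cayley graphs has a median Cayley graph (\cite[Proposition~8.2]{QM}), specialising to Example~\ref{ex:GP}.
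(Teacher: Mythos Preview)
Your proposal is correct and follows essentially the same route as the paper: the paper packages your ``clique-replacement lemma'' as the statement that a coherent system of (quasi-)median local metrics on the cliques of a quasi-median graph yields a (quasi-)median global metric (Proposition~\ref{prop:QMdelta}), and then identifies $\mathrm{Cayl}(\mathfrak{Q},\bigcup_i S_i)$ with $(\mathfrak{M},\delta)$ for exactly such a system (Proposition~\ref{prop:CayleyQ}), using the hypothesis $g\ast S_i=S_{g\ast i}$ precisely to verify coherence via Lemma~\ref{lem:WhenCoherent}. The verification of Proposition~\ref{prop:QMdelta} proceeds, as you outline, by checking the Triangle and Quadrangle Conditions and the forbidden subgraphs of Theorem~\ref{thm:QMmodular}, using the gatedness of cliques and prisms (Lemma~\ref{lem:GatedGated}) to reduce to the local factors.
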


\noindent
Of course, as generating sets, we can always take $S_i:=G_i$, in which case we recover Theorem~\ref{thm:QuandleQM}. For interesting applications, the groups in $\mathcal{G}$ will be finitely generated and we will want to find finite generated sets for the $S_i$. This is not always possible in full generality, but, with Lemma~\ref{lem:GeneratingNoHolo} below, we will see that such a collection of finite generating sets can always be found for quandle products with trivial holonomy. 

\medskip \noindent
The proof of Theorem~\ref{thm:MedianCayl} is based on \cite[Section~3]{QM}, which shows how to extend a collection of (local) metrics defined on the cliques of a quasi-median graph into a global metric that is ``quasi-median friendly''. We recall below basic definitions and properties related to this construction. (See also \cite{Contracting} for a similar construction in the more general framework of \emph{paraclique graphs}.)

\medskip \noindent
Let $X$ be a quasi-median graph. For every clique $C$ of $X$, fix a metric $\delta_C$ on $C$. We refer to the collection $\{\delta_C \mid C \text{ clique}\}$ as a \emph{system of (local) metrics}. The \emph{global (pseudo-)metric} $\delta$ on $X$ is defined by
$$\delta(a,b):= \inf \left\{ \sum\limits_ {i=1}^{n-1} \delta_{C_i}(x_i,x_{i+1}) \mid \begin{array}{c}  x_1=a, \ldots, x_n=b \text{ path in } X \\ C_i \text{ clique containing $x_i$ and $x_{i+1}$} \end{array} \right\}$$
for all $a,b \in X$. In full generality, there is no reason for the global metric to extend the local metric nor for the geometry of $\delta$ to be compatible with the quasi-median geometry of $X$. This is why, in practice, we assume that our system of local metrics satisfies some condition: the system is \emph{coherent} if, for all cliques $C$ and $D$ that belong to the same hyperplane, 
$$\delta_D(\mathrm{proj}_D(x), \mathrm{proj}_D(y)) = \delta_C(x,y) \text{ for all } x,y \in C$$
where $\mathrm{proj}_D : X \to D$ denotes the gate-projection on $D$. Global metrics of coherent systems of metrics do extend the local metrics, and they turn out to be compatible in some sense with the quasi-median geometry of $X$. Let us mention two alternative descriptions of the global metric under the assumption of coherence. 

\begin{lemma}[{\cite[Proposition~3.13]{QM}}]
Let $X$ be a quasi-median graph endowed with a coherent system of metrics $\{\delta_C \mid C \text{ clique}\}$. For every hyperplane $J$, choose a clique $C \subset J$ and define
$$\delta_J(a,b) := \delta_C( \mathrm{proj}_C(a), \mathrm{proj}_C(b)) \text{ for all } a,b \in X.$$
Then $\delta_J$ is a pseudo-metric on $X$ that does not depend on the choice of $C$. Moreover, 
$$\delta(a,b)= \sum\limits_{J \text{ hyperplane}} \delta_J(a,b) = \sum\limits_{J \text{ hyperplane separating $a$ and $b$}} \delta_J(a,b)$$
for all $a,b \in X$. 
\end{lemma}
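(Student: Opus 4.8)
The plan is to prove four things, in order: \textbf{(i)} that $\delta_J$ does not depend on the chosen clique $C\subset J$ and is a pseudo-metric; \textbf{(ii)} the inequality $\delta\geq\sum_J\delta_J$; \textbf{(iii)} the inequality $\delta\leq\sum_{J\text{ separating }a,b}\delta_J$; and \textbf{(iv)} that $\delta_J(a,b)=0$ whenever $J$ does not separate $a$ and $b$. Putting these together squeezes the three quantities in the statement between one another and forces all inequalities to be equalities.

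For \textbf{(i)}, I would use the product decomposition $N(J)=C\times F$ of carriers and the gatedness of carriers and cliques recalled in Theorem~\ref{thm:QMbig}. Under the identification $N(J)=C\times F$, the cliques of $J$ contained in $N(J)$ are exactly the slices $C\times\{f\}$, so a second clique $D\subset J$ corresponds to some $C\times\{f_D\}$ and the product bijection $C\to D$ is an isometry between $\delta_C$ and $\delta_D$ by coherence. Since gate-projections compose ($\mathrm{proj}_C=\mathrm{proj}_C\circ\mathrm{proj}_{N(J)}$ because $C\subset N(J)$ is gated) and projecting within the product onto a slice preserves the $C$-coordinate, the vertices $\mathrm{proj}_C(a)$ and $\mathrm{proj}_D(a)$ correspond under that bijection for every $a\in X$; hence $\delta_C(\mathrm{proj}_C a,\mathrm{proj}_C b)=\delta_D(\mathrm{proj}_D a,\mathrm{proj}_D b)$, so $\delta_J$ is well defined. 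Symmetry, vanishing on the diagonal, and the triangle inequality for $\delta_J$ then follow at once from the same properties of $\delta_C$, since $\mathrm{proj}_C$ is a fixed map.

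The structural observation behind \textbf{(ii)} and \textbf{(iv)} is this: if $x,y$ lie in a common clique $C$ with hyperplane $J_C$, then the edge joining them is not in any hyperplane $J\neq J_C$, so $x$ and $y$ lie in the same sector of such a $J$; since a sector of $J$ gate-projects to a single vertex of any clique $C'\subset J$ (a consequence of $N(J)=C'\times F$ and gatedness), we get $\mathrm{proj}_{C'}(x)=\mathrm{proj}_{C'}(y)$ and hence $\delta_J(x,y)=0$, whereas $\delta_{J_C}(x,y)=\delta_C(x,y)$ directly by coherence. Thus $\delta_C(x,y)=\sum_J\delta_J(x,y)$ for any two vertices of a common clique. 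Now for an arbitrary path $a=x_1,\dots,x_n=b$, letting $C_i$ be the (unique) clique of the edge $\{x_i,x_{i+1}\}$, we compute $\sum_i\delta_{C_i}(x_i,x_{i+1})=\sum_i\sum_J\delta_J(x_i,x_{i+1})=\sum_J\sum_i\delta_J(x_i,x_{i+1})\geq\sum_J\delta_J(a,b)$ using the triangle inequality for each pseudo-metric $\delta_J$; taking the infimum over paths gives \textbf{(ii)}. Running the same computation along a \emph{combinatorial geodesic} $a=x_1,\dots,x_n=b$ gives \textbf{(iii)}: by Theorem~\ref{thm:QMbig} a geodesic crosses each separating hyperplane exactly once and no others, so $J_1,\dots,J_{n-1}$ enumerate the hyperplanes separating $a$ and $b$; moreover $x_1,\dots,x_i$ and $x_{i+1},\dots,x_n$ lie on opposite sides of $J_i$, whence $\mathrm{proj}_{C_i}(a)=x_i$ and $\mathrm{proj}_{C_i}(b)=x_{i+1}$ (using again that non-separated vertices have equal $C_i$-projections), so $\delta_{C_i}(x_i,x_{i+1})=\delta_{J_i}(a,b)$ and $\delta(a,b)\leq\sum_i\delta_{C_i}(x_i,x_{i+1})=\sum_{J\text{ sep.}}\delta_J(a,b)$. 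Finally \textbf{(iv)} is just the ``same side'' argument applied to $a$ and $b$ directly. Combining, $\sum_J\delta_J(a,b)\leq\delta(a,b)\leq\sum_{J\text{ sep.}}\delta_J(a,b)\leq\sum_J\delta_J(a,b)$, so all three are equal.

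The main obstacle I anticipate is marshalling the carrier/hyperplane geometry cleanly: specifically, the facts that $\mathrm{proj}_{C'}$ is constant on each sector of $J$ for $C'\subset J$, and that the decomposition $N(J)=C\times F$ interacts with gate-projections as used in \textbf{(i)}. These are precisely the structural inputs of Theorem~\ref{thm:QMbig} together with gatedness of cliques and prisms, and once they are invoked correctly the metric estimates are bookkeeping with the triangle inequality. A secondary point to handle carefully is that $\delta$ is an infimum, so for the upper bound \textbf{(iii)} one must exhibit an explicit path — a combinatorial geodesic — rather than argue indirectly.
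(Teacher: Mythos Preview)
The paper does not give its own proof of this lemma: it is quoted verbatim as \cite[Proposition~3.13]{QM} and used as a black box, so there is no in-paper argument to compare against. That said, your proposal is correct and follows what is essentially the standard route to this result. The four-step squeeze (well-definedness of $\delta_J$; lower bound via the triangle inequality along an arbitrary path; upper bound via a combinatorial geodesic that crosses each separating hyperplane exactly once; vanishing of $\delta_J$ on pairs not separated by $J$) is exactly how one proves such decomposition formulas, and your invocation of Theorem~\ref{thm:QMbig} (carrier product structure, gatedness of cliques and sectors, ``geodesics cross each hyperplane once'') supplies precisely the structural inputs needed.

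Two small remarks. First, when you write ``$\delta_{J_C}(x,y)=\delta_C(x,y)$ directly by coherence'' for $x,y\in C$, coherence is not actually needed there: $\mathrm{proj}_C$ is the identity on $C$, so this is immediate from the definition of $\delta_{J_C}$. Second, the fact that a sector of $J$ gate-projects to a single vertex of any clique $C'\subset J$ is the crux of your ``same side $\Rightarrow$ same projection'' step; you correctly attribute it to the product decomposition $N(J)=C'\times F$ plus gatedness, but in writing it out you should make explicit that distinct vertices of $C'$ lie in distinct sectors of $J$ (equivalently, that $C'\backslash\backslash J$ is totally disconnected), which is what forces the projection to be constant on each sector.
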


\noindent
In order to state our second alternative description of global metrics, we will assume for simplicity that our local metrics are graph-metrics. Notice that the global metric of a system of local graph-metrics is also a graph-metric. 

\begin{lemma}[{\cite[Lemma~3.18]{QM}}]\label{lem:BrokenGeod}
Let $X$ be a quasi-median graph endowed with a coherent system of graph-metrics $\{\delta_C \mid C \text{ clique}\}$. A \emph{broken geodesic} between two vertices $a,b \in X$ is a path in $(X,\delta)$ of the form $\gamma_1 \cup \cdots \cup \gamma_{n-1}$ where $x_1=a, \ldots, x_n=b$ is a geodesic in $X$ and where each $\gamma_i$ is a $\delta_{C_i}$-geodesic in the clique $C_i$ of $X$ containing both $x_i$ and $x_{i+1}$. Then, broken geodesics are geodesics in $(X,\delta)$. 
\end{lemma}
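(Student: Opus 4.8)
The plan is to compute the $\delta$-length of a broken geodesic between $a$ and $b$ and show it equals $\delta(a,b)$; since $\delta$ satisfies the triangle inequality, this will force the broken geodesic to be a geodesic of $(X,\delta)$. So fix a geodesic $x_1=a,\dots,x_n=b$ of $X$, cliques $C_i\ni x_i,x_{i+1}$, and $\delta_{C_i}$-geodesics $\gamma_i\subset C_i$ from $x_i$ to $x_{i+1}$. Because the system of metrics is coherent, $\delta$ restricts to $\delta_{C_i}$ on each clique $C_i$, so consecutive vertices of $\gamma_i$ are $\delta$-adjacent and the $\delta$-length of $\gamma_i$ equals $\delta_{C_i}(x_i,x_{i+1})$. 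Hence the broken geodesic $\gamma_1\cup\cdots\cup\gamma_{n-1}$ has $\delta$-length $\sum_{i=1}^{n-1}\delta_{C_i}(x_i,x_{i+1})$, and it remains to identify this sum with $\delta(a,b)$ via the alternative description of $\delta$ supplied by the preceding lemma.

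The heart of the argument is the gate identification $\mathrm{proj}_{C_i}(a)=x_i$ and $\mathrm{proj}_{C_i}(b)=x_{i+1}$, where $\mathrm{proj}_{C_i}$ denotes the gate-projection onto the (gated) clique $C_i$. Let $J_i$ be the hyperplane containing $C_i$. Since $x_1,\dots,x_n$ is a geodesic, it crosses $J_i$ exactly once, necessarily along the edge $\{x_i,x_{i+1}\}$ (Theorem~\ref{thm:QMbig}); in particular its initial segment from $a$ to $x_i$ avoids $J_i$, so $a$ and $x_i$ lie in the same sector of $J_i$. Now take any $v\in C_i\setminus\{x_i\}$: the edge $\{x_i,v\}$ of $C_i$ is dual to $J_i$, so $x_i$ and $v$ lie in distinct sectors, whence $J_i$ separates $a$ from $v$ but not from $x_i$; and for every hyperplane $H\neq J_i$, the adjacent vertices $x_i,v$ lie on the same side of $H$, so $H$ separates $a$ from $v$ iff it separates $a$ from $x_i$. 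Counting separating hyperplanes (Theorem~\ref{thm:QMbig}) gives $d(a,v)=d(a,x_i)+1$, so $x_i$ is the vertex of $C_i$ closest to $a$, i.e.\ its gate; symmetrically $x_{i+1}$ is the gate of $b$ in $C_i$. By the definition of $\delta_{J_i}$ in the preceding lemma, $\delta_{C_i}(x_i,x_{i+1})=\delta_{C_i}(\mathrm{proj}_{C_i}(a),\mathrm{proj}_{C_i}(b))=\delta_{J_i}(a,b)$.

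To conclude, note that the hyperplanes $J_1,\dots,J_{n-1}$ are pairwise distinct (a geodesic crosses each hyperplane at most once) and coincide exactly with the set of hyperplanes separating $a$ and $b$. Hence, by the preceding lemma,
\[
\sum_{i=1}^{n-1}\delta_{C_i}(x_i,x_{i+1})=\sum_{i=1}^{n-1}\delta_{J_i}(a,b)=\sum_{J\text{ separating }a,b}\delta_J(a,b)=\delta(a,b),
\]
so the broken geodesic has $\delta$-length $\delta(a,b)$ and is therefore a geodesic of $(X,\delta)$.

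I expect the only genuinely non-formal step to be the gate identification $\mathrm{proj}_{C_i}(a)=x_i$; everything else is bookkeeping with the hyperplane calculus of Theorem~\ref{thm:QMbig}. The one point to treat with a little care is that coherence is precisely what guarantees that $\delta$ restricts to each local metric $\delta_{C_i}$, so that a $\delta_{C_i}$-geodesic is an honest path of the stated length in $(X,\delta)$.
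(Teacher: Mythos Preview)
The paper does not prove this lemma itself; it is quoted verbatim from \cite[Lemma~3.18]{QM} without argument, so there is no in-paper proof to compare against. Your proof is correct and is the natural one: the gate identification $\mathrm{proj}_{C_i}(a)=x_i$, $\mathrm{proj}_{C_i}(b)=x_{i+1}$ follows exactly as you argue from the hyperplane-separation count in Theorem~\ref{thm:QMbig}, and once you have it the formula $\delta(a,b)=\sum_{J\text{ sep.}}\delta_J(a,b)$ from the preceding lemma converts $\sum_i\delta_{C_i}(x_i,x_{i+1})$ into $\delta(a,b)$ on the nose.
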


\noindent
It is worth mentioning, however, that geodesics with respect to a global metric may not be broken geodesics. 

\medskip \noindent
Theorem~\ref{thm:MedianCayl} follows from two key observations: Cayley graphs of quandle products as in Theorem~\ref{thm:MedianCayl} can be obtained from global metrics given by coherent systems of metrics on quasi-median graphs (Proposition~\ref{prop:CayleyQ}); and global metrics given by coherent systems of (quasi-)median metrics are again (quasi-)median (Proposition~\ref{prop:QMdelta}). We start by proving the latter observation.

\begin{prop}\label{prop:QMdelta}
Let $X$ be a quasi-median graph and $\{\delta_C \mid C \text{ clique}\}$ a coherent system of metrics. If $(C,\delta_C)$ is a median (resp.\ quasi-median) graph for every clique $C$, then $(X,\delta)$ is a median (resp.\ quasi-median) graph. 
\end{prop}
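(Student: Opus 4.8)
The plan is to verify that $(X,\delta)$ satisfies one of the combinatorial characterisations of (quasi-)median graphs recalled in Section~\ref{section:QM}. Since the global metric of a coherent system of graph-metrics is again a graph-metric, $(X,\delta)$ is a genuine connected graph, and because each $(C,\delta_C)$ is triangle-free exactly when it is median (by Theorem~\ref{thm:MedianQM}), it suffices to treat the quasi-median case: the median statement then follows by checking that $(X,\delta)$ contains no $3$-cycle whenever every $(C,\delta_C)$ is $\triangle$-free, which is immediate from the description of cliques of $(X,\delta)$ below. So the goal is to show that $(X,\delta)$ is weakly modular and contains no induced $K_4^-$ or $K_{3,2}$, i.e. to apply Theorem~\ref{thm:QMmodular}.

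The first step is structural: describe the cliques, hyperplanes, carriers and fibres of $(X,\delta)$ in terms of those of $X$. The key point, which should follow from Lemma~\ref{lem:BrokenGeod} together with the decomposition $\delta(a,b)=\sum_J \delta_J(a,b)$, is that a clique of $X$ that carries the local graph $(C,\delta_C)$ becomes, in $(X,\delta)$, a copy of $(C,\delta_C)$, and that a hyperplane $J$ of $X$ gives rise to hyperplanes of $(X,\delta)$ which are ``the same'' combinatorially — the sectors of $J$ in $X$ are unions of sectors in $(X,\delta)$, and crucially two vertices of $X$ are separated by $J$ in $X$ if and only if they are separated by (one of) the corresponding hyperplane(s) in $(X,\delta)$. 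The broken-geodesic lemma is exactly what lets one say that $\delta$-geodesics decompose along $X$-geodesics with clique-geodesic segments, so that the hyperplane combinatorics of $(X,\delta)$ is governed fibrewise by the quasi-median graph $X$ and cliquewise by the graphs $(C,\delta_C)$. Carriers split as $N(J)=C\times F$ in $X$ (Theorem~\ref{thm:QMbig}), and under $\delta$ this becomes $(C,\delta_C)\times F$ with $F$ still carrying its induced metric; I would record this product decomposition explicitly as it is what makes the verification local.

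With that dictionary in hand, the second step is to verify the two forbidden-subgraph conditions and weak modularity. Absence of $K_4^-$: any induced $K_4^-$ in $(X,\delta)$ must have all four vertices pairwise within $\delta$-distance $1$ or $2$; using that $\delta$-neighbours either share a clique of $X$ (and hence lie in one $(C,\delta_C)$) or are connected through a clique-geodesic, one reduces to the statement that neither $(C,\delta_C)$ nor a prism $(C,\delta_C)\times F$ contains an induced $K_4^-$, which holds because each factor is quasi-median and products of quasi-median graphs are quasi-median. The $K_{3,2}$ case is analogous. For weak modularity one checks the Triangle and Quadrangle Conditions directly: given a base vertex $o$ and the relevant configuration, one uses broken geodesics from $o$ to split the picture according to which hyperplanes of $X$ separate the points, observes that the configuration is concentrated in a single carrier $(C,\delta_C)\times F$ (or within one clique), and then invokes weak modularity of that product graph. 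Alternatively — and this is probably cleaner — one cites the local-to-global criterion: simple connectivity of $(X,\delta)^{\square\triangle}$ follows because filling the triangles/squares of $(X,\delta)$ recovers the square-triangle complex of $X$ with each clique's simplex subdivided according to $(C,\delta_C)$, which is a homotopy equivalence; and the house and $3$-cube conditions are checked inside a single carrier as above.

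**The main obstacle** I anticipate is the careful bookkeeping in the first step: making precise the correspondence between hyperplanes of $X$ and of $(X,\delta)$, and verifying that the separation/convexity properties transfer, since a single hyperplane $J$ of $X$ with local graph $(C,\delta_C)$ of diameter $>1$ breaks into several hyperplanes of $(X,\delta)$ and one must track how sectors and fibres refine. Everything after that is a routine reduction to the fact that finite (or arbitrary) products of quasi-median graphs are quasi-median and that carriers are products; the genuinely new content is the gluing, and Lemma~\ref{lem:BrokenGeod} is the tool that carries it. I would therefore spend the bulk of the proof establishing the carrier decomposition $(N(J),\delta)\cong (C,\delta_C)\times(F,\delta)$ and the separation statement, and then dispatch the forbidden subgraphs and weak modularity quickly by localising to carriers.
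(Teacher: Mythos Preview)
Your overall strategy --- verify Theorem~\ref{thm:QMmodular} by checking weak modularity and the two forbidden subgraphs, then deduce the median case from Theorem~\ref{thm:MedianQM} --- is exactly what the paper does. But your implementation is substantially heavier than necessary, and one step has a circularity problem.

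The paper never builds the hyperplane/sector dictionary for $(X,\delta)$ that you flag as the ``main obstacle''. Instead it proves a single short lemma (Lemma~\ref{lem:GatedGated}): any subgraph gated in $X$ is gated in $(X,\delta)$, with the same gate. This follows immediately from Lemma~\ref{lem:BrokenGeod}, and it does all the localisation work. For the Triangle Condition, $\delta$-adjacent vertices $x,y$ lie in a common $X$-clique $C$; project $o$ to its gate $z_0$ in $C$ and apply the triangle condition inside $(C,\delta_C)$. For the Quadrangle Condition, the paper localises not to a carrier but to a \emph{prism}: either $x,y,z$ lie in one $X$-clique, or the two $X$-edges $\{x,y\}$, $\{x,z\}$ lie in distinct hyperplanes which must be transverse, so they span a prism $P=C_1\times C_2$; then $(P,\delta)=(C_1,\delta_{C_1})\times(C_2,\delta_{C_2})$ is a product of two graphs each weakly modular by hypothesis, hence weakly modular, and one projects $o$ to its gate in $P$.

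Your proposal to localise to a carrier $N(J)\cong (C,\delta_C)\times (F,\delta)$ runs into trouble here: to use weak modularity of this product you need $(F,\delta)$ weakly modular, but $F$ is a gated subgraph of $X$ --- itself a quasi-median graph with the restricted coherent system --- so this is exactly an instance of what you are trying to prove, with no induction parameter available. The prism is the right object precisely because both factors are cliques, hence covered by the hypothesis directly.

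The forbidden-subgraph step is also simpler than you anticipate. Since $\delta$-adjacency implies $X$-adjacency, any $K_3$ in $(X,\delta)$ sits in a single $X$-clique; a $K_4^-$ would then force two $X$-cliques sharing an edge, which coincide; and an induced $K_{3,2}$ in $(X,\delta)$ cannot be induced in $X$ (since $X$ is quasi-median), so some extra $X$-edge appears and reduces to the $K_4^-$ case. Thus every $K_3$, $K_4^-$, $K_{3,2}$ of $(X,\delta)$ lives in a single $(C,\delta_C)$, and the hypothesis on the local graphs finishes. No carriers or products are needed here at all.
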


\noindent
In order to prove the proposition, the following observation will be needed.

\begin{lemma}\label{lem:GatedGated}
Let $X$ be a quasi-median graph and $\{\delta_C \mid C \text{ clique}\}$ a coherent system of graph-metrics. For every $Y \subset V(X)$, if $Y$ induces a gated subgraph in $X$, then it induces a gated subgraph in $(X,\delta)$. If so, the gates taken in $X$ and $(X,\delta)$ coincide. 
\end{lemma}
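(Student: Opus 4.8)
The plan is to show that the gate of a vertex $x \in X$ in the gated subgraph $Y$, computed with respect to the graph-metric of $X$, remains a gate with respect to the global metric $\delta$. Let $p \in Y$ denote the $X$-gate of $x$ in $Y$; by definition, for every $y \in Y$ there is an $X$-geodesic from $x$ to $y$ passing through $p$. The goal is to produce, for each $y \in Y$, a $\delta$-geodesic from $x$ to $y$ through $p$, which will establish that $p$ is also the $\delta$-gate (uniqueness of gates then forces the two gates to coincide).

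First I would invoke Lemma~\ref{lem:BrokenGeod}: in $(X,\delta)$, broken geodesics are geodesics. Start from an $X$-geodesic $x = x_1, \ldots, x_k = p, \ldots, x_n = y$ that passes through $p$ (such a geodesic exists because $p$ is the $X$-gate). Replace each edge $\{x_i, x_{i+1}\}$, lying in the clique $C_i$, by a $\delta_{C_i}$-geodesic inside $C_i$ from $x_i$ to $x_{i+1}$; concatenating these gives a broken geodesic from $x$ to $y$, hence a $\delta$-geodesic, and it visits $p$. This shows every $y \in Y$ can be joined to $x$ by a $\delta$-geodesic through $p$, i.e. $p$ is a $\delta$-gate of $x$ in $Y$. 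Since gates, when they exist, are unique, $Y$ is gated in $(X,\delta)$ and its $\delta$-gate equals its $X$-gate.

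The one point that requires a little care — and which I expect to be the main (minor) obstacle — is the concatenation step: I should check that splicing the local $\delta_{C_i}$-geodesics together indeed yields a path of the form required by the definition of a broken geodesic in Lemma~\ref{lem:BrokenGeod}, namely that $x_1, \ldots, x_n$ is genuinely an $X$-geodesic (which it is, by construction) and that each $C_i$ is the clique containing both $x_i$ and $x_{i+1}$ (which holds since consecutive vertices of an $X$-geodesic that are adjacent lie in a unique common clique). Coherence of the system of metrics is what guarantees, via the preceding lemmas, that $\delta$ restricts to $\delta_{C_i}$ on each clique, so the local geodesics used here really are $\delta$-geodesic segments. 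With these observations in place the argument is complete; no further computation is needed.
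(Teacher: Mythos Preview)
Your proposal is correct and follows essentially the same route as the paper's proof: take the $X$-gate $p$ of $x$ in $Y$, use the $X$-geodesic from $x$ to any $y\in Y$ through $p$ to build a broken geodesic through $p$, and apply Lemma~\ref{lem:BrokenGeod} to conclude that this is a $\delta$-geodesic, so $p$ is the $\delta$-gate. The extra care you take in the last paragraph just makes explicit what the paper leaves implicit.
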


\begin{proof}
Let $x \in X$ be a vertex and let $p \in Y$ denote its gate in $Y$. For every vertex $y \in Y$, we know that there exists a geodesic in $X$ connecting $x$ to $y$ that passes through $p$. Consequently, there exists a broken geodesic connecting $x$ to $y$ that passes through $p$. It follows from Lemma~\ref{lem:BrokenGeod} that there exists a geodesic in $(X,\delta)$ connecting $x$ to $y$ that passes through $p$. This proves that $p$ is the gate of $x$ in $(Y,\delta)$. 
\end{proof}

\begin{proof}[Proof of Proposition~\ref{prop:QMdelta}.]
Assume that $(C,\delta_C)$ is a graph-metric for every clique $C$. We start by verifying that, if each $(C,\delta_C)$ is weakly modular, then so is $(X,\delta)$. 

\begin{claim}\label{claim:CombTC}
If each $(C,\delta_C)$ satisfies the triangle condition, then so does $(X,\delta)$.
\end{claim}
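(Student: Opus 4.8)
The plan is to reduce the triangle condition in $(X,\delta)$ to the triangle condition inside a single clique, exploiting that cliques are gated in $(X,\delta)$. So fix vertices $o,x,y\in X$ with $\delta(o,x)=\delta(o,y)$ and $\delta(x,y)=1$; we must produce a common $\delta$-neighbour $z$ of $x$ and $y$ with $\delta(o,z)=\delta(o,x)-1$. The first step is to observe that, because each $\delta_C$ is a graph-metric and the coherent global metric $\delta$ extends the $\delta_C$'s, the equality $\delta(x,y)=1$ forces $x$ and $y$ to lie in a common clique $C$ with $\delta_C(x,y)=1$: an optimal path $x=x_1,\dots,x_n=y$ realizing $\delta(x,y)$ satisfies $\sum_{i=1}^{n-1}\delta_{C_i}(x_i,x_{i+1})=1$ with each summand a positive integer (consecutive vertices of a path being distinct), hence $n=2$.

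Next I would bring in gatedness. Cliques are gated in the quasi-median graph $X$, so by Lemma~\ref{lem:GatedGated} the clique $C$ is gated in $(X,\delta)$ as well, with the same gates. Let $o'$ denote the gate of $o$ in $C$. Gatedness in $(X,\delta)$ gives $\delta(o,c)=\delta(o,o')+\delta(o',c)=\delta(o,o')+\delta_C(o',c)$ for every $c\in C$. Applying this to $c=x$ and to $c=y$ and using $\delta(o,x)=\delta(o,y)$ yields $\delta_C(o',x)=\delta_C(o',y)$.

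Now the triangle condition is invoked inside the clique. Since $(C,\delta_C)$ satisfies the triangle condition, applied to the triple $o',x,y$ (with $\delta_C(x,y)=1$ and $\delta_C(o',x)=\delta_C(o',y)$) there is a vertex $z\in C$ with $\delta_C(x,z)=\delta_C(y,z)=1$ and $\delta_C(o',z)=\delta_C(o',x)-1$. Because $\delta$ restricts to $\delta_C$ on $C$, the vertex $z$ is a common $\delta$-neighbour of $x$ and $y$; and by gatedness $\delta(o,z)=\delta(o,o')+\delta_C(o',z)=\delta(o,o')+\delta_C(o',x)-1=\delta(o,x)-1$. This is precisely the conclusion of the triangle condition in $(X,\delta)$.

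The only genuinely non-formal ingredient is Lemma~\ref{lem:GatedGated}, i.e.\ that gatedness passes from $X$ to $(X,\delta)$, which is already available (it rests on Lemma~\ref{lem:BrokenGeod}); everything else is bookkeeping with the gate identity $\delta(o,c)=\delta(o,o')+\delta_C(o',c)$. I therefore expect no real obstacle in this claim. The analogous treatment of the quadrangle condition, and of the forbidden induced subgraphs $K_4^-$ and $K_{3,2}$ needed to finish the proof of Proposition~\ref{prop:QMdelta}, will follow the same localization-to-a-clique strategy (for the quadrangle condition one also projects the fourth vertex onto the relevant clique), but those steps lie beyond the present claim.
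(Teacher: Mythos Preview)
Your proof is correct and follows essentially the same approach as the paper: project $o$ onto the clique $C$ containing $x$ and $y$ via Lemma~\ref{lem:GatedGated}, apply the triangle condition in $(C,\delta_C)$ to the projected point, and lift back using the gate identity. The paper's argument is written slightly more tersely (and calls the gate $z_0$ rather than $o'$), but the logic is identical.
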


\noindent
Let $o,x,y \in X$ be three vertices satisfying $\delta(o,x)=\delta(o,y)$ and $\delta(x,y)=1$. Necessarily, $x$ and $y$ belong to a common clique $C$ of $X$. Let $z_0 \in C$ denote the gate of $o$ in $C$. As a consequence of Lemma~\ref{lem:GatedGated}, $\delta(z_0,x)=\delta(z_0,y)$. By aplying the triangle condition in $(C,\delta_C)$, we find a vertex $z$ that is adjacent to both $x$ and $y$ in $(C,\delta_C)$ and such that $\delta_C(z_0,z) = \delta_C(z_0,x)-1$. Since $z_0$ is the gate of $z$ in the subgraph $(C,\delta_C)$ of $(X,\delta)$, we have
$$\delta(o,z)=\delta(o,z_0)+ \delta(z_0,z) = \delta(o,z_0) + \delta(z_0,x)-1 = \delta(o,x)-1.$$
Thus, $z$ is the vertex we are looking for. 

\begin{claim}\label{claim:CombQC}
If each $(C,\delta_C)$ satisfies the quadrangle condition, then so does $(X,\delta)$.
\end{claim}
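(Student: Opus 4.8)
The plan is to mimic the proof of Claim~\ref{claim:CombTC}, reducing the quadrangle condition in $(X,\delta)$ to the quadrangle condition in a single clique. Suppose we are given $o,x,y,z \in X$ with $\delta(o,x)=\delta(o,z)=\delta(o,y)-1$, $\delta(y,x)=\delta(y,z)=1$, and $\delta(x,z)=2$. From $\delta(y,x)=\delta(y,z)=1$, the vertices $x$ and $z$ each lie in a common clique with $y$. Two subcases arise: either $x,y,z$ all lie in one clique $C$ of $X$, or $x,y$ lie in a clique $C_1$ and $y,z$ lie in a distinct clique $C_2$ with $C_1 \neq C_2$.

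First I would handle the one-clique case. If $x,y,z$ all lie in a clique $C$, let $w_0 \in C$ denote the gate of $o$ in $C$. By Lemma~\ref{lem:GatedGated}, distances from $o$ to vertices of $C$ are $\delta(o,w_0)$ plus the $\delta_C$-distance to $w_0$, so the hypotheses transfer to the statement that, inside $(C,\delta_C)$, we have $\delta_C(w_0,x)=\delta_C(w_0,z)=\delta_C(w_0,y)-1$, $\delta_C(y,x)=\delta_C(y,z)=1$, and $\delta_C(x,z)=2$. Applying the quadrangle condition in $(C,\delta_C)$ yields a common neighbour $w$ of $x$ and $z$ in $(C,\delta_C)$ with $\delta_C(w_0,w)=\delta_C(w_0,y)-2$; since $w_0$ is the gate of $w$ in $(C,\delta_C) \subset (X,\delta)$, we get $\delta(o,w)=\delta(o,w_0)+\delta_C(w_0,w)=\delta(o,y)-2$, as required.

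The two-clique case is the main obstacle and needs the quasi-median geometry of $X$ itself. If $x \in C_1 \ni y$ and $z \in C_2 \ni y$ with $C_1 \neq C_2$, then $x$ and $z$ are at $X$-distance $2$ (they cannot be adjacent in $X$, else by the no-$K_4^-$ property of $X$ all three would lie in one clique, and they cannot coincide since $\delta(x,z)=2>0$), and $(x,y,z)$ is an $X$-geodesic. The hypotheses on $o$, together with the fact that $\delta$-distances dominate $X$-distances on pairs in a common clique, should force $d_X(o,x)=d_X(o,z)=d_X(o,y)-1$, with $x,z$ at $X$-distance $2$; one must check these $X$-distances carefully using the hyperplane decomposition $\delta = \sum_J \delta_J$ and the fact that $y$ is $X$-adjacent to both $x$ and $z$. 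Then the quadrangle condition in the quasi-median graph $X$ produces a common $X$-neighbour $w$ of $x$ and $z$ with $d_X(o,w)=d_X(o,y)-2$; moreover, since $x,y,z,w$ form an induced $4$-cycle in $X$ (it is induced because $x,z$ are not $X$-adjacent and, by no-$K_4^-$, $y,w$ are not $X$-adjacent), the cliques $C_1=\langle x,y\rangle$, $C_2=\langle y,z\rangle$, $\langle x,w\rangle$, $\langle z,w\rangle$ are pairwise parallel sides of a prism, so coherence of the system of metrics gives $\delta_{\langle x,w\rangle}(x,w)=\delta_{C_2}(y,z)=1$ and $\delta_{\langle z,w\rangle}(z,w)=\delta_{C_1}(x,y)=1$, i.e.\ $\delta(x,w)=\delta(z,w)=1$. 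Finally, to get $\delta(o,w)=\delta(o,y)-2$ rather than merely $d_X(o,w)=d_X(o,y)-2$, I would argue hyperplane by hyperplane: the hyperplanes separating $o$ from $y$ are exactly those separating $o$ from $w$ together with the two hyperplanes of $C_1$ and $C_2$, and on each such hyperplane $J$ the contribution $\delta_J$ is unchanged between the relevant vertices by coherence, so summing gives $\delta(o,w)=\delta(o,y)-\delta_{C_1}(x,y)-\delta_{C_2}(y,z)=\delta(o,y)-2$. This completes Claim~\ref{claim:CombQC}, and together with Claim~\ref{claim:CombTC} it shows $(X,\delta)$ is weakly modular; the proof of Proposition~\ref{prop:QMdelta} then concludes by ruling out induced $K_4^-$ and $K_{3,2}$ in $(X,\delta)$ using Lemma~\ref{lem:GatedGated} and Theorem~\ref{thm:QMmodular}, with the median case following from Theorem~\ref{thm:MedianQM} since $(C,\delta_C)$ being median means triangle-free.
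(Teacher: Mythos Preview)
Your one-clique case is fine and matches the paper. The gap is in the two-clique case, specifically step~2: the claim that $d_X(o,x)=d_X(o,z)=d_X(o,y)-1$ need \emph{not} hold. Take $X=K_3\times K_2$ with $K_3=\{a,b,c\}$, $K_2=\{0,1\}$; put on $K_3$ the path metric $a\!-\!b\!-\!c$ (so $\delta_{K_3}(a,c)=2$) and the standard metric on $K_2$. With $o=(a,0)$, $y=(c,1)$, $x=(b,1)$, $z=(c,0)$ one checks $\delta(o,x)=\delta(o,z)=2=\delta(o,y)-1$, $\delta(y,x)=\delta(y,z)=1$, $\delta(x,z)=2$, so the $(X,\delta)$-quadrangle hypothesis holds; yet $d_X(o,x)=2=d_X(o,y)$, so the $X$-quadrangle hypothesis fails and you cannot invoke the quadrangle condition of $X$.

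The paper avoids this by never passing to $X$-distances. It observes that, in the two-clique case, the hyperplanes $J_1,J_2$ through the edges $\{y,x\}$ and $\{y,z\}$ both separate $o$ from $y$ (because the gate of $o$ in each clique is not $y$, as follows from $\delta(o,x)=\delta(o,y)-1$ and $\delta(o,z)=\delta(o,y)-1$), hence are transverse, so the two cliques span a prism $P$. Since prisms are gated and the quadrangle condition is stable under products, $(P,\delta)$ satisfies the quadrangle condition; letting $w_0$ be the gate of $o$ in $P$ and applying Lemma~\ref{lem:GatedGated}, the $\delta$-hypotheses transfer verbatim to $w_0,x,y,z$ inside $(P,\delta)$, and the quadrangle condition in $(P,\delta)$ yields $w$ with $\delta(o,w)=\delta(o,w_0)+\delta(w_0,w)=\delta(o,y)-2$. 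Your hyperplane-by-hyperplane computation in step~5 is in fact correct and would give an alternative endgame once you have the $4$-cycle $x,y,z,w$; what is missing is a correct argument producing $w$, and that is exactly the transversality/prism step, not an appeal to the $X$-quadrangle condition.
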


\noindent
Let $o,x,y,z \in X$ be three vertices such that $\delta(o,y)=\delta(o,z)=\delta(o,x)-1$ and $\delta(y,z)=2$. We start by showing that there exists a gated subgraph $Q$ containing $x,y,z$ such that $(Q,\delta)$ is weakly modular. Since $x$ is adjacent to both $y$ and $z$ in $(X,\delta)$, necessarily $x$ is adjacent to both $y$ and $z$ in $X$. We distinguish two cases. If $x,y,z$ all belong to the same clique $C$, then we set $Q:=C$. Otherwise, the hyperplane of $X$ containing $\{x,y\}$, which necessarily separates $o$ and $y$, must be transverse to the hyperplane containing $\{x,z\}$, which necessarily separates $o$ and $z$. Therefore, the edges $\{x,y\}$ and $\{x,z\}$ are contained in a common prism $P$, which decomposes as the product of the two cliques containing $\{x,y\}$ and $\{x,z\}$. Since the quadrangle condition is stable under product, we know that $(P,\delta)$ satisfies the quadrangle condition. Then, we set $Q:=P$.

\medskip \noindent
Now, let $w_0$ denote the gate of $o$ in $Q$. As a consequence of Lemma~\ref{lem:GatedGated}, we have $\delta(w_0,y)= \delta(w_0,z)=\delta(w_0,x)-1$. Since $(Q,\delta)$ satisfies the quadrangle condition, we can find a common neighbour $w \in Q$ of $y,z$ in $(Q,\delta)$ such that $\delta(w_0,w)=\delta(w_0,x)-2$. Since
$$\delta(o,w)=\delta(o,w_0)+\delta(w_0,w)=\delta(o,w_0)+ \delta(w_0,x)-2  =\delta(o,x)-2,$$
we conclude that $w$ is the vertex we are looking for, completing the proof of Claim~\ref{claim:CombQC}. 

\medskip \noindent
It remains to understand how behave specific subgraphs that are allowed in weakly modular graphs but forbidden in (quasi-)median graphs.

\begin{claim}\label{claim:NoSubGraphs}
Any subgraph of $(X,\delta)$ isomorphic to $K_3$, $K_4^-$, or $K_{3,2}$ must be contained in $(C,\delta)$ for some clique $C$ of $X$.
\end{claim}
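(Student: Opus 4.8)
Any subgraph of $(X,\delta)$ isomorphic to $K_3$, $K_4^-$, or $K_{3,2}$ must be contained in $(C,\delta)$ for some clique $C$ of $X$.

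\medskip

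The plan is to translate each of the three forbidden configurations back into the quasi-median graph $X$ and use the combinatorics of hyperplanes there, together with Lemma~\ref{lem:BrokenGeod} and Lemma~\ref{lem:GatedGated}, to force all the vertices into a single clique of $X$. The starting observation is that a $\delta$-edge between $u$ and $v$ (i.e.\ $\delta(u,v)=1$) forces $u$ and $v$ to lie in a common clique $C$ of $X$: indeed a broken geodesic realising $\delta(u,v)$ consists of segments inside cliques of $X$, and a path of $\delta$-length $1$ can only be a single such segment of $\delta_C$-length $1$. So the vertex set of any of our three small graphs lies in a union of cliques of $X$, the cliques being those containing the $\delta$-edges of the configuration.

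First I would do $K_3$. Say $u,v,w$ are pairwise $\delta$-adjacent, with $u,v$ in a clique $C$ of $X$. If $w\in C$ we are done, so suppose not; then the edges of $X$ witnessing $\delta(u,w)=1$ and $\delta(v,w)=1$ lie in cliques $C_1\ni u,w$ and $C_2\ni v,w$ distinct from $C$. If $u,v,w$ were pairwise adjacent in $X$ they would lie in a common clique of $X$ by the quasi-median structure (a triangle in a quasi-median graph lies in a clique), forcing $w\in C$; otherwise at least one pair, say $u,v$, is at $X$-distance $\ge 2$. But then any broken geodesic from $u$ to $v$ passing through $w$ has $\delta$-length $\ge 2$ (one segment in $C_1$ of length $\ge 1$, one in $C_2$ of length $\ge 1$, and they cannot be combined since $C_1\ne C_2$), so $\delta(u,v)\ge 2$ by Lemma~\ref{lem:BrokenGeod} (broken geodesics are geodesics), contradicting $\delta(u,v)=1$. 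Hence all of $u,v,w$ lie in one clique of $X$.

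For $K_4^-$ and $K_{3,2}$ the scheme is the same but uses that $X$ itself contains no induced $K_4^-$ and no induced $K_{3,2}$ (Lemma~\ref{lem:QuandleCycle} gives the latter in our Cayley graphs, but more to the point $X$ is quasi-median so Theorem~\ref{thm:QMmodular} applies). For $K_4^-$ with vertices $a,b,c,d$, all $\delta$-edges present except $\{a,c\}$: the $K_3$ on $a,b,d$ lies in a clique $C$ of $X$ by the previous paragraph, and likewise $b,c,d$ lie in a clique $C'$. If $C=C'$ we are done; otherwise $C\cap C'\supseteq\{b,d\}$ is contained in a clique intersection of size $\ge 2$ in $X$, which is impossible since distinct cliques of a quasi-median graph share at most one vertex, so $C=C'$. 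For $K_{3,2}$ with parts $\{a,b\}$ and $\{x,y,z\}$, each triple $\{a,x,y\}$, $\{a,y,z\}$, $\{b,x,y\}$, etc., being a $\delta$-path of length $2$ rather than a triangle, one argues instead that $x,y,z$ pairwise lie in the fibre-type configuration: the two $\delta$-paths of length $2$ from $x$ to $z$ through $a$ and through $b$ both have $X$-length $2$, and if $a,b$ are not in a common clique with $x$ one gets an induced $K_{3,2}$ in $X$ after replacing $\delta$-edges by $X$-edges (each $\delta$-edge being a single $X$-edge inside the relevant clique), contradicting the quasi-median structure of $X$; the only way out is that all of $a,b,x,y,z$ sit in one clique $C$, where $(C,\delta_C)$ is quasi-median so it contains no $K_{3,2}$ anyway, yielding the claim.

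\medskip

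The main obstacle I expect is the bookkeeping in the $K_{3,2}$ case: one must be careful that a $\delta$-edge inside a clique $C$ of $X$ is genuinely a single edge of $X$ (true, since $\delta$ extends $\delta_C$ and $\delta_C$-edges are $X$-edges of $C$), so that replacing the $\delta$-graph by the corresponding $X$-subgraph really produces the claimed $K_3$, $K_4^-$, or $K_{3,2}$ in $X$ — and then one invokes that $X$, being quasi-median, forbids $K_4^-$ and $K_{3,2}$ and confines triangles to cliques. Once those translations are set up cleanly, each case closes by the "two cliques of a quasi-median graph meet in at most a point" principle.
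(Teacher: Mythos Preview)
Your approach is the same as the paper's: observe that $\delta$-adjacency implies $X$-adjacency, hence each of the three configurations yields the corresponding subgraph in $X$, and then use the quasi-median structure of $X$ to confine it to a clique. The $K_4^-$ case is handled exactly as in the paper.

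Two things need tightening. First, in your $K_3$ argument the ``otherwise'' branch is vacuous: you have already placed $u,v$ in a clique $C$, $u,w$ in a clique $C_1$, and $v,w$ in a clique $C_2$, so all three pairs are $X$-adjacent. The broken-geodesic paragraph never applies; the proof ends with ``triangle in $X$ implies common clique, and that clique must be $C$''. This is harmless but should be pruned.

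Second, and more seriously, your $K_{3,2}$ argument does not close. Knowing the $\delta$-$K_{3,2}$ gives an $X$-$K_{3,2}$ that cannot be induced is correct, but from ``some extra $X$-edge exists'' you jump to ``all five vertices sit in one clique'' without justification. You need to treat the two possibilities for the extra edge. If the extra $X$-edge is $ab$, then each of the triangles $abx$, $aby$, $abz$ lies in a clique of $X$, and any two of these cliques share the edge $ab$, hence coincide by your own ``distinct cliques meet in at most a vertex'' principle; so all five vertices are in one clique. If instead the extra $X$-edge lies inside $\{x,y,z\}$, say $xy$, then the triangles $axy$ and $bxy$ lie in cliques sharing the edge $xy$, hence coincide, which forces the $X$-edge $ab$ and reduces to the first case. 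The paper compresses this by passing to hyperplanes: the extra edge forces a $K_4^-$, hence a $K_4$, hence all six $K_{3,2}$-edges lie in a common hyperplane $J$, and then the product decomposition $N(J)\cong C\times F$ pins everything to a single clique. Either route works, but yours as written stops short of it.

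Finally, your closing remark ``$(C,\delta_C)$ is quasi-median so it contains no $K_{3,2}$ anyway'' is not part of this claim; the claim only asserts that any such subgraph is \emph{contained} in some $(C,\delta_C)$, and the non-existence inside $(C,\delta_C)$ is the \emph{next} step in the proof of Proposition~\ref{prop:QMdelta}.
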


\noindent
This essentially follows from the fact that two adjacent vertices of $(X,\delta)$ are necessarily adjacent in $X$. In particular, every complete subgraph (e.g.\ $K_3$) of $(X,\delta)$ must be contained in a clique of $X$. A subgraph $K_4^-$ of $(X,\delta)$ must be contained in two cliques of $X$ whose intersection contains at least one edge. But, in a quasi-median graph, two such cliques must coincide, so our $K_4^-$ actually must be contained in a single clique of $X$. Finally, a $K_{3,2}$ in $(X,\delta)$ cannot be induced in $X$, so  at least two of its non-adjacent vertices must be adjacent in $X$, thus producing a $K_4^-$. It follows that all the edges of our $K_{3,2}$ belong to the same hyperplane of $X$, from which we deduce that it must be contained in some clique of $X$. Claim~\ref{claim:NoSubGraphs} is proved. 

\medskip \noindent
Since quasi-median graphs can be defined as weakly modular graphs with no induced copies of $K_4^-$ nor $K_{3,2}$, according to Theorem~\ref{thm:QMmodular}, it follows from Claims~\ref{claim:CombTC}, \ref{claim:CombQC}, and~\ref{claim:NoSubGraphs} that, if each $(C,\delta_C)$ is quasi-median, then $(X,\delta)$ is quasi-median as well. Notice that, as a consequence of Claim~\ref{claim:NoSubGraphs}, we know that $(X,\delta)$ is triangle-free if and only if each $(C,\delta_C)$ is triangle-free. Therefore, since median graphs can be defined as triangle-free quasi-median graphs according to Theorem~\ref{thm:MedianQM}, we conclude that $(X,\delta)$ is median whenever each $(C,\delta_C)$ is median. 
\end{proof}

\noindent
Now, we turn to the second key observation in order to prove Theorem~\ref{thm:MedianCayl}. 

\begin{prop}\label{prop:CayleyQ}
Let $\mathfrak{Q}:= \mathfrak{Q}(\mathcal{I}, \mathcal{G}, \mathcal{A})$ be a quandle product. For every $i \in I$, let $S_i \subset G_i$ be a generating set. Assume that, for all $i,j \in I$ and $g \in G_j$, if $i<j$ then $g \ast S_i = S_{g \ast i}$. For every clique $C$ of $\mathfrak{M}:= \mathfrak{M}(\mathcal{I}, \mathcal{G}, \mathcal{A})$, write $C=gG_i$ for some $i \in I$ and some $g \in \mathfrak{Q}$, and define
$$\delta_C : (ga,gb) \mapsto |a^{-1}b|_{S_i}.$$
Then $\{\delta_C \mid C \text{ clique}\}$ is a coherent system of metrics; and, if we denote by $\delta$ the corresponding global metric, then $(\mathfrak{M},\delta)$ is isometric to $\mathrm{Cayl}(\mathfrak{Q}, \bigcup_{i \in I} S_i)$. 
\end{prop}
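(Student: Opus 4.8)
The plan is to check, in turn, that $\delta_C$ is well defined, that the system $\{\delta_C \mid C\text{ clique}\}$ is coherent, and that its global metric $\delta$ is exactly the word metric of $\mathfrak{Q}$ relative to $\bigcup_i S_i$.

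\textbf{Well-definedness.} By Corollary~\ref{cor:QuandleClique} the cliques of $\mathfrak{M}$ are precisely the cosets of the factors, so a clique $C$ can indeed be written $gG_i$. If also $C=g'G_j$, then, since $1\in g^{-1}C=G_i$, there is $b\in G_j$ with $g^{-1}g'b=1$, so $g^{-1}g'\in G_j$ and hence $G_i=g^{-1}g'G_j=G_j$ as subgroups of $\mathfrak{Q}$; Corollary~\ref{cor:FactorsEmb} applied to the stable singletons $\{i\},\{j\}$ then forces $i=j$. Writing $g'=gh$ with $h\in G_i$, for $x=ga$, $y=gb'$ in $C$ one has $x=g'(h^{-1}a)$, $y=g'(h^{-1}b')$, and $|(h^{-1}a)^{-1}(h^{-1}b')|_{S_i}=|a^{-1}b'|_{S_i}$, so the two formulas for $\delta_C$ agree. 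Each $\delta_C$ is the pull-back of the word metric of $(G_i,S_i)$ under the bijection $a\mapsto ga$, hence a genuine metric.

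\textbf{Coherence.} Let $J$ be a hyperplane and $C,D$ two of its cliques. By Theorem~\ref{thm:QMbig} the carrier decomposes as $N(J)=C_0\times F$ for a clique $C_0\subset J$ and a (connected) fibre $F$; every clique of $J$ is $C_0\times\{f\}$ for some $f\in F$, and for $x=(c,f)$ the gate $\mathrm{proj}_{C_0\times\{f'\}}(x)$ is $(c,f')$. It therefore suffices to prove that whenever $f,f'\in F$ are adjacent, the parallelism bijection $\beta\colon C_0\times\{f\}\to C_0\times\{f'\}$ satisfies $\delta_{C_0\times\{f'\}}(\beta x,\beta y)=\delta_{C_0\times\{f\}}(x,y)$; composing along a path of $F$ joining the fibre-coordinates of $C$ and $D$ then yields the coherence identity in general. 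For a single such step translate so that $C_0\times\{f'\}$ passes through $1$; by Corollary~\ref{cor:QuandleClique} it is then some $G_k$, and $C_0\times\{f\}$ is a clique $aG_j$ parallel to it with $a$ a neighbour of $1$, say $a\in G_i$. Lemma~\ref{lem:ParallelEdges} now describes $\beta$: it sends the vertex $ab$ of $aG_j$ to $c\in G_k$, where either $k=j$ and $c=b$ (if $i\perp j$ or $i<j$), or $k=a^{-1}\ast j$ and $c=a^{-1}\ast b$ (if $i>j$). In the first case $\beta$ fixes the ``$G_j$-coordinate'' and there is nothing to prove; in the second, $\beta$ agrees on $G_j$ with the isomorphism $b\mapsto a^{-1}\ast b$ of $\mathscr{S}(\mathcal{I},\mathcal{G},\mathcal{A})$, and the hypothesis $a^{-1}\ast S_j=S_{a^{-1}\ast j}$ — applicable because $j<i$ and $a^{-1}\in G_i$ — shows this isomorphism carries $S_j$ bijectively onto $S_{a^{-1}\ast j}$, hence preserves word length. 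So $\delta$ is preserved in both cases, proving coherence.

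\textbf{Identification of the global metric.} Finally unwind the definition of $\delta$. A path in $\mathfrak{M}$ from $g$ to $h$ is a sequence $g=x_0,\dots,x_n=h$ with $x_{t-1}^{-1}x_t=:a_t\in G_{j(t)}\setminus\{1\}$, the clique through $\{x_{t-1},x_t\}$ being $x_{t-1}G_{j(t)}$, on which $\delta$ restricts to $\delta_{x_{t-1}G_{j(t)}}(x_{t-1},x_t)=|a_t|_{S_{j(t)}}$. Hence
\[
\delta(g,h)\;=\;\inf\Big\{\ \sum_{t=1}^{n}|a_t|_{S_{j(t)}}\ \Big|\ n\ge 0,\ g^{-1}h=a_1\cdots a_n,\ a_t\in G_{j(t)}\setminus\{1\}\ \Big\}.
\]
On the other hand, writing $S:=\bigcup_i S_i$, any $S$-word representing $g^{-1}h$ splits into maximal runs of letters from a common $S_{j(t)}^{\pm1}$, realising $g^{-1}h$ as a product $a_1\cdots a_n$ with $a_t\in G_{j(t)}$ of total length $\sum_t|a_t|_{S_{j(t)}}$; conversely every such product gives an $S$-word of that length by spelling out each $a_t$ over $S_{j(t)}$, and using $|a|_{S_i}+|a'|_{S_i}\ge|aa'|_{S_i}$ to merge consecutive same-index factors one sees that $d_S(g,h)$ equals the same infimum. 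Thus $\delta=d_S$; since a global metric of graph-metrics is a graph-metric and $d_S$ is the metric of $\mathrm{Cayl}(\mathfrak{Q},S)$, the identity map is the desired isometry $(\mathfrak{M},\delta)\to\mathrm{Cayl}\big(\mathfrak{Q},\bigcup_i S_i\big)$. The only genuinely delicate point is the coherence step: matching the label-transport of Lemma~\ref{lem:ParallelEdges} with the gate-projection in the definition of coherence and propagating the equality along a chain of parallel cliques via $N(J)=C_0\times F$; the rest is a routine unwinding of definitions.
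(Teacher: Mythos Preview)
Your proof is correct and follows essentially the same approach as the paper: well-definedness is handled identically, your coherence argument inlines the paper's Lemma~\ref{lem:WhenCoherent} (reducing to opposite sides of a single $4$-cycle via the carrier decomposition $N(J)=C_0\times F$) and then uses Lemma~\ref{lem:ParallelEdges} exactly where the paper uses Lemma~\ref{lem:QuandleCycle}, and your identification step computes the global metric directly rather than, as the paper does, simply checking that $\delta(a,b)=1$ if and only if $a^{-1}b\in\bigcup_i S_i$. The only implicit point you use without comment is the $\mathfrak{Q}$-equivariance of the system $\{\delta_C\}$ when you ``translate so that $C_0\times\{f'\}$ passes through $1$'', but this is immediate from your well-definedness check.
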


\noindent
Before proving the proposition, let us record an elementary criterion that allows us to verify whether or not a system of metrics is coherent. 

\begin{lemma}\label{lem:WhenCoherent}
Let $X$ be a quasi-median graph. A system of metrics $\{ \delta_C \mid C \text{ clique}\}$ is coherent if and only if, for all oriented edges $(a,b)$ and $(x,y)$ that are opposite sides of an induced $4$-cycle, we have $\delta_C(a,b)= \delta_D(x,y)$ where $C$ (resp.\ $D$) is the clique containing $a$ and $b$ (resp.\ $x$ and $y$). 
\end{lemma}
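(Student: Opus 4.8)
The plan is to prove both implications directly from the structure of carriers in a quasi-median graph, the key input being the product decomposition $N(J) = C \times F$ provided by Theorem~\ref{thm:QMbig}.

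For the forward implication, I would suppose the system is coherent and take oriented edges $(a,b)$ and $(x,y)$ that are opposite sides of an induced $4$-cycle; after relabelling, the $4$-cycle is $a - b - y - x - a$, so $a$ is adjacent to $x$ and $b$ to $y$. Write $C$ for the clique containing $\{a,b\}$ and $D$ for the clique containing $\{x,y\}$. First I would note $C \neq D$, since otherwise $a$ and $y$ would be adjacent, contradicting that the $4$-cycle is induced; and that $C$ and $D$ lie in a common hyperplane $J$, since the edges $\{a,b\}$ and $\{x,y\}$ are directly identified in the definition of a hyperplane (opposite sides of a $4$-cycle). Next I would check that $x$ is the gate of $a$ in $D$: it is a neighbour of $a$ lying in $D$, and it is the only one, for if $a$ were adjacent to some $z \in D \backslash \{x\}$ then $\{a,x,z\}$ would span a triangle and $a$ would belong to the clique $D$. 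Likewise $y$ is the gate of $b$ in $D$. Coherence, applied to the two cliques $C$ and $D$ of the hyperplane $J$, then yields $\delta_D(x,y) = \delta_D(\mathrm{proj}_D(a), \mathrm{proj}_D(b)) = \delta_C(a,b)$, which is the desired equality.

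For the converse, I would assume the $4$-cycle condition and let $C,D$ be two cliques of a hyperplane $J$ and $x,y \in C$; the claim is trivial if $x=y$, so assume $x \neq y$. Using $N(J) = C_0 \times F$ for a reference clique $C_0 \subset J$ and a fibre $F$, the cliques of $X$ belonging to $J$ are exactly the slices $C_0 \times \{f\}$, $f \in F$, so I may write $C = C_0 \times \{f\}$ and $D = C_0 \times \{f'\}$. Since $F$ is connected, I would fix a path $f = f_0, f_1, \dots, f_n = f'$ in $F$ and set $C_i := C_0 \times \{f_i\}$. For each $i$ and all distinct $c, c' \in C_0$, the oriented edges $\big((c,f_i),(c',f_i)\big)$ and $\big((c,f_{i+1}),(c',f_{i+1})\big)$ are opposite sides of an induced $4$-cycle inside the carrier, so the hypothesis gives $\delta_{C_{i+1}}\big((c,f_{i+1}),(c',f_{i+1})\big) = \delta_{C_i}\big((c,f_i),(c',f_i)\big)$; moreover, since $N(J)$ is gated in $X$ and gate-projection within a product fixes the first coordinate, $\mathrm{proj}_{C_{i+1}}$ sends $(c,f_i)$ to $(c,f_{i+1})$. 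Composing these projections along the chain identifies $\mathrm{proj}_D$ restricted to $C$ with $(c,f) \mapsto (c,f')$, and telescoping the equalities of the local metrics gives $\delta_D(\mathrm{proj}_D(x), \mathrm{proj}_D(y)) = \delta_C(x,y)$, i.e.\ coherence.

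The step I expect to be the main obstacle is the bookkeeping in the converse: one must justify carefully that the cliques of $X$ belonging to $J$ are precisely the slices $C_0 \times \{f\}$ (so that the intermediate cliques of the chain are genuine cliques of $X$ and the hypothesis applies to them), that the gate-projections between consecutive slices genuinely compose — which is where the product structure of $N(J)$ and the gatedness of carriers are used, rather than any general composition law for gate-projections — and that the $4$-cycles arising between consecutive slices are induced. None of these is deep: each reduces to Theorem~\ref{thm:QMbig} together with the elementary fact that two distinct cliques of a quasi-median graph share at most one vertex; but they are the points at which the argument has to be spelled out with care.
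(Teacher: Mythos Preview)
Your proposal is correct and follows essentially the same approach as the paper. The forward direction is identical (identify the gates and apply coherence). For the converse, the paper invokes Lemma~\ref{lem:Ladder} to obtain a ladder of $4$-cycles connecting $(a,b)$ to its projection on the second clique and then telescopes the local equalities along the rungs; you instead unpack this ladder explicitly via the product decomposition $N(J)=C_0\times F$ and a path in $F$, which is exactly how Lemma~\ref{lem:Ladder} is itself derived, so the two arguments coincide in substance.
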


\begin{proof}
Assume that our system of metrics is coherent, and fix two oriented edges $(a,b)$ and $(x,y)$ that are opposite sides of an induced $4$-cycle. Denote by $C$ (resp.\ $D$) the clique containing $a$ and $b$ (resp.\ $x$ and $y$). Since $x$ (resp.\ $y$) is the gate-projection of $a$ (resp.\ $b$) on $D$, the equality $\delta_C(a,b)= \delta_D(x,y)$ follows immediately from the coherence of our system of metrics.

\medskip \noindent
Conversely, assume that, for all oriented edges $(a,b)$ and $(x,y)$ that are opposite sides of an induced $4$-cycle, we have $\delta_C(a,b)= \delta_D(x,y)$ where $C$ (resp.\ $D$) is the clique containing $a$ and $b$ (resp.\ $x$ and $y$). Fix two cliques $A$ and $B$ that belong to a common hyperplane, and let $a,b \in A$ be two vertices. As a consequence of Lemma~\ref{lem:Ladder}, there exists a ladder connecting $(a,b)$ to the edge $(a',b')$ of $B$ where $a'$ (resp.\ $b'$) denotes the gate-projection of $a$ (resp.\ $b$) on $B$. Let $(x_1,y_1)=(a,b), \ldots, (x_n,y_n)$ denote the (oriented) rungs of our ladder; and, for every $1 \leq i \leq n$, let $K_i$ denote the clique containing $x_i$ and $y_i$. It follows from our assumption that
$$\delta_{K_i}(x_i,y_i) = \delta_{K_{i+1}}(x_{i+1},y_{i+1}) \text{ for every } 1 \leq i \leq n-1.$$
We conclude that
$$\delta_A(a,b)= \delta_{K_1}(x_1,y_1)= \delta_{K_n}(x_n,y_n)= \delta_B(a',b').$$
This proves that our system of metrics is coherent.
\end{proof}

\begin{proof}[Proof of Proposition~\ref{prop:CayleyQ}.]
First of all, notice that the metric $\delta_C$ does not depend on the choice of $g$. Indeed, let $h \in \mathfrak{Q}$ be another element satisfying $C=hG_i$. Since $h \in C = gG_i$, there exists some $c \in G_i$ such that $h=gc$. Then, for all $x,y \in C$, we can write $x=ha$ and $y=hb$ for some $a ,b \in G_i$. We also have $x=g(ca)$ and $y=g(cb)$. Since $|(ca)^{-1}(cb)|_{S_i} = |a^{-1}b|_{S_i}$, the claim follows. 

\medskip \noindent
In order to verify that $\{\delta_C \mid C \text{ clique}\}$ is coherent, we apply Lemma~\ref{lem:WhenCoherent}. So let $S$ be an $4$-cycle in $\mathfrak{M}$. According to Lemma~\ref{lem:QuandleCycle}, $S$ corresponds to one of the following configurations:
\begin{center}
\includegraphics[width=0.7\linewidth]{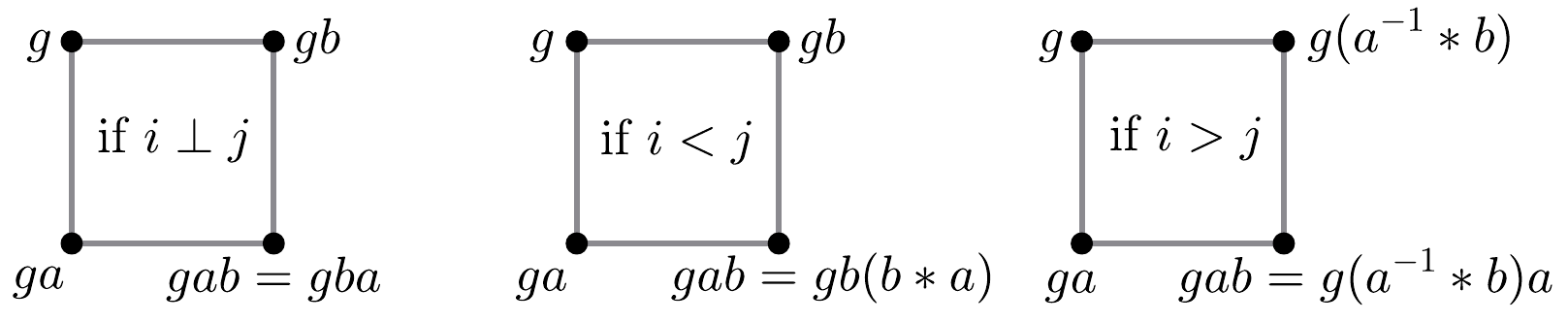}
\end{center}

\noindent
where $g \in \mathfrak{Q}$, $i,j \in I$, $a \in G_i$, and $b \in G_j$. Let $C$ (resp.\ $D$) denote the clique containing containing the top (resp.\ bottom) edge of $S$. If $i \perp j$ or $i<j$, then
$$\delta_C(g,gb) = |b|_{S_j} = \delta_D(ga,gab),$$
as desired. If $i>j$, then the equality
$$\delta_C(g,g (a^{-1} \ast b)) = |a^{-1} \ast b|_{a^{-1} \ast j} = |b|_{S_j} = \delta_D(ga,gab)$$
follows from the fact that $a^{-1} \ast S_j = S_{a^{-1} \ast j}$. Thus, coherence is proved.

\medskip \noindent
Finally, notice that, for all vertices $a,b \in \mathfrak{M}$, $\delta(a,b)=1$ holds if and only if there exists a clique $C$ containing both $a,b$ and $\delta_C(a,b)=1$. By construction, this amounts to saying that $a^{-1}b$, or its inverse, belongs to $\bigcup_{i \in I}S_i$. This shows that the graph $(\mathfrak{M},\delta)$ coincides with $\mathrm{Cayl}(\mathfrak{Q}, \bigcup_{i \in I} S_i)$. 
\end{proof}

\begin{proof}[Proof of Theorem~\ref{thm:MedianCayl}.]
Our theorem follows immediately from the combination of Propositions~\ref{prop:CayleyQ} and~\ref{prop:QMdelta}. 
\end{proof}

\noindent
We conclude this section by noticing that Theorem~\ref{thm:MedianCayl} can always be applied to quandle products of finitely generated groups when the holonomy is trivial. 

\begin{lemma}\label{lem:GeneratingNoHolo}
Let $(\mathcal{I}, \mathcal{G}, \mathcal{A})$ be a quandle system with trivial holonomy. Assume that every group in $\mathcal{G}$ is finitely generated. There exists a collection of finite generating sets $\{S_i \subset G_i \mid i \in I\}$ such that, for all $i,j \in I$ and $g \in G_j$, if $i<j$ then $g \ast S_i  = S_{g \ast i}$. 
\end{lemma}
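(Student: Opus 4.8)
The plan is to exploit the fact that, under trivial holonomy, the factors split into "holonomy classes": factors $G_i$ and $G_j$ are identified whenever there is a chain of isomorphisms from $\mathscr{S}(\mathcal{I},\mathcal{G},\mathcal{A})$ carrying $G_i$ to $G_j$, and (by triviality of the holonomy) all such chains agree, so this identification is canonical. Concretely, define a relation on $I$ by $i \sim j$ if there exists an isomorphism $G_i \to G_j$ obtained as a composition of isomorphisms $h \mapsto g \ast h$ from $\mathscr{S}(\mathcal{I},\mathcal{G},\mathcal{A})$ (including the empty composition, so $\sim$ is reflexive); symmetry holds because each generating isomorphism $G_i \to G_{j\ast i}$, $h\mapsto g\ast h$, is invertible with inverse $g^{-1}\ast(-) \colon G_{j\ast i}\to G_i$, which is again of the required form since $i = g^{-1}\ast(g\ast i)$ and $g^{-1}\in G_j$ with $j > g\ast i$; transitivity is immediate from composability (Fact~\ref{fact:Diaph}-style bookkeeping, or just directly). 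So $\sim$ is an equivalence relation on $I$.

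The first real step is to check that trivial holonomy makes the canonical isomorphism between two $\sim$-equivalent factors well-defined, i.e.\ independent of the chosen chain. Suppose $\varphi, \psi \colon G_i \to G_j$ are both compositions of isomorphisms from $\mathscr{S}$. Then $\psi^{-1}\circ\varphi$ is an automorphism of $G_i$ obtained by composing isomorphisms of $\mathscr{S}$ (using that $\psi^{-1}$ is such a composition, as above), hence lies in $\mathrm{Hol}(i) = \{\mathrm{id}\}$; so $\varphi = \psi$. Denote this canonical isomorphism by $\varphi_{ij}\colon G_i \to G_j$; it satisfies the cocycle condition $\varphi_{jk}\circ\varphi_{ij} = \varphi_{ik}$ and $\varphi_{ii} = \mathrm{id}$, and in particular $\varphi_{ji} = \varphi_{ij}^{-1}$. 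Moreover, for any $g \in G_j$ with $j > i$, the generating isomorphism $h \mapsto g\ast h$ from $G_i$ to $G_{g\ast i}$ \emph{is} by definition a one-step composition from $\mathscr{S}$, hence equals $\varphi_{i,\,g\ast i}$; that is, $g\ast h = \varphi_{i,\,g\ast i}(h)$ for all $h\in G_i$.

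Now construct the generating sets. In each $\sim$-class, pick a representative $i_0$ and, since $G_{i_0}$ is finitely generated, a finite generating set $T_{i_0}\subset G_{i_0}$. For every $i$ in that class, set $S_i := \varphi_{i_0 i}(T_{i_0})$; this is a finite generating set of $G_i$ because $\varphi_{i_0 i}$ is an isomorphism. It remains to verify the compatibility condition: if $i < j$ and $g \in G_j$, then (by Condition~(1) of the quandle system) $g\ast i$ is defined and lies in $I$, and we must check $g\ast S_i = S_{g\ast i}$. Both $i$ and $g\ast i$ lie in the same $\sim$-class (they are related by the generating isomorphism $h\mapsto g\ast h$), say with representative $i_0$. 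Using that $h\mapsto g\ast h$ equals $\varphi_{i,\,g\ast i}$ on $G_i$, and the cocycle identity $\varphi_{i,\,g\ast i}\circ\varphi_{i_0 i} = \varphi_{i_0,\,g\ast i}$, we compute
$$g\ast S_i = \varphi_{i,\,g\ast i}\bigl(\varphi_{i_0 i}(T_{i_0})\bigr) = \varphi_{i_0,\,g\ast i}(T_{i_0}) = S_{g\ast i},$$
as required. The main obstacle is purely the bookkeeping in the previous paragraph: confirming that the generating isomorphisms of $\mathscr{S}$ genuinely generate an equivalence relation with a consistent, chain-independent system of comparison isomorphisms, so that the naive "transport a fixed generating set along $\mathscr{S}$" prescription is unambiguous; once trivial holonomy is invoked to kill the ambiguity, everything else is a direct verification.
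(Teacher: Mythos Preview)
Your proof is correct and follows essentially the same approach as the paper: pick a representative in each connected component of the system $\mathscr{S}(\mathcal{I},\mathcal{G},\mathcal{A})$, fix a finite generating set there, and transport it along the (well-defined, by trivial holonomy) canonical isomorphisms. Your write-up is in fact more careful than the paper's, spelling out the cocycle identity and the verification $g\ast S_i = S_{g\ast i}$ explicitly where the paper simply asserts that the desired condition holds ``by construction, due to the triviality of the holonomy.''
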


\begin{proof}
Consider the system of groups $\mathscr{S}:=\mathscr{S}(\mathcal{I}, \mathcal{G},\mathcal{A})$ as defined in Section~\ref{section:Holonomy}. Choose a group in each connected component of $\mathcal{S}$. We get a collection of groups $\{G_i \mid i \in I_0\}$ for some $I_0 \subset I$. For every $i \in I_0$, fix a finite generating set $S_i \subset G_i$. Now, given an arbitrary $j \in I$, there exist a unique $i \in I_0$ and an isomorphism $\varphi_j : G_i \to G_j$ obtained by composing morphisms of $\mathscr{S}$. Notice that, because there is no holonomy, the isomorphism $\varphi_j$ is also uniquely determined. Then, we set $S_j:= \varphi_j(S_i)$. The collection $\{S_i \subset G_i \mid i \in I\}$ satisfies the desired condition by construction, due to the triviality of the holonomy. 
\end{proof}

\subsection{Banach space compression}

\noindent
We conclude this section by recording a few by-products of the results obtained in Sections~\ref{section:ProperCub} and~\ref{section:LocallyFiniteCayley}, once combined with \cite{QM}.

\medskip \noindent
Recall that a (discrete) group is \emph{a-T-menable} (resp.\ \emph{a-$L^p$-menable}) whenever it admits a metrically proper action on some Hilbert space (resp.\ $L^p$-space). Roughly speaking, such a property shows that the geometry of the group under consideration is compatible with the geometry of Hilbert spaces (resp.\ $L^p$-spaces). A quantitative measure of this compatibility is given by the \emph{$L^p$-compression}. The \emph{compression} of a Lipschitz map $\varphi : X\to Y$ between two metric spaces is defined as
$$\mathrm{comp}(\varphi) :=\sup \left\{ \alpha \in [0,1] \mid \exists C >0, \forall x,y \in X, d(\varphi(x),\varphi(y)) \geq C d(x,y)^\alpha \right\}.$$
Then, the \emph{$L^p$-compression} of a finitely generated group $G$ is
$$\alpha_p(G):= \sup \{ \mathrm{comp}(\varphi) \mid \varphi : G \to L^p\text{-space Lipschitz} \}.$$
The main result of this section is:

\begin{thm}\label{thm:Compression}
Let $\mathfrak{Q}:= \mathfrak{Q}(\mathcal{I}, \mathcal{G}, \mathcal{A})$ be a quandle product with $\mathcal{I}$ finite and with trivial holonomy. 
\begin{itemize}
	\item If every group in $\mathcal{G}$ is a-T-menable (resp.\ a-$L^p$-menable for some odd $p \geq 1$), then so is $\mathfrak{Q}$.
	\item If every group in $\mathcal{G}$ is finitely generated, then $$\alpha_p (\mathfrak{Q}) \geq \min \left( \frac{1}{p}, \min\limits_{G \in \mathcal{G}} \alpha_p(G) \right).$$ for every $p \geq 1$.
\end{itemize}
\end{thm}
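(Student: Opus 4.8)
The plan is to reduce both items to the combination results for groups acting topical-transitively on quasi-median graphs developed in \cite{QM}. The starting point is that $\mathfrak{Q}$ acts by left multiplication on the quasi-median graph $\mathfrak{M}:=\mathfrak{M}(\mathcal{I},\mathcal{G},\mathcal{A})$ of Theorem~\ref{thm:QuandleQM}, and this action is particularly well-behaved: vertex-stabilisers are trivial; by Corollary~\ref{cor:QuandleClique} the cliques are exactly the cosets of the factors, so clique-stabilisers are conjugates of factors acting freely transitively on their cliques; by Proposition~\ref{prop:TopicHolonomy}, triviality of the holonomy makes the action topical-transitive; and, as $\mathcal{I}$ is finite, there are finitely many $\mathfrak{Q}$-orbits of cliques, every vertex lies in only finitely many cliques (Claim~\ref{claim:FinetelyManyCliques}), and $\mathfrak{M}$ has finite cubical dimension (Theorem~\ref{thm:QuandleQM}). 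I would record these facts first, since they are precisely the hypotheses under which the machinery of \cite{QM} applies.

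For the first item, I would invoke the a-T-menability (resp.\ a-$L^p$-menability, $p$ odd) combination statement of \cite{QM}: under the above hypotheses, $\mathfrak{Q}$ is a-T-menable (resp.\ a-$L^p$-menable) provided all its vertex-stabilisers and all the rotative-stabilisers of its hyperplanes are. Vertex-stabilisers are trivial, and topical-transitivity together with the identification of clique-stabilisers as factors shows that a rotative-stabiliser acts on each of its cliques as the corresponding factor does, so the relevant stabilisers are abstractly factors; they are a-T-menable (resp.\ a-$L^p$-menable) by hypothesis. The conclusion for $\mathfrak{Q}$ follows.

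For the second item, I would use the global-metric construction recalled in Section~\ref{section:LocallyFiniteCayley}. Since the holonomy is trivial and the factors are finitely generated, Lemma~\ref{lem:GeneratingNoHolo} supplies a compatible family of finite generating sets $\{S_i\}$, and Proposition~\ref{prop:CayleyQ} identifies the word metric of $\mathfrak{Q}$ relative to $\bigcup_i S_i$ with the global metric $\delta$ of the coherent system of graph-metrics $\delta_C$ obtained by transporting the $S_i$-metrics onto the cosets of the factors. For each $i$, fix $\alpha<\alpha_p(G_i)$ and an $S_i$-Lipschitz map $G_i\to L^p$ of compression at least $\alpha$; translating these over the cosets gives an equivariant system of $L^p$-valued maps on the cliques, with uniformly controlled Lipschitz and lower bounds. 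Feeding this into the $L^p$-compression combination statement of \cite{QM} for quasi-median graphs --- which assembles such local data, together with the hyperplane (wall) embedding contributing the $1/p$ term, into an equivariant map $\mathfrak{Q}\to L^p$-space of compression at least $\min(1/p,\alpha)$ --- and letting $\alpha\to\min_i\alpha_p(G_i)$ yields the desired inequality.

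The main obstacle is not a new idea but the verification that the hypotheses of these combination theorems genuinely hold in our setting: that the action is topical-transitive (this is exactly where trivial holonomy is indispensable, as the $\mathrm{BS}(1,2)$ example shows), that the cubical dimension and the number of orbits of cliques are finite, and that the hyperplane- and clique-stabilisers are the expected parabolic subgroups; for the compression estimate one must in addition check that the coherent system built from the chosen maps on the factors satisfies the uniformity required by \cite{QM}, so that the $1/p$ coming from the hyperplane structure and the $\alpha_p(G_i)$ coming from the factors really combine as a minimum.
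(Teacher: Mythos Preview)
Your proposal is correct and follows essentially the same route as the paper: verify that the action of $\mathfrak{Q}$ on $\mathfrak{M}$ is topical-transitive with trivial vertex-stabilisers, clique-stabilisers isomorphic to factors, and finitely many cliques through each vertex, then invoke the black-box combination results from \cite{QM} (Propositions~5.25--5.26 for the first item, and Proposition~3.30 together with Lemma~\ref{lem:GeneratingNoHolo} and Proposition~\ref{prop:CayleyQ} for the second). The only minor difference is that you phrase the first item via rotative-stabilisers of hyperplanes, whereas the paper cites the clique-stabiliser version of the combination theorem directly; this detour is harmless but unnecessary.
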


\begin{proof}
We know from Proposition~\ref{prop:TopicHolonomy} that $\mathfrak{Q}$ acts topically-transitively on its quasi-median Cayley graph $\mathfrak{M}$. Clearly, vertex-stabilisers are trivial; and, according to Corollary~\ref{cor:QuandleClique}, clique-stabilisers are isomorphic to groups in $\mathcal{G}$. We also know from Fact~\ref{claim:FinetelyManyCliques} that every vertex of $\mathfrak{M}$ belongs to only finitely many cliques. Therefore, \cite[Proposition~5.25]{QM} (resp.\ \cite[Proposition~5.26]{QM}) applies and show that, if every group in $\mathcal{G}$ is a-T-menable (resp.\ a-$L^p$-menable for some odd $p \geq 1$), then so is $\mathfrak{Q}$. This proves the first item of our theorem. Let us prove the second item. 

\medskip \noindent
By combining Lemma~\ref{lem:GeneratingNoHolo} and Proposition~\ref{prop:CayleyQ}, we know that $\mathfrak{Q}$ has a finite generating set $S$ such that $\mathrm{Cayl}(\mathfrak{Q},S)$ can be described as $(\mathfrak{M},\delta)$ where $\delta$ is the global metric given by a coherent system of metrics $\{\delta_C \mid C \text{ clique}\}$. Moreover, for every clique $C$ of $\mathfrak{M}$, $(C,\delta_C)$ is isometric to a Cayley graph of a group in $\mathcal{G}$ (with respect to a finite generating set). Then, the desired conclusion follows from \cite[Proposition~3.30]{QM}. 
\end{proof}

\section{Iterated semidirect products}\label{section:SemiDecomposition}

\noindent
Section~\ref{section:Combination} recorded geometric aspects of quandle products that can be deduced from the quasi-median geometry described in Section~\ref{section:QMquandle}. In this section, we focus on more algebraic aspects of quandle products. Our main result in this direction is the following decomposition theorem:

\begin{thm}\label{thm:SemiDirect}
Let $\mathfrak{Q}:= \mathfrak{Q}(\mathcal{I}, \mathcal{G}, \mathcal{A})$ be a quandle product. Let $I_\mathrm{min}$ denote the set of minimal elements in $I$ and let $\mathfrak{Q}_\mathrm{min}:= \langle gG_ig^{-1} \mid i \in I_\mathrm{min}, g \in \mathfrak{Q} \rangle$. Then $\mathfrak{Q}= \mathfrak{Q}_\mathrm{min} \rtimes \langle I \backslash I_\mathrm{min} \rangle$. Moreover, $\mathfrak{Q}_\mathrm{min}$ is a graph product whose vertex-groups are isomorphic to $G_i$ for some $i \in I_\mathrm{min}$ and the action of $\langle I \backslash I_\mathrm{min} \rangle$ on $\mathfrak{Q}_\mathrm{min}$ permutes vertex-groups.
\end{thm}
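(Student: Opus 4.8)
The plan is to exploit the quasi-median geometry of $\mathfrak{M}:=\mathfrak{M}(\mathcal{I},\mathcal{G},\mathcal{A})$ together with the normal form of Theorem~\ref{thm:NormalForm}. First I would observe that $\langle I\setminus I_\mathrm{min}\rangle$ is itself a quandle product by Corollary~\ref{cor:FactorsEmb} — provided $I\setminus I_\mathrm{min}$ is stable, which it is, since if $i<j$ with $i,j\in I\setminus I_\mathrm{min}$ then $g\ast i$ is not $\leq$-minimal either (the action of $G_j$ preserves $\leq$, so $g\ast i$ dominates $g\ast k$ for any $k<i$, and $i$ being non-minimal supplies such a $k$). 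Similarly $\mathrm{sc}(I_\mathrm{min})$ is the stable closure of the minimal elements, but here it is cleaner to work with the parabolic $\langle I_\mathrm{min}\rangle$ and note that $\mathfrak{Q}_\mathrm{min}$ is the normal closure of $\langle I_\mathrm{min}\rangle$, generated by all conjugates of the minimal factors.

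The key structural point is that $\mathfrak{Q}_\mathrm{min}$ is normal in $\mathfrak{Q}$: it is by definition generated by a conjugation-invariant set of subgroups, so normality is immediate. Next I would verify $\mathfrak{Q}_\mathrm{min}\cap\langle I\setminus I_\mathrm{min}\rangle=\{1\}$ and $\mathfrak{Q}=\mathfrak{Q}_\mathrm{min}\cdot\langle I\setminus I_\mathrm{min}\rangle$. For the product decomposition: given any element, represent it by a ranked braid $s_1\cdots s_n$ (Theorem~\ref{thm:NormalForm}); using the relations $ab=b(b\ast a)$ one can push every syllable lying in a minimal factor to the left past every syllable lying in a non-minimal factor — a minimal index $i$ is never $>j$ for any $j$, and a twisted left-commutation on a pair $s_k\in G_j$, $s_{k+1}\in G_i$ with $j>i$ replaces $s_{k+1}$ by $s_{k+1}\ast$ applied to it, keeping it minimal (stability of $I_\mathrm{min}$ under the downward action is what is needed here, which follows exactly as above). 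This rewrites the element as (a word in conjugates of minimal factors)$\cdot$(a word in non-minimal factors), giving $\mathfrak{Q}=\mathfrak{Q}_\mathrm{min}\langle I\setminus I_\mathrm{min}\rangle$. For triviality of the intersection, I would use Corollary~\ref{cor:FactorsEmb}: an element of $\langle I\setminus I_\mathrm{min}\rangle$ has a ranked-braid representative all of whose letters come from non-minimal factors, and if it also lies in $\mathfrak{Q}_\mathrm{min}$ one pushes the minimal conjugating letters out to see the ranked braid must be empty — alternatively, pass to the quotient $\mathfrak{Q}/\mathfrak{Q}_\mathrm{min}$ and check it is exactly $\langle I\setminus I_\mathrm{min}\rangle$ by comparing presentations (killing all minimal factors leaves precisely the presentation of the parabolic).

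Finally, to identify $\mathfrak{Q}_\mathrm{min}$ as a graph product: the minimal elements are pairwise $<$-incomparable, so the only relations among the factors $G_i$, $i\in I_\mathrm{min}$, are the $\perp$-commutations, whence $\langle I_\mathrm{min}\rangle$ is the graph product $\Gamma\{G_i\}_{i\in I_\mathrm{min}}$ as in Example~\ref{ex:GP}. Its normal closure $\mathfrak{Q}_\mathrm{min}$ is then generated by conjugates $gG_ig^{-1}$; by Corollary~\ref{cor:QuandleClique} these are exactly the rotative-stabilisers of the hyperplanes $J_i$ carrying the minimal cliques, so $\mathfrak{Q}_\mathrm{min}$ acts on the corresponding collection of hyperplanes and one reads off a graph-product structure on the set of minimal-clique orbits (a cleaner route: observe that the vertex-groups of $\langle I_\mathrm{min}\rangle$ together with their $\langle I\setminus I_\mathrm{min}\rangle$-conjugates are pairwise either equal or freely-or-commutingly related, using Lemma~\ref{lem:QuandleCycle} to see that two minimal cliques span a $4$-cycle iff their indices are $\perp$-related). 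The conjugation action of $\langle I\setminus I_\mathrm{min}\rangle$ on $\mathfrak{Q}_\mathrm{min}$ permutes these vertex-groups because conjugating a factor by a generator of a strictly larger factor sends it to another factor via the quandle action, which respects $\perp$ by condition~(1) of a quandle system.

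\textbf{Main obstacle.} The delicate point is the identification of $\mathfrak{Q}_\mathrm{min}$ with a genuine graph product — i.e.\ showing that the conjugates $\{gG_ig^{-1}: i\in I_\mathrm{min}, g\in\mathfrak{Q}\}$, modulo the equivalence of being equal, form the vertex set of a graph product with no hidden relations. I expect this to require the hyperplane technology of Section~\ref{section:QMquandle} (minimal cliques, their parallelism classes, and the fact — from Theorem~\ref{thm:QMbig} and Corollary~\ref{cor:QuandleClique} — that rotative-stabilisers of disjoint-but-non-transverse hyperplanes generate a free product while transverse ones commute), rather than a purely combinatorial argument on words; controlling exactly which conjugates coincide is where the real work lies.
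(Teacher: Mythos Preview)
Your plan for the semidirect product decomposition is correct and in fact more elementary than the paper's own route. The paper does not argue the splitting $\mathfrak{Q}=\mathfrak{Q}_\mathrm{min}\rtimes\langle I\setminus I_\mathrm{min}\rangle$ directly via normal forms and quotients; instead it obtains the semidirect product and the graph-product structure simultaneously from a single external black box (\cite[Theorem~3.24]{MR4033512}), after setting up the hyperplane picture. Your quotient argument for trivial intersection is cleaner for that part (and your observation that both $I_\mathrm{min}$ and $I\setminus I_\mathrm{min}$ are stable is exactly what is needed). One small wobble: the ``push minimal syllables to the left'' description is backwards --- the relation $ab=b(b\ast a)$ moves the minimal letter to the right --- but this is irrelevant once you note that $\mathfrak{Q}_\mathrm{min}$ is normal, so $\mathfrak{Q}_\mathrm{min}\cdot\langle I\setminus I_\mathrm{min}\rangle$ is a subgroup containing every factor.

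Where you and the paper diverge is precisely the point you flag as the main obstacle. Your instinct that hyperplane technology is required is right, but the attribution is off: Corollary~\ref{cor:QuandleClique} only identifies cliques with cosets of factors; it says nothing about rotative-stabilisers. The substantive step the paper carries out is Lemma~\ref{lem:RotativeStab}, which proves two things for $i\in I_\mathrm{min}$: first, $\mathrm{stab}_\circlearrowright(J_i)=G_i$ (this uses a ladder argument and the key Claim~\ref{claim:MinimalGenerators} that every edge of $J_i$ is labelled by a \emph{minimal} generator --- exactly the control over ``which conjugates coincide'' that you anticipated needing); second, rotative-stabilisers of transverse minimal hyperplanes commute. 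Together with the gatedness of $\langle I\setminus I_\mathrm{min}\rangle$ (Lemma~\ref{lem:ParabolicGated}), these feed into \cite[Theorem~3.24]{MR4033512}, which outputs the semidirect decomposition with $\mathrm{Rot}(\mathcal{W})$ a graph product over the crossing graph of the tangent hyperplanes. So the paper does not verify ``no hidden relations'' by hand; it invokes a general theorem about rotative-stabilisers in quasi-median graphs. If you want a self-contained argument along your lines, the missing piece is precisely Lemma~\ref{lem:RotativeStab}.
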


\noindent
Since parabolic subgroups are quandle products according to Corollary~\ref{cor:FactorsEmb}, Theorem~\ref{thm:SemiDirect} allows one to prove that some properties are preserved under quandle products by induction on the number of factors. Examples will be given below. Notice, however, that our oposet may not contain minimal elements, so Theorem~\ref{thm:SemiDirect} does not bring any valuable information for such a quandle product of infinitely many groups. See Example~\ref{ex:Push} for such a quandle product. Nevertheless, for quandle products of finitely many groups, Theorem~\ref{thm:SemiDirect} provides a complete decomposition as iterated semidirect and graph products. 

\begin{cor}\label{cor:DecompositionQuandle}
Let $\mathfrak{Q}:= \mathfrak{Q}(\mathcal{I}, \mathcal{G}, \mathcal{A})$ be a quandle product. If $\mathcal{I}$ is finite, then 
$$\mathfrak{Q} = G_1 \rtimes ( G_2 \rtimes ( \cdots \rtimes G_n))$$
for some graph products $G_1, \ldots, G_n$ whose vertex-groups are isomorphic to groups in $\mathcal{G}$. Moreover, for each semidirect product, the action on the corresponding graph product permutes its vertex-groups. \qed
\end{cor}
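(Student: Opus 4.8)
The plan is to prove the corollary by induction on $n := |I|$, using Theorem~\ref{thm:SemiDirect} at each step to split off one graph-product factor, and Corollary~\ref{cor:FactorsEmb} to recognise the leftover piece as a quandle product over a strictly smaller oposet. The base cases are immediate: if $I = \emptyset$ then $\mathfrak{Q}$ is trivial, and if $|I| = 1$ then $\mathfrak{Q}$ is a single factor, which is already a graph product over a one-vertex graph.

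For the inductive step, assume $I$ is finite and nonempty. A finite poset has minimal elements, so $I_\mathrm{min} \neq \emptyset$ and hence $|I \setminus I_\mathrm{min}| < n$. Applying Theorem~\ref{thm:SemiDirect} gives $\mathfrak{Q} = \mathfrak{Q}_\mathrm{min} \rtimes \langle I \setminus I_\mathrm{min}\rangle$, where $\mathfrak{Q}_\mathrm{min}$ is a graph product whose vertex-groups are isomorphic to $G_i$ for various $i \in I_\mathrm{min} \subset I$, and where the action of $\langle I \setminus I_\mathrm{min}\rangle$ permutes these vertex-groups. I would set $G_1 := \mathfrak{Q}_\mathrm{min}$, which records the outermost semidirect-product factor together with its permuting action.

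The next step is to observe that $S := I \setminus I_\mathrm{min}$ is a \emph{stable} subset of $I$: given $i, j \in S$ with $i < j$ and $g \in G_j$, choose $i' \in I$ with $i' < i$ (possible since $i$ is not minimal); then $i' < j$, so $g \ast i'$ is defined, and $g \ast i' < g \ast i$ because the action of $G_j$ preserves $\leq$ (condition (1) of a quandle system). Hence $g \ast i$ is not minimal, i.e.\ $g \ast i \in S$. By Corollary~\ref{cor:FactorsEmb}, the parabolic subgroup $\langle S\rangle$ is isomorphic to the quandle product $\mathfrak{Q}(\mathcal{S}, \mathcal{G}_{|S}, \mathcal{A}_{|S})$, whose underlying oposet has fewer than $n$ elements and whose factors all lie in $\mathcal{G}$. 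By the induction hypothesis, $\langle S\rangle = G_2 \rtimes (G_3 \rtimes (\cdots \rtimes G_n))$ for some graph products $G_2, \ldots, G_n$ with vertex-groups among $\mathcal{G}_{|S} \subset \mathcal{G}$, and with each semidirect-product action permuting vertex-groups. Substituting this into $\mathfrak{Q} = G_1 \rtimes \langle S\rangle$ yields the desired decomposition $\mathfrak{Q} = G_1 \rtimes (G_2 \rtimes (\cdots \rtimes G_n))$; the clause about the outermost action permuting vertex-groups is exactly the corresponding assertion of Theorem~\ref{thm:SemiDirect}, and the analogous clauses for the inner actions are supplied by the induction hypothesis.

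This argument is essentially a straight unwinding of Theorem~\ref{thm:SemiDirect}, so I do not expect any serious obstacle. The only point that requires a genuine (if short) verification is the stability of $I \setminus I_\mathrm{min}$ — without it one cannot apply Corollary~\ref{cor:FactorsEmb} to identify the leftover parabolic subgroup as a quandle product over a smaller oposet, and the induction would not close; as shown above this stability is a direct consequence of the fact that the actions in a quandle system preserve the order relation.
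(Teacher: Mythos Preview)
Your argument is correct and follows exactly the route the paper intends: the corollary is marked with a \qed\ immediately after its statement, and the paragraph preceding it says explicitly that the result follows by applying Theorem~\ref{thm:SemiDirect} and inducting via Corollary~\ref{cor:FactorsEmb}. Your verification that $I\setminus I_\mathrm{min}$ is stable is the one detail the paper glosses over (it is asserted as ``clear'' inside the proof of Theorem~\ref{thm:SemiDirect}); your argument for it is fine. One small notational slip: you set $n:=|I|$ at the outset but then reuse $n$ for the number of graph-product factors in the final decomposition, which in general equals the height of the poset rather than $|I|$; this does not affect the correctness of the induction.
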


\noindent
We emphasize that, in the decomposition provided by Corollary~\ref{cor:DecompositionQuandle}, the graph products may be done over infinite graphs, even when the quandle product is finitely generated.

\medskip \noindent
We now turn to the proof of Theorem~\ref{thm:SemiDirect}. Our first preliminary lemma highlights the specific role played by the minimal elements of our oposet. 

\begin{lemma}\label{lem:RotativeStab}
Let $\mathfrak{Q}:= \mathfrak{Q}(\mathcal{I}, \mathcal{G}, \mathcal{A})$ be a quandle product acting on its quasi-median Cayley graph $\mathfrak{M}:= \mathfrak{M}(\mathcal{I}, \mathcal{G}, \mathcal{A})$. For every $i \in I$, let $J_i$ denote the hyperplane containing the clique $G_i$.
\begin{itemize}
	\item For every $i \in I_\mathrm{min}$, $\mathrm{stab}_\circlearrowright(J_i)= G_i$.
	\item For all $i,j \in I_\mathrm{min}$ and $g,h \in \mathfrak{Q}$, if $gJ_i$ and $hJ_j$ are transverse, then $\mathrm{stab}_\circlearrowright(gJ_i)$ and $\mathrm{stab}_\circlearrowright (hJ_j)$ commute.
\end{itemize}
\end{lemma}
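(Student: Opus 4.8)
The proof splits naturally along the two bullet points, both of which will be read off from the geometric description of hyperplanes and cliques established earlier (Corollary~\ref{cor:QuandleClique}, Lemmas~\ref{lem:QuandleCycle}, \ref{lem:PathFibreDiaph}, \ref{lem:ParallelEdges}, and Corollary~\ref{cor:HypStab}). For the first bullet, I want to show that for a minimal index $i$ the rotative-stabiliser of $J_i$ is exactly $G_i$. One inclusion is trivial: $G_i$ acts on itself by left multiplication, fixing (setwise) the clique $G_i$, and by Lemma~\ref{lem:ParallelEdges} it fixes every clique parallel to $G_i$, hence $G_i \leq \mathrm{stab}_\circlearrowright(J_i)$. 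For the reverse inclusion I would take $g \in \mathrm{stab}_\circlearrowright(J_i)$; in particular $g$ stabilises the clique $G_i$, so $g \in G_i \cdot \mathrm{stab}_\circlearrowright(J_i)_1$ where the subscript denotes the pointwise stabiliser of $1$ among rotative-stabiliser elements — actually cleaner: since $g$ stabilises the clique $G_i$ setwise, and $1 \in G_i$, we have $g \cdot 1 \in G_i$, so after multiplying by an element of $G_i$ we may assume $g$ fixes $1$. Then $g$ lies in the vertex-stabiliser of $1$, which is trivial (the action on the Cayley graph is by left multiplication). Hence $g \in G_i$ to begin with. The point where minimality of $i$ enters is implicit: for a non-minimal $i$ there are indices $j < i$, and conjugation of $G_i$ by elements of $G_j$-cosets via the quandle relations can produce nontrivial elements of $\mathrm{stab}_\circlearrowright(J_i)$ fixing $1$ only through the structure of $\mathrm{Diaph}(i)$ — but in fact the clean statement above shows $\mathrm{stab}_\circlearrowright(J_i) = G_i$ for \emph{every} $i$ once we know vertex-stabilisers are trivial, so I should double-check whether minimality is really needed here or only for the second bullet; I suspect the lemma states it for minimal $i$ only because that is where it is applied, and the argument in fact goes through verbatim. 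I will phrase it so that minimality is not used in the first bullet.

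\textbf{Second bullet.} For transverse hyperplanes $gJ_i$ and $hJ_j$ with $i,j \in I_\mathrm{min}$, I want $\mathrm{stab}_\circlearrowright(gJ_i)$ and $\mathrm{stab}_\circlearrowright(hJ_j)$ to commute. Conjugating by $g$, we may assume $g = 1$, so $J_i$ and $h'J_j$ are transverse for $h' = g^{-1}h$. By the first bullet, $\mathrm{stab}_\circlearrowright(J_i) = G_i$ and $\mathrm{stab}_\circlearrowright(h'J_j) = h'G_jh'^{-1}$. Transversality of $J_i$ and $h'J_j$ means there are intersecting edges, one in each, spanning a $4$-cycle; by Lemma~\ref{lem:QuandleCycle} such a $4$-cycle has the form $x, xa, xb, xab$ with $a \in G_r$, $b \in G_s$ and $r \perp s$ or $r < s$ — and here, after translating, we can arrange the common vertex to be some vertex $x$ lying on both hyperplanes, with the $G_i$-clique through $x$ and the $h'G_j$-clique through $x$ being the two factor-cliques of that $4$-cycle. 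Since both $i$ and $j$ are minimal, the case $r < s$ (or $s < r$) is impossible unless the smaller index is not minimal — wait, $r$ and $s$ are the \emph{indices} of the cliques, which are exactly $i$ and $j$ (up to the holonomy/conjugation identifications). Because $i$ is minimal, we cannot have $r < s$ with $r = i$ unless... hmm, minimality of $i$ forbids anything being strictly \emph{below} $i$, not $i$ being below something. So the genuine content: minimality is used to rule out the asymmetric $r<s$ configuration in Lemma~\ref{lem:QuandleCycle}. Let me reconsider: in Lemma~\ref{lem:QuandleCycle}, a $4$-cycle comes from $a \in G_i$, $b \in G_j$ with $i \perp j$ \emph{or} $i < j$. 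If the two cliques are (translates of) $G_i$ and $G_j$ with $i, j$ both minimal, then $i < j$ is impossible (that would make $i$ strictly below $j$, fine for $i$ but — no: $i<j$ just says $i$ below $j$, which is consistent with $i$ minimal!). The real obstruction must be different. I think the correct reading is: when $gJ_i$ and $hJ_j$ are transverse, the transversality $4$-cycle exhibits the two factor-cliques, and by Lemma~\ref{lem:ParallelEdges} plus minimality, translating a clique of $gJ_i$ along $hJ_j$ does not change its index (the $a^{-1}\ast j$ twist collapses because, by minimality of $j$, we must be in the $\perp$ case, not the $<$ case — since if $j < i$ were the relevant comparison we'd violate minimality of $j$, and if $i<j$ the twisted edge has index $a^{-1}\ast j = j$ only when... ). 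I will need to sort out exactly which comparison is forced, but the upshot is that minimality forces $i \perp j$ for the relevant pair, whence $G_i$ and $h'G_jh'^{-1}$ both sit inside a common prism $C_i \times C_j$ and commute elementwise by the defining relation $ab = ba$ for $i \perp j$, extended along the prism via Lemma~\ref{lem:ParallelEdges}.

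\textbf{Execution sketch and the main obstacle.} Concretely: (1) prove $\mathrm{stab}_\circlearrowright(J_i) = G_i$ for all $i$ using triviality of vertex-stabilisers and the fact that a rotative-stabiliser element fixes the clique $G_i$ setwise; (2) reduce the second bullet to $g = 1$ by equivariance, so we study $G_i$ and $h'G_jh'^{-1}$; (3) unpack transversality via Lemma~\ref{lem:QuandleCycle} to locate a $4$-cycle whose two factor-cliques are a clique of $J_i$ and a clique of $h'J_j$ meeting at a vertex $x$; (4) observe that by minimality of $i$ and $j$ the comparison appearing in that $4$-cycle must be the orthogonality relation $\perp$ (here is where minimality is essential — ruling out the twisted-commutation case, which would force one index strictly below another in a way incompatible with both being minimal after chasing the $a^{-1}\ast j$ formula of Lemma~\ref{lem:ParallelEdges}); (5) conclude that the clique of $J_i$ through $x$ and the clique of $h'J_j$ through $x$ span a prism, and by Lemma~\ref{lem:ParallelEdges} every clique of $J_i$ is parallel to a clique of $h'J_j$ across a prism with commuting edge-labels, so $\mathrm{stab}_\circlearrowright(J_i) = G_i$ and $\mathrm{stab}_\circlearrowright(h'J_j) = h'G_jh'^{-1}$ commute. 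The main obstacle is step (4): pinning down precisely why minimality of both indices collapses the twisted case of Lemma~\ref{lem:QuandleCycle}/Lemma~\ref{lem:ParallelEdges} to the orthogonal case — one must track how the index of a clique changes as it is transported along the transverse hyperplane and verify that a strict inequality would contradict minimality somewhere along the ladder (Lemma~\ref{lem:Ladder}). Everything else is routine given the hyperplane machinery already in place.
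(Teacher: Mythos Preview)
Your argument for the first bullet has the two inclusions reversed in difficulty. The inclusion $\mathrm{stab}_\circlearrowright(J_i)\subseteq G_i$ is indeed trivial (any $g$ in the rotative-stabiliser fixes the clique $G_i$ setwise, hence $g=g\cdot 1\in G_i$), and this works for every $i$. But the inclusion $G_i\subseteq\mathrm{stab}_\circlearrowright(J_i)$ is the substantive one, and your justification (``by Lemma~\ref{lem:ParallelEdges} it fixes every clique parallel to $G_i$'') is neither what that lemma says nor true in general. Take Example~\ref{ex:SemiDirect}: $I=\{1,2\}$ with $1<2$ and a nontrivial action $G_2\curvearrowright G_1$. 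The cliques of $J_2$ are the cosets $aG_2$ with $a\in G_1$, and for $b\in G_2$ one computes $b\cdot aG_2=aG_2$ iff $b\ast a=a$; so if the action is nontrivial then $G_2\not\subseteq\mathrm{stab}_\circlearrowright(J_2)$. Minimality of $i$ is therefore essential for this inclusion, and you have no argument for it.

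The paper supplies the missing mechanism: first prove that every edge of $J_i$ is labelled by a generator from some $G_k$ with $k\in I_{\min}$ (this uses that $I_{\min}$ is stable and that opposite sides of a $4$-cycle are related by either the identity or a twist coming from a \emph{larger} index). Then, given two cliques $A,B\subset J_i$, run a ladder from $A$ to $B$; because each rung is labelled by a minimal generator, Lemma~\ref{lem:QuandleCycle} forces the horizontal labels to be unchanged across each $4$-cycle of the ladder, and one reads off $\mathrm{stab}(A)\subseteq\mathrm{stab}(B)$. This is exactly the ``ladder'' step you allude to in your sketch of the second bullet, but it is needed already for the first.

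For the second bullet your instinct is right but the logic is tangled. After translating so that the two intersecting cliques pass through $1$, they are $G_p$ and $G_q$ for some $p,q$; the point you keep circling is simply that $p<q$ would mean $q$ has something strictly below it, contradicting $q\in I_{\min}$ (and symmetrically), so $p\perp q$ and the elements commute. Note, however, that you cannot assume $p=i$ and $q=j$: the clique of $gJ_i$ through the intersection point need not be a coset of $G_i$. That $p,q\in I_{\min}$ is again a consequence of the minimal-generators claim above, so the same missing ingredient underlies both bullets.
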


\begin{proof}
We start by proving the following claim, where we refer to a minimal generator as an element of a factor $G_i$ with $i \in I_\mathrm{min}$. 

\begin{claim}\label{claim:MinimalGenerators}
For every $i \in I_\mathrm{min}$, all the edges of $J_i$ are labelled by minimal generators.
\end{claim}

\noindent
As a consequence of Lemma~\ref{lem:QuandleCycle}, if an edge of an induced $4$-cycle is labelled by a generator $a \in G_i$, then the opposite edge is labelled either by $a$ itself or by $b \ast a$ where $b \in G_j$ for some $j>i$. Since $I_\mathrm{min}$ is a stable subset of $I$, it follows that, if an induced $4$-cycle has an edge labelled by a minimal generator, then the opposite edge is also labelled by a minimal generator. We also know from Lemma~\ref{lem:QuandleClique} that all the edges of a $3$-cycle are labelled by generators coming from the same factor, so, if a $3$-cycle has one edge labelled by a minimal generator, then all its edges are labelled by minimal generators. These observations prove Claim~\ref{claim:MinimalGenerators}. 

\medskip \noindent
Now, given an $i \in I_\mathrm{min}$, we want to prove that $\mathrm{stab}_\circlearrowright(J_i)= G_i$. It suffices to show that, given two arbitrary cliques $A,B \subset J_i$, the inclusion $\mathrm{stab}(A) \subset \mathrm{stab}(B)$. Let $g,h \in \mathfrak{Q}$ and $j,k \in I$ be such that $A=gG_j$ and $B=hG_k$. Up to replacing $h$ with another representative in $hG_k$, we can assume that the vertices $g \in A$ and $h \in B$ belong to the same fibre of $J_i$. For every $a \in G_j$, we know from Lemma~\ref{lem:Ladder} that we can find an $b \in G_k$ such that the oriented edges $(g,ga)$ and $(h,hb)$ are connected by a ladder.
\begin{center}
\includegraphics[width=0.7\linewidth]{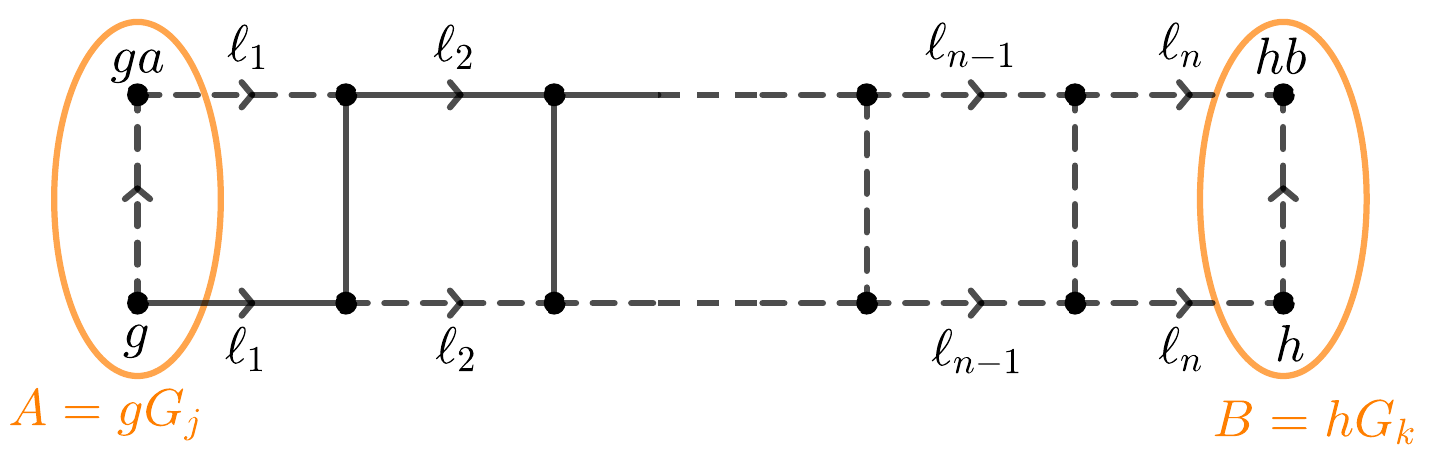}
\end{center}

\noindent
Let $\ell_1 \cdots \ell_n$ denote the word labelling the bottom path connecting $g$ to $h$ in the ladder. We know from Claim~\ref{claim:MinimalGenerators} that all the rungs of our ladders are labelled by minimal generators. It follows from Lemma~\ref{lem:QuandleCycle} that the edges of the top path connecting $ga$ to $hb$ in the ladder are labelled by $\ell_1, \ldots, \ell_n$. It follows that 
$$ag^{-1}h = a \ell_1 \cdots \ell_n = \ell_1 \cdots \ell_n b= g^{-1}h b,$$
hence $gag^{-1} = hbh^{-1}$. Thus, we have proved that 
$$\mathrm{stab}(A) = gG_jg^{-1} \leq hG_kh^{-1} = \mathrm{stab}(B),$$
as desired. The first item of our lemma is proved.

\medskip \noindent
Next, let $i,j \in I_\mathrm{min}$ and $g,h \in \mathfrak{Q}$ such that $gJ_i$ and $hJ_j$ are transverse. So there exist two intersecting cliques $A \subset gJ_i$ and $B \subset hJ_j$ that span a prism. Up to translating by an element of $\mathfrak{Q}$, we can assume for simplicity that $1 \in A \cap B$. An element $a \in \mathrm{stab}_\circlearrowright (gJ_i)$ (resp.\ $b \in \mathrm{stab}_\circlearrowright(hJ_j)$) must stabilise $A$ (resp.\ $B$), so the two neighbours $a$ and $b$ of $1$ must span an induced $4$-cycle $Q$. Let $p,q \in I$ be such that $a \in G_p$ and $b \in G_q$. We know from Claim~\ref{claim:MinimalGenerators} that $p,q \in I_\mathrm{min}$ and from Lemma~\ref{lem:QuandleCycle} that $p,q$ are $<$- or $\perp$-comparable. Hence $p \perp q$, which implies that $a$ and $b$ necessarily commute. Thus, we have proved the second item of our lemma. 
\end{proof}

\noindent
Our second, and last, preliminary lemma shows that parabolic subgroups in quandle products are gated-cocompact. 

\begin{lemma}\label{lem:ParabolicGated}
Let $(\mathcal{I}, \mathcal{G}, \mathcal{A})$ be a quandle system. For every stable $R \subset I$, the subgraph $\langle R \rangle$ in the quasi-median Cayley graph $\mathfrak{M}:= \mathfrak{M}(\mathcal{I}, \mathcal{G}, \mathcal{A})$ is gated.
\end{lemma}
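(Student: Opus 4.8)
The first move is to recognise the induced subgraph of $\mathfrak{M}:= \mathfrak{M}(\mathcal{I},\mathcal{G},\mathcal{A})$ on the vertex set $\langle R \rangle$. If $g,h \in \langle R \rangle$ are adjacent in $\mathfrak{M}$, then $g^{-1}h$ is a non-trivial element of $G_i$ for some $i \in I$, hence a non-trivial element of $\langle R \rangle \cap \langle \{i\} \rangle = \langle R \cap \{i\} \rangle$ by Corollary~\ref{cor:FactorsEmb} (both $R$ and $\{i\}$ being stable), which forces $i \in R$. Therefore the subgraph induced on $\langle R \rangle$ is exactly the Cayley graph of $\langle R \rangle \cong \mathfrak{Q}_R$ with respect to the factors $G_i$, $i \in R$, which by Theorem~\ref{thm:QuandleQM} is a connected quasi-median graph. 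By the characterisation of gated subgraphs of a quasi-median graph (see \cite{QM}), it then suffices to check two things: (a) $\langle R \rangle$ is convex in $\mathfrak{M}$, and (b) $\langle R \rangle$ is clique-closed, i.e.\ any clique of $\mathfrak{M}$ sharing an edge with $\langle R \rangle$ is contained in $\langle R \rangle$.

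\medskip

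The core of the argument is the convexity (a). Let $g,h \in \langle R \rangle$ and let $\gamma$ be a geodesic of $\mathfrak{M}$ from $g$ to $h$. Translating by $g^{-1}$ — a graph automorphism of $\mathfrak{M}$ stabilising $\langle R \rangle$ — we may assume $g=1$. Then the word $w$ labelling $\gamma$ is $\mathfrak{Q}$-reduced by Theorem~\ref{thm:ShortestWords}. On the other hand $h$, lying in $\langle R \rangle$, is represented by a $\mathfrak{Q}_R$-reduced word $w'$ over $\bigcup_{i \in R} G_i$; since $R$ is stable, every $\perp$-commutation, fusion or twisted commutation that $\mathfrak{Q}$ allows on a word with all letters in $R$-factors is already a $\mathfrak{Q}_R$-move and keeps all letters in $R$-factors, so $w'$ is in fact $\mathfrak{Q}$-reduced, of length $d_{\mathfrak{M}}(1,h)=|w|$. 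By the second half of Theorem~\ref{thm:ShortestWords}, $w$ is obtained from $w'$ by a finite sequence of $\perp$- and twisted commutations. Each such move preserves the property ``every letter lies in some $G_i$ with $i \in R$'': a $\perp$-commutation merely permutes letters; a twisted left-commutation replacing $s_is_{i+1}$ (with $r(i)<r(i+1)$ in $R$) by $s_{i+1}(s_{i+1}\ast s_i)$ produces letters in $G_{r(i+1)}$ and $G_{s_{i+1}\ast r(i)}$, with $s_{i+1}\ast r(i)\in R$ by stability of $R$; twisted right-commutations are handled symmetrically. Hence $w$ has all letters in $R$-factors, so $\gamma \subset \langle R \rangle$, proving convexity (and, incidentally, that $\langle R \rangle$ is isometrically embedded).

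\medskip

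Condition (b) is immediate from the description of cliques in Lemma~\ref{lem:QuandleClique}: a clique of $\mathfrak{M}$ is a coset $aG_i$, and if it contains an edge of $\langle R \rangle$ then, after translating, it equals $G_i$ and meets $\langle R \rangle$ in at least two points, whence $G_i \cap \langle R \rangle = \langle R \cap \{i\}\rangle$ is non-trivial, forcing $i \in R$ and $G_i \subset \langle R \rangle$. Feeding (a) and (b) into the gatedness criterion for quasi-median graphs then yields the lemma. The real obstacle is (a): what makes it work is purely combinatorial — the stability hypothesis on $R$ is exactly what makes the set of words with all letters in $R$-factors invariant under the elementary moves of Theorem~\ref{thm:ShortestWords}, and hence what lets geodesics of $\mathfrak{M}$ between points of $\langle R \rangle$ stay inside $\langle R \rangle$. (Should one wish to bypass any local-to-global statement, the gate of an element $g$ can instead be exhibited directly as its maximal left-divisor lying in $\langle R \rangle$, obtained from a $\mathfrak{Q}$-reduced word for $g$ by sliding the $R$-syllables to the front, and shown to be a gate using the hyperplane combinatorics of Theorem~\ref{thm:QMbig}.)
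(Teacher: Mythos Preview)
Your proof is correct but follows a genuinely different route from the paper's. The paper invokes the purely local criterion \cite[Proposition~2.6]{QM}: a subgraph of a quasi-median graph is gated once it is connected, absorbs $3$-cycles, and is \emph{locally convex} (every induced $4$-cycle with two consecutive edges in the subgraph lies entirely in it). These three conditions are then checked directly from the descriptions of cliques (Corollary~\ref{cor:QuandleClique}) and $4$-cycles (Lemma~\ref{lem:QuandleCycle}), without ever appealing to the word problem. You instead establish \emph{global} convexity via Theorem~\ref{thm:ShortestWords}, using that stability of $R$ makes the class of words with all letters in $R$-factors closed under $\perp$- and twisted commutations; clique-closedness then finishes the job. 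Your criterion ``convex $+$ clique-closed $\Rightarrow$ gated'' is not the one the paper cites, but it reduces to it: convexity yields connectedness and local convexity (the fourth vertex of an induced $4$-cycle lies on a geodesic between the two non-adjacent vertices already in $\langle R\rangle$), and clique-closedness yields triangle absorption via Lemma~\ref{lem:QuandleClique}. The paper's argument is shorter and stays at the level of local cycle structure; yours uses heavier machinery but extracts the stronger intermediate statement that $\langle R\rangle$ is genuinely convex, and makes transparent why stability of $R$ is exactly the hypothesis needed.
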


\begin{proof}
First, it is clear that the subgraph $\langle R \rangle$ is connected. Next, we claim that $\langle R \rangle$ \emph{absorbs $3$-cycles}, i.e.\ every $3$-cycle with at least one edge in $\langle R \rangle$ must be entirely contained in $\langle R \rangle$. Indeed, fix a $3$-cycle $\{x,y,z\}$ with $\{x,y\} \subset \langle R \rangle$. We can write $y=xa$ with $a \in G_i$ for some $i \in I$. Since $x,y \in \langle R \rangle$, necessarily $a \in G_i$, which implies that $i \in R$ according to Corollary~\ref{cor:FactorsEmb}. Then, we deduce from Corollary~\ref{cor:QuandleClique} that $\{x,y,z\}$ is contained in $xG_i$. But $x G_i \subset \langle R \rangle$ since $x \in \langle R \rangle $and $i \in R$, concluding the proof of our claim.

\medskip \noindent
Finally, we claim that $\langle R \rangle$ is \emph{locally convex}, i.e.\ every $4$-cycle with two consecutive edges in $\langle R \rangle$ must be entirely contained in $\langle R \rangle$. Indeed, let $Q$ be such a $4$-cycle. Up to translating $Q$ with an element of $\langle R \rangle$, we assume for simplicity that $1$ is a vertex of $Q$ with two neighbours in $\langle R \langle$, say $a$ and $b$. Let $i,j \in I$ be such that $a \in G_i$ and $b \in G_j$. As previously, we know from Corollary~\ref{cor:FactorsEmb} that $i,j \in R$ since the edges $\{1,a\}$ and $\{1,b\}$ are contained in $\langle R \rangle$. According to Lemma~\ref{lem:QuandleCycle}, the fourth vertex of $Q$ is $ab$ or $ba$, but it any case it has to belong to $\langle R \rangle$. So $Q$ must be contained in $\langle R \rangle$, concluding the proof of our claim.

\medskip \noindent
Thus, we have proved that the subgraph $\langle R \rangle$ is connected, that is absorbs $3$-cycles, and that is locally convex. It follows from \cite[Proposition~2.6]{QM} that $\langle R \rangle$ is a gated subgraph. 
\end{proof}

\begin{proof}[Proof of Theorem~\ref{thm:SemiDirect}.]
It is clear that $I\backslash I_\mathrm{min}$ is a stable subset of $I$. Therefore, Lemma~\ref{lem:ParabolicGated} implies that the subgraph $\langle I \backslash I_\mathrm{min} \rangle$ is gated. Let $\mathcal{W}$ denote the collection of the hyperplanes tangent to $\langle I \backslash I_\mathrm{min} \rangle$ (i.e.\ containing the edges with exactly one endpoint in $\langle I \backslash I_\mathrm{min} \rangle$). Notice that
$$\mathcal{W}= \{ gJ_i \mid g\in \langle I \backslash I_\mathrm{min} \rangle, i \in I_\mathrm{min} \},$$
where, as usual, we denote by $J_i$ the hyperplane containing the clique $G_i$. Set 
$$\mathrm{Rot}(\mathcal{W}):= \langle \mathrm{stab}_\circlearrowright(J) \mid J \in \mathcal{W} \rangle.$$
Thanks to Lemma~\ref{lem:RotativeStab}, we can apply \cite[Theorem~3.24]{MR4033512} (see also \cite[Proposition~6.8]{FBn}) and deduce that
$$\mathfrak{Q}= \mathrm{Rot} (\mathcal{W}) \rtimes \mathrm{stab}( \langle I \backslash I_\mathrm{min} \rangle)$$
where $\mathrm{Rot}(\mathcal{W})$ decomposes as a graph product whose vertex-groups are isomorphic to $G_i$ for some $i \in \mathrm{I}_\mathrm{min}$ and where $\langle I \backslash I_\mathrm{min} \rangle$ acts on $\mathrm{Rot}(\mathcal{W})$ by permuting its vertex-groups. Since $\mathrm{stab}( \langle I \backslash I_\mathrm{min} \rangle)= \langle I \backslash I_\mathrm{min} \rangle$, this concludes the proof of our theorem. 

\medskip \noindent
It is worth mentioning that, according to \cite[Theorem~3.24]{MR4033512}, $\mathrm{Rot}(\mathcal{W})$ is a graph product over an explicit graph: the \emph{crossing graph} of $\mathcal{W}$, namely the graph whose vertex-set is $\mathcal{W}$ and whose edges connect two hyperplanes whenever they are transverse. Hence

\begin{fact}\label{fact:FiniteClique}
The underlying graph $\Gamma$ defining the graph product $\mathfrak{Q}_\mathrm{min}$ satisfies
$$\mathrm{clique}(\Gamma) \leq \dim_\square \mathfrak{M} \leq \#\{ n \geq 0 \mid \exists i_1, \ldots, i_n \in I \text{ pairwise $<$- or $\perp$-comparable} \}.$$
\end{fact}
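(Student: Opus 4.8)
The plan is to obtain the statement essentially for free from what is already in place. Recall that, by the discussion preceding the statement, \cite[Theorem~3.24]{MR4033512} applied together with the computation of rotative-stabilisers carried out in Lemma~\ref{lem:RotativeStab} identifies the graph $\Gamma$ underlying the graph product $\mathfrak{Q}_\mathrm{min}=\mathrm{Rot}(\mathcal{W})$ with the \emph{crossing graph} of $\mathcal{W}$: its vertices are the hyperplanes in $\mathcal{W}$, and two of them are joined by an edge precisely when they are transverse in $\mathfrak{M}$. Hence a clique of $\Gamma$ of size $n$ is nothing but a family $J_1, \ldots, J_n \in \mathcal{W}$ of pairwise transverse hyperplanes of $\mathfrak{M}$.

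For the first inequality, I would invoke the Helly-type property of hyperplanes in quasi-median graphs: any $n$ pairwise transverse hyperplanes $J_1, \ldots, J_n$ of a quasi-median graph span a prism $C_1 \times \cdots \times C_n$ with $C_k$ a clique of $J_k$ for each $k$ (see \cite{QM}). Applied to a maximal clique of $\Gamma$, this produces a prism of $\mathfrak{M}$ of cubical dimension $\mathrm{clique}(\Gamma)$, so that $\mathrm{clique}(\Gamma) \leq \dim_\square \mathfrak{M}$. The second inequality is then exactly the upper bound on the cubical dimension of $\mathfrak{M}$ already established in Theorem~\ref{thm:QuandleQM}: a prism of cubical dimension $n$ containing $1$ yields, through the description of induced $4$-cycles in Lemma~\ref{lem:QuandleCycle}, indices $i_1, \ldots, i_n \in I$ that are pairwise $<$- or $\perp$-comparable.

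I do not anticipate a serious obstacle; the argument is a short unpacking of definitions together with two citations. The only non-formal ingredient is the Helly property for pairwise transverse hyperplanes, but this is a standard feature of quasi-median geometry recorded in \cite{QM}, so the only mildly delicate point is locating the precise reference there.
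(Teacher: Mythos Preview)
Your proposal is correct and follows the same approach as the paper: identify $\Gamma$ with the crossing graph of $\mathcal{W}$ via \cite[Theorem~3.24]{MR4033512}, so that a clique of size $n$ in $\Gamma$ is a family of $n$ pairwise transverse hyperplanes and hence bounds $\dim_\square \mathfrak{M}$ from below, and then invoke Theorem~\ref{thm:QuandleQM} for the second inequality. The paper is in fact more terse than you are, leaving the passage from pairwise transverse hyperplanes to a prism implicit, whereas you spell out the Helly-type step explicitly.
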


\noindent
Recall that the clique-number $\mathrm{clique}(\cdot)$ of a graph refers to the maximal size (possibly infinite) of a complete subgraph. The upper bound on the cubical dimension of $\mathfrak{M}$ is given by Theorem~\ref{thm:QuandleQM}. 
\end{proof}

\noindent
Let us record a few concrete consequences of Theorem~\ref{thm:SemiDirect}. 

\begin{cor}\label{cor:CombApplications}
Let $\mathfrak{Q}:= \mathfrak{Q}(\mathcal{I}, \mathcal{G}, \mathcal{A})$ be a quandle product with $\mathcal{I}$ finite. 
\begin{itemize}
	\item[(i)] For every prime $p \geq 2$, $\mathfrak{Q}$ contains an element of order $p$ if and only there exists some $G \in \mathcal{G}$ containing an element of order $p$. Consequently, $\mathfrak{Q}$ is torsion-free if and only if all its factors are torsion-free.  
	\item[(ii)] $\mathfrak{Q}$ is orderable if and only if all its factors are orderable. 
	\item[(iii)] $\mathfrak{Q}$ satisfies the Tits alternative if and only if all its factors satisfy the Tits alternative.
	\item[(iv)] $\mathfrak{Q}$ has finite asymptotic dimension if and only if all its factors have finite asymptotic dimension. 
\end{itemize}
\end{cor}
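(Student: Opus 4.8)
The plan is to run an induction on $|I|$, feeding off two ingredients: the iterated semidirect decomposition of Theorem~\ref{thm:SemiDirect} (so that $\mathfrak{Q}$ is assembled from graph products by repeated semidirect products), and a handful of standard combination results, namely that left-orderability, the Tits alternative, and finite asymptotic dimension are each (a) inherited by subgroups, (b) stable under extensions $1 \to A \to G \to B \to 1$, with $\mathrm{asdim}\,G \le \mathrm{asdim}\,A + \mathrm{asdim}\,B$, and (c) stable under graph products of groups satisfying them — the asymptotic-dimension statement requiring in addition that the underlying graph have finite clique number (which follows from the theory of groups acting on quasi-median graphs, cf.\ \cite{QM}) — together with the classical fact that an element of prime order $p$ in a graph product forces one of its vertex-groups to contain an element of order $p$.

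The forward implications of (ii)--(iv) and the backward implication of (i) are immediate from Corollary~\ref{cor:FactorsEmb}: each factor $G\in\mathcal G$ sits inside $\mathfrak Q$ as a standard parabolic subgroup, so if $\mathfrak Q$ is orderable / satisfies the Tits alternative / has finite asymptotic dimension then every factor does, and an element of order $p$ in a factor is one in $\mathfrak Q$. For the remaining directions I induct on $|I|$. If the partial order on $I$ is trivial, i.e.\ $I=I_{\mathrm{min}}$, then $\mathfrak Q=\mathfrak Q_{\mathrm{min}}$ is itself a graph product whose vertex-groups are among the $G_i$, and by Fact~\ref{fact:FiniteClique} (or directly from Theorem~\ref{thm:QuandleQM}) it is a graph product over a graph of clique number at most $|I|<\infty$; so all four assertions follow from the graph-product combination results of ingredient (c). Otherwise $\emptyset\neq I\setminus I_{\mathrm{min}}\subsetneq I$, and Theorem~\ref{thm:SemiDirect} gives
$$\mathfrak Q=\mathfrak Q_{\mathrm{min}}\rtimes\langle I\setminus I_{\mathrm{min}}\rangle,$$
where $\mathfrak Q_{\mathrm{min}}$ is a graph product of the type just discussed and, by Corollary~\ref{cor:FactorsEmb}, $\langle I\setminus I_{\mathrm{min}}\rangle$ is a quandle product whose factors are factors of $\mathfrak Q$ and whose index set is strictly smaller. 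If every factor of $\mathfrak Q$ is orderable (resp.\ satisfies the Tits alternative, resp.\ has finite asymptotic dimension), then so does $\mathfrak Q_{\mathrm{min}}$ by (c), so does $\langle I\setminus I_{\mathrm{min}}\rangle$ by the inductive hypothesis, and hence so does $\mathfrak Q$ by (b); this proves (ii), (iii), (iv). For (i), let $g\in\mathfrak Q$ have order $p$; under the retraction $\mathfrak Q\to\langle I\setminus I_{\mathrm{min}}\rangle$ its image has order dividing $p$, hence is either trivial or of order exactly $p$. In the first case $g\in\mathfrak Q_{\mathrm{min}}$, a graph product, so some vertex-group — isomorphic to a factor of $\mathfrak Q$ — contains an element of order $p$; in the second case the inductive hypothesis applied to $\langle I\setminus I_{\mathrm{min}}\rangle$ produces an element of order $p$ in one of its factors, which is again a factor of $\mathfrak Q$. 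Either way some $G\in\mathcal G$ has an element of order $p$, and the consequence about torsion-freeness follows since a group is torsion-free exactly when it contains no element of prime order.

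The work here is almost entirely in invoking known black boxes, so the main thing to be careful about is matching them precisely: one must use the versions of ``orderable'' and of ``Tits alternative'' that are simultaneously subgroup-, extension- and graph-product-stable (left-orderability, and the Tits alternative relative to an extension-closed variety such as the virtually polycyclic groups, as treated in \cite{QM}), and — the only genuinely non-formal point — one must insist, for the asymptotic-dimension statement, that each graph product $\mathfrak Q_{\mathrm{min}}$ appearing along the induction be over a graph of finite clique number even though it may have infinitely many vertices. This is exactly what Fact~\ref{fact:FiniteClique} supplies when $\mathcal I$ is finite, and without it the asymptotic-dimension combination result is simply false (e.g.\ an infinite-rank free abelian group).
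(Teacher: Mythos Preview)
Your proof is correct and follows essentially the same route as the paper: induction on $|I|$ via the semidirect decomposition $\mathfrak Q=\mathfrak Q_{\mathrm{min}}\rtimes\langle I\setminus I_{\mathrm{min}}\rangle$ of Theorem~\ref{thm:SemiDirect}, combined with the standard graph-product and extension stability results (including the finite-clique-number caveat from Fact~\ref{fact:FiniteClique} for asymptotic dimension). Your additional remarks on which precise versions of orderability and the Tits alternative are needed are well taken and make explicit what the paper leaves implicit.
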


\noindent
In the item (iii), we say that a group satisfies the \emph{Tits alternative} if every subgroup either contains a non-abelian free subgroup or is virtually solvable. 

\begin{proof}[Proof of Corollary~\ref{cor:CombApplications}.]
We start by proving (i). Since factors are embedded in $\mathfrak{Q}$ according to Corollary~\ref{cor:FactorsEmb}, it is clear that, if a factor contains an element of order $p$, then so does $\mathfrak{Q}$. Conversely, assume that $\mathfrak{Q}$ contains an element of order $p$. Decompose $\mathfrak{Q}$ as $\mathfrak{Q}_\mathrm{min} \rtimes \langle I \backslash I_\mathrm{min} \rangle$ as in Theorem~\ref{thm:SemiDirect}. According to Claim~\ref{claim:SemiFiniteOrder} below, either $\mathfrak{Q}_\mathrm{min}$ or $\langle I \backslash I_\mathrm{min} \rangle$ contains an element of order $p$. In the latter case, since we know from Corollary~\ref{cor:FactorsEmb} that $\langle I \backslash I_\mathrm{min} \rangle$ is a quandle product with fewer factors than $\mathfrak{Q}$, we can argue by induction and deduce that $\langle I \backslash I_\mathrm{min} \rangle$ has a factor containing an element of order $p$. But the factors of $\langle I \backslash I_\mathrm{min} \rangle$ are factors of $\mathfrak{Q}$, hence the desired conclusion follows. In the former case, it follows from Claim~\ref{claim:GPfiniteOrder} below that the graph product $\mathfrak{Q}_\mathrm{min}$ has a vertex-group containing an element of order $p$. But the vertex-groups of $\mathfrak{Q}_\mathrm{min}$ are factors of $\mathfrak{Q}$, hence the desired conclusion.

\begin{claim}\label{claim:SemiFiniteOrder}
If a semidirect product $N \rtimes H$ contains an element of order $p$, then either $N$ or $H$ contains an element of order $p$. 
\end{claim}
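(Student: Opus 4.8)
The plan is to exploit the canonical projection onto the quotient by $N$. Write $G := N \rtimes H$ and let $\pi : G \to H$ be the homomorphism that splits the extension, i.e.\ the map with kernel $N$ that restricts to the identity on the complement $H$. Given an element $g \in G$ of order $p$, the first step is simply to observe that $\pi(g)$ has order dividing $p$, since $\pi$ is a group homomorphism; because $p$ is prime, this forces $\pi(g)$ to have order either $1$ or $p$.

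In the case where $\pi(g)$ has order $p$, I would conclude directly: $\pi(g)$ lies in the subgroup $H \leq G$ (as $\pi$ fixes $H$ pointwise), so $H$ contains an element of order $p$. In the remaining case $\pi(g)=1$, the element $g$ lies in $\ker \pi = N$, and hence $N$ itself contains the order-$p$ element $g$. Either way, one of the two factors carries an element of order $p$, which is the desired conclusion.

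I do not anticipate any genuine obstacle here: the entire argument reduces to the behaviour of element orders under the quotient homomorphism $G \to G/N \cong H$, together with the primality of $p$ to rule out intermediate divisors. The only point worth stating carefully is that, in the first case, $\pi(g)$ is genuinely an element of $H$ (and thus has the same order in $H$ as in $G$), not merely an element of an abstract quotient — which is immediate from the semidirect-product structure.
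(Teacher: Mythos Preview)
Your proof is correct and essentially identical to the paper's: both project $g$ to $H$ via the quotient map with kernel $N$, observe that the image has order dividing $p$ (hence $1$ or $p$ by primality), and split into the two cases accordingly. Your write-up is slightly more explicit about invoking primality, but the argument is the same.
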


\noindent
Let $g \in N \rtimes H$ be an element of order $p$. If its projection $h$ to $H$ is non-trivial, then $h$ yields an element of order $p$ in $H$. Otherwise, if $g$ projects trivially to $H$, then $g$ must belong to $N$, providing an element of order $p$ in $N$. 

\begin{claim}\label{claim:GPfiniteOrder}
If a graph product contains an element of order $p$, then some vertex-group contains an element of order $p$. 
\end{claim}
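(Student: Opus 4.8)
The plan is to reduce an order-$p$ element of a graph product $G := \Gamma\mathcal{G}$ to a single vertex-group by cyclically reducing it. Recall that graph products are exactly the quandle products whose collection of actions is empty (Example~\ref{ex:GP}), so Theorems~\ref{thm:NormalForm} and~\ref{thm:ShortestWords} specialise to the classical normal-form theory for graph products: twisted commutations become ordinary commutations $ab \mapsto ba$ with $a \in G_u$, $b \in G_v$ and $u \perp v$; a word over $\bigsqcup_v G_v \setminus \{1\}$ has minimal syllable length if and only if it is $\mathfrak{Q}$-reduced; and any two minimal words representing the same element differ by a sequence of such commutations.

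First I would fix a non-trivial element $g$ of order $p$ and, among all its conjugates, choose one, say $w = s_1 \cdots s_n$ with $s_i \in G_{v_i} \setminus \{1\}$, whose minimal word has the least possible number of syllables $n$; the goal is to show $n = 1$. If $n \geq 2$, minimality forces $w$ to be cyclically reduced in the usual sense: no word obtained from $s_1 \cdots s_n$ by commutations has its first and last letters in a common vertex-group, since if $t_1 \cdots t_n$ were such a word with $t_1, t_n \in G_v$, then $t_n w t_n^{-1} = (t_n t_1) t_2 \cdots t_{n-1}$ would be a conjugate of $g$ with at most $n-1$ syllables. I would then invoke the standard fact about graph products (see \cite{GreenGP}) that a cyclically reduced word of syllable length $n \geq 1$ has all its powers $w^k$ reduced of syllable length $kn$; in particular $w^k \neq 1$ for every $k \geq 1$, so $w$ — and hence its conjugate $g$ — has infinite order, contradicting $g^p = 1$. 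Therefore $n \leq 1$, and since $g \neq 1$ we get $n = 1$, i.e.\ $w \in G_v$ for some vertex $v$; as $w$ is conjugate to $g$ it has order $p$, so $G_v$ contains an element of order $p$.

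The only delicate ingredient is the cyclic-reduction lemma for graph products (existence of a cyclically reduced conjugate together with the syllable-length behaviour of its powers), which I would cite from \cite{GreenGP} rather than reprove; everything else is a short manipulation of reduced words. I would also remark that primality of $p$ plays no role here: the argument in fact shows that every torsion element of a graph product is conjugate into a vertex-group, which is precisely what is needed for the inductive step in the proof of Corollary~\ref{cor:CombApplications}(i), in combination with Claim~\ref{claim:SemiFiniteOrder}.
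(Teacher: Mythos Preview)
Your proof is correct and takes a route genuinely different from the paper's. The paper invokes \cite[Lemma~4.5]{GreenGP}, which places every finite subgroup of a graph product inside (a conjugate of) $\langle \Lambda \rangle$ for some complete subgraph $\Lambda$; since $\langle \Lambda \rangle = \bigoplus_{u \in V(\Lambda)} G_u$, an iterated application of Claim~\ref{claim:SemiFiniteOrder} then pushes the order-$p$ element down to a single factor. Your argument bypasses the intermediate direct sum entirely: by cyclically reducing a conjugate and using the syllable-length behaviour of powers of cyclically reduced words (also from \cite{GreenGP}), you land directly in a vertex-group. This yields the marginally stronger conclusion that every torsion element is \emph{conjugate into} a vertex-group, whereas the paper only extracts some element of the right order via projection in the direct sum. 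One small quibble: the cited fact ``cyclically reduced of syllable length $n \geq 1$ implies $|w^k| = kn$'' is false for $n=1$ (a single syllable may have finite order in its vertex-group); it holds for $n \geq 2$, which is the only case you use.
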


\noindent
Let $\Gamma$ be a graph and $\mathcal{G}$ a collection of groups indexed by $V(\Gamma)$. According to \cite[Lemma~4.5]{GreenGP}, a finite subgroup $H \leq \Gamma \mathcal{G}$ must be contained in $\langle \Lambda \rangle$ for some complete subgraph $\Lambda \leq \Gamma$. Thus, if $\Gamma \mathcal{G}$ contains an element of order $p$, then there must exist a complete subgraph $\Lambda \leq \Gamma$ such that $\langle \Lambda \rangle$ contains an element of order $p$. Since $\langle \Lambda \rangle = \bigoplus_{u \in V(\Lambda)} G_u$, we conclude from Claim~\ref{claim:SemiFiniteOrder} that some vertex-group of $\Gamma \mathcal{G}$ contains an element of order $p$. This concludes the proof of Claim~\ref{claim:GPfiniteOrder}. 

\medskip \noindent
Item (ii) (resp.\ (iii)) is proved similarly as being orderable (resp.\ satisfying the Tits alternative) is stable under graph products \cite{MR2946302} (resp.\ \cite{MR3365774}) and semi-direct products.

\medskip \noindent
Finally, let us prove (iv). If $\mathfrak{Q}$ has finite asymptotic dimension, then the same holds for its subgroups, including its factors. Conversely, assume that every factor has finite asymptotic dimension. Decompose $\mathfrak{Q}$ as $\mathfrak{Q}_\mathrm{min} \rtimes \langle I \backslash I_\mathrm{min} \rangle$ as in Theorem~\ref{thm:SemiDirect}. Since $\langle I \backslash I_\mathrm{min} \rangle$ is a quandle product with fewer factors as $\mathfrak{Q}$ and whose factors are also factors of $\mathfrak{Q}$, we can argue by induction on the number of factors and deduce that $\langle I \backslash I_\mathrm{min} \rangle$ has finite asymptotic dimension. According to \cite{MR2213160}, it suffices to know that $\mathfrak{Q}_\mathrm{min}$ has finite asymptotic dimension in order to conclude that $\mathfrak{Q}$ has finite asymptotic dimension. The key observation is that $\mathfrak{Q}_\mathrm{min}$ is a graph product over a graph with finite clique-number (Fact~\ref{fact:FiniteClique}) and whose vertex-groups are factors of $\mathfrak{Q}$. Then, we can apply \cite{AsdimGP} to conclude. 
\end{proof}

\begin{remark}
The item (i) in Corollary~\ref{cor:CombApplications} provides valuable information on the torsion in quandle products, but it would be more satisfying to have a complete description of finite subgroups. See Problem~\ref{prob:FiniteSub} and the related discussion. 
\end{remark}

\begin{remark}
In view of the item (ii) in Corollary~\ref{cor:CombApplications}, it is worth mentioning that a quandle product of bi-orderable groups may not be bi-orderable. For instance, 
$$\langle a,b,t \mid ab=ba, \ ta=b^{-1}t, \ tb=a^{-1}t \rangle$$
is the presentation of a quandle product of three infinite cyclic groups that is not bi-orderable (since it contains a non-trivial element conjugate to its inverse). 
\end{remark}

\noindent
We conclude this section with a by-product of the results proved so far.

\begin{prop}\label{prop:QuasiRetract}
Let $\mathfrak{Q}:= \mathfrak{Q}(\mathcal{I}, \mathcal{G}, \mathcal{A})$ be a quandle product with trivial holonomy, with $\mathcal{I}$ finite, and with every group in $\mathcal{G}$ finitely generated. Every parabolic subgroup of $\mathfrak{Q}$ is a quasi-retract. 
\end{prop}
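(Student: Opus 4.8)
The plan is to leverage the quasi-median geometry established in Theorem~\ref{thm:QuandleQM} together with the gatedness of parabolic subgraphs (Lemma~\ref{lem:ParabolicGated}) and the metric-refinement machinery from Section~\ref{section:LocallyFiniteCayley}. Recall that a subgroup $H \leq G$ of a finitely generated group is a \emph{quasi-retract} if there is a Lipschitz retraction $G \to H$ that is a quasi-isometry onto its image; equivalently (up to changing constants), if the inclusion $H \hookrightarrow G$ is a quasi-isometric embedding and there is a coarse retraction. Since by Corollary~\ref{cor:FactorsEmb} it suffices to treat standard parabolic subgroups $\langle S \rangle$ with $S \subset I$ stable, and since holonomy is trivial and $\mathcal{I}$ is finite with finitely generated factors, Lemma~\ref{lem:GeneratingNoHolo} and Proposition~\ref{prop:CayleyQ} let us realise a word metric on $\mathfrak{Q}$ as the global metric $\delta$ of a coherent system of metrics $\{\delta_C\}$ on $\mathfrak{M}$, where each $(C,\delta_C)$ is a finite-generating-set Cayley graph of a factor.

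First I would observe that, by Lemma~\ref{lem:ParabolicGated}, the subgraph $\langle S \rangle \subset \mathfrak{M}$ is gated, hence there is a gate-projection $\pi : \mathfrak{M} \to \langle S \rangle$, which is $1$-Lipschitz for the graph metric of $\mathfrak{M}$ and does not increase the number of hyperplanes crossed. Moreover $\langle S \rangle$ is $\langle S \rangle$-invariant and $\pi$ is $\langle S \rangle$-equivariant. The key point is then to check that $\pi$ remains Lipschitz and a coarse retraction for the \emph{refined} metric $\delta$: using the description $\delta(a,b) = \sum_{J} \delta_J(a,b)$ over hyperplanes separating $a$ and $b$ (from \cite[Proposition~3.13]{QM}, as quoted before Lemma~\ref{lem:BrokenGeod}), and the fact that the gate-projection onto a gated subgraph only ``deletes'' hyperplanes, for every hyperplane $J$ one has $\delta_J(\pi(a),\pi(b)) \le \delta_J(a,b)$ (it is either $0$ if $J$ is not crossed by $\langle S \rangle$, or equals $\delta_J(a,b)$ because the coherent metric $\delta_J$ is constant along the hyperplane and $\pi$ restricted to a fibre is an isometry of cliques). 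Summing over $J$ gives $\delta(\pi(a),\pi(b)) \le \delta(a,b)$, so $\pi$ is $1$-Lipschitz for $\delta$, and it fixes $\langle S \rangle$ pointwise.

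It then remains to show that the inclusion $(\langle S \rangle, \delta|_{\langle S \rangle}) \hookrightarrow (\mathfrak{Q},\delta)$ is a quasi-isometric embedding — indeed, by Corollary~\ref{cor:FactorsEmb} the parabolic subgroup $\langle S \rangle$ is itself a quandle product $\mathfrak{Q}_S$ with the induced data, its own quasi-median Cayley graph $\mathfrak{M}_S$ embeds naturally as the gated subgraph $\langle S \rangle$ of $\mathfrak{M}$ (via Fact~\ref{fact:RankedSub} and Theorem~\ref{thm:ShortestWords}, $\mathfrak{Q}_S$-reduced words stay $\mathfrak{Q}$-reduced, so distances in $\mathfrak{M}_S$ agree with distances in $\mathfrak{M}$), and the refined metric on $\mathfrak{M}_S$ coming from the generating sets $\{S_i \mid i \in S\}$ coincides with the restriction of $\delta$ because the coherent local metrics $\delta_C$ are intrinsic to the cliques. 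Hence $\delta|_{\langle S \rangle}$ is genuinely the word metric of $\langle S \rangle$ with respect to $\bigcup_{i \in S} S_i$, and the inclusion is an isometric embedding of graphs, a fortiori a quasi-isometric embedding. Combining this with the $1$-Lipschitz equivariant retraction $\pi$, we conclude that $\langle S \rangle$ is a quasi-retract of $\mathfrak{Q}$. The main obstacle I anticipate is the bookkeeping in the middle step: carefully verifying that gate-projection onto $\langle S \rangle$ interacts correctly with the hyperplane-by-hyperplane decomposition of the refined metric $\delta$, i.e.\ that $\delta_J \circ \pi \le \delta_J$ for every hyperplane $J$; once that inequality is in hand, everything else is formal.
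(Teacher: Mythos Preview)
Your proposal is correct and follows the same route as the paper: pass to the refined metric $\delta$ on $\mathfrak{M}$ via Lemma~\ref{lem:GeneratingNoHolo} and Proposition~\ref{prop:CayleyQ}, and use the gate-projection onto the gated subgraph $\langle R\rangle$ (Lemma~\ref{lem:ParabolicGated}) as the Lipschitz retraction. The only difference is that your ``main obstacle'' dissolves once you invoke Lemma~\ref{lem:GatedGated}, which shows that $\langle R\rangle$ remains gated in $(\mathfrak{M},\delta)$ with the same gates, so the gate-projection is automatically $1$-Lipschitz for $\delta$ and restricts to the identity on $\langle R\rangle$ --- no hyperplane-by-hyperplane bookkeeping is needed, and under the paper's definition of quasi-retract (a Lipschitz map $\rho:G\to H$ with $\rho|_H$ at bounded distance from $\mathrm{id}_H$) your separate QI-embedding verification is also unnecessary.
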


\noindent
Recall that, given a finitely generated group $G$, a subgroup $H \leq G$ is a \emph{quasi-retract} if there exists a Lipschitz map $\rho : G \to H$ such that $\rho_{|H}$ lies at finite distance from $\mathrm{id}_H$. 

\begin{proof}[Proof of Proposition~\ref{prop:QuasiRetract}.]
Let $P \leq \mathfrak{Q}$ be a parabolic subgroup. Without loss of generality, we can assume that $P$ is a standard parabolic subgroup. According to Lemma~\ref{lem:StableClosure}, there exists a stable subset $R \subset I$ such that $P= \langle R \rangle$.

\medskip \noindent
Let $S_i \in subset G_i$, $i \in I$, be finite generating sets as given by Lemma~\ref{lem:GeneratingNoHolo}. According to Theorem~\ref{thm:MedianCayl}, $\mathrm{Cayl}(\mathfrak{Q}, S:= \bigcup_{i \in I} S_i)$ can be identified with $(\mathfrak{M}, \delta)$ for some global metric $\delta$ defined on the quasi-median Cayley graph $\mathfrak{M}:= \mathfrak{M}(\mathcal{I}, \mathcal{G}, \mathcal{A})$ of $\mathfrak{Q}$. The combination of Lemmas~\ref{lem:ParabolicGated} and ~\ref{lem:GatedGated} shows that $\langle R \rangle$ defines a gated subgraph in $\mathrm{Cayl}(\mathfrak{Q},S)$. 

\medskip \noindent
Thus, the gated-projection $\mathrm{Cayl}(\mathfrak{Q},S) \to \langle R \rangle$ yields the desired quasi-retraction $\mathfrak{Q} \to P$ on our parabolic subgroup.  
\end{proof}

\section{Graph products as subgroups}

\noindent
We saw in Section~\ref{section:SemiDecomposition} that, given a quandle product $\mathfrak{Q}:= \mathfrak{Q}(\mathcal{I}, \mathcal{G}, \mathcal{A})$, conjugates of factors indexed by minimal elements of $\mathcal{I}$ usually generate graph products. In this section, we show that, under reasonable assumptions, the same holds for arbitrary factors up to taking finite-index subgroups. More precisely:

\begin{thm}\label{thm:SubGP}
Let $\mathfrak{Q}:= \mathfrak{Q}(\mathcal{I}, \mathcal{G}, \mathcal{A})$ be a quandle product. Assume that the following conditions are satisfied:
\begin{itemize}
	\item the oposet $\mathcal{I}$ is finite;
	\item every group in $\mathcal{G}$ is finitely generated and residually finite;
	\item for every $i \in I$, the kernel of $G_i \curvearrowright \bigsqcup_{j<i} G_j$ has finite-index in $G_i$. 
\end{itemize}
For all conjugates of factors $F_1, \ldots, F_n$, there are finite-index subgroups $F_i' \leq F_i$ such that $\{F_1', \ldots, F_n'\}$ is the basis of a graph product. 
\end{thm}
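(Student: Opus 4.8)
The plan is to use the geometric machinery already developed, combined with the iterated semidirect-product decomposition of Theorem~\ref{thm:SemiDirect} and the Agol-style criterion for building graph products out of hyperplane rotative-stabilisers that was used in the proof of Theorem~\ref{thm:SemiDirect} (namely \cite[Theorem~3.24]{MR4033512}). The key point is that, while arbitrary factors $G_i$ need not give rotative-stabilisers of hyperplanes in $\mathfrak{M}:=\mathfrak{M}(\mathcal{I},\mathcal{G},\mathcal{A})$ (only the minimal ones do, by Lemma~\ref{lem:RotativeStab}), one can pass to a finite-index subgroup of $\mathfrak{Q}$ that behaves, along the relevant hyperplanes, like a quandle product in which all the indices we care about are minimal.

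First I would reduce to standard factors: up to conjugating everything by a single element, and then using that the statement is insensitive to replacing each $F_k$ by a conjugate, it suffices to treat $F_k=G_{i_k}$ for indices $i_1,\ldots,i_n\in I$ (not necessarily distinct; if two coincide, intersect the resulting finite-index subgroups). Second, I would exploit the third hypothesis: for each $i\in I$ the kernel $K_i$ of the action $G_i\curvearrowright\bigsqcup_{j<i}G_j$ has finite index in $G_i$. The point of $K_i$ is that an element of $K_i$ acts trivially on the indices below $i$, so the twisted-commutation relations $ab=b(b\ast a)$ with $b\in K_i$ become genuine commutations $ab=ba$; combined with the holonomy discussion (Section~\ref{section:Holonomy}), an element of $K_i$ conjugating a factor $G_j$ ($j<i$) to a factor does so via the identity. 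Third, using residual finiteness of the $G_i$ and finiteness of $\mathcal I$, I would pass to a common finite-index normal subgroup: concretely, build a finite-index subgroup $\mathfrak Q'\le\mathfrak Q$ (using the decomposition of Corollary~\ref{cor:DecompositionQuandle} into iterated semidirect products of graph products, and residual finiteness of each piece, so that finite-index subgroups propagate through semidirect and graph products) such that $\mathfrak Q'\cap G_i\le K_i$ for every $i$ appearing, and moreover $\mathfrak Q'$ is ``full'' enough that the hyperplanes $J_{i_1},\ldots,J_{i_n}$ of $\mathfrak{M}$ and their $\mathfrak Q'$-translates carry rotative-stabilisers contained in $\mathfrak Q'$. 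Then set $F_k':=\mathfrak Q'\cap G_{i_k}$, a finite-index subgroup of $G_{i_k}$.

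The heart of the argument is then to show $\langle F_1',\ldots,F_n'\rangle$ is a graph product with the $F_k'$ as vertex-groups. For this I would reprove, inside $\mathfrak Q'$ acting on (the same graph) $\mathfrak{M}$, the two bullet points of Lemma~\ref{lem:RotativeStab} for the hyperplanes $J_{i_k}$: that $\mathrm{stab}_\circlearrowright(J_{i_k})$ restricted to $\mathfrak Q'$ equals $F_k'$, and that transverse $\mathfrak Q'$-translates $gJ_{i_k}$, $hJ_{i_\ell}$ have commuting rotative-stabilisers. The first follows exactly as in the proof of Claim~\ref{claim:MinimalGenerators} and Lemma~\ref{lem:RotativeStab}: once all generators we manipulate lie in the kernels $K_i$, the ladder/parallel-edge analysis (Lemmas~\ref{lem:Ladder}, \ref{lem:QuandleCycle}, \ref{lem:ParallelEdges}) shows opposite rungs carry the \emph{same} label, so $\mathrm{stab}(A)=\mathrm{stab}(B)$ for any two cliques $A,B$ on $J_{i_k}$ meeting $\mathfrak Q'$; commutation of transverse rotative-stabilisers is the $\perp$-comparability argument from the proof of Lemma~\ref{lem:RotativeStab}, unchanged. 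With these two facts in hand, \cite[Theorem~3.24]{MR4033512} (applied to the action of $\langle F_1',\ldots,F_n'\rangle$, or of $\mathfrak Q'$, on $\mathfrak{M}$, with the collection $\mathcal W$ of translates of the $J_{i_k}$) yields that $\langle F_1',\ldots,F_n'\rangle$ decomposes as a graph product over the crossing graph of $\mathcal W$, with vertex-groups the rotative-stabilisers, i.e.\ conjugates of the $F_k'$; intersecting $\mathfrak Q'$ further with a finite-index subgroup trivialising any remaining holonomy along these hyperplanes ensures the vertex-groups are exactly $F_1',\ldots,F_n'$ and not merely conjugates.

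The main obstacle I expect is the bookkeeping in the third step: producing a \emph{single} finite-index subgroup $\mathfrak Q'$ that simultaneously (a) meets each relevant $G_i$ inside $K_i$, (b) survives the passage through the iterated semidirect/graph-product decomposition of Corollary~\ref{cor:DecompositionQuandle} while remaining finite-index (this is where residual finiteness of the $G_i$ and finiteness of $\mathcal I$ are essential, together with the fact that residual finiteness is stable under graph products and under extensions with residually finite quotient), and (c) is compatible with the hyperplane combinatorics, so that the hypotheses of \cite[Theorem~3.24]{MR4033512} genuinely hold for $\mathfrak Q'\curvearrowright\mathfrak{M}$. A clean way to organise (b)--(c) is to induct on $|I|$ using Theorem~\ref{thm:SemiDirect}: write $\mathfrak Q=\mathfrak Q_{\min}\rtimes\langle I\setminus I_{\min}\rangle$, handle factors indexed in $I_{\min}$ directly via Lemma~\ref{lem:RotativeStab} (no finite-index passage needed there), and for the remaining factors apply the inductive hypothesis inside the smaller quandle product $\langle I\setminus I_{\min}\rangle$ (which is again a quandle product by Corollary~\ref{cor:FactorsEmb} and inherits all three hypotheses), then reassemble, using that a graph product of residually finite groups is residually finite to lift the finite-index subgroup across the semidirect product.
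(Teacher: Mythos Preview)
Your proposal has two genuine gaps that the paper's proof avoids entirely.

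\textbf{First gap: the reduction to standard factors is invalid.} You write ``up to conjugating everything by a single element, and then using that the statement is insensitive to replacing each $F_k$ by a conjugate, it suffices to treat $F_k=G_{i_k}$.'' But the $F_k$ are conjugates of factors by \emph{different} elements $g_k$, so no single conjugation makes them all standard; and the property ``$\{F_1',\ldots,F_n'\}$ is the basis of a graph product'' is \emph{not} insensitive to replacing each $F_k'$ by an individual conjugate (it changes the subgroup they generate). So you cannot reduce to the standard case.

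\textbf{Second gap: you assume $\mathfrak{Q}$ has enough finite-index subgroups.} Your step~(b) requires producing a finite-index $\mathfrak{Q}'\le\mathfrak{Q}$ with prescribed intersections with the factors. You justify this via ``residual finiteness is stable under graph products and under extensions with residually finite quotient,'' but the second assertion is false in general, and for quandle products specifically the question of residual finiteness is left \emph{open} in the paper (see the paragraph around Question~\ref{question:CocompactSpecial}). The graph products $\mathfrak{Q}_{\min}$ in Corollary~\ref{cor:DecompositionQuandle} are typically over infinite graphs and need not be finitely generated, so propagating finite-index subgroups through the semidirect products is exactly the unresolved difficulty. Your inductive reassembly in the last paragraph runs into the same wall.

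\textbf{What the paper does instead.} The paper never passes to a finite-index subgroup of $\mathfrak{Q}$. It invokes a black-box criterion (Proposition~\ref{prop:PPforSubGP}, from \cite[Theorem~8.43]{QM}): for a group acting on a quasi-median graph with trivial vertex-stabilisers, any residually finite subgroups of rotative-stabilisers of hyperplanes have finite-index subgroups forming the basis of a graph product. This handles arbitrary conjugates of factors at once (they correspond to arbitrary hyperplanes), dissolving your first gap. The remaining work is purely local to each factor: show that $G_i$ contains a finite-index subgroup $\dot{G}_i\le\mathrm{stab}_\circlearrowright(J_i)$. For this the paper sets
\[
\dot{G}_i:=\bigcap_{j\in I}\ \bigcap_{\varphi\in\mathscr{S}(i,j)}\varphi^{-1}(K_j),
\]
where $K_j=\ker\bigl(G_j\curvearrowright\bigsqcup_{\ell<j}G_\ell\bigr)$. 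Although the index set is infinite, all the $\varphi^{-1}(K_j)$ have index bounded by $\max_j[G_j:K_j]$, and a finitely generated group has only finitely many subgroups of bounded index; hence $\dot{G}_i$ has finite index. A ladder argument (your intuition here was right) then shows that an element of $\dot{G}_i$ stabilises every clique of $J_i$, because at each rung the transported element still lies in the relevant kernel and so the opposite edge carries the same label. This construction lives entirely inside the factors and uses only finite generation of each $G_i$, never residual finiteness of $\mathfrak{Q}$.
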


\noindent
Here, given a group $G$ and subgroups $H_1, \ldots, H_n \leq G$, we say that $\{H_1, \ldots, H_n\}$ is \emph{the basis of a graph product} if the identity maps $H_i \to H_i$ induce an injective morphism $\Gamma \mathcal{H} \hookrightarrow G$ where $\Gamma$ is the commutation graph of $\{H_1, \ldots, H_n\}$ (i.e.\ the graph whose vertices are $H_1, \ldots, H_n$ and whose edges connect $H_i$ and $H_j$ whenever they commute) and where $\mathcal{H}:= \{H_1, \ldots, H_n\}$.

\medskip \noindent
The proof of Theorem~\ref{thm:SubGP} is based on the following criterion, which follows from (the proof of) \cite[Theorem~8.43]{QM}.

\begin{prop}\label{prop:PPforSubGP}
Let $G$ be a group acting on a quasi-median graph $X$ with trivial vertex-stabilisers. For all hyperplanes $J_1, \ldots, J_n$ and all residually finite subgroups $S_1 \leq \mathrm{stab}_\circlearrowright(J_1), \ldots, S_n \leq \mathrm{stab}_\circlearrowright(J_n)$, there exist finite-index subgroups $\dot{S}_1 \leq S_1, \ldots, \dot{S}_n \leq S_n$ such that $\{ \dot{S}_1, \ldots, \dot{S}_n \}$ is the basis of a graph product.
\end{prop}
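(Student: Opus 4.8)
The plan is to prove Proposition~\ref{prop:PPforSubGP} by a ping-pong argument on $X$, using residual finiteness only to arrange the separation hypotheses that the ping-pong requires. I would begin by recording two geometric facts. First, since vertex-stabilisers are trivial and $\mathrm{stab}_\circlearrowright(J_i)$ stabilises each clique of $J_i$ setwise, the action of $S_i$ on the set of cliques of $J_i$ — equivalently on the sectors delimited by $J_i$ — is \emph{free}: a non-trivial element stabilising a clique would fix each of its vertices. Second, if $J_i$ and $J_j$ are transverse then $\mathrm{stab}_\circlearrowright(J_i)$ and $\mathrm{stab}_\circlearrowright(J_j)$ commute. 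Indeed, on a prism $C_i\times C_j$ with $C_i\subset J_i$ and $C_j\subset J_j$, an element of $\mathrm{stab}_\circlearrowright(J_i)$ stabilises each clique $C_i\times\{b\}$ (these lie in $J_i$), hence preserves the $C_j$-coordinate and, by the parallelism provided by Lemma~\ref{lem:Ladder}, acts as $\pi\times\mathrm{id}$ for a single permutation $\pi$ of $C_i$; symmetrically an element of $\mathrm{stab}_\circlearrowright(J_j)$ acts as $\mathrm{id}\times\rho$. Their commutator then fixes the prism pointwise, hence fixes a vertex, hence is trivial. Consequently, whenever $J_i,J_j$ are transverse the subgroups $\dot S_i,\dot S_j$ will be adjacent in the commutation graph $\Gamma$, so the only relations we must \emph{force} into $G$ are the transversality commutations, and the only relations we must \emph{exclude} are those among non-transverse pairs.

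Next I would fix a base vertex $x_0\in X$, lying in a unique sector $D_i$ of each $J_i$ (its \emph{home sector}). Two distinct hyperplanes are either transverse or nested, so for $i\ne j$ with $J_i,J_j$ not transverse the hyperplane $J_j$ is contained in a single sector $E_{ij}$ of $J_i$. The ping-pong will require that $\dot S_i$ never carries such a $J_j$ back onto the home sector, i.e.\ that $s\,E_{ij}\ne D_i$ for all $s\in\dot S_i\setminus\{1\}$. Because $S_i$ acts freely on sectors, for each ordered non-transverse pair $(i,j)$ there is at most one $s_{ij}\in S_i$ with $s_{ij}E_{ij}=D_i$; thus the set $B_i$ of obstructing elements of $S_i$ is finite (at most one per index $j$). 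Since $S_i$ is residually finite, I can pass to a finite-index subgroup $\dot S_i\le S_i$ with $\dot S_i\cap B_i=\{1\}$, obtained by intersecting the kernels of finitely many homomorphisms to finite groups, one separating each element of $B_i\setminus\{1\}$ from the identity. After these choices the home sectors and their $\dot S_i$-orbits sit in the configuration the ping-pong demands.

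With the $\dot S_i$ fixed, I would then prove that the natural map $\Gamma\mathcal H\to G$ is injective by showing that every element represented by a reduced word in the graph-product normal form of \cite{GreenGP} acts non-trivially on $x_0$. Writing such a word as $w=g_1\cdots g_m$ with $g_k\in\dot S_{i_k}\setminus\{1\}$, and tracking the sequence $x_0,\;g_m x_0,\;g_{m-1}g_m x_0,\dots$, each syllable $g_k$ crosses the translate $g_{k+1}\cdots g_m\,J_{i_k}$, and the separation arranged above guarantees that the hyperplane dual to the first non-cancellable syllable is never re-crossed; hence some $G$-translate of a $J_{i_k}$ separates $x_0$ from $wx_0$. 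Therefore $wx_0\ne x_0$, and triviality of vertex-stabilisers gives $w\ne 1$. This is precisely the mechanism of \cite[Theorem~8.43]{QM}, which I would invoke once its separation hypotheses have been verified. As a by-product, this also shows that no spurious edges appear in $\Gamma$: a non-transverse pair cannot commute, since a commutator would be a reduced word acting non-trivially, so $\Gamma$ coincides with the transversality graph of $\{J_1,\dots,J_n\}$ and the construction is internally consistent.

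The step I expect to be the main obstacle is the normal-form bookkeeping of the previous paragraph. Unlike the free-product case, reduced words in a graph product may be reshuffled using the commutations coming from transverse hyperplanes, so one cannot simply read off a single separating hyperplane; one must argue that, after the separation of the second paragraph, the hyperplane carrying the first non-cancellable syllable genuinely survives all subsequent crossings. Making this precise is exactly the technical heart of \cite[Theorem~8.43]{QM}, and it is where the full combinatorics of hyperplanes in quasi-median graphs — crossings, nesting, and the interaction between reduced words and separation — is needed. The remaining points (freeness on sectors, commutation of transverse rotative-stabilisers, finiteness of the obstructing sets, and the residual-finiteness separation) are comparatively routine once organised as above.
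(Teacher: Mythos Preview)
Your proposal is correct and takes the same route as the paper, which does not argue the proposition independently but simply records that it follows from (the proof of) \cite[Theorem~8.43]{QM}. Your sketch of how residual finiteness supplies the finitely many separation conditions, and of why transverse rotative-stabilisers commute, spells out precisely the verifications that invoking that reference requires.
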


\begin{proof}[Proof of Theorem~\ref{thm:SubGP}.]
For every $i \in I$, let $J_i$ denote the hyperplane containing the clique $G_i$. In view of Proposition~\ref{prop:PPforSubGP}, it suffices to show that, for every $i \in I$, $G_i$ contains a finite-index subgroup $\dot{G}_i$ that lies in $\mathrm{stab}_\circlearrowright (J_i)$. We start by setting
$$K_i:= \mathrm{ker} \left( G_i \curvearrowright \bigsqcup\limits_{j<i} G_j \right) \text{ for every } i \in I.$$
By assumption, we know that each $K_i$ has finite index in $G_i$. Then, we set
$$\dot{G}_i:= \bigcap\limits_{j \in I} \bigcap\limits_{\varphi \in \mathscr{S}(i,j)} \varphi^{-1}(K_j) \text{ for every } i \in I.$$
Notice that, as a direct consequence of our definition:

\begin{fact}\label{fact:PhiDot}
For all $i,j \in I$ and $\varphi \in \mathscr{S}(i,j)$, $\varphi( \dot{G}_i)= \dot{G}_j$.
\end{fact}

\noindent
Then, let us verify that:

\begin{claim}\label{claim:FiniteIndexDot}
For every $i \in I$, $\dot{G}_i$ has finite index in $G_i$.
\end{claim}

\noindent
Because $\mathcal{I}$ is finite, there exists some $N \geq 0$ such that $K_j$ has index $\leq N$ in $G_j$ for every $j \in I$. Therefore, $\dot{G}_i$ is an intersection of subgroups with indices $\leq N$. But $G_i$ is finitely generated, so it may contain only finitely many such subgroups. Consequently, there are only finitely many distinct subgroups in the intersection defining $\dot{G}_i$, which implies that $\dot{G}_i$ indeed has finite index in $G_i$. Claim~\ref{claim:FiniteIndexDot} is proved.

\medskip \noindent
The key observation that will allow us to prove our theorem is:

\begin{claim}\label{claim:Transparent}
For all $i,j \in I$, $g \in \dot{G}_i$, and $h \in \mathfrak{Q}$, if $h$ is $(i,j)$-diaphanous then $\dot{G}_i \leq h G_j h^{-1}$. 
\end{claim}

\noindent
It follows from Lemma~\ref{lem:PathFibreDiaph} that the clique $hG_j$ belongs to $J_i$. Thus, according to Lemma~\ref{lem:Ladder}, there exists a ladder connecting the edge $\{ 1, g^{-1}\}$ to an edge of $hG_j$. In other words, we have 
\begin{center}
\includegraphics[width=0.6\linewidth]{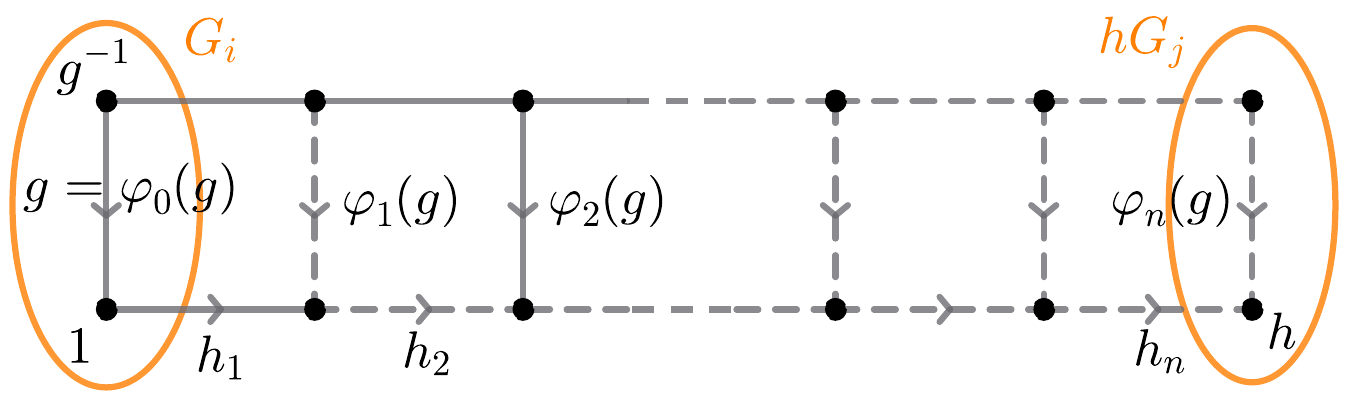}
\end{center}

\noindent
where $h_1 \cdots h_n$ is a $\mathfrak{Q}$-reduced word representing $h$. Notice that, as a consequence of Lemma~\ref{lem:ParallelEdges}, the vertical edges of our ladder are labelled by generators of the form $\varphi(g)$ with $\varphi \in \mathscr{S}(i, \ell)$; observe that, according to Fact~\ref{fact:PhiDot}, such a generator must belong to $\dot{G}_\ell$ as $g \in \dot{G}_i$. This allows us to deduce from Lemma~\ref{lem:ParallelEdges} that the edge opposite to the horizontal edge labelled by $h_k$ is labelled by either $h_k$ or $\varphi_{k-1}(g) \ast h_k = h_k$. Thus, the top horizontal line of our ladder is also labelled by the word $h_1 \cdots h_n$. This implies that $gh = h g'$ for some $g' \in G_j$. The proof of Claim~\ref{claim:Transparent} is complete. 

\medskip \noindent
We know from Lemma~\ref{lem:PathFibreDiaph} that the cliques of $J_i$ are exactly the $hG_j$ where $h$ is $(i,j)$-diaphanous. Consequently,
$$\mathrm{stab}_\circlearrowright(J_i)= \bigcap\limits_{j \in I} \bigcap\limits_{h \text{ $(i,j)$-diaphanous}} hG_jh^{-1}.$$
According to Claim~\ref{claim:Transparent}, $\mathrm{stab}_\circlearrowright (J_i)$ contains $\dot{G}_i$, which has finite index in $G_i$ according to Claim~\ref{claim:FiniteIndexDot}. This concludes the proof of our theorem. 
\end{proof}

\noindent
As a particular case of Theorem~\ref{thm:SubGP}, we get the following statement:

\begin{cor}\label{cor:SubRAAG}
Let $\mathfrak{Q}:= \mathfrak{Q}(\mathcal{I}, \mathcal{G},\mathcal{A})$ be a quandle product. Assume that $\mathcal{I}$ finite and that every group in $\mathcal{G}$ is cyclic. For all conjugates of infinite-order generators $z_1, \ldots, z_n$, there exists $N \geq 1$ such that $\{z_1^N, \ldots, z_n^N\}$ is a basis of a right-angled Artin group. \qed
\end{cor}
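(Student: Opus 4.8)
The plan is to deduce the corollary directly from Theorem~\ref{thm:SubGP}, applied to the cyclic conjugate factors $\langle z_1\rangle, \ldots, \langle z_n\rangle$, and then to pass to a common power.

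First I would check that the hypotheses of Theorem~\ref{thm:SubGP} are met. The oposet $\mathcal{I}$ is finite by assumption, and every cyclic group is finitely generated and residually finite, so the only condition requiring a word is the one on the kernels of the actions $G_i \curvearrowright \bigsqcup_{j<i} G_j$. Fix $i \in I$. Since $\mathcal{I}$ is finite, the set $\{j \in I \mid j<i\}$ is finite, and each $G_j$ is cyclic, hence has finite automorphism group; consequently the groupoid $\bigsqcup_{j<i} G_j$ has finite automorphism group (an automorphism consists of a permutation of the finitely many $G_j$ sending each factor to an isomorphic one, together with a choice of isomorphism between the paired factors, and there are only finitely many such data). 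Therefore the morphism $G_i \to \mathrm{Aut}\big(\bigsqcup_{j<i} G_j\big)$ has finite image, so its kernel has finite index in $G_i$. Thus Theorem~\ref{thm:SubGP} applies.

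Since $z_i$ has infinite order and generates a conjugate of a cyclic factor, $\langle z_i\rangle\cong \mathbb{Z}$, and $\langle z_i\rangle$ is a conjugate of a factor. Theorem~\ref{thm:SubGP} then furnishes finite-index subgroups of the $\langle z_i\rangle$ forming the basis of a graph product; a finite-index subgroup of $\langle z_i\rangle\cong\mathbb{Z}$ is necessarily of the form $\langle z_i^{k_i}\rangle$ for a unique $k_i\geq 1$, so $\{\langle z_1^{k_1}\rangle,\ldots,\langle z_n^{k_n}\rangle\}$ is the basis of a graph product $\Gamma\mathcal{H}$ whose vertex-groups are infinite cyclic, i.e.\ a right-angled Artin group. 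I would then set $N:=\mathrm{lcm}(k_1,\ldots,k_n)$, so that $z_i^N\in\langle z_i^{k_i}\rangle$ for every $i$. It remains to see that $\{z_1^N,\ldots,z_n^N\}$ is again the basis of a right-angled Artin group. First, the commutation graph of $\{z_1^N,\ldots,z_n^N\}$ coincides with $\Gamma$: an edge of $\Gamma$ between $\langle z_i^{k_i}\rangle$ and $\langle z_j^{k_j}\rangle$ forces $z_i^N$ and $z_j^N$ to commute, and conversely if there is no such edge then $\langle z_i^{k_i},z_j^{k_j}\rangle\cong\langle z_i^{k_i}\rangle\ast\langle z_j^{k_j}\rangle$ is free of rank two inside $\Gamma\mathcal{H}$, in which two non-trivial elements lying in distinct free factors cannot commute. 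Since moreover $\langle z_i^N\rangle\leq\langle z_i^{k_i}\rangle$ for each $i$ and the two commutation graphs agree, the canonical morphism from the right-angled Artin group on $\Gamma$ with generators $z_1^N,\ldots,z_n^N$ into $\Gamma\mathcal{H}$ is the standard inclusion of a graph product built from subgroups of the vertex-groups of $\Gamma\mathcal{H}$, which is injective by the normal form theorem for graph products; composing with the embedding $\Gamma\mathcal{H}\hookrightarrow\mathfrak{Q}$ gives the claim.

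The only step that goes beyond a direct invocation of Theorem~\ref{thm:SubGP} is this last one, namely that replacing the finite-index subgroups $\langle z_i^{k_i}\rangle$ by the uniform powers $\langle z_i^N\rangle$ does not destroy the graph-product structure; I expect this to be routine, resting on the normal form theorem for graph products and the elementary fact that non-trivial elements of distinct free factors do not commute, and it is the only place where any real argument (as opposed to bookkeeping) is needed.
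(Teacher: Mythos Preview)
Your proposal is correct and follows exactly the route the paper intends: the corollary is stated with a bare \qed, i.e.\ as an immediate consequence of Theorem~\ref{thm:SubGP}, and you have simply spelled out the routine verifications (finiteness of $\mathrm{Aut}\big(\bigsqcup_{j<i}G_j\big)$ for cyclic factors over a finite oposet, and the passage from individual indices $k_i$ to a common $N$ via the normal form for graph products). One cosmetic point: strictly speaking a ``generator'' in this paper is any non-trivial element of a factor, so $\langle z_i\rangle$ may be a proper subgroup of the ambient conjugate factor rather than the whole of it; but this changes nothing, since you can apply Theorem~\ref{thm:SubGP} to the full conjugate factors and then intersect with $\langle z_i\rangle$ before taking your lcm.
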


\section{Examples}\label{section:Examples}

\noindent
In this section, we record various examples of quandle products.

\subsection{Twisted graph products}

\noindent
We saw in Example~\ref{ex:GP} that quandle products $\mathfrak{Q}(\mathcal{I}, \mathcal{G}, \mathcal{A})$ for which $\mathcal{I}=(I, =, \perp)$, and consequently $\mathcal{A}= \emptyset$, correspond to graph products. Of course, they have trivial holonomy. 

\medskip \noindent
A related but more general situation is where $\leq$ is allowed to be distinct from $=$ but all the actions $G_i \curvearrowright \mathrm{Obj} \left( \bigsqcup_{j<i} G_j \right)$ coming from $\mathcal{A}$ are supposed to be trivial. Then, the corresponding quandle product is an example of a \emph{twisted graph product}. 

\begin{definition}
Let $\Gamma$ be an oriented graph, $\mathcal{G}= \{G_u \mid u \in V(\Gamma)\}$ a collection of groups, and $\mathcal{A}=\{ G_u \curvearrowright G_v \mid (u,v) \in \vec{E}(\Gamma)\}$ a collection of actions. The \emph{twisted graph product} $\Gamma(\mathcal{G}, \mathcal{A})$ is 
$$\left( \underset{u \in V(\Gamma)}{\ast} G_u \right) / \langle\langle ab=b(b\ast a) \text{ for all } (u,v) \in E(\Gamma), a \in G_u, b \in G_v \rangle\rangle.$$
\end{definition}

\noindent
Twisted graph products of infinite cyclic groups coincide with twisted right-angled Artin groups as introduced in \cite{MR856848, MR2672155}. Twisted graph products will be studied in full generality in a forthcoming work \cite{BuildingQM}, in connection with right-angled buildings.

\medskip \noindent
Notice that, if $\Gamma(\mathcal{G}, \mathcal{A})$ is a twisted graph product coming from a quandle product, then $\Gamma$ must be \emph{transitively oriented} (i.e.\ for all vertices $x,y,z \in V(\Gamma)$, if $(x,y), (y,z) \in \vec{E}(\Gamma)$, then $(x,z) \in \vec{E}(\Gamma)$). In fact, not every twisted graph product is (at least naturally) a quandle product: indeed, there exist twisted right-angled Artin groups with torsion, while quandle products of infinite cyclic groups are torsion-free according to Corollary~\ref{cor:CombApplications}.

\subsection{Wreath products}\label{section:Wreath}

\noindent
In this section, we focus on quandle products whose oposets contain greatest elements. 

\begin{definition}
A \emph{quandle-wreath product} is a quandle product $\mathfrak{Q}(\mathcal{I}, \mathcal{G}, \mathcal{A})$ for which there exists some $i_\mathrm{max} \in I$ such that $i \leq i_\mathrm{max}$ for every $i \in I$. 
\end{definition}

\noindent
Clearly, for such a quandle product, $\mathfrak{Q}$ decomposes as the semidirect product $\langle I \backslash \{i_\mathrm{max}\} \rangle \rtimes \langle i_\mathrm{max} \rangle$, where $\langle i_\mathrm{max} \rangle$ acts on the quandle product $\langle I \backslash \{ i_\mathrm{max}\} \rangle$ by permuting its factors according to the action of $G_{i_\mathrm{max}}$ on $\bigsqcup_{j \neq i_\mathrm{max}} G_j$ coming from $\mathcal{A}$.   

\medskip \noindent
Such a structure echoes the definition of permutation wreath products, motivating our terminology. In fact, it is not difficult to describe any permutation wreath products as quandle-wreath products. More generally, let us consider the following family of groups:

\begin{definition}[\cite{MR3393469}]
Let $H$ be a group acting on a graph $\Gamma$ and $A$ another group. The \emph{graph-wreath product} $A \wr_\Gamma H$ is $\Gamma A \rtimes H$ where $H$ acts on the graph product $\Gamma A$ by permuting the vertex-groups according to its action on the vertices $\Gamma$. 
\end{definition}

\noindent
We emphasize that, in the definition above, $H$ sends copies of $A$ to copies of $A$ identically. 

\begin{prop}
Graph-wreath products are quandle-wreath products with trivial holonomy.
\end{prop}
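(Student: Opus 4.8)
The plan is to exhibit, for an arbitrary graph-wreath product $A \wr_\Gamma H = \Gamma A \rtimes H$, an explicit quandle system with a greatest element whose quandle product is $A \wr_\Gamma H$ and whose holonomy vanishes. First I would build the oposet $\mathcal{I} = (I, \leq, \perp)$: take $I := V(\Gamma) \sqcup \{\top\}$, declare $v < \top$ for every $v \in V(\Gamma)$ and no other strict inequality, and set $u \perp v$ precisely when $u,v \in V(\Gamma)$ are adjacent in $\Gamma$ (so $\top$ is orthogonal to nothing). The two oposet axioms are immediate: adjacent vertices of $\Gamma$ are $\leq$-incomparable, and since $\top$ is never orthogonal to anything, the downward-closure axiom is vacuous. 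Then I would set $G_v := A$ for $v \in V(\Gamma)$ and $G_\top := H$, with $\mathcal{A}$ consisting of the single action $H = G_\top \curvearrowright \bigsqcup_{v \in V(\Gamma)} G_v$ in which $h \in H$ sends the index $v$ to $h \cdot v$ and acts by $\mathrm{id}_A$ on the corresponding copy of $A$ (all other groupoids $\bigsqcup_{j < i} G_j$ are empty, so carry only the trivial action).

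Next I would check that $(\mathcal{I}, \mathcal{G}, \mathcal{A})$ is a quandle system. Condition (1) holds because $H$ acts on $V(\Gamma)$ by graph automorphisms, hence preserves adjacency (that is, $\perp$) and the trivial order on $V(\Gamma)$. Condition (2), the quandle relation, is vacuous: its hypothesis requires $i < j$ and an element $a \in \bigsqcup_{i' < i} G_{i'}$, but the only strict inequalities are $v < \top$ and $\bigsqcup_{i' < v} G_{i'} = \emptyset$. Since $\top$ is a greatest element of $\mathcal{I}$, the resulting quandle product $\mathfrak{Q} := \mathfrak{Q}(\mathcal{I}, \mathcal{G}, \mathcal{A})$ is by definition a quandle-wreath product.

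Then I would identify the group. The suboposet carried by $V(\Gamma)$ is $(V(\Gamma), =, \perp)$ with empty actions, and $V(\Gamma)$ is a stable subset of $I$, so by Corollary~\ref{cor:FactorsEmb} together with Example~\ref{ex:GP} the standard parabolic $\langle V(\Gamma) \rangle$ is the graph product $\Gamma A$. The semidirect decomposition recorded just after the definition of quandle-wreath products then gives $\mathfrak{Q} = \langle V(\Gamma) \rangle \rtimes G_\top = \Gamma A \rtimes H$, with $H$ acting on $\Gamma A$ by permuting the vertex-copies of $A$ according to $H \curvearrowright V(\Gamma)$ and identically on each copy — which is exactly $A \wr_\Gamma H$. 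The only computational point needing a line of care is matching the presentation relation $ab = b(b \ast a)$ between $G_v$ and $G_\top$ to the semidirect-product relation, which is done exactly as in Example~\ref{ex:SemiDirect}.

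Finally, for trivial holonomy I would inspect the system of groups $\mathscr{S}(\mathcal{I}, \mathcal{G}, \mathcal{A})$: its isomorphisms are the maps $G_v \to G_{h \cdot v}$, $a \mapsto h \ast a$, one for each pair $(v \in V(\Gamma), h \in H)$, and under the canonical identification $G_v = A = G_{h \cdot v}$ each of these is $\mathrm{id}_A$; moreover no isomorphism has $G_\top$ as source or target, since $\top$ is maximal and is never of the form $j \ast i$ with $i < j$. Hence every closed composition of isomorphisms at any index $i \in I$ is the identity, so $\mathrm{Hol}(i) = \{\mathrm{id}\}$ for all $i$, and $(\mathcal{I}, \mathcal{G}, \mathcal{A})$ has trivial holonomy. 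I do not expect a genuine obstacle in this argument; the only thing demanding attention is the bookkeeping of which copy of $A$ a given element is regarded as lying in — precisely the fact that $H$ moves copies of $A$ around \emph{without} acting on $A$ itself is what forces every structure isomorphism to be $\mathrm{id}_A$, and hence the holonomy to vanish.
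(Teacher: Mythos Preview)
Your proposal is correct and follows essentially the same approach as the paper: you build the identical quandle system on $V(\Gamma)\sqcup\{\top\}$ with $G_\top=H$ acting by permuting the copies of $A$ identically, and you verify trivial holonomy in the same way. The only cosmetic difference is that you identify $\mathfrak{Q}$ with $A\wr_\Gamma H$ via Corollary~\ref{cor:FactorsEmb} and the semidirect decomposition of quandle-wreath products, whereas the paper simply reads the isomorphism off the defining presentation; both routes are equally short.
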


\begin{proof}
Let $A \wr_\Gamma H$ be a graph-wreath product. Let $\mathcal{I}$ denote the oposet $(V(\Gamma) \sqcup \{H\}, \leq ,\perp)$ where $\leq$ is defined by: $x \leq y$ if and only if $x \in V(\Gamma)$ and $y=H$; and where $\perp$ is defined by: $x \perp y$ if and only if $x,y \in V(\Gamma)$ and $\{x,y\} \in E(\Gamma)$. Let $\mathcal{G}$ be the collection of groups $\{G_u=A \text{ for every } u \in V(\Gamma), G_H=H\}$, and finally set $\mathcal{A} = \{ G_H \curvearrowright \bigsqcup_{u \in V(\Gamma)} G_u \}$ where $G_H$ permutes the $G_u$ according to its action on $V(\Gamma)$. It is clear that $(\mathcal{I}, \mathcal{G}, \mathcal{A})$ is a quandle system. Moreover, the presentation defining the corresponding quandle product $\mathfrak{Q}$ shows directly that $\mathfrak{Q} \simeq A \wr_\Gamma H$. Because $G_H=H$ sends copies of $A$ to copies of $A$ identically, it is clear that the holonomy is trivial. 
\end{proof}

\noindent
Let us mention a couple of remarkable examples of graph-wreath products.

\begin{ex}
Given two groups $A$ and $B$, define the product $A \bullet B$ from the relative presentation 
$$\langle A,B , t \mid [A, t^nBt^{-n}]=1 \text{ for every } n\geq 0 \rangle,$$
where $[X,Y]=1$ is a shorthand for: $[x,y]=1$ for all $x \in X$ and $y \in Y$. The $\bullet$-product of two groups can be defined as a diagram product \cite[Example~10.64]{QM} and as a right-angled graph of groups \cite[Section~11]{QM}.  The group $\mathbb{Z} \bullet \mathbb{Z}$ coincides with the Bestvina-Brady subgroup of $\mathbb{F}_2 \times \mathbb{F}_2$, i.e.\ the kernel of the morphism $\mathbb{F}_2 \times \mathbb{F}_2 \twoheadrightarrow \mathbb{Z}$ that sends generators to $1$. Thus, $\mathbb{Z} \bullet \mathbb{Z}$ naturally embeds into $\mathbb{F}_2 \times \mathbb{F}_2$. More generally, as noticed in \cite[Example~4.29]{MR4586831}, $A \bullet B$ embeds into $(A \ast \mathbb{Z}) \times (B \ast \mathbb{Z})$. The alternative description
$$\langle A_i \ (i \in \mathbb{Z}), B_i \ (i \in \mathbb{Z}), t \mid tA_it^{-1}=A_{i+1}, tB_it^{-1}= B_{i+1}, [A_i,B_j]=1 \text{ if } i<j \rangle$$
$$=\langle A_i \ (i \in \mathbb{Z}), B_i \ (i \in \mathbb{Z}) \mid  [A_i,B_j]=1 \text{ if } i<j \rangle \rtimes \langle t \rangle$$
of $A \bullet B$, where the $A_i$ (resp.\ $B_i$), $i \in \mathbb{Z}$, are copies of $A$ (resp.\ of $B$), shows that $A \bullet B$ is a graph-wreath product. 
\end{ex}

\begin{ex}
Given two groups $A$ and $B$, define the product $A \square B$ from the relative presentation
$$\langle A,B,t \mid [A,B]= [A, tBt^{-1}]=1 \rangle.$$
The $\square$-product of two groups can also be defined both as a diagram product \cite[Example~10.65]{QM} and as a right-angled graph of groups \cite[Example~11.40]{QM}. The group $\mathbb{Z} \square \mathbb{Z}$ was introduced in \cite{MR4071367} as the example of a cocompact diagram group that is not a right-angled Artin group. The alternative description
$$\langle A_i \ (i \in \mathbb{Z}), B_i \ (i \in \mathbb{Z}), t \mid tA_it^{-1}=A_{i+1}, tB_it^{-1}= B_{i+1}, [A_i,B_j]=1 \text{ if } j \in \{i,i+1\} \rangle$$
$$= \langle A_i \ (i \in \mathbb{Z}), B_i \ (i \in \mathbb{Z}) \mid [A_i,B_j]=1 \text{ if } j \in \{i,i+1\} \rangle \rtimes \langle t \rangle$$
of $A \square B$, where the $A_i$ (resp.\ $B_i$), $i \in \mathbb{Z}$, are copies of $A$ (resp.\ of $B$), shows that $A \square B$ is a graph wreath-product. 
\end{ex}

\noindent
In the definition of permutation wreath products, all the lamp-groups are taken to be equal. But it is possible to take distinct groups in lamps that lie in distinct orbits. More precisely, let $H$ be a group acting on a set $S$. Let $\{S_i \mid i \in I\}$ denote the $H$-orbits in $S$. For every $i \in I$, fix a group $F_i$. Then, define
$$\left( \bigoplus\limits_{i \in I} \bigoplus_{S_i} F_i \right) \rtimes H,$$
where $H$ permutes the factors in the direct sum according to its action on $S$. Formally, this is no longer a permutation wreath product, but it is a quandle product. As a funny particular case:

\begin{ex}
Given two groups $A$ and $B$, consider the semidirect product
$$\left( \bigoplus\limits_{\mathbb{Z}} (A \oplus B) \right) \rtimes \mathbb{Z}$$
where $\mathbb{Z}$ translates to the left (resp.\ right) the copies of $A$ (resp.\ $B$). An element of this group can be represented as a configuration of lamps with two lamplighters:
\begin{center}
\includegraphics[width=0.7\linewidth]{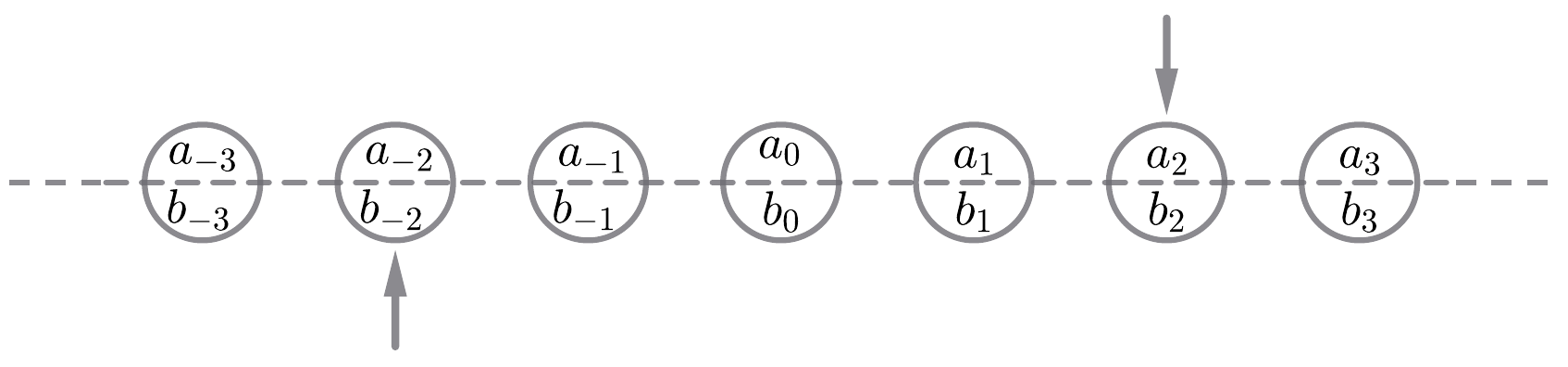}
\end{center}

\noindent
where each lamp has a top colour in $A$ and a bottom colour in $B$, and where shifting the top lamplighter to the right (resp.\ left) also shifts the bottom lamplighter to the left (resp.\ right). 
\end{ex}

\noindent
Also, in the definition of permutation wreath products, the lamp-groups are permuted identically. But we can add some holonomy. For instance:

\begin{ex}
Let $n \geq 1$ be an integer and $H$ a group with a fixed order $\leq$ invariant under conjugation. Define
$$L:= \langle a_h \ (h \in H) \mid a_h^{2n}=1 \ (h \in H), \ a_ha_k = a_k^{-1}a_h \ (h>k) \rangle.$$
Because $\leq$ is invariant under conjugation, $H$ acts on $L$ via
$$ha_kh^{-1} = a_{hk} \text{ for all } h,k \in H.$$
The semidirect product $L \rtimes H$ is not a permutation wreath product, but it is a quandle product. The example is taken from \cite[Example~4.42]{Halo}, where it is used, with $H$ finitely generated, as an example of a group quasi-isometric to $\mathbb{Z}_{2n} \wr H$ but not commensurable to any lamplighter over $H$. 
\end{ex}

\noindent
Finally, notice that the lamp-cactus group mentioned in the introduction is also a quandle-wreath product.

\subsection{Cactus products}\label{section:cactus}

\noindent
Motivated by cactus groups and various variations, including those mentioned in the introduction, we define:

\begin{definition}
A \emph{cactus system} $(\mathcal{S}, \mathcal{P}, \mathcal{G})$ is the data of 
\begin{itemize}
	\item a set $\mathcal{S}$ together with a collection $\mathcal{P} = \{ (S, \iota_S) \}$ of subsets $S \subset \mathcal{S}$, each endowed with an involution $\iota_S : S \to S$;
	\item and a collection $\mathcal{G}= \{( G_i, \dot{G}_i,\alpha_i) \mid i \in \{ \# S \mid S \in \mathcal{P} \}\}$ of groups $G_i$, each endowed with an index-two subgroup $\dot{G}_i \leq G_i$ and an automorphism $\alpha_i \in \mathrm{Aut}(G_i)$ of order $\leq 2$ stabilising $\dot{G}_i$. 
\end{itemize}
The \emph{cactus product} $\mathrm{Cact}(\mathcal{S}, \mathcal{P}, \mathcal{G})$ is the quandle product:
\begin{itemize}
	\item whose oposet is $(\mathcal{P}, \subseteq, \text{disjointness} \perp)$;
	\item whose factors are $\{ G_S:= G_{\# S} \mid S \in \mathcal{P} \}$;
	\item and whose actions are defined by $$\left\{ \begin{array}{l} b \ast A = A \text{ and } b \ast a =a \text{ if } b \in \dot{G}_B \\ b \ast A = \iota_B(A) \text{ and } b \ast a =\alpha_A (a) \text{ otherwise} \end{array} \right.$$ for all $A,B \in \mathcal{P}$ satisfying $A \subseteq B$ and $a \in G_A$, $b \in G_B$.
\end{itemize}
\end{definition}

\noindent
Verifying that the quandle relation holds is just computation. Given $A,B,C \in \mathcal{P}$ satisfying $A \subset B \subset C$ and $a \in G_A$, $b \in G_B$, $c \in G_C$, we find that 
$$\left\{ \begin{array}{l} c \ast (b \ast A) = \iota_C \circ \iota_B (A) = (\iota_C \circ \iota_B \circ \iota_C^{-1}) \circ \iota_C (A) = \iota_{\iota_C(B)} \circ \iota_C(A) \\ (c \ast b) \ast (c \ast A) = \iota_{\iota_C(B)} \circ \iota_C (A) \end{array} \right.$$
and 
$$c \ast (b \ast a)  = \left\{ \begin{array}{cl} a & \text{if $b \in \dot{G}_B$ and $c \in \dot{G}_C$} \\ \alpha_A(a) & \text{if $b \in \dot{G}_B$ and $c \notin \dot{G}_C$} \\ \alpha_A(a) & \text{if $b \notin \dot{G}_B$ and $c \in \dot{G}_C$} \\ a & \text{if $b \notin \dot{G}_B$ and $c \notin \dot{G}_C$} \end{array} \right.  = (c \ast b ) \ast (c \ast a).$$
Thus, cactus products are well-defined quandle products.

\medskip \noindent
Let us mention some examples of cactus products. We start with the easiest one: (standard) cactus groups. It is worth mentioning that the higher-dimensional and oriented cactus groups mentioned in the introduced are also cactus products of cyclic groups. For higher-dimensional cactus groups, we leave the details to the interested reader.

\begin{ex}
Set $\mathcal{S} := \{1, \ldots, n\}$, $\mathcal{P}:= \{ ([i,j], \iota_{[i,j]}) \mid 1 \leq i<j \leq n \}$ where $\iota_{[i,j]}$ denotes the central symmetry of $[i,j]$, and $\mathcal{G}:= \{ (G_i:= \mathbb{Z}_2, \dot{G}_i:= \{1\}, \alpha_i:= \mathrm{id}) \mid 2 \leq i \leq n\}$. Then, the cactus product $\mathrm{Cact}(\mathcal{S}, \mathcal{P}, \mathcal{G})$ coincides with the standard cactus groups $J_n$ described in the introduction.  
\end{ex}

\begin{ex}
Set $\mathcal{S} := \{1, \ldots, n\}$, $\mathcal{P}:= \{ ([i,j], \iota_{[i,j]}) \mid 1 \leq i<j \leq n \}$ where $\iota_{[i,j]}$ denotes the central symmetry of $[i,j]$, and $\mathcal{G}:= \{ (G_i:= \mathbb{Z}, \dot{G}_i:= 2 \mathbb{Z}, \alpha_i:= - \mathrm{id}) \mid 2 \leq i \leq n\}$. Then, the cactus product $\mathrm{Cact}(\mathcal{S}, \mathcal{P}, \mathcal{G})$ coincides with the oriented cactus groups described in the introduction.  
\end{ex}

\noindent
The cactus group naturally surjects onto the symmetric group $\mathrm{Sym}(\{1, \ldots, n \})$. In \cite{MR3567259, MR3945740}, a generalised cactus group is associated to every Coxeter group. Formally, fix a Coxeter system $(W,S)$. Denote by $\mathcal{P}^{\mathrm{irr}}_{< \infty}(S)$ the collection of the subsets $R \subset S$ such that $\langle R \rangle$ is an irreducible finite subgroup, and by $\omega_R$ the longest element of $\langle R \rangle$. Then, the \emph{cactus group over $(W,S)$} is defined by the presentation
$$\left\langle \sigma_I \ (I \in \mathcal{P}^\mathrm{irr}_{<\infty}(S)) \mid \begin{array}{c} \sigma_I^2=1  \text{ for every } I, \left[ \sigma_I, \sigma_J \right] = 1 \text{ if } I \cap J = \emptyset, \\ \sigma_I \sigma_J = \sigma_J \sigma_{\omega_J I \omega_J^{-1}} \text{ whenever } I \subset J \end{array} \right\rangle.$$
Notice that the cactus group over the symmetric group coincides with the standard cactus group. As shown by the next example, generalised cactus groups are also cactus products.

\begin{ex}
Fix a Coxeter system $(W,S)$. Set $\mathcal{S}:=S$, $\mathcal{P}:= \{ (R, \iota_R) \mid R \in \mathcal{P}^\mathrm{irr}_{< \infty}(S) \}$ where $\iota_R$ is defined by $x \mapsto \omega_R x \omega_R^{-1}$, and $\mathcal{G}:= \{ ( G_i:= \mathbb{Z}_2, \dot{G}_i:= \{1\}, \alpha_i:= \mathrm{id}) \}$. Then, the cactus product $\mathrm{Cact}(\mathcal{S}, \mathcal{P}, \mathcal{G})$ coincides with the cactus group over $(W,S)$. 
\end{ex}

\noindent
In the spirit of graph braid groups \cite{MR2701024} and the diagrammatic interpretation of the affine cactus group \cite{AffineCactus}, we can also define \emph{graph cactus groups}:

\begin{ex}
Let $\Gamma$ be a graph. Set $\mathcal{S}:= V(\Gamma)$, 
$$\mathcal{P}:= \{ (P, \iota_P) \mid P \text{ embedded path or cycle, } \iota_P \text{ central symmetry}\},$$ 
and $\mathcal{G}:= \{ (G_i:= \mathbb{Z}_2, \dot{G}_i:= \{1\}, \alpha_i:= \mathrm{id} ) \}$. We refer to the cactus product $\mathrm{Cact}(\mathcal{S}, \mathcal{P}, \mathcal{G})$ as the cactus group over $\Gamma$. 

\medskip \noindent
The cactus group over the path $P_n$ of length $n$ coincides with the standard cactus group $J_{n+1}$, and the cactus group over the cycle $C_n$ of length $n$ coincides with the affine cactus group $AJ_n$. 
\end{ex}

\subsection{Trickle groups}\label{section:Trickle}

\noindent
In this section, we relate the \emph{trickle groups} recently introduced in \cite{Trickle} to quandle products. We start by recalling their definition. 

\begin{definition}\label{def:Trickle}
A \emph{trickle graph} is a quadruple $(\Gamma, \leq , \mu , (\varphi_x)_{x \in V(\Gamma)})$, where
\begin{itemize}
	\item $\Gamma$ is a simplicial graph,
	\item $\leq$ is a partial order on $V(\Gamma)$,
	\item $\mu$ is a labelling $\mu : V(\Gamma) \to \mathbb{N}_{\geq 2} \cup \{\infty\}$ of the vertices,
	\item $\varphi_x : \mathrm{star}_\Gamma(x) \to \mathrm{star}_\Gamma(x)$ is an automorphism of $\mathrm{star}_\Gamma(x)$ for every $x \in V(\Gamma)$.
\end{itemize}
For $x,y \in V(\Gamma)$, the notation $x \parallel y$ means that $x$ and $y$ are not $\leq$-comparable. We set $E_\parallel (\Gamma):= \{ \{x,y\} \in E(\Gamma) \mid x \parallel y\}$. The quadruple $(\Gamma, \leq, \mu, (\varphi_x)_{x \in V(\Gamma)})$ must satisfy the following conditions.
\begin{itemize}
	\item[(a)] For all $x,y \in V(\Gamma)$, if $x<y$ then $\{x,y\} \in E(\Gamma)$.
	\item[(b)] For all $x,y,z \in V(\Gamma)$, if $\{x,y\} \in E_\parallel(\Gamma)$ and $z \leq y$, then $\{x,z\} \in E_\parallel (\Gamma)$.
	\item[(c)] For all $x \in V(\Gamma)$ and $y,z \in \mathrm{star}_\Gamma(x)$, we have $z \leq y$ if and only if $\varphi_x(z) \leq \varphi_x(y)$.
	\item[(d)] For all $x \in V(\Gamma)$ and $y \in \mathrm{star}_\Gamma(x)$, if $\varphi_x(y) \neq y$ then $y<x$.
	\item[(e)] For every $x \in V(\Gamma)$, if $\mu(x)$ is finite, then the order of $\varphi_x$ is finite and divides $\mu(x)$.
	\item[(f)] For all $x \in V(\Gamma)$ and $y \in \mathrm{star}_\Gamma(x)$, $\mu(\varphi_x(y))= \mu(y)$.
	\item[(g)] For all $x,y,z \in V(\Gamma)$, if $z<y<x$ then $\varphi_x \circ \varphi(y)(z)= \varphi_{y'} \circ \varphi_x(z)$ where $y':= \varphi_x(y)$. 
\end{itemize}
\end{definition}

\noindent
Our next result shows that trickle groups are quandle products, but it also characterises precisely which quandle products are trickle groups. Thus, one gets an alternative definition of trickle groups thanks to the perspective of quandle products. 

\begin{prop}\label{prop:QuandleVsTrickle}
A group is a trickle group $\mathfrak{T}(\Gamma, \leq , \mu, (\varphi_x)_{x \in V(\Gamma)})$ if and only if it is a quandle product of $\{ \mathbb{Z}_{\mu(x)}, x \in V(\Gamma)\}$ with trivial holonomy.
\end{prop}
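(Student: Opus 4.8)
The plan is to prove the two directions of the equivalence by directly translating between the two presentations.

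\textbf{From trickle groups to quandle products.}
First I would show that any trickle graph $(\Gamma, \leq, \mu, (\varphi_x)_{x\in V(\Gamma)})$ gives rise to a quandle system. The oposet $\mathcal{I}$ is taken to be $V(\Gamma)$ with the partial order $\leq$ and with orthogonality $\perp$ defined by $x \perp y$ iff $\{x,y\} \in E_\parallel(\Gamma)$. Conditions (a) and (b) of Definition~\ref{def:Trickle} are exactly what is needed to check that $(V(\Gamma), \leq, \perp)$ is an oposet: (a) forces $<$-comparable vertices to be adjacent (hence, together with the definition of $E_\parallel$, not $\perp$-related, since $\perp$ only involves $E_\parallel$-edges between $\parallel$-incomparable vertices), and (b) gives the downward-closure axiom for $\perp$. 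The factors are $\mathcal{G} = \{\mathbb{Z}_{\mu(x)} \mid x \in V(\Gamma)\}$. The actions $\mathcal{A}$ are built from the $\varphi_x$: for $y < x$ adjacent, $\varphi_x$ maps $\mathrm{star}_\Gamma(x)$ to itself, so for $z \leq y$ (which lies in $\mathrm{star}_\Gamma(x)$ by (a)) we set $x \ast z := \varphi_x(z)$ on objects. On the groups themselves, since $\mu(\varphi_x(y)) = \mu(y)$ by (f), there is a canonical identification $\mathbb{Z}_{\mu(y)} \cong \mathbb{Z}_{\mu(\varphi_x(y))}$; we use it to define $g \ast a$ for $g \in G_x$, $a \in G_y$. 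Condition (1) (preservation of $\leq$ and $\perp$) follows from (c) and the fact that $\varphi_x$ is a graph automorphism of the star. The quandle relation, condition (2), is precisely condition (g), which says $\varphi_x \circ \varphi_y(z) = \varphi_{\varphi_x(y)} \circ \varphi_x(z)$ for $z < y < x$; translating through the canonical identifications and unwinding the definition of $\ast$ on both objects and group elements yields $k \ast(h \ast a) = (k \ast h) \ast (k \ast a)$. Then I would observe that the presentation of the quandle product $\mathfrak{Q}(\mathcal{I},\mathcal{G},\mathcal{A})$ — with the $\perp$-commutations $ab = ba$ and the twisted relations $ab = b(b\ast a)$ — is exactly the trickle presentation $\langle x_i \mid x_i^{\mu(x_i)} = 1, \ x_ix_j = x_j x_i'\rangle$ of \cite{Trickle}, after choosing generators $x_i$ of each $\mathbb{Z}_{\mu(x_i)}$. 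Finally, triviality of the holonomy: an element of $\mathrm{Hol}(x)$ is a composite of maps $z \mapsto g\ast z$; by condition (d), whenever $\varphi_x(y) \neq y$ we have $y < x$, so the $\varphi_x$ never act non-trivially on a vertex that is $\geq$-above them, and (g) together with the finiteness constraint (e) forces every such composite that returns to $G_x$ to be the identity. This part should be made precise by an argument paralleling Lemma~\ref{lem:GeneratingNoHolo} / Remark~\ref{remark:NoHolonomy}: the system of groups $\mathscr{S}(\mathcal{I},\mathcal{G},\mathcal{A})$ only has isomorphisms $G_y \to G_{\varphi_x(y)}$ for $y < x$, and using (c),(f),(g) one checks any two such composites with the same source and target coincide.

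\textbf{From quandle products to trickle groups.}
Conversely, suppose $\mathfrak{Q}(\mathcal{I},\mathcal{G},\mathcal{A})$ is a quandle product of cyclic groups with trivial holonomy. By Lemma~\ref{lem:Oposet} we may realise $\mathcal{I}$ concretely, but more usefully, by the convention that factors are non-trivial and by Corollary~\ref{cor:FactorsEmb} the factors embed and are pairwise non-conjugate up to the stable-closure identification. Using triviality of holonomy and Remark~\ref{remark:NoHolonomy}, we may identify factors that are linked by isomorphisms of $\mathscr{S}$, so that each isomorphism $G_i \to G_{j \ast i}$ is canonically an equality of cyclic groups of the same order. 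Now define $\Gamma$ to be the graph with vertex set $I$ and edges connecting $i,j$ whenever $i \perp j$ or $i$ and $j$ are $<$-comparable; set $\mu(i) := |G_i| \in \mathbb{N}_{\geq 2} \cup \{\infty\}$, keep $\leq$, and define $\varphi_i : \mathrm{star}_\Gamma(i) \to \mathrm{star}_\Gamma(i)$ by $\varphi_i(j) := g \ast j$ for a generator $g$ of $G_i$ when $j < i$, and $\varphi_i(j) := j$ otherwise. One then checks axioms (a)--(g): (a) holds by construction of $\Gamma$; (b) is the oposet axiom for $\perp$; (c) is condition (1) of the quandle system restricted to the star; (d) holds because $\varphi_i$ was defined to fix everything not strictly below $i$; (e) follows since $\varphi_i$ is a power action of the finite group $G_i$ on finitely many objects together with triviality of holonomy forcing the order to divide $\mu(i)$ (this is where the ``trivial holonomy'' hypothesis is essential — without it $\varphi_i$ need not even be well-defined as depending only on $i$); (f) is the statement that $g \ast \cdot$ is an isomorphism between factors, so orders are preserved; and (g) is exactly the quandle relation (2). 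Finally the presentation of $\mathfrak{Q}$ becomes the trickle presentation, so $\mathfrak{Q} \cong \mathfrak{T}(\Gamma, \leq, \mu, (\varphi_i))$.

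\textbf{Main obstacle.}
I expect the delicate point to be the bookkeeping around holonomy in both directions. In the trickle-to-quandle direction one must verify that condition (d) — $\varphi_x(y) \neq y \Rightarrow y < x$ — combined with (g) really does force trivial holonomy; the subtlety is that a holonomy element is a long composite $a_1 \ast(a_2 \ast(\cdots \ast(a_n \ast g)))$ with a chain of inequalities $r(n) > i$, $r(n-1) > a_n \ast i$, etc., and one has to argue that any such composite returning to $G_i$ must be trivial, which uses (g) to ``commute'' the $\varphi$'s past each other and (d),(f) to control the orbits; this is morally a confluence/normal-form argument in the spirit of Theorem~\ref{thm:NormalForm}. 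In the quandle-to-trickle direction, the obstacle is dual: one must check that $\varphi_i$ is \emph{well-defined}, i.e.\ that $g \ast j$ does not depend on the choice of generator $g \in G_i$ and that the induced maps on the factor groups are coherent — precisely what Remark~\ref{remark:NoHolonomy} buys us, but it needs to be invoked carefully, together with Corollary~\ref{cor:FactorsEmb} to know the factors genuinely inject so that the combinatorial data of $\Gamma$, $\mu$, $\varphi$ faithfully records the group. Once the holonomy bookkeeping is pinned down, the remaining verifications are a routine dictionary between the two lists of axioms.
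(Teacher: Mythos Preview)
Your proposal follows essentially the same approach as the paper: build the oposet from $(V(\Gamma),\leq)$ with $\perp = E_\parallel$, take cyclic factors $\mathbb{Z}_{\mu(x)}$, define the actions via the $\varphi_x$, and verify the axioms in both directions by matching (a)--(g) against the quandle-system axioms. The structure is correct and would go through.

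However, your ``main obstacle'' is not actually an obstacle, and you misidentify where trivial holonomy enters. In the trickle $\to$ quandle direction, triviality of holonomy is \emph{immediate}: by construction the action is $z_u \ast z_v := z_{\varphi_u(v)}$, so every isomorphism in $\mathscr{S}(\mathcal{I},\mathcal{G},\mathcal{A})$ sends a chosen generator to a chosen generator. Any composite $G_i \to \cdots \to G_i$ therefore sends $z_i \mapsto z_i$ and is the identity. No confluence, no normal forms, no induction on the chain of inequalities --- it is a one-line observation, and this is exactly what the paper does. Conditions (d) and (g) play no role here. In the quandle $\to$ trickle direction, you misplace the use of trivial holonomy twice: the map $\varphi_i$ is well-defined as soon as you \emph{choose} a generator $g$ of $G_i$ (different choices give different but equally valid trickle graphs, so there is nothing to check), and condition (e) holds simply because $\varphi_i$ is the action of a generator of a group of order $\mu(i)$, hence has order dividing $\mu(i)$ --- this is just the definition of a group action. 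Where trivial holonomy \emph{is} genuinely needed is at the very end, to ensure that the trickle presentation (whose twisted relations read $z_v z_u = z_u z_{\varphi_u(v)}$, i.e.\ generator $\mapsto$ generator) coincides with the quandle presentation (whose twisted relations read $a b = b(b\ast a)$ for arbitrary $a,b$); this matching is exactly the content of Remark~\ref{remark:NoHolonomy}.
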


\begin{proof}
Let $(\Gamma, \leq , \mu, (\varphi_x)_{x \in V(\Gamma)})$ be a trickle graph. Our goal is to define a quandle system such that the presentations defining the trickle and quandle groups yield the same group.

\medskip \noindent
First, consider the oposet $\mathcal{I}:= (V(\Gamma), \leq , \perp)$ where $\perp$ is defined as follows: for all $u,v \in V(\Gamma)$, $u \perp v$ holds if and only if $\{u,v \} \in E(\Gamma)$ but $u$ and $v$ are not $\leq$-comparable. Notice that $\mathcal{I}$ is indeed an oposet as a consequence of (b). Then, consider the collection of groups $\mathcal{G}:= \{ G_u \simeq \mathbb{Z}_{\mu(u)} \mid u \in V(\Gamma)\}$. For convenience, we fix a generator $z_u$ of $G_u$ for every $u \in V(\Gamma)$. Finally, given a vertex $u \in V(\Gamma)$, consider the action $G_u \curvearrowright \bigsqcup_{v<u} G_v$ defined by
$$\left\{ \begin{array}{l} z_u \ast v: = \varphi_u(v) \\ z_u \ast z_v := z_{\varphi_u(v)} \end{array} \right. \text{ for every } v < u.$$
We need to justify that this indeed defines an action. Notice that, as a consequence of (c), if $v < u$ then $\varphi_u(v)< \varphi_u(u)$. But it follows from (d) that $\varphi_u(u)=u$, hence $\varphi_u(v)<u$. Therefore, $\varphi_u$ induces a bijection of $\{ v \mid v<u\}$. Moreover, (e) implies that, if $\mu(u)$ is finite, then $\mathrm{ord}(\varphi_u)$ is finite and divides $\mu(u)= \mathrm{ord}(z_u)$. Consequently, $z_u \ast v := \varphi_u(v)$ defines an action of $G_u$ on $\mathrm{Obj}(\bigsqcup_{v<u} G_v)$. Then, we know from (f) that $\mathrm{ord}(z_{\varphi_u(v)}) = \mathrm{ord}(z_v)$ for every $v<u$. In other words, $z_u$ permutes the isotropy groups $G_v$ of $\bigcup_{v<u} G_v$ by preserving the isomorphism type. We conclude that our action $G_u \curvearrowright \bigsqcup_{v<u} G_v$ is well-defined.

\medskip \noindent
Notice that, according to (c), the action $G_u \curvearrowright \mathrm{Obj}(\bigsqcup_{v<u} G_v)$ preserves the order $\leq$, and necessarily the orthogonality $\perp$. 

\medskip \noindent
Let $u,v,w \in V(\Gamma)$ satisfying $u<v<w$. Notice that, according to (a), our vertices are pairwise adjacent in $\Gamma$. We have
$$z_w \ast ( z_v \ast u) = \varphi_w \circ \varphi_v(u) \overset{\text{(f)}}{=} \varphi_{\varphi_w(v)} \circ \varphi_w(u) = \varphi_w(v) \ast \varphi_w(u) = (z_w \ast z_v) \ast (z_w \ast u),$$
and similarly
$$z_w \ast ( z_v \ast z_u)= z_{\varphi_w \circ \varphi_v(u)} \overset{\text{(f)}}{=}  z_{\varphi_{\varphi_w(v)} \circ \varphi_w(u)} = (z_w \ast z_v) \ast ( z_w \ast z_u).$$
Thus, we have proved that $(\mathcal{I}, \mathcal{G}, \mathcal{A}:= \{ G_u \curvearrowright \bigsqcup_{v<u} G_v, u \in V(\Gamma)\})$ is a quandle system. The holonomy is trivial because, given two vertices $u<v$, the stabiliser of $G_v$ for the action of $G_u$ on $\bigsqcup_{w<u} G_w$ is non-trivial unless $\varphi_u$ fixes $v$, in which case $z_u$ induces the identity on $G_v$. 

\medskip \noindent
The fact that the presentation given by our trickle graph coincides with the presentation given by our quandle system follows from (d). 

\medskip \noindent
Conversely, let $(\mathcal{I}, \mathcal{G}, \mathcal{A})$ be a quandle system with trivial holonomy such that every group in $\mathcal{G}$ is cyclic. Our goal is to define a trickle graph such that the presentations defining the trickle and quandle groups coincide.

\medskip \noindent
Let $\Gamma$ denote the graph whose vertex-set is $I$ and whose edges connect two vertices whenever they are $\leq$- or $\perp$-comparable. We endow $V(\Gamma)=I$ with the order given by the oposet $\mathcal{I}$. Define the labelling $\mu : i \mapsto |G_i|$. Finally, for every edge $\{i,j\} \in E(\Gamma)$, set
$$\varphi_i(j):= \left\{ \begin{array}{l} j \text{ if } i \leq j \text{ or } i\perp j \\ k \text{ such that } i \ast j \in G_k \text{ if } i>j \end{array} \right..$$
Notice that $\varphi_i$ induces an automorphism of $\mathrm{star}_\Gamma(i)$ because $G_i$ has a well-defined action on $\mathrm{Obj}(\bigsqcup_{j<i} G_j)$. 

\medskip \noindent
Now, we need to verify that all the conditions from Definition~\ref{def:Trickle} are satisfied. Condition (a) is clear and (b) holds because $\mathcal{I}$ is an oposet. Conditions (c), (e), and (f) hold because $G_i$ has a well-defined action on the groupoid $\bigcup_{j<i} G_j$ that preserves $\leq$. In order to verify (d), let $i \in V(\Gamma)$ and $j \in \mathrm{star}_\Gamma(i)$. We know by construction of $\Gamma$ that $i \leq j$, or $j \leq i$, or $i \perp j$. But we also know by construction of $\varphi_i$ that $\varphi_i(j) = j$ whenever $j \geq i$ or $j \perp i$. Consequently, if $\varphi_i(j) \neq j$ then we must have $i>j$, as desired. Finally, (g) holds because of the quandle relation.

\medskip \noindent
Thus, we have proved that $(\Gamma, \leq , \mu, (\varphi_i)_{i \in V(\Gamma)})$ is a trickle graph. The fact that the presentation given by this trickle graph coincides with the presentation given by our quandle system follows from the triviality of the holonomy (see Remark~\ref{remark:NoHolonomy}). 
\end{proof}

\noindent
The next statement summarises what can be deduced about trickle groups from our results about quandle products thanks to Proposition~\ref{prop:QuandleVsTrickle}.

\begin{cor}\label{cor:Trickle}
Let $(\Gamma, \leq , \mu , (\varphi_x)_{x \in V(\Gamma)})$ be a trickle graph with $\Gamma$ finite and let $\mathfrak{T}$ denote the corresponding trickle group. 
\begin{itemize}
	\item[(i)] The Cayley graph $\mathrm{Cayl}(\mathfrak{T}, V(\Gamma))$ is quasi-median. Consequently, $\mathfrak{T}$ is cocompactly cubulable. 
	\item[(ii)] For every prime $p \geq 2$, $\mathfrak{T}$ contains an element of order $p$ if and only if there exists $x \in V(\Gamma)$ such that $\mu(x)$ is disible by $p$. Consequently, $\mathfrak{T}$ is torsion-free if and only if $\mu(x)= \infty$ for every $x \in V(\Gamma)$. 
	\item [(iii)] $\mathfrak{T}$ decomposes as $$G_1 \rtimes ( G_2 \rtimes (\cdots \rtimes G_n))$$ where each $G_i$ is a graph product of finite groups.
	\item[(iv)] If $\mu(x)=\infty$ for every $x \in V(\Gamma)$, then $\mathfrak{T}$ is orderable.
	\item[(v)] For all conjugates of infinite-order generators $g_1, \ldots, g_m$, there exists $N \geq 1$ such that $\{g_1^N, \ldots, g_m^N\}$ is the basis of a right-angled Artin group.
\end{itemize}
\end{cor}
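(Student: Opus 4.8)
The plan is to deduce all five items from the general theory of quandle products developed above, via the dictionary of Proposition~\ref{prop:QuandleVsTrickle}. That proposition identifies $\mathfrak{T}$ with a quandle product $\mathfrak{Q}(\mathcal{I},\mathcal{G},\mathcal{A})$ whose oposet $\mathcal{I}$ has underlying set $V(\Gamma)$ — hence is finite, since $\Gamma$ is — whose factors $\mathcal{G}$ are the cyclic groups $\mathbb{Z}_{\mu(x)}$, $x\in V(\Gamma)$, and whose holonomy is trivial. Everything then follows by feeding this into the statements already proved for quandle products of finitely many groups (and, where needed, of cyclic groups with trivial holonomy); the only work is to verify the hypotheses of each cited result and to read off its conclusion in the cyclic setting.

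Concretely: item (ii) is the specialisation of Corollary~\ref{cor:CombApplications}(i), since $\mathbb{Z}_{\mu(x)}$ contains an element of prime order $p$ if and only if $p\mid\mu(x)$, and intersecting over all primes $p$ yields the torsion-free statement; item (iii) is Corollary~\ref{cor:DecompositionQuandle}, the vertex-groups of the graph products $G_i$ being among the $\mathbb{Z}_{\mu(x)}$ (so finite whenever $\mu(x)<\infty$); item (iv) follows from Corollary~\ref{cor:CombApplications}(ii), because when every $\mu(x)=\infty$ each factor is $\mathbb{Z}$, which is orderable; and item (v) is Corollary~\ref{cor:SubRAAG}, i.e.\ the case of cyclic factors of Theorem~\ref{thm:SubGP}, where the extra hypothesis (finite index of the kernel of $G_i \curvearrowright \bigsqcup_{j<i} G_j$) holds automatically here because $\{j:j<i\}$ is finite and $\mathrm{Aut}(\mathbb{Z}_{\mu(x)})$ is finite, so the whole action factors through a finite quotient. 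For item (i), each factor $\mathbb{Z}_{\mu(x)}$ carries a locally finite quasi-median (indeed median, when $\mu(x)=\infty$) Cayley graph: the complete graph $K_{\mu(x)}$ with generating set $G_x$ if $\mu(x)<\infty$, the line with generating set $\{z_x^{\pm1}\}$ if $\mu(x)=\infty$. Using triviality of the holonomy, Lemma~\ref{lem:GeneratingNoHolo} lets these choices be made compatibly with the twisting actions, so the hypothesis of Theorem~\ref{thm:MedianCayl} is met, and that theorem produces a locally finite quasi-median Cayley graph of $\mathfrak{T}$. Finally, $\mathfrak{T}$ acts freely (vertex-stabilisers are trivial) and cocompactly on this graph, and a proper cocompact action on a locally finite quasi-median graph yields a proper cocompact action on a CAT(0) cube complex by the standard cubulation of quasi-median graphs (see \cite{QM}), whence cocompact cubulability.

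I expect (i) to be the only step needing any genuine care: one must arrange the generating sets so that the compatibility condition $g\ast S_i=S_{g\ast i}$ of Theorem~\ref{thm:MedianCayl} holds — this is precisely where triviality of the holonomy, rather than mere cyclicity of the factors, is used — and then invoke the cubulation of quasi-median graphs correctly. The remaining items are direct translations, through Proposition~\ref{prop:QuandleVsTrickle}, of combination and decomposition results established earlier.
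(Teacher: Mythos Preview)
Your proposal is correct and follows essentially the same route as the paper's proof: invoke Proposition~\ref{prop:QuandleVsTrickle} to recast $\mathfrak{T}$ as a quandle product of cyclic groups with finite oposet and trivial holonomy, then read off (i) from Theorem~\ref{thm:MedianCayl} together with the cubulation of quasi-median graphs from \cite{QM}, (ii) and (iv) from Corollary~\ref{cor:CombApplications}, (iii) from Corollary~\ref{cor:DecompositionQuandle}, and (v) from Corollary~\ref{cor:SubRAAG}. Your treatment of (i) is in fact a bit more explicit than the paper's (you spell out the choice of generating sets and appeal to Lemma~\ref{lem:GeneratingNoHolo}), but note that the generating set you produce---all of $G_x$ when $\mu(x)<\infty$ and $\{z_x^{\pm1}\}$ otherwise---is not literally $V(\Gamma)$, so strictly speaking you obtain \emph{a} locally finite quasi-median Cayley graph (sufficient for cocompact cubulability) rather than $\mathrm{Cayl}(\mathfrak{T},V(\Gamma))$ itself.
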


\begin{proof}
Theorem~\ref{thm:MedianCayl} implies that $\mathrm{Cayl}(\mathfrak{T}, V(\Gamma))$ is quasi-median. Since every quasi-median graph can be turned canonically into a median graph (see \cite[Proposition~4.16]{QM}), it follows that $\mathfrak{T}$ acts geometrically on a median graph, which amounts to saying that $\mathfrak{T}$ acts geometrically on a CAT(0) cube complex. This proves (i).

\medskip \noindent
Then, (ii)  and (iv) are consequences of Corollary~\ref{cor:CombApplications}; (iii) follows from Corollary~\ref{cor:DecompositionQuandle}; and (v) is a particular case of Corollary~\ref{cor:SubRAAG}. 
\end{proof}

\section{Open questions}

\noindent
In this final section, we record several natural questions which we leave open.

\paragraph{Other examples.} Recall that, to every oriented graph $\Gamma$ and every labelling $\lambda : \vec{E}(\Gamma ) \to V(\Gamma)$, is associated a \emph{LOG group} defined by the presentation
$$\langle V(\Gamma) \mid u \lambda(e) = \lambda(e) v \text{ for every } (u,v) \in \vec{E}(\Gamma) \rangle.$$
In view of the similarity between such presentations and the presentations defining quandle products, it is natural to ask which LOG groups are quandle products. In full generality, the question is probably too difficult to handle. 

\medskip \noindent
An important class of LOG groups is given by knot groups, as described by their Wirtinger presentations. Hence the more specific question:

\begin{question}
Which knot groups are quandle products of cyclic groups?
\end{question}

\noindent
Another interesting source of LOG groups is given by diagram groups associated to complete semigroup presentations \cite{MR1396957}. 

\begin{question}
Which diagram groups are quandle products of cyclic groups? 
\end{question}

\noindent
It is worth mentioning that Thompson's group $F$, a notable example of diagram group, turns out to be a trickle group \cite{Trickle}, and consequently a quandle product of infinite cyclic groups. However, its commutator subgroup $F'$, which is also a diagram group \cite{MR1725439}, is not a quandle product of infinite cyclic groups. Indeed, $F'$ is simple and every quandle product of infinite cyclic groups surjects onto $\mathbb{Z}_2$. For the same reason, $F'$ is already not a LOG group.

\paragraph{Quandle products of monoids.} Thompson's group $F$ admits the well-known infinite presentation
$$\langle x_0,x_1, \ldots \mid x_nx_k = x_kx_{n+1} \text{ for all } k<n \rangle.$$
Despite the fact that $F$ is a quandle product of infinite cyclic groups, as mentioned above, this is not the presentation of a quandle product. Indeed, $\langle x_0 \rangle$ does not permute $\langle x_1 \rangle, \langle x_2 \rangle, \ldots$, the ``negative translates'' are missing. The construction in \cite{Trickle} essentially amounts to adding these missing generators (namely $x_0^k x_1x_0^{-k}$ for $k >0$) to the presentation. Loosely speaking, we fill a gap between $x_0$ and $x_1$ by adding infinitely many generators. But, for the same reason, we have to fill a gap between $x_i$ and $x_{i+1}$ for every $i \geq 1$. And, then, we have to fill a gap between any two consecutive new generators. And so on and so forth. Eventually, we find the infinite presentation of a quandle product whose generators are labelled by $\mathbb{Z}[1/2]$. 

\medskip \noindent
However, $\langle x_0 \rangle$ does permute $\langle x_1 \rangle, \langle x_2 \rangle, \ldots$, but as a monoid. Thus, the monoid presentation define by the infinite presentation above would correspond to a quandle product of monoids. Our geometric perspective on Cayley graphs of quandle products of groups seems to extend to quandle products of monoids. For instance, we can recover the fact $\mathrm{Cayl}(F, \{x_1,x_2, \ldots\})$ is a median graph (which is already implicit in \cite{MR752825}). 

\begin{question}
Is there a good theory of quandle products of monoids and their groups?
\end{question}

\noindent
We already know that there is a tight connection between $F$ and its monoid. Does this phenomenon extend to arbitrary quandle products of monoids?

\paragraph{Intersection of parabolic subgroups.} It is a common phenomenon in group theory that intersections of parabolic subgroups are again parabolic subgroups. In the framework we are interested in, we already know that it holds for graph products \cite{MR3365774} and at least for standard parabolic subgroups in quandle products according to Corollary~\ref{cor:WP}. 

\begin{question}
In a quandle product, is the intersection between two arbitrary parabolic subgroups again a parabolic subgroup?
\end{question}

\noindent
Motivated by the proof of \cite[Theorem~1.7]{Mediangle}, we suggest the sketch of a geometric proof as follows. Let $\mathfrak{Q}:=\mathfrak{Q}(\mathcal{I}, \mathcal{G}, \mathcal{A})$ be a quandle product and $g\langle R \rangle g^{-1},h \langle S \rangle h^{-1} \leq \mathfrak{Q}$ two parabolic subgroups. With respect to the action of $\mathfrak{Q}$ on its quasi-median Cayley graph $\mathfrak{M}:= \mathfrak{M}(\mathcal{I}, \mathcal{G}, \mathcal{A})$, our parabolic subgroups coincide with the stabilisers of the gated subgraphs $g \langle R \rangle$ and $h \langle S \rangle$. Let $P$ denote the gated-projection of $h \langle S \rangle$ on $g \langle R \rangle$. Fix a point $k \in P$ and let $T \subset I$ be such that every generator labelling an edge of $P$ belongs to $G_t$ for some $t \in T$. The inclusion $P \subset k \langle T \rangle$ is clear. We can expect the reverse inclusion to hold. If so, $P= k \langle T \rangle$. Then, the inclusion
$$g \langle R \rangle g^{-1} \cap h \langle S \rangle h^{-1} \subset \mathrm{stab}(P)= k \langle T \rangle k^{-1}$$
is clear. It remains to verify the reverse inclusion to conclude that the intersection of our two parabolic subgroups is again parabolic.

\paragraph{Residual finiteness.} A natural question, which is also asked in \cite{Trickle} for trickle groups, is whether quandle products of finitely many residually finite groups with trivial holonomy are residually finite. (Notice that asking for finitely many factors and for trivial holonomy is necessary as Thompson's group $F$ and wreath products $F \wr \mathbb{Z}$ with $F$ non-abelian are not residually finite.) In this perspective, \cite[Proposition~7.5]{QM} allows us to show that a group is cocompact special \cite{MR2377497} whenever it admits an action on a quasi-median graph with good properties. 

\medskip \noindent
The main difficulty is to find a finite-index subgroup that avoids bad configurations of hyperplanes. Such configurations are easy to identify, thanks to the explicit structure of our quasi-median graph and its hyperplanes, but how to find a finite-index subgroup that avoids this obstruction? For a cactus group $J_n$, which we think as a quandle product $\mathfrak{Q}:= \mathfrak{Q}(\mathcal{I}, \mathcal{G}, \mathcal{A})$ where $I:= \{ [i,j] \subset [n] \mid i < j \}$, we can enrich $\mathcal{I}$ by adding the singletons $\{i\}$ to $I$ in order to get a new oposet $\mathcal{I}^+$ whose minimal elements are $\leq$- or $\perp$-comparable to all the other elements of the oposet. Then, these minimal elements yields a stable subset $I_\mathrm{min} \subset I^+$ on which $\mathfrak{Q}$ naturally acts. The kernel of this action coincides with the pure cactus group, which is cocompact special. Can this construction be generalised? As a more concrete question, we ask:

\begin{question}\label{question:CocompactSpecial}
Is a quandle product of finitely many finite groups virtually cocompact special?
\end{question}

\noindent
Notice that a positive answer would imply that a quandle product with trivial holonomy of finitely many residually finite groups (e.g.\ a trickle group defined by a finite trickle graph) is residually finite. 

\medskip \noindent
A more direct strategy to prove the residual finiteness of a quandle product $\mathfrak{Q}:= \mathfrak{Q}(\mathcal{I}, \mathcal{G}, \mathcal{A})$, with $\mathcal{I}$ finite and with $G$ finite for every $G \in \mathcal{G}$, is the following. Fix a non-trivial element $g \in \mathfrak{Q}$. In the quasi-median Cayley graph $\mathfrak{M}:= \mathfrak{M}(\mathcal{I}, \mathcal{G}, \mathcal{A})$, choose a finite gated subgraph $Y$ containing both $1$ and $g$. Now, consider the subgroup $H \leq \mathfrak{Q}$ generated by all the cliques tangent to $Y$ (i.e.\ with a vertex in $Y$ but edge in $Y$). This is a finite-index subgroup of $\mathfrak{Q}$, but does it contain $g$? 

\medskip \noindent
The decomposition provided by Corollary~\ref{cor:DecompositionQuandle} is also promising. Typically, residual finiteness is not stable under semidirect products, but the semidirect products in Corollary~\ref{cor:DecompositionQuandle} are far from being arbitrary. As a preliminary question (which corresponds to the case where the holonomy is trivial), we ask: 

\begin{question}
When is a graph-wreath product residually finite?
\end{question}

\noindent
We refer the reader to Section~\ref{section:Wreath} for the definition of graph-wreath products.

\paragraph{CAT(0)ness.} In view of Theorems~\ref{thm:NPC} and~\ref{thm:MedianCayl}, it is natural to ask whether acting geometrically on a median graph or on a CAT(0) space is preserved by quandle products (of finitely many groups without holonomy). The machinery developed in \cite{QM} applies to graph products, but not to quandle product verbatim. The main reason is that, for the actions of quandle products on their quasi-median Cayley graphs, the stabiliser of a prism does not decompose as the product of the stabilisers of its cliques. However, we expect that this is only a technical difficulty, and that the conclusion should still hold. 

\begin{conj}\label{conj:CAT}
A quandle product with trivial holonomy of finitely many groups that are cocompactly cubulable (resp.\ CAT(0)) is cocompactly cubulable (resp.\ CAT(0)). 
\end{conj}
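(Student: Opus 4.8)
The plan is to upgrade the proper-cubulation argument behind Theorem~\ref{thm:NPC} to a geometric one by blowing up the quasi-median Cayley graph $\mathfrak{M}:=\mathfrak{M}(\mathcal{I},\mathcal{G},\mathcal{A})$. Since $\mathcal{I}$ is finite, the action $\mathfrak{Q}\curvearrowright\mathfrak{M}$ is cocompact, topical-transitive (Proposition~\ref{prop:TopicHolonomy}), has trivial vertex-stabilisers, and admits only finitely many orbits of cliques, each stabilised by a conjugate of a factor (Corollary~\ref{cor:QuandleClique}). In the cubulable case I would fix for each $i\in I$ a geometric action $G_i\curvearrowright Y_i$ on a CAT(0) cube complex and replace every clique $gG_i$ of $\mathfrak{M}$ by a copy of $Y_i$, gluing these copies along the carriers of hyperplanes exactly as in the blow-up machinery of \cite{QM} already used to prove Theorem~\ref{thm:NPC}. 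Trivial holonomy is what makes the gluing coherent: by Lemma~\ref{lem:GeneratingNoHolo} the generating sets $S_i$ can be chosen so that the conjugation isomorphisms $G_i\to G_{g\ast i}$ of the system $\mathscr{S}(\mathcal{I},\mathcal{G},\mathcal{A})$ are the identity, whence the complexes attached to two parallel cliques are canonically identified and the walls of $Y_i$ extend across each fibre of a hyperplane without ambiguity.

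The genuinely new ingredient, compared with the proper case, is cocompactness, and this is exactly where the obstacle flagged in the discussion preceding Conjecture~\ref{conj:CAT} appears. In a graph product the stabiliser of a prism splits as the direct product of its clique-stabilisers, and this factorisation guarantees that the walls coming from the two blown-up cliques of an induced $4$-cycle cross, so that the dual of the resulting wallspace is a CAT(0) cube complex carrying a cocompact action. In a quandle product a prism may arise from a pair of indices $i<j$ rather than $i\perp j$ (Lemma~\ref{lem:QuandleCycle}), and then the clique-stabilisers do not commute: the defining relation is $ab=b(b\ast a)$ with $b\ast a\in G_{b\ast i}$, not $ab=ba$. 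I would resolve this by showing that, under trivial holonomy, the shearing $a\mapsto b\ast a$ is a wall-preserving identification $Y_i\to Y_{b\ast i}=Y_i$, so that the blow-up of the prism is still the honest product $Y_i\times Y_j$ and the two families of walls still cross. Concretely, the idea is to promote the second item of Lemma~\ref{lem:RotativeStab} from minimal indices to all indices \emph{at the level of walls} rather than of group elements: even when $\mathrm{stab}(P)$ fails to split, the induced wall-structure on $P$ does, which is all that Sageev's construction needs.

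A perhaps more robust route is induction on $\#I$ through the decomposition $\mathfrak{Q}=\mathfrak{Q}_{\min}\rtimes\langle I\setminus I_{\min}\rangle$ of Theorem~\ref{thm:SemiDirect}. By induction the quotient $\langle I\setminus I_{\min}\rangle$, a quandle product with fewer factors and trivial holonomy, is cocompactly cubulable; and $\mathfrak{Q}_{\min}$ is a graph product whose vertex-groups are (cocompactly cubulable) factors and whose defining graph has finite clique-number (Fact~\ref{fact:FiniteClique}), hence is itself cocompactly cubulable by the graph-product combination theorem of \cite{QM}. It then remains to cubulate the semidirect product $\mathfrak{Q}_{\min}\rtimes\langle I\setminus I_{\min}\rangle$, where the quotient acts by permuting vertex-groups. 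This is the hard part: semidirect products do not preserve cubulability in general (the Baumslag--Solitar phenomenon recorded after Theorem~\ref{thm:NPC}), so one must exploit that the action is permutational with trivial holonomy. I would again build a geometric action on the blow-up of $\mathfrak{M}$, combining the cube complex of $\mathfrak{Q}_{\min}$ with that of the quotient and reducing once more to the prism-crossing condition; trivial holonomy guarantees that the permutation action carries each vertex-group's cube complex to an identical one, preserving the product structure over prisms.

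For the CAT(0) (non-cubical) variant the same scaffold serves, but the gluing must be performed in the category of CAT(0) spaces. I would model $\mathfrak{Q}$ as the fundamental group of a developable complex of CAT(0) spaces supported on the quotient $\mathfrak{Q}\backslash\mathfrak{M}^{\square\triangle}$, with clique-pieces the given spaces $Y_i$ and prism-pieces their products, and then invoke a Cartan--Hadamard/local-to-global criterion for nonpositive curvature of the development to conclude that the universal cover is CAT(0). The main obstacle is identical in spirit: certifying that the links at prisms are CAT(1), which once more reduces to the product decomposition over prisms that trivial holonomy should supply. Turning this expected product structure into a rigorous verification --- reconciling the shearing relations $ab=b(b\ast a)$ with an honest metric product --- is, I expect, the crux of the whole argument.
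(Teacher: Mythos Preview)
This statement is a \emph{conjecture} in the paper, not a theorem: the paper does not prove it. The surrounding discussion explicitly flags the obstacle --- the stabiliser of a prism in $\mathfrak{M}$ need not split as the product of its clique-stabilisers --- and records the expectation that this is only a technical difficulty, without resolving it. So there is no proof to compare your proposal against.

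As for your proposal itself: it is an intelligent outline of plausible strategies, and you correctly locate the same obstacle the paper identifies. But you do not actually overcome it. In your second paragraph you assert that under trivial holonomy the shearing $a\mapsto b\ast a$ is a wall-preserving identification $Y_i\to Y_{b\ast i}$, so that the blown-up prism is still a product $Y_i\times Y_j$; but trivial holonomy only tells you that \emph{closed loops} in the system $\mathscr{S}$ act trivially, not that a single conjugation $b\ast(-):G_i\to G_{b\ast i}$ respects any chosen cube-complex structure on $Y_i$. Even if you arrange $Y_{b\ast i}=Y_i$ as spaces, there is no reason the automorphism of $G_i$ coming from an open path in $\mathscr{S}$ is realised by an isometry of $Y_i$, and that is exactly what you would need for the walls to cross. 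Your inductive route via Theorem~\ref{thm:SemiDirect} runs into the same gap: you reduce to cubulating $\mathfrak{Q}_{\min}\rtimes\langle I\setminus I_{\min}\rangle$ and then invoke the prism-crossing condition again. The CAT(0) variant has the identical missing step dressed in link-curvature language. In short, your proposal rediscovers the difficulty the paper records and proposes to wish it away; the conjecture remains open.
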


\paragraph{Finite subgroups.} Building blocks of quasi-median graphs are cliques, prisms, and hyperplanes. Corollary~\ref{cor:QuandleClique} describes clique-stabilisers in our quasi-median graphs and Corollary~\ref{cor:HypStab} hyperplane-stabiliser. However, the structure if prism-stabilisers is more delicate. 

\begin{question}
Let $\mathfrak{Q}(\mathcal{I}, \mathcal{G}, \mathcal{A})$ be a quandle product acting on its quasi-median Cayley graph $\mathfrak{M}(\mathcal{I}, \mathcal{G}, \mathcal{A})$. What is the algebraic structure of prism-stabilisers?
\end{question}

\noindent
A better understanding of this question would bring valuable information on the structure of finite subgroups, on distorted elements or even distorted abelian subgroups, on the structure of polycyclic subgroups, and on finiteness properties of quandle products. As concrete questions:

\begin{problem}\label{prob:FiniteSub}
Describe finite subgroups in quandle products. 
\end{problem}

\begin{question}
Is a quandle product with trivial holonomy of finitely many groups of type $F_n$ also of type $F_n$?
\end{question}

\noindent
We emphasize that, even for cactus groups, the structure of finite subgroups is not fully understood. See \cite[Conjecture~8.5]{MR4874027}.

\paragraph{Asymptotic geometry.} With the notable exception of asymptotic dimension, we have not discussed the large-scale geometry of quandle products. A basic question in this direction is:

\begin{question}\label{question:QuandleHyp}
When is a finitely generated quandle product hyperbolic?
\end{question}

\noindent
In view of \cite[Theorem~5.17]{QM}, a preliminary step would be to determine when quasi-median Cayley graphs of quandle products are hyperbolic. In the same vein, we can ask

\begin{question}
When is a quandle product acylindrically hyperbolic?
\end{question}

\noindent
The question may be more accessible than Question~\ref{question:QuandleHyp}, as it would be sufficient to find WPD contracting isometries in quasi-median Cayley graphs of quandle products. 

\medskip \noindent
Then, in another direction and inspired by \cite[Proposition~8.7]{MR4808711}, we ask:

\begin{question}
Let $\mathfrak{Q}:= \mathfrak{Q}(\mathcal{I}, \mathcal{G}, \mathcal{A})$ be a quandle product with trivial holonomy and with $\mathcal{I}$ finite. Does $\mathfrak{Q}$ quasi-isometrically embedded into a product of finitely many trees of spaces whose vertex-spaces are quasi-isometric to groups in $\mathcal{G}$?
\end{question}

\noindent
We know from \cite[Proposition~5.42]{MR4808711} a positive answers for graph products. A positive answer in full generality would provide information, for instance, about Dehn functions of quandle products.

\paragraph{Algorithmic problems.} As illustrated in \cite{MR4874027} for cactus groups, admitting a (quasi-)median Cayley graph allows one to solve explicitly and efficiently various algorithmic properties. In this direction, a natural question to ask is:

\begin{question}
Let $\mathfrak{Q}$ be a quandle product of finitely many cyclic groups (with trivial holonomy). Solve explicitly the conjugacy problem in $\mathfrak{Q}$. 
\end{question}

\noindent
Notice that, when the holonomy is trivial, we already know that the conjugacy problem is solvable, since the group is cocompactly cubulable according to Corollary~\ref{cor:Trickle}. But the goal is to exploit the quasi-median geometry of the Cayley graph in order to find a solution as simple as possible.

\addcontentsline{toc}{section}{References}

\bibliographystyle{alpha}
{\footnotesize\bibliography{QuandleProd}}

\Address

%

\end{document}